\theoremstyle{plain}
\newtheoremstyle{myexstyle}
{}{}{}{}{\bfseries}{.}{ }{}
\newtheorem{theorem}{Theorem}[section]
\newtheorem{corollary}[theorem]{Corollary}
\newtheorem{definition}[theorem]{Definition}
\newtheorem{lemma}[theorem]{Lemma}
\newtheorem{lemmadefinition}[theorem]{Lemma-Definition}
\newtheorem{notation}[theorem]{Notation}
\newtheorem{conventionnotation}[theorem]{Conventions and Notations}
\newtheorem{observation}[theorem]{Observation}
\newtheorem{proposition}[theorem]{Proposition}
\theoremstyle{myexstyle}
\newtheorem{remark}[theorem]{Remark}
\newtheorem{example}[theorem]{Example}
\numberwithin{equation}{section}
\renewenvironment{proof}[1][Proof]{\noindent\textbf{#1.} }{\ \rule{0.5em}{0.5em}}
\numberwithin{equation}{section}
\begin{document}

\title{Partial Dirac structures in infinite dimension}

\author{Fernand Pelletier \& Patrick Cabau }
\maketitle

\noindent\textbf{Keywords.--} Convenient calculus, partial Dirac structures, direct limits, projective limits.

\noindent\textbf{Classification MSC 2010.--}  
22E65, 
46T05, 
55R10, 
70G45  
.

\date{}

\begin{abstract}
In this paper the notion of Dirac structure in finite dimension is extended to the convenient setting. In particular, we introduce the notion of partial Dirac structure on convenient Lie algebroids and manifolds. We then look for those structures whose classical geometrical results in finite dimension can be extended to this infinite dimensional context. Finally, we are interested in the projective and direct limits of such structures.
\end{abstract}

\tableofcontents

\bigskip

Dirac structures were introduced in \cite{Cou90} and \cite{CoWe88}, in the finite dimensional setting, in order to treat both weak symplectic  and Poisson structures in a unified framework say as subbundles of the Pontryagin bundle
\[
{T}^{\mathfrak{p}}(M) = TM \oplus T^\ast M.
\]
The notion was intensively developped from the geometric point of view in connection with the study of nonholonomic mechanical systems or mechanical systems with constraints (cf. \cite{JoRa12} and \cite{YJM10}). It was also motivated by the type of structures encountered in Kirchhoff current laws for electric circuits (cf. \cite{YoMa06} and \cite{CDK87}).\\

In this paper, we introduce, in the convenient framework, a weaker notion on the partial Pontryagine bundle
\[
{T}^{\mathfrak{p}}(M) = TM \oplus T^\flat M
\]
where $T^\flat M$ is a weak sub-bundle of the kinematic cotangent bundle $T'M$. 

\section{Dirac structures on finite dimensional manifolds}
\label{__DiracStructuresOnFiniteDimensionalManifolds}

Let $M$ be a manifold of dimension $n$, $TM$ its tangent bundle and $T^\ast M$ its cotangent bundle. 

\subsection{Dirac structures}

The Pontryagin bundle\index{Pontryagin bundle} 
${T}^{\mathfrak{p}}(M) = TM \oplus T^\ast(M)$ is the Whitney sum bundle over $M$, i.e. it is the bundle over the base $M$ where the fibres over the point $x \in M$ equal to $T_x M \oplus T_x^\ast M$. This bundle is equipped with the natural projections
\[
\operatorname{pr}_T : {T}^{\mathfrak{p}}(M) \to TM
\textrm{  et  }
\operatorname{pr}_{T^\ast} : {T}^{\mathfrak{p}}(M) \to T^\ast(M)
\]
as well as
\begin{description}
\item[$\bullet$]
a nondegenerate, symmetric fibrewise bilinear form $<.,.>$ defined for any $x \in M$, any pair $(X_x,Y_x)$ of $T_xM$ and any pair $(\alpha_x,\beta_x)$ of $T_x^\ast M$ by
\begin{eqnarray}
\label{eq_DirectSymmetricBilinearFormDiracStructures}
<(X_x,\alpha_x),(Y_x,\beta_x)>
= 
\beta_x(X_x) + \alpha_x(Y_x)
\end{eqnarray}
\item[$\bullet$]
the \emph{Courant bracket}\index{Courant bracket}\index{bracket!Courant} defined for any pair $(X,Y)$ of vector fields and any pair $(\alpha,\beta)$ of $1$-forms by
\begin{eqnarray}
\label{eq_CourantBracketDiracStructures}
[[
(X,\alpha),(Y,\beta)
]]
=
\left( 
[X,Y], 
L_X \beta - L_Y \alpha 
+\dfrac{1}{2} d \left( \alpha(Y) - \beta(X) \right) 
\right)
\end{eqnarray}
\end{description}
The Courant bracket satisfies the identity:
\[
\circlearrowleft [[[[a1, a2]], a3]]  
=
-
\circlearrowleft \dfrac{1}{3} d( < [[a1, a2]], a3 > )
\]
where $\circlearrowleft$ stands for the sum of cyclic permutations.

\begin{notation}
\label{N_OrthogonalDiracStructure}
For a vector subbundle $D$ of ${T}^{\mathfrak{p}}M$, we denote $D^\perp$ the orthogonal of $D$ with respect to $<.,.>$:
\[
D^\perp 
= 
\{ (U,\eta) \in {T}^{\mathfrak{p}}M :\;
\forall (X,\alpha) \in D, < (X,\alpha),(U,\eta) > =0  \}
\]
\end{notation}
\begin{notation}
\label{N_SetOfSectionsOfASubbundle}
We denote $\Gamma(D)$ the set of smooth sections of a vector subbundle $D$ of ${T}^{\mathfrak{p}}M$.
\end{notation}

\begin{definition}
\label{D_DiracStructures}
An \emph{almost Dirac structure}\index{almost!Dirac structure} on $M$ is a vector sub-bundle $D$ of ${T}^{\mathfrak{p}}M$ satisfying
\begin{description}
\item[\textbf{(ADS)}]
{\hfil 
$D = D^\perp$
}
\end{description}
\end{definition}
Moreover, if the distribution $\Gamma(D)$\index{GammaL@$\Gamma(D)$ (distribution)} fulfills the condition
\begin{description}
\item[\textbf{(DS)}]
{\hfil 
$[[\Gamma(D), \Gamma(D)]] \subset \Gamma(D)$
}
\end{description}
$D$ is a \emph{Dirac structure}\footnote{In \cite{CoWe88}, the condition \textbf{(DS)} is replaced by
\[
\forall 
\left( 
\left( X_1,\alpha_1 \right) , 
\left( X_2,\alpha_2 \right) , 
\left( X_3,\alpha_3 \right) 
\right)
\in \left( \Gamma(D) \right) ^3,\;
\mathbf{\sigma} \left\langle L_{X_1} \alpha_2 , X_3
 \right\rangle = 0 .
\]
}
\index{Dirac structure} on $M$.\\
Since the pairing $<.,.>$ has split signature, \textbf{(ADS)} is equivalent to
\begin{enumerate}
\item[$\bullet$]
$<.,.>_{|D} = 0$
\item[$\bullet$]
$\operatorname{rank} D = n$ 
\end{enumerate}
For any Lagrangian subbundle $L \subset TM$, the expression
\[
\mathbf{T}_{L} \left( a_1, a_2, a_3 \right) := <[[a_1, a_2]], a_3>
\]
for any $a_1$, $a_2$ and $a_3$ in  $\Gamma(L)$, defines a $3$-tensor called the \emph{Courant tensor}\index{Courant!tensor} of $L$.\\ 
For a  sub-bundle $D$, the condition of involutivity  \textbf{(DS)} is equivalent to $\mathbf{T}_D = 0$. 

\subsection{Basic examples}
\label{___BasicExamples}

\begin{example}
\label{Ex_GraphOfABiVectorField}
\textsf{Graph of a bivector.} \\
Any bivector field $\underline{P} \in \Gamma \left( \Lambda^2(TM) \right) $ on a finite dimensional manifold defines a Lagrangian subbundle $D_{\underline{P}}$ of ${T}^{\mathfrak{p}}M$ 
\[
D_{\underline{P}} = \left\lbrace  \left( P^\sharp \alpha, \alpha \right), \; \alpha \in T^\ast M \right\rbrace
\]
where $P^{\sharp}$ is a morphism from $T^\ast M$ to $TM$ defined by  $ P^\sharp (\alpha) = \iota_{\alpha} P$ where $P$ is the corresponding mapping from $T^\ast M \times T^\ast M$ to $\mathbb{R}$.\\
The bivector field $\underline{P}$ defines the skew-symmetric bracket 
$ \{f,g\} = \underline{P} (df, dg)$. \\
For any $a_i = \left( P^{\sharp}\left( df_i \right) , df_i \right) $ for $i \in \{1,2,3\}$, we have 
\[
\mathbf{T}_{D_{\underline{P}}}  \left( a_1,a_2,a_3 \right) 
 =\circlearrowleft \{ f_1, \{ f_2,f_3 \} \} .
\]
So $D_{\underline{P}}$ is a Dirac structure if and only if $\underline{P}$ is a Poisson tensor\index{Poisson!tensor}.
\end{example}

\begin{example}
\label{Ex_GraphOfA2Form}
\textsf{Graph of a $2$-form.}\\
Any $2$-form $\underline{\omega}$ on a finite dimensional manifold $M$ defines the Lagrangian subbundle
\[
D_\omega = \left\lbrace \left( X,\omega^{\flat} (X) \right),\; X \in TM \right\rbrace
\]
where $\omega^{\flat} : TM \to T^\ast M$ is the associated morphism to $\underline{\omega}$.\\
For any $a_i = \left( X_i,\omega^\flat X_i \right) $, we have
\[
\mathbf{T}_{D_\omega}  \left( a_1,a_2,a_3 \right) 
= d\omega \left( X_1,X_2,X_3 \right)
\] 
So $D_\omega$ is a Dirac structure if and only if $\omega$ is presymplectic. 
\end{example}

\subsection{Induced Dirac structures on the cotangent bundle}
An important class of Dirac structures is the induced Dirac structure on the cotangent bundle of a finite dimensional manifold $Q$ (cf. \cite{LeOh11}, 2.2). \\
Let $\pi_Q : T^\ast M \to M$ be the cotangent bundle and $\Omega^\flat : T T^\ast M \to T^\ast T^\ast M$ the morphism associated to the Liouville $2$-form $\Omega$ (canonical  symplectic structure on $T^\ast Q$).\\
Given a constant-dimensional distribution $\Delta \subset TQ$, one defines the lifted distribution
\[
\Delta_{T^\ast Q} := \left( T \pi_Q \right) ^{-1} (\Delta) \subset T T^\ast Q
\] 
The annihilator $\Delta_{T^\ast Q}^{\operatorname{o}}$ is also given by $\Delta_{T^\ast Q}^{\operatorname{o}} = \pi_Q^\ast \left( \Delta \right) $.\\
Then the subbundle $\mathcal{D}_\Delta$ of $T T^\ast Q \oplus T^\ast T^\ast Q$ defined by
\[
\mathcal{D}_\Delta 
=
\{
(u,\alpha) \in T T^\ast Q \oplus T^\ast T^\ast Q: \;
u \in \Delta_{T^\ast Q},\; \alpha - \Omega^\flat (u) 
\in \Delta_{T^\ast Q}^{\operatorname{o}}
\}
\]
is a Dirac structure on $T^\ast Q$.

\begin{example}
\label{Ex_VerticalRollingDisk}
\textsf{The vertical rolling disk.}\\
The \emph{vertical rolling disk}\index{vertical rolling disk} on a plane is one of the most fundamental example of mechanical systems with non holonomic constraints described in  \cite{BKMM96} and \cite{Blo03}. It is a homogeneous disk of radius $R$ and mass $m$ rolling without slipping on a horizontal plane.\\
The circle $\mathbb{S}^1$ of radius $1$ is parametrized by an angular variable, i.e. a variable which is $2\pi$-periodic.\\
The configuration space is $Q = \mathbb{R}^2 \times \mathbb{S}^1 \times \mathbb{S}^1$\index{R2S1S1@$\mathbb{R}^2 \times \mathbb{S}^1 \times \mathbb{S}^1$ (configuration space of the vertical rolling disk)} and is parametrized by the (generalized) coordinates $(x,y,\varphi,\theta)$ where $(x,y)$ are the cartesian coordinates of the contact point $M_0$ in the $xy$-plane, $\varphi$ the rotation angle of the disk and $\theta$ the orientation of the disk.


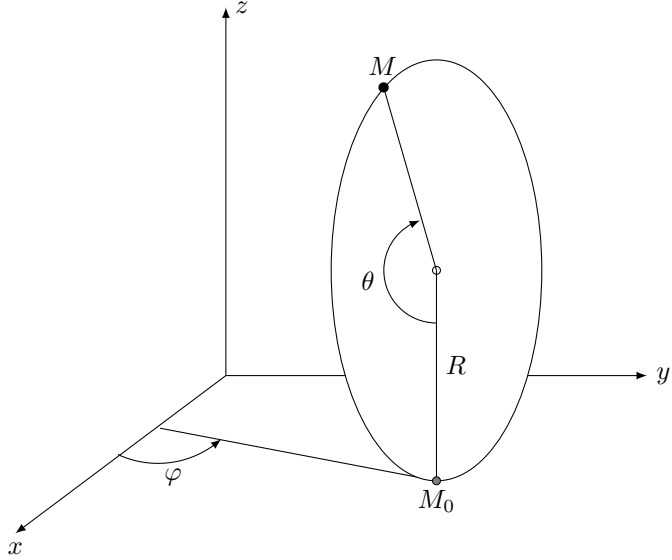
\begin{figure}[h]
    \centering
\begin{tikzpicture}[x=0.7cm,y=0.7cm,
declare function={
f(\t)=2*cos(\t);
g(\t)=4*sin(\t);}
]		
\draw[->,> = latex](-4,-2)--(-8,-5) node[below]{$x$};			\draw[->,> = latex](-4,-2)--(4,-2) node[right]{$y$};		
\draw[->,> = latex](-4,-2)--(-4,5) node[right]{$z$};
\draw (-5.25,-3) -- (0,-4);
\draw[fill=white] (0:0) ellipse (2 and 4);
\draw (0,0) circle(1.5pt);
\draw ({f(120)},{g(120}) node{$\bullet$};
\draw(-1.02,3.53) node[above]{$M$};
\draw (0,0) -- ({f(120)},{g(120});
\draw (0,0) -- (0,-4);
\draw[fill=gray] (0,-4) circle(1.5pt);
\draw(0,-4) node[below]{$M_0$};
\draw(0,-1.8) node[right]{$R$};
\draw[->,> = latex](0,-1) arc(-90:-252:1);
\draw(-1,-0.2) node[left]{$\theta$};
\draw[->,> = latex](-6.05,-3.5) arc(-115:-48:1.8);
\draw(-5,-3.6) node[below]{$\varphi$};
\end{tikzpicture}
\caption{The vertical rolling disk}
\end{figure}


Let $(q,v)= \left( x,y,\theta,\varphi,v_x,v_y,v_{\theta},v_{\varphi} \right) $ be the local coordinates for the tangent bundle $TQ$.\\
The standard free Lagrangian without constraints is 
\[
L \left( x,y,\theta,\varphi,v_x,v_y,v_{\theta},v_{\varphi} \right) 
=
\dfrac{1}{2}m \left( v_x^2 + v_y^2 \right)
+ \dfrac{1}{2} I v_{\theta}^2
+ \dfrac{1}{2} J v_{\varphi}^2
\]
where $I$ is the moment of inertia of the disk about the axis perpendicular to the plane of the disk, and $J$ is the moment
of inertia about an axis in the plane of the disk (both axes passing through the center of the disk).\\
The \emph{non holonomic constraints}\index{non holonomic constraints}\footnote{See \cite{Blo03}, 1.3, for the notions of holonomic and non holonomic constraints.} 
due to the rolling contact without slipping on the plane are
\[
\left\{
\begin{array}[c]{rcl}
\dot{x} &=& R \cos \varphi \; \dot{\theta}  \\
\dot{y} &=& R \sin \varphi \; \dot{\theta}
\end{array}
\right.
\]
The constraint may be represented by the distribution $\Delta \subset TQ$ given for each $q \in Q$ by
\[
\Delta = \{ v_q \in TQ, <\omega^i,v_q>=0,\; i \in \{1,2\} \}
\]
where the $1$ forms $\omega^i$ are given by
\[
\left\{
\begin{array}[c]{rcl}
\omega^1 &=& dx - R \cos \varphi \; d\theta  \\
\omega^2 &=& dy - R \sin \varphi \; d\theta
\end{array}
\right.
\]
This distribution is locally spanned by
\[
\dfrac{\partial}{\partial \varphi}
\textrm{~~~and~~~}
R \cos \varphi \dfrac{\partial}{\partial x}
+ R \sin \varphi \dfrac{\partial}{\partial t}
+\dfrac{\partial}{\partial \theta}.
\]
On can define the distribution 
$\Delta_{T^\ast Q}^\ast = \Omega^\flat \left( \Delta_{T^\ast Q} \right) $  where  $\Delta_{T^\ast Q}$ is the lifted distribution of $\Delta$ on $T^\ast Q$.\\
According to \cite{YoMa06}, Proposition~5.3, the induced Dirac structure on $T^\ast Q$ is given, for each $(q,p) \in T^\ast Q$, by the set $\mathcal{D}_\Delta$ of elements  $ \left( v_{(q,p)},\alpha_{(q,p)} \right) $ 
in $ T_{(q,p)} T^\ast Q \times T_{(q,p)}^\ast T^\ast Q$ such that 
\[
\left\{
\begin{array}
[c]{c}
\alpha_{(q,p)} \in \Delta_{T^\ast Q}^\ast  \\
v_{(q,p)} - P^\sharp (q,p).\alpha_{(q,p)}
\in \left( \Delta_{T^\ast Q}^\ast \right) ^{\operatorname{o}} (q,p)
\end{array}
\right.
\]
where $P^\sharp$ is the associated morphism to the canonical Poisson structure \\
$P : T^\ast T^\ast Q \times T^\ast T^\ast Q \to \mathbb{R}$.\\
It is given locally, for $z \in T^\ast Q$, by
\[
\mathcal{D}_\Delta(z) =
\left\lbrace
\left( \left( q,p,\dot{q},\dot{p} \right) , 
\left( q,p,\alpha,\omega \right) \right) ,
\left\{
\begin{array}
[c]{c}
\dot{q} \in \Delta(q)  	\\
\omega = \dot{q}	\\
\alpha + \dot{p} \in \Delta^{\operatorname{o}}(q)
\end{array}
\right.
\right\rbrace .
\]
\end{example}

\begin{example}
\label{Ex_LCCircuit}
\textsf{LC circuit.}\\
Let us consider the \emph{LC circuit}\index{LC circuit}\footnote{This LC circuit is an example of \emph{degenerate Lagrangian system with constraints}.} below
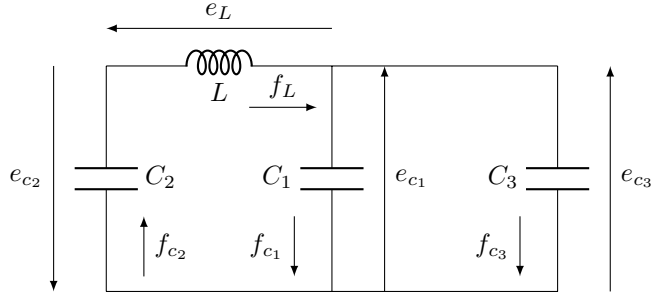
\begin{figure}[h]
    \centering  
\[
\begin{circuitikz}
\draw
(0,0) -- (3,0)
(0,0) to[C, l_=$C_2$] (0,3)
(0,3) to[L, l_=$L$] (3,3)
(3,0) to[C, l^=$C_1$] (3,3)
(6,0) to[C, l^=$C_3$] (6,3)
(3,0) -- (6,0)
(3,3) -- (6,3)
;
\draw[->,>=latex]  (3,3.5) -- (0,3.5)node[midway,above]{$e_L$};
\draw[->,>=latex]  (1.9,2.45) -- (2.8,2.45)node[midway,above]{$f_L$};
\draw[->,>=latex]  (-0.7,3) -- (-0.7,0)node[midway,left=0.5]{$e_{c_2}$};
\draw[->,>=latex]  (0.5,0.2) -- (0.5,1)node[midway,right=0.5]{$f_{c_2}$};
\draw[->,>=latex]  (2.5,1) -- (2.5,0.2)node[midway,left=0.5]{$f_{c_1}$};
\draw[->,>=latex]  (3.7,0) -- (3.7,3)node[midway,right=0.5]{$e_{c_1}$};
\draw[->,>=latex]  (5.5,1) -- (5.5,0.2)node[midway,left=0.5]{$f_{c_3}$};
\draw[->,>=latex]  (6.7,0) -- (6.7,3)node[midway,right=0.5]{$e_{c_3}$};
\end{circuitikz}
\]
\caption{LC circuit}
\end{figure} 
with an inductor $\ell$ and three capacitors $c_1$, $c_2$  and $c_3$ (cf. \cite{VdS98}, \cite{YoMa06}, and \cite{LeOh11}).\\
The configuration space\index{configuration space} is the $4$-dimensional vector space 
$Q = \{ \left( q^\ell,q^{c_1},q^{c_2},q^{c_3} \right) \}$ which corresponds to charges in the elements of the circuit.\\
An element $\left( f^\ell,f^{c_1},f^{c_2},f^{c_3} \right)$ of the tangent space $T_q Q$ corresponds to the currents in the different elements.\\
The Kirchhof current law imposes the constraints
\[
\left\{
\begin{array}
[c]{rcl}
-f^\ell + f^{c_2}	&=&	0  \\
-f^{c_1} + f^{c_2} - f^{c_3}	&=& 0
\end{array}
\right.
\]
This defines the constraint distribution $\Delta$ by
\[
\Delta = \{ f \in TQ: \; 
\forall i \in \{1,2\}, <\omega^i,f> = 0 
\}
\] 
where the constraint $1$-forms $\omega^1$ and $\omega^2$ are given by
\[
\left\{
\begin{array}
[c]{rcl}
\omega^1 	&=&	-dq^\ell + dq^{c_2}  \\
\omega^2	&=&	-dq^{c_1} + dq^{c_2} - dq^{c_3}	
\end{array}
\right.
\]
The annihilator distribution is
\[
\Delta^{\operatorname{o}}  
= \{ e \in T^\ast Q: \;
\forall f \in \Delta, <e,f> = 0 \}
=\operatorname{span} \{ \omega^1,\omega^2 \}.
\]
An element $e$ of $T^\ast Q$ denotes the voltage of the circuit.
In coordinates, an element of $T^\ast Q$ is written as $(q,p)$ with $p = \left( p_\ell,p_{c_1},p_{c_2},p_{c_3} \right) $ and the Liouville form is $\Omega = dq \wedge dp$.\\
The induced Dirac distribution is then
\[
\mathcal{D}_\Delta(q,p) 
=
\{
\left( \dot{q},\dot{p},\alpha_q,\alpha_p \right) \in T T^\ast Q \oplus T^\ast T^\ast Q: \;
\dot{q} \in \Delta, 
\dot{q} = \alpha_p,
\dot{p} + \alpha_q
\in \Delta^{\operatorname{o}}
\}
\]
\end{example}

\subsection{Integrable Dirac structures as Lie algebroids}
\label{___IntegrableDiracStructuresAsLieAlgebroids}

A \emph{Lie algebroid}\index{Lie algebroid} is a finite dimensional smooth vector bundle $\pi : E \to M$  over $M$ with a vector bundle homomorphism $ \rho : E \to TM$, called the \emph{anchor}\index{anchor}, and a Lie algebra bracket $[.,.] : \Gamma(E) \times \Gamma(E) \to \Gamma(E)$ satisfying:
\begin{description}
\item[\textbf{(LA1)}]
$\rho$ is a Lie algebra homomorphism;
\item[\textbf{(LA2)}]
for all $f \in C^\infty(M)$ and all $X$ and $Y$ in $\Gamma(E)$,
\[
[X,fY] = f[X,Y] \left( L_{\rho(X)}f \right) Y.
\]
\end{description}
For a Dirac structure $L$ on $M$, the vector bundle $\pi_L : L \to M$ inherits a Lie algebroid structure with bracket on $\Gamma(L)$ given by the restriction of the Courant bracket and  anchor  given by the restriction of the projection $\operatorname{pr}_T : {T}^{\mathfrak{p}}(M) \to TM$ to $L$ (cf. \cite{JoRa12}, 2.2 and \cite{Bur13}, 4.1). \\

\section{Partial  Dirac structures on a convenient Lie algebroid}
\label{__PartialAlmostDiracStructuresOnConvenientAlgebroid}

The purpose of this section is to show how  we can generalize the notion of Dirac structure from a finite dimension context to an infinite dimensional setting. As in the symplectic framework, we need to use an adapted  notion of partial (almost) Dirac structure. We begin by a linear convenient setting which are needed in the next sections.

\subsection{Linear partial Dirac structure}
\label{___LinearPartialDiracStructure}
 
Consider a  convenient space $\mathbb{E}$ and let $\mathbb{E}^\prime$ be its (bounded)  dual. Given any vector space $\mathbb{E}^\flat$ provided with its own convenient structure such that the inclusion $\mathbb{E}^\flat \to \mathbb{E}^\prime$ is bounded, we set $\mathbb{E}^{\mathfrak{p}}=\mathbb{E}\oplus \mathbb{E}^\flat$ which is called \emph{the partial Pontryagin space associated to $\mathbb{E}$}. As in finite dimension, the canonical pairing $<.,.>$ between $\mathbb{E}^\prime$ and $\mathbb{E}$ induces a pairing (also denoted $<.,.>$) between $\mathbb{E}^\flat$ and $\mathbb{E}$ which is left non degenerate. We define the symmetric pairing $<<.,.>>$ on  $\mathbb{E}^{\mathfrak{p}}$ by
\[
<<(u,\alpha),(v,\beta)>>=<\alpha, v>+<\beta,u>
\]
for all $(u,\alpha)$ and $(v,\beta)$ in $\mathbb{E}^{\mathfrak{p}}$.\\
If $\mathbb{D}$ is a closed vector subspace of $\mathbb{E}^{\mathfrak{p}}$, the \emph{orthogonal} $\mathbb{D}^\perp$\index{Dperp@$\mathbb{D}^\perp$ (orthogonal of $\mathbb{D}$)} of $\mathbb{D}$ is the set 
\[
\{(v,\beta)\in \mathbb{E}\oplus \mathbb{E}^{\flat}:\;
\forall (u,\alpha)\in \mathbb{D},\,
 <<(u,\alpha),(v,\beta)>>=0\; \}.
\]
A \emph{linear  partial  Dirac structure}\index{linear!partial Dirac structure} on $\mathbb{E}$ is a closed subspace $\mathbb{D}$ of $\mathbb{E}^{\mathfrak{p}}$ such that 
$\mathbb{D}=\mathbb{D}^\perp$. When $\mathbb{E}^\flat=\mathbb{E}^\prime$, a linear linear Dirac structure is simply called a \emph{linear Dirac structure}.

\begin{remark}
\label{R_ElementariesRemarks} 
Let $\mathbb{D}$  be  a partial linear Dirac structure  on $\mathbb{E}$.
\begin{enumerate}
\item[1.]
We always have:
\begin{center}
$ \forall (u,\alpha)\in \mathbb{D},\;
<\alpha,u>=0$
\end{center}
since $<\alpha,v>+<\beta, u>=0$ for all $(v,\beta)\in \mathbb{D}$ and, in particular,\\
for $(v,\beta)=(u,\alpha)$.
\item[2.] 
$\mathbb{D}$ is maximal in the following sense:\\ if $\mathbb{D}_1$ is  a partial linear Dirac structure  on $\mathbb{E}$ contained in $\mathbb{D}$, then $\mathbb{D}_1=\mathbb{D}$.\\ Indeed we always have
 $\mathbb{D}^\perp\subset \mathbb{D}_1^\perp=\mathbb{D}_1\subset \mathbb{D}$, the result follows from $\mathbb{D}^\perp=\mathbb{D}$.
\end{enumerate}
\end{remark}

Let $\mathbb{X}$ be any vector subspace of $\mathbb{E}$; the \emph{partial  annihilator}\index{partial!annihilator}\index{X0@$\mathbb{X}^0$ (partial annihilator)} of $\mathbb{X}$  is the subspace $ \mathbb{ X}^0$ of $\mathbb{E}^\flat$ defined by
\[
\mathbb{X}^0=\{\alpha\in \mathbb{E}^\flat:\,
\forall  u\in \mathbb{X}, <\alpha,u>=0 \}.
\]

\begin{conventionnotation}
\label{CN_EEflat}
From now on, when there is no ambiguity, we will consider  $\mathbb{E}$ and $\mathbb{E}^\flat$ as closed spaces of $\mathbb{E}^{\mathfrak{p}}$.
\end{conventionnotation}
 The classical results in finite dimension (cf. \cite{YoMa06}) have, in this context, the following adaptation:

\begin{lemma}
\label{L_PartialAnulatorProperties}${}$
\begin{enumerate}
\item 
If $\mathbb{X}^{\operatorname{ann}}$ is the annihilator   of $\mathbb{X} $  then we have 
\begin{center}
$
\mathbb{X}^0=\mathbb{X}^{\operatorname{ann}}\cap \mathbb{E}^\flat.
$
\end{center}
In particular,  $\mathbb{X}^0$ is a convenient subspace of $\mathbb{E}^\flat$.\\
\noindent  Moreover, we have $(\mathbb{X}^0)^{\operatorname{ann}}\cap \mathbb{E}\subset \mathbb{X}$.

\item Let $\mathbb{X}$ be  a closed subspace of $\mathbb{E}$ and $ Q:\mathbb{E}\to \mathbb{E}^\flat$ be a skew-symmetric bounded linear map (i.e. $<Qu,v>=-<Q v,u>$). If $Q(\mathbb{X})$ is closed in $\mathbb{E}^\flat$, then 
$$\mathbb{D}_Q^\mathbb{X}=\{(u,\alpha)\in \mathbb{X}\times\mathbb{E}^\flat\;:\; \alpha-Q(u)\in \mathbb{X}^0\}$$
is a partial Dirac structure on $\mathbb{E}$. In particular,  $\mathbb{D}_\mathbb{X}:=\mathbb{X}\oplus\mathbb{X}^0$ is a partial Dirac structure on $\mathbb{E}$ and $\mathbb{D}_Q:=\{(u,Q(u)), u\in \mathbb{E}\}$  is a partial Dirac structure on $\mathbb{E}$.

\item
If $P: \mathbb{E}^\flat \to \mathbb{E}$ is a linear bounded map which is skew symmetric,\\
(i.e. $<\alpha, P \beta>=-<\beta,P \alpha>$), then the graph $\mathbb{D}_P$ of $P$, considered as a subset of $\mathbb{E}\oplus\mathbb{E}^\flat$ is  a partial linear  Dirac structure on $\mathbb{E}$ if  $(\mathbb{E}^\flat)^{\operatorname{ann}}\cap \mathbb{E}=\{0\}$ or if $P$ is surjective.

\item  For $i \in \{1,2\}$, let $\mathbb{D}_i$ be a partial linear Dirac structure relative to the Pontryagine space $\mathbb{E}_i\oplus \mathbb{E}^\flat_i$. If  $<.,.>_i$ denotes the canonical pairing between $\mathbb{E}_i^\prime$ and $\mathbb{E}_i$, the canonical pairing $<.,.>$ between $\mathbb{E}_1^\prime\oplus\mathbb{E}_2^\prime$ and $\mathbb{E}_1\oplus\mathbb{E}_2$ is $<.,.>_1+<.,.>_2$ and  then the set 

$\mathbb{D}=\{ (u_1+u_2,\alpha=p_1^\flat(\alpha_1)+p_2^\flat(\alpha_2)), \;:\; (u_1\alpha_1)\in \mathbb{D}_1,\; (u_2,\alpha_2)\in \mathbb{D}_2\}$

is a  a partial linear Dirac structure relative  to the Pontryagine space
\[
(\mathbb{E}_1\oplus\mathbb{E}_2)\oplus (\mathbb{E}^\flat_1\oplus\mathbb{E}^\flat_2).
\]
$\mathbb{D}$ is called \emph{the direct sum of $\mathbb{D}_1$} and is denoted $\mathbb{D}_1\oplus \mathbb{D}_2$.

\end{enumerate}
\end{lemma}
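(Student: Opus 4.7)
All four statements ask for the self-orthogonality $\mathbb{D}=\mathbb{D}^\perp$ of a candidate subspace, which I will split each time into (a) \emph{isotropy} $\mathbb{D}\subset\mathbb{D}^\perp$, which uses skew-symmetry of the underlying operator, and (b) \emph{maximality} $\mathbb{D}^\perp\subset\mathbb{D}$, which always proceeds by feeding carefully chosen test elements of $\mathbb{D}$ into the defining relation. Part 1 is a preparatory fact that is exactly the ingredient needed for maximality in Part 2.

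For Part 1, the equality $\mathbb{X}^0=\mathbb{X}^{\operatorname{ann}}\cap\mathbb{E}^\flat$ is immediate from the definitions, and closedness of $\mathbb{X}^0$ in $\mathbb{E}^\flat$ follows because it is cut out by the bounded evaluation functionals $\alpha\mapsto\langle\alpha,u\rangle$ for $u\in\mathbb{X}$, hence $\mathbb{X}^0$ inherits a convenient structure. For the inclusion $(\mathbb{X}^0)^{\operatorname{ann}}\cap\mathbb{E}\subset\mathbb{X}$, I would invoke Hahn--Banach in the convenient setting: if $u\in\mathbb{E}$ lies outside the closed subspace $\mathbb{X}$, there should exist a separating functional in $\mathbb{E}^\flat$ (not merely in $\mathbb{E}'$) that kills $\mathbb{X}$ but not $u$; establishing that the choice of $\mathbb{E}^\flat$ is large enough for this separation is where the only delicate point of the lemma sits.

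For Part 2, expand the pairing on two elements $(u,\alpha),(v,\beta)\in\mathbb{D}_Q^\mathbb{X}$ by writing $\alpha=Q(u)+\alpha_0$, $\beta=Q(v)+\beta_0$ with $\alpha_0,\beta_0\in\mathbb{X}^0$; the $\alpha_0,\beta_0$ contributions vanish because $u,v\in\mathbb{X}$, and the remaining $\langle Q(u),v\rangle+\langle Q(v),u\rangle$ cancels by skew-symmetry of $Q$. For maximality, take $(v,\beta)\in(\mathbb{D}_Q^\mathbb{X})^\perp$: testing against $(0,\alpha)$ with arbitrary $\alpha\in\mathbb{X}^0$ forces $v\in(\mathbb{X}^0)^{\operatorname{ann}}\cap\mathbb{E}\subset\mathbb{X}$ by Part 1, and then testing against $(u,Q(u))$ with arbitrary $u\in\mathbb{X}$ and using skew-symmetry yields $\beta-Q(v)\in\mathbb{X}^0$. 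Closedness of $\mathbb{D}_Q^\mathbb{X}$ is automatic as the preimage of the closed set $\mathbb{X}^0$ under the bounded map $(u,\alpha)\mapsto\alpha-Q(u)$ on the closed subspace $\mathbb{X}\times\mathbb{E}^\flat$; the hypothesis that $Q(\mathbb{X})$ is closed is what permits the above decomposition $\alpha=Q(u)+\alpha_0$ to characterize membership in $\mathbb{D}_Q^\mathbb{X}$ cleanly. The two special cases follow by taking $Q=0$ (giving $\mathbb{D}_\mathbb{X}=\mathbb{X}\oplus\mathbb{X}^0$) and $\mathbb{X}=\mathbb{E}$ (where $\mathbb{E}^0=\{0\}$, giving $\mathbb{D}_Q$).

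For Part 3, isotropy of the graph is a one-line consequence of $\langle\alpha,P\beta\rangle+\langle\beta,P\alpha\rangle=0$. For maximality, $(v,\beta)\in\mathbb{D}_P^\perp$ reads $\langle\alpha,v-P\beta\rangle=0$ for all $\alpha\in\mathbb{E}^\flat$, i.e.\ $v-P\beta\in(\mathbb{E}^\flat)^{\operatorname{ann}}\cap\mathbb{E}$. Under the first hypothesis this is $\{0\}$ and we are done. Under the surjectivity hypothesis, write $v-P\beta=P\eta$; the annihilator condition combined with skew-symmetry forces $\langle\eta,P\alpha\rangle=0$ for every $\alpha\in\mathbb{E}^\flat$, i.e.\ $\eta$ vanishes on $P(\mathbb{E}^\flat)=\mathbb{E}$, hence $\eta=0$. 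Closedness of $\mathbb{D}_P$ follows since $P$ is bounded. Finally, Part 4 decouples entirely across the two summands: isotropy is the sum of the two isotropies, and a test element $(v_1+v_2,\beta_1+\beta_2)\in\mathbb{D}^\perp$ paired against elements of $\mathbb{D}$ with vanishing second (respectively first) component recovers $(v_i,\beta_i)\in\mathbb{D}_i^\perp=\mathbb{D}_i$, while closedness is inherited factor by factor. The main obstacle remains the Hahn--Banach step in Part 1; everything else is a mechanical unravelling of skew-symmetry.
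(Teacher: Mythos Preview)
Your overall plan matches the paper's proof almost step for step: for Parts 2--4 the paper does exactly the isotropy-then-maximality split you describe, with the same test elements (take $u=0$ to force $w\in(\mathbb{X}^0)^{\operatorname{ann}}\cap\mathbb{E}$, then arbitrary $u\in\mathbb{X}$ to pin down $\gamma-Q(w)\in\mathbb{X}^0$; and in Part~4, kill one summand at a time). The special cases $Q=0$ and $\mathbb{X}=\mathbb{E}$ are deduced identically, and the surjective branch of Part~3 in the paper is your argument with $\eta=\beta'-\beta$.

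Your hesitation about Part~1 is well placed and is the one point worth flagging. The paper does \emph{not} invoke Hahn--Banach; it instead writes
\[
(\mathbb{X}^0)^{\operatorname{ann}}\cap\mathbb{E}\ \subset\ (\mathbb{X}^{\operatorname{ann}})^{\operatorname{ann}}\cap\mathbb{E}=\mathbb{X},
\]
using that $\mathbb{X}$ is closed. But since $\mathbb{X}^0\subset\mathbb{X}^{\operatorname{ann}}$, passage to annihilators reverses inclusions, so in general one only has $(\mathbb{X}^{\operatorname{ann}})^{\operatorname{ann}}\subset(\mathbb{X}^0)^{\operatorname{ann}}$, and the displayed inclusion is not justified as written. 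Indeed, the paper's own Remark following the lemma exhibits a situation with $(\mathbb{E}^\flat)^{\operatorname{ann}}\cap\mathbb{E}\neq\{0\}$ (take $\mathbb{E}=c_0$, $\mathbb{E}^\flat=\operatorname{span}\{e_n^*:n>1\}$); with $\mathbb{X}=\{0\}$ this gives $(\mathbb{X}^0)^{\operatorname{ann}}\cap\mathbb{E}=\operatorname{span}\{e_1\}\not\subset\mathbb{X}$. So the ``delicate point'' you identify is genuine: without an extra separation hypothesis on $\mathbb{E}^\flat$ (such as $(\mathbb{E}^\flat)^{\operatorname{ann}}\cap\mathbb{E}=\{0\}$, which the paper imposes elsewhere), neither your Hahn--Banach sketch nor the paper's one-line inclusion actually establishes $(\mathbb{X}^0)^{\operatorname{ann}}\cap\mathbb{E}\subset\mathbb{X}$ in full generality. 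Your Parts 2--4 are fine and coincide with the paper; just be aware that the use of Part~1 inside Part~2 inherits this same issue.
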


\begin{proof}\\

1. From its definition, we have   $\mathbb{X}^0=\mathbb{X}^{\operatorname{ann}}\cap \mathbb{E}^\flat$.  It is well known that $\mathbb{X}^{\operatorname{ann}}$ is 
closed in $\mathbb{E}^\prime$. The last property is a consequence of the fact that the inclusion of $\mathbb{E}^\flat$ in $\mathbb{E}^\prime$ is bounded and so 
continuous for the $c^\infty$-topology, which implies that (for the $c^\infty$-topology) the inverse image of a closed set is closed in $\mathbb{E}^\flat$.\\
On the other hand, we have 
\[
(\mathbb{X}^0)^{\operatorname{ann}}\cap \mathbb{E}
=
\{ x\in \mathbb{E}:\forall \alpha\in \mathbb{X}^0,\; <\alpha,x>=0 \}
\subset \left( \mathbb{X}^{\operatorname{ann}} \right) ^{\operatorname{ann}}\cap \mathbb{E}=\mathbb{X}
\]  
since $\mathbb{X}$ is closed.\\

2. At first, we have $\mathbb{D}_Q^\mathbb{X}=\mathbb{X}\oplus( Q(\mathbb{X} )+\mathbb{X}^0)$. If $Q(\mathbb{X})$ is closed in $\mathbb{E}^\flat$, according to Point 1,  so is  $Q(\mathbb{X} )+\mathbb{X}^0$. Therefore,  $\mathbb{D}_Q^\mathbb{X}$ is a closed subspace of $\mathbb{E}^\mathfrak{p}$.\\

Now,  we have
\begin{center}
$
(\mathbb{D}_Q^\mathbb{X})^\perp
=\{(w,\gamma)\in \mathbb{E}^{\mathfrak{p}}:\,
\forall u\in \mathbb{X},\; \forall \alpha-Q(u)\in\mathbb{X}^0:\;
<\alpha,w>+<\gamma,u>=0\}.
$
\end{center}
First we show that  $\mathbb{D}_Q^\mathbb{X}\subset ( \mathbb{D}_Q^\mathbb{X})^\perp$.  Let $(w,\gamma)\in\mathbb{D}_Q^\mathbb{X}$. Then we have:
$$<\alpha,w>+<\gamma,u>=<Q(u),w>+<Q(w),u>=0$$
 from the skew symmetry of $Q$. This proves the announced inclusion.

\noindent For the inverse  inclusion,  consider  $(w,\gamma)\in(\mathbb{D}_Q^\mathbb{X})^\perp$. By assumption we have
\begin{equation}
\label{alpha1}
<\alpha,w>+<\gamma,u>=0
\end{equation}
for all $(u,\alpha)\in \mathbb{D}_Q^\mathbb{X}$. But, for any $(u,\alpha)\in \mathbb{D}_Q^\mathbb{X}$, we have 
\begin{equation}
\label{alpha2}
<\alpha,w>=<Q(u),w>
\end{equation}
for all  $w\in \mathbb{E}.$\\
 
At first, choose $u=0$. Then since $Q(u)=0$ for all $\alpha\in \mathbb{X}^0$ we have   $(u,\alpha)$ belongs to $\mathbb{D}_Q^\mathbb{X}$.  From  relation (\ref{alpha1}), it follows that $\alpha(w)=0$ for all $\alpha\in \mathbb{X}^0$ and so $w\in( \mathbb{X}^0)^{\operatorname{ann}}\cap \mathbb{E}$. But,  since    $(\mathbb{X}^0)^{\operatorname{ann}}\cap \mathbb{E}
\subset \mathbb{X}$ then $w$ belongs to $\mathbb{X}$.  Now taking in account relation (\ref{alpha2}), since $w$ belongs to $\mathbb{X}$, relation (\ref{alpha1}) implies:

$<Q(u),w>+<\gamma,u>=0$ for all $u\in \mathbb{X}$. 
In other words 

$<\gamma,u>=<Q(w), u>$ from the skew symmetry of $Q$. 

\noindent Therefore $(w,\gamma)$ belongs to $\mathbb{D}_Q^\mathbb{X}$, which ends the proof  that $\mathbb{D}_Q^\mathbb{X}$ is a  partial linear Dirac structure on 
$\mathbb{E}$.

For $Q=0$, we have $\mathbb{D}_Q^\mathbb{X}=\mathbb{D}_\mathbb{X}$ and for $\mathbb{X}=\mathbb{E}$, we have $\mathbb{D}_Q^\mathbb{X}=\mathbb{D}_Q$, which ends the proof.\\

3. Assume that $P:\mathbb{E}^\flat \to \mathbb{E}$ is a  linear bounded map which is skew symmetric (i.e. $<\alpha, P(\beta)>=-<\beta, P(\alpha)>$). The graph $\mathbb{D}_P$ of $P$,  as a subset of    $\mathbb{E}\oplus\mathbb{E}^\flat$, is $\{(u, \alpha)\in \mathbb{E}\times 
\mathbb{E}^\flat\;:\; u=P(\alpha)\}$.\\ 
Now since 
\begin{center}
$\forall \alpha \in \mathbb{E}^\flat,\;
<\alpha,P(\beta)>+<\beta, P(\alpha)>=0$,
\end{center}
it follows that  $\mathbb{D}_P\subset  \mathbb{D}_P^\perp$. \\
Let $(w,\beta)\in   \mathbb{D}_P^\perp$.
  Then $<\beta,P(\alpha)>=-<\alpha,w>$ for all $ \alpha\in \mathbb{E}^\flat$. Thus $<\alpha,P(\beta)>=<\alpha,w>$ for all all $\alpha\in 
\mathbb{E}^\flat$, which implies that 
$P\beta-w$ belongs to $(\mathbb{E}^\flat)^{\operatorname{ann}}\cap \mathbb{E}$.\\
If $(\mathbb{E}^\flat)^{\operatorname{ann}}\cap \mathbb{E}=\{0\}$, this implies that  $\mathbb{E}^\flat$ separates points, and so it follows that that $P\beta=w$.\\
If $P$ is surjective, then there exists $\beta^\prime\in \mathbb{E}^\flat$ such that $P\beta^\prime=w$. It follows that  $<\alpha, P(\beta-\beta^\prime)>=0=<\beta-\beta', P\alpha>$ for all $\alpha\in \mathbb{E}^\flat$. Since $P$ is surjective and the bilinear form $<.,.>$  is left non degenerate, we obtain $\beta-\beta'=0$, which ends the proof of Point 3.\\

4. Let $p_i$ and $p_i^\flat$ (resp. $p$ and $p^\flat$) the canonical projection of $\mathbb{E}_i\oplus \mathbb{E}^\flat_i$ over $\mathbb{E}_i$ and $\mathbb{E}_i^\flat$ (resp. $(\mathbb{E}_1\oplus\mathbb{E}_2)\oplus (\mathbb{E}^\flat_1\oplus\mathbb{E}^\flat_2)$ over  $(\mathbb{E}_1\oplus\mathbb{E}_2)$ and $(\mathbb{E}^\flat_1\oplus\mathbb{E}^\flat_2)$ ) respectively. Then we have $p=p_1+p_2$ and $p^\flat=p_1^\flat+p_2^\flat$. Consider any  $(u_1+u_2, \alpha)$ and $(v_1+v_2,\beta)$ in $\mathbb{D}$ and assume that $\alpha=p^\flat(\alpha_1)+p_2^\flat(\alpha_2)$ and $\beta=p_1^\flat(\beta_1)+p_2^\flat(\beta_2)$ where $(u_i,\alpha_i)$ and $(v_i,\beta_i)$ belongs to $\mathbb{D}_i$ for $i \in \{1,2\}$. Then we have 

$\begin{matrix}
<<(u_1+u_2, \alpha),(v_1+v_2,\beta)>>&=\beta(u_1+u_2)+\alpha((v_1+v_2)\hfill{}\\
&=\beta_1(u_1)+\beta_2(u_2)+\alpha_1(v_1)+\alpha_2(v_2)\hfill{}\\
&=0\hfill{}\\
\end{matrix}
$

 \noindent since  $(u_i,\alpha_i)$ and $(v_i,\beta_i)$ belongs to $\mathbb{D}_i$ and $\mathbb{D}_i$ is a partial linear Dirac structure.\\
 
\noindent  Consider now $(w,\gamma)\in (\mathbb{E}_1\oplus\mathbb{E}_2)\oplus (\mathbb{E}^\flat_1\oplus\mathbb{E}^\flat_2)$. We have a decomposition
 
 $(w,\gamma)=(w_1,\gamma_1)+(w_2,\gamma_2)$ where $(w_i,\gamma_i)\in \mathbb{E}_i\oplus \mathbb{E}^\flat_i$. Assume that  
 
 $<<(w,\gamma), (u_1+u_2,\alpha)>>=0$ for all $(u_1+u_2,\alpha)\in \mathbb{D}$ 
 
 \noindent and, for such an $\alpha$, let  $\alpha_i$ be  such that $(u_i,\alpha_i)\in \mathbb{D}_i$ and   $\alpha=p^\flat(\alpha_1)+p^\flat(\alpha_2)$. 
 
 \noindent Thus we have
 
 $\gamma_1(u_1)+\gamma_2(u_2)+\alpha_1(v_1)+\alpha_2(v_2)=0$ for all $ (u_i,\alpha_i)\in \mathbb{D}_i.$ 
 
 \noindent In particular, for  $(u_2,\alpha_2)=(0,0)\in \mathbb{D}_2$, we obtain
 
  $\gamma_1(u_1)+\alpha_1(w_1)=0$ for all  $(v_1,\alpha_1)\in \mathbb{D}_1$.
  
  \noindent This implies that $(w_1,\gamma_1)$ belongs to $\mathbb{D}_1$. By the same argument, we get $(w_2,\gamma_2)$ belongs to $\mathbb{D}_2$ and so $\mathbb{d}$ is a partial linear Dirac structure on $\mathbb{E}_1\oplus\mathbb{E}_2$.
\end{proof}

\begin{remark}
\label{R_ContreExamples}${}$
\begin{enumerate}
\item 
If $\mathbb{E}^\flat =\mathbb{E}^\prime$, Lemma \ref{L_PartialAnulatorProperties}, 3. is always true since, in this case, 
$\mathbb{X}^0=\mathbb{X}^{\operatorname{ann}} $ and $(\mathbb{X}^{\operatorname{ann}})^{\operatorname{ann}}=\mathbb{X}$, which is a natural generalization of  classical results obtained in finite dimension (cf.
\cite{YoMa06}) and \cite{Cou90}). 

Thus we obtain a natural generalization of  classical results in finite dimension (cf.
\cite{YoMa06}) and \cite{Cou90}).\\
However, when $\mathbb{E}^\flat \not=\mathbb{E}^\prime$, if the assumptions of Lemma~\ref{L_PartialAnulatorProperties}, 3.  are not satisfied, the result is not true in general as the elementary following examples show.\\
In  the Banach space $c_0$ of real sequences which converge to $0$, provided with its canonical basis 
$\{e_n, n\in \mathbb{N}\}$, consider the set $\mathbb{X}$  generated by $\{e_n, n>1\}$. Then  $\ell^1$ will be identified with  the dual of $c_0$ and provided with its canonical basis $\{e_n^*, n\in \mathbb{N}\}$.  We denote by  $c_0^\flat$ the subspace of $\ell^1$  generated by $\{e_n^*, n>1\}$. Therefore $(c_0^\flat)^{\operatorname{ann}}\cap c_0$  is the vector space generated by $e_1$. Take $P:c_0^\flat \to c_0$ defined by 
\begin{center}
$
\forall n>1,\,P(e_n^*)=-e_n.
$
\end{center}
Then  the space $ \mathbb{D}^\perp_P$ contains the pair $(e_1, e^*_2)$ which does not belong to  $ \mathbb{D}_P$.

\item  
The density of $\mathbb{E}^\flat$ in $\mathbb{E}^\prime$ implies the condition $(\mathbb{E}^\flat)^{\operatorname{ann}}\cap \mathbb{E}=\{0\}$ and so, in such a situation, 
each  conclusion of Lemma~ \ref{L_PartialAnulatorProperties} is  true. More generally, we have the same result  {\rm if $\mathbb{E}^\flat$ separates points in $\mathbb{E}$.
\item 
 We discus the relation between the condition $(\mathbb{E}^\flat)^{\operatorname{ann}}\cap \mathbb{E}=\{0\}$,\\
the density of $\mathbb{E}^\flat$ in $\mathbb{E}^\prime$ and the property "$\mathbb{E}^\flat$ separates points in $\mathbb{E}$".}
\begin{description}
\item (i) The condition $(\mathbb{E}^\flat)^{\operatorname{ann}}\cap \mathbb{E}=\{0\}$  does not imply the density of  $\mathbb{E}^\flat$ in $\mathbb{E}^\prime$ 
 as illustrated by the following example:\\
in $\mathbb{E}=\ell^1$ take $\mathbb{E}^\flat =c_0\subset \ell^\infty$. Then $\mathbb{E}^\flat\not=\mathbb{E}$ is closed  in $\mathbb{E}$ and therefore not dense (for the topology of 
the norm). 
\item (ii)The assumption  $(\mathbb{E}^\flat)^{\operatorname{ann}}\cap \mathbb{E}=\{0\} $ is equivalent to  the assumption that $\left((\mathbb{E}^\flat)^{\operatorname{ann}}\cap \mathbb{E}\right)^{\operatorname{ann}}=\mathbb{E}^\prime $ which (as a consequence of the definition of a convenient space)  implies that  $\mathbb{E}^\flat$ separates points of  $\mathbb{E}$.\\
The converse is true for a Fr\'echet vector space $\mathbb{E}$ since its convenient dual $\mathbb{E}^\prime $ is equal to its topological dual $\mathbb{E}^*$ and it is a consequence of the Hahn-Banach Theorem and the fact that for a subset $\mathbb{F}$ of $\mathbb{E}$, then  
$\left((\mathbb{F}^{\operatorname{ann}}\cap \mathbb{E}\right)^{\operatorname{ann}}$ is  the weak-* closure of $\mathbb{F}$ in $\mathbb{E}^*$.
Unfortunately,  the Hahn-Banach Theorem is not true, in general,  in the convenient dual of a convenient space (cf. \cite{KrMi97}) and then, if $\mathbb{E}^\flat$ separates points of $
\mathbb{E}$, this could  not imply that  $(\mathbb{E}^\flat)^{\operatorname{ann}}\cap \mathbb{E}=\{0\}$.
\end{description}
\end{enumerate}
\end{remark}

We denote by $p:\mathbb{E}^{\mathfrak{p}}\to \mathbb{E}$ and $p^\flat: \mathbb{E}^{\mathfrak{p}} \to \mathbb{E}^\flat$ the natural projections. Given a  closed subspace  $\mathbb{D}$ of  $\mathbb{E}^{\mathfrak{p}}$, 
 we have $\ker  p_{| \mathbb{D}}= \mathbb{D}\cap \mathbb{E}^\flat$  and $\ker  p_{| \mathbb{D}}^\flat= \mathbb{D}\cap \mathbb{E}$. \\

From now on,  we assume that  $\mathbb{D}$ is a partial linear Dirac structure and we will use systematically Conventions and Notations~\ref{CN_EEflat}.


\begin{lemma}
\label{L_PDPflatDClosed}${}$
\begin{enumerate}
\item 
$p(\mathbb{D})$ and $p^\flat(\mathbb{D})$ are closed in $\mathbb{E}$ and $\mathbb{E}^\flat$ respectively. 
\item  
Let $\varpi:\mathbb{D}\to  \mathbb{D}/(\mathbb{D}\cap \mathbb{E}^\flat)$ and $\varpi^\flat:\mathbb{D}\to \mathbb{D}/(\mathbb{D}\cap \mathbb{E})$ be the projections on the quotient spaces.\\
Then $p$ and $p^\flat$ induces convenient  isomorphisms
\begin{center}
$ \hat{p}_{| \mathbb{D}}: \mathbb{D}/(\mathbb{D}\cap \mathbb{E}^\flat)\to  p(\mathbb{D})$ \text{~~and~~}  $\hat{{p}}^\flat_{| \mathbb{D}}: \mathbb{D}/(\mathbb{D}\cap \mathbb{E})\to  p^\flat (\mathbb{D})$ 
\end{center}
such that $p_{| \mathbb{D}}= \hat{p}_{| \mathbb{D}}\circ \varpi$ and $p_{| \mathbb{D}}^\flat= \hat{p}_{| \mathbb{D}}^\flat\circ \varpi^\flat$ respectively.
 \end{enumerate}
\end{lemma}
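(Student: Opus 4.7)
The plan is to derive both parts from the self-duality $\mathbb{D}=\mathbb{D}^\perp$ via the partial-annihilator formalism of Lemma~\ref{L_PartialAnulatorProperties}. First I would establish the kernel identities
\[
\mathbb{D}\cap\mathbb{E}^\flat \;=\; \{\alpha\in\mathbb{E}^\flat:\ \alpha(u)=0 \text{ for all } u\in p(\mathbb{D})\} \;=\; (p(\mathbb{D}))^0,
\]
and symmetrically $\mathbb{D}\cap\mathbb{E} = \{u\in\mathbb{E}:\ \beta(u)=0 \text{ for all } \beta\in p^\flat(\mathbb{D})\}$. Both follow by unwinding $\mathbb{D}=\mathbb{D}^\perp$ on pure elements: $(0,\alpha)\in\mathbb{D}$ iff $<<(0,\alpha),(v,\beta)>>=\alpha(v)=0$ for every $(v,\beta)\in\mathbb{D}$, and dually for $(u,0)$.

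For Part~1, I would upgrade the kernel identities to image identities
\[
p(\mathbb{D}) \;=\; \{u\in\mathbb{E}:\ \alpha(u)=0 \text{ for all } \alpha\in \mathbb{D}\cap\mathbb{E}^\flat\}, \qquad p^\flat(\mathbb{D}) \;=\; (\mathbb{D}\cap\mathbb{E})^0.
\]
The inclusions $\subset$ are immediate. For the reverse inclusion in the first identity, I would fix $u_0$ in the right-hand side and consider the bounded linear functional $\lambda_{u_0}:\mathbb{D}\to\mathbb{R}$, $(v,\gamma)\mapsto -\gamma(u_0)$; the defining property of $u_0$ makes $\lambda_{u_0}$ vanish on $\mathbb{D}\cap\mathbb{E}^\flat$, so it descends to $\mathbb{D}/(\mathbb{D}\cap\mathbb{E}^\flat)$, and the self-duality $\mathbb{D}=\mathbb{D}^\perp$ then produces $\beta_0\in\mathbb{E}^\flat$ with $(u_0,\beta_0)\in\mathbb{D}$ realizing $\lambda_{u_0}$, whence $u_0\in p(\mathbb{D})$. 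With these image identities in place, $p(\mathbb{D})$ and $p^\flat(\mathbb{D})$ are closed as intersections of kernels of bounded linear functionals (cf. Lemma~\ref{L_PartialAnulatorProperties}~(1)); the argument for $p^\flat(\mathbb{D})$ is entirely symmetric.

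For Part~2, Lemma~\ref{L_PartialAnulatorProperties}~(1) guarantees that $\mathbb{D}\cap\mathbb{E}^\flat$ and $\mathbb{D}\cap\mathbb{E}$ are closed in $\mathbb{D}$, so the quotients $\mathbb{D}/(\mathbb{D}\cap\mathbb{E}^\flat)$ and $\mathbb{D}/(\mathbb{D}\cap\mathbb{E})$ carry convenient structures, and the universal property of the quotient yields the factorizations $p_{|\mathbb{D}}=\hat{p}_{|\mathbb{D}}\circ\varpi$ and $p^\flat_{|\mathbb{D}}=\hat{p}^\flat_{|\mathbb{D}}\circ\varpi^\flat$ with $\hat{p}_{|\mathbb{D}}$ and $\hat{p}^\flat_{|\mathbb{D}}$ bounded and injective onto the (closed) subspaces $p(\mathbb{D})$ and $p^\flat(\mathbb{D})$. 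To see that these bounded bijections are convenient isomorphisms, I would read the inverse maps off the explicit construction of $\beta_0$ from $u_0$ above: sending $u_0\in p(\mathbb{D})$ to the class of $(u_0,\beta_0)$ in $\mathbb{D}/(\mathbb{D}\cap\mathbb{E}^\flat)$ is well defined (any two lifts differ by an element of $\mathbb{D}\cap\mathbb{E}^\flat$) and is manifestly linear and bounded.

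The main obstacle is the reverse inclusion in the image identity of Part~1, equivalently producing $\beta_0$ from $u_0$ with linear and bounded dependence. A direct Hahn--Banach extension of $\lambda_{u_0}$ is not legitimate in the convenient setting (cf. Remark~\ref{R_ContreExamples}), so the existence of $\beta_0$ has to be extracted from the Dirac condition $\mathbb{D}=\mathbb{D}^\perp$ itself; this is also precisely the step that delivers the boundedness of $\hat{p}^{-1}_{|\mathbb{D}}$ required for the isomorphism statement in Part~2.
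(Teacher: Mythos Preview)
Your approach diverges from the paper's, and the gap you flag at the end is genuine and unresolved.

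For Part~1, the paper argues directly: $p$ and $p^\flat$ are the canonical projections of the product $\mathbb{E}\times\mathbb{E}^\flat$, and the paper simply asserts that the image of the closed subspace $\mathbb{D}$ under each is closed. It does not pass through any image identity. The identities you target --- in particular $p^\flat(\mathbb{D}) = (\mathbb{D}\cap\mathbb{E})^0$ --- are proved only \emph{afterward}, in Lemma~\ref{L_DE0=DEflat}, and there the inclusion $\supset$ explicitly invokes the closedness of $p^\flat(\mathbb{D})$ already supplied by the present lemma. So the paper's logical order is the reverse of yours. Reversing it forces you into exactly the step you cannot complete: from $u_0$ annihilating $\mathbb{D}\cap\mathbb{E}^\flat$, produce $\beta_0\in\mathbb{E}^\flat$ with $(u_0,\beta_0)\in\mathbb{D}$. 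Descending $\lambda_{u_0}$ to a functional on $p(\mathbb{D})$ is fine, but nothing in $\mathbb{D}=\mathbb{D}^\perp$ furnishes an extension of that functional to an element of $\mathbb{E}^\flat$; that is precisely a Hahn--Banach step, and as you already observe, Hahn--Banach is unavailable in the convenient setting.

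For Part~2, the paper also bypasses any explicit inverse. It reads the convenient isomorphism directly off the bornologies: bounded sets in the quotient $\mathbb{D}/(\mathbb{D}\cap\mathbb{E}^\flat)$ are by definition the sets $\varpi(B)$ for $B$ bounded in $\mathbb{D}$, and the relation $\hat p_{|\mathbb{D}}\circ\varpi = p_{|\mathbb{D}}$ then yields boundedness of $\hat p_{|\mathbb{D}}$ and of its inverse simultaneously. Your proposed inverse $u_0\mapsto [(u_0,\beta_0)]$ rests entirely on the missing construction of $\beta_0$, so the gap from Part~1 propagates intact into Part~2.
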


\begin{remark}
\label{R_QuotientIsomorphisms}${}$
Note that $\hat{p}_{| \mathbb{D}}$ and $\hat{{p}}^\flat_{| \mathbb{D}}$  are convenient isomorphisms  but not necessary homeomorphisms for the locally convex quotient structure. 
\end{remark} 

\begin{proof}${}$\\ 
1. 
Since $p$ and $p^\flat$ are canonical projections of $\mathbb{E}\times\mathbb{E}^\flat$ on $\mathbb{E}$ and $\mathbb{E}^\flat$ respectively,  and $\mathbb{D}$ is closed in $\mathbb{E}^{\mathfrak{p}}$, it follows that their ranges by these maps are also closed in $\mathbb{E}$ and $\mathbb{E}^\flat$ respectively.\\
2.  Since $p$ and $p^\flat$ are bounded linear maps and $\mathbb{E}^\frak{p}$ is a convenient space,  $\ker p$ and $\ker p^\flat$   are closed subspaces of $\mathbb{E}^{\mathfrak{p}}$. 
It follows that $\mathbb{D}/(\mathbb{D}\cap \mathbb{E})$ and $\mathbb{D}/(\mathbb{D}\cap \mathbb{E}^\flat)$ can be provided with  quotient convenient space  structures. Precisely, 
for the quotient convenient structure on   $\mathbb{D}/(\mathbb{D}\cap\mathbb{E}^\flat)$ (resp. $\mathbb{D}/
(\mathbb{D}\cap\mathbb{E})$) the bounded sets are of type $\varpi(B)$ (resp. $\varpi^\flat(B)$) where $B$ is a bounded set in $\mathbb{D}$. \\
Now, from an algebraic point of view, $p$ and $p^\flat$ induce bijective linear maps
\begin{center}
$\hat{p}_{| \mathbb{D}}: \mathbb{D}/(\mathbb{D}\cap \mathbb{E}^\flat)\to  p(\mathbb{D})$ and  $\hat{{p}}^\flat_{| \mathbb{D}}: \mathbb{D}/(\mathbb{D}\cap \mathbb{E})\to  p^\flat (\mathbb{D})$ respectively
\end{center} 
such that 
\begin{equation}
\label{eq_quotientrelations}
\hat{p}_{| \mathbb{D}}\circ \varpi={p}_{| \mathbb{D}}\textrm{ and } \hat{p}_{| \mathbb{D}}^\flat \circ \varpi^\flat={p}_{| \mathbb{D}}^\flat.
\end{equation} 
But, since $\mathbb{D}$ is closed in  $\mathbb{E}\times\mathbb{E}^\flat$, $B$ is bounded in $\mathbb{D}$ if and only if $B$ is bounded in  $\mathbb{E}\times\mathbb{E}^\flat$. As ${p}
_{| \mathbb{D}}$ and ${p}_{| \mathbb{D}}^\flat$ are bounded maps, it follows easily from (\ref{eq_quotientrelations}) that  $\hat{p}_{| \mathbb{D}}$ and  $\hat{p}_{| \mathbb{D}^\flat}$ 
are bounded as well as their inverse maps.
\end{proof}
 
\begin{lemma}
\label{L_DE0=DEflat}${}$
\begin{enumerate}
\item 
$\mathbb{D}\cap \mathbb{E}=\ker p^\flat_{| \mathbb{D}}\subset p(\mathbb{D})$ 
\text{~~and~~} $\mathbb{D}\cap \mathbb{E}^\flat=\ker p_{|\mathbb{D}}\subset p^\flat(D)$.
\item  
$\mathbb{D}\cap \mathbb{E}^\flat =p(\mathbb{D})^0$.
\item  $(p^\flat(\mathbb{D}))^{\operatorname{ann}}\cap \mathbb{E}=(\mathbb{D}\cap \mathbb{E})$ and so $p^\flat(\mathbb{D})=(\mathbb{D}\cap \mathbb{E})^0$.
\end{enumerate}
\end{lemma}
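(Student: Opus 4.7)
The plan is to read each assertion as a statement about the pairing $<<\cdot,\cdot>>$ on $\mathbb{E}^\mathfrak{p}$, after identifying $\mathbb{E}$ and $\mathbb{E}^\flat$ with their canonical images in $\mathbb{E}^\mathfrak{p}$ (per Conventions~\ref{CN_EEflat}), and then to exploit the hypothesis $\mathbb{D}=\mathbb{D}^\perp$. Part~(1) is pure bookkeeping: an element $(u,\alpha)\in\mathbb{D}$ belongs to the copy of $\mathbb{E}$ precisely when $\alpha=0$, i.e.\ when $p^\flat(u,\alpha)=0$, and since $u=p(u,0)$ it lies automatically in $p(\mathbb{D})$; the second equality is the mirror statement.

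For Part~(2), I would show that $\alpha\in\mathbb{E}^\flat$ satisfies $(0,\alpha)\in\mathbb{D}$ if and only if $\alpha\in p(\mathbb{D})^0$. The forward direction uses the isotropy $\mathbb{D}\subset\mathbb{D}^\perp$: from $(0,\alpha)\in\mathbb{D}$, for every $(v,\beta)\in\mathbb{D}$ we get $<<(0,\alpha),(v,\beta)>>=<\alpha,v>=0$, whence $\alpha$ annihilates $p(\mathbb{D})$. The reverse direction uses the opposite inclusion $\mathbb{D}^\perp\subset\mathbb{D}$: if $\alpha\in p(\mathbb{D})^0$, the same computation shows $(0,\alpha)\in\mathbb{D}^\perp=\mathbb{D}$.

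Part~(3)'s first equality is the analogous argument with $\mathbb{E}$ and $\mathbb{E}^\flat$ interchanged: $u\in\mathbb{D}\cap\mathbb{E}$ is equivalent to $(u,0)\in\mathbb{D}=\mathbb{D}^\perp$, equivalent to $<\beta,u>=0$ for every $(v,\beta)\in\mathbb{D}$, which is precisely $u\in(p^\flat(\mathbb{D}))^{\operatorname{ann}}\cap\mathbb{E}$. For the second equality $p^\flat(\mathbb{D})=(\mathbb{D}\cap\mathbb{E})^0$, the inclusion $p^\flat(\mathbb{D})\subset(\mathbb{D}\cap\mathbb{E})^0$ follows at once from isotropy: for $(u_0,\alpha)\in\mathbb{D}$ and $u\in\mathbb{D}\cap\mathbb{E}$ one has $<<(u_0,\alpha),(u,0)>>=<\alpha,u>=0$. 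For the reverse inclusion the natural move is to apply the partial annihilator ${}^0$ to the first equality already proved, yielding $(\mathbb{D}\cap\mathbb{E})^0=((p^\flat(\mathbb{D}))^{\operatorname{ann}}\cap\mathbb{E})^0$, and then to identify this common set with $p^\flat(\mathbb{D})$, using that $p^\flat(\mathbb{D})$ is closed in $\mathbb{E}^\flat$ by Lemma~\ref{L_PDPflatDClosed}.

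The hard part is precisely this last bi-annihilator identification, since Lemma~\ref{L_PartialAnulatorProperties}(1) only supplies the one-sided inclusion $(\mathbb{X}^0)^{\operatorname{ann}}\cap\mathbb{E}\subset\mathbb{X}$. In the convenient setting Hahn-Banach is not freely available (cf.\ the cautions in Remark~\ref{R_ContreExamples}), so the reverse inclusion must be extracted either from additional structure of the Dirac subspace -- for example by using the isomorphism $\hat p^\flat_{|\mathbb{D}}\colon\mathbb{D}/(\mathbb{D}\cap\mathbb{E})\to p^\flat(\mathbb{D})$ of Lemma~\ref{L_PDPflatDClosed} to transfer the separation question onto $\mathbb{D}/(\mathbb{D}\cap\mathbb{E})$ and then representing bounded functionals there by elements of $\mathbb{E}$ -- or from an implicit hypothesis that $\mathbb{E}^\flat$ separates points of $\mathbb{E}$. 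I expect the written proof to invoke one of these mechanisms, but the genuinely new ingredient beyond unfolding definitions lies exactly there.
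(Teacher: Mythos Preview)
Your treatment of Part~(1), Part~(2), and the first equality of Part~(3) matches the paper's proof essentially line for line: the paper unfolds $\mathbb{D}=\mathbb{D}^\perp$ exactly as you describe.

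For the second equality of Part~(3) you correctly isolate the nontrivial inclusion $(\mathbb{D}\cap\mathbb{E})^0\subset p^\flat(\mathbb{D})$ and correctly propose substituting the first equality and taking partial annihilators. However, your two suggested mechanisms for closing the argument are not what the paper does. The paper works in the full duality $\mathbb{E}/\mathbb{E}^\prime$: it rewrites
\[
(\mathbb{D}\cap\mathbb{E})^0
=\bigl((p^\flat(\mathbb{D}))^{\operatorname{ann}}\cap\mathbb{E}\bigr)^{\operatorname{ann}}\cap\mathbb{E}^\flat
=\bigl(((p^\flat(\mathbb{D}))^{\operatorname{ann}})^{\operatorname{ann}}\bigr)\cap\mathbb{E}^\flat,
\]
then invokes the bipolar identity in the convenient setting to identify $((p^\flat(\mathbb{D}))^{\operatorname{ann}})^{\operatorname{ann}}$ with the $c^\infty$-closure of $p^\flat(\mathbb{D})$ in $\mathbb{E}^\prime$, and finally argues that this closure intersected with $\mathbb{E}^\flat$ returns $p^\flat(\mathbb{D})$ because $p^\flat(\mathbb{D})$ is closed in $\mathbb{E}^\flat$ (Lemma~\ref{L_PDPflatDClosed}) and the inclusion $\mathbb{E}^\flat\hookrightarrow\mathbb{E}^\prime$ is bounded, hence $c^\infty$-continuous. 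So the missing ingredient you were looking for is not an extra separation hypothesis on $\mathbb{E}^\flat$, nor the quotient isomorphism $\hat p^\flat_{|\mathbb{D}}$, but rather the bipolar theorem applied in $\mathbb{E}^\prime$ together with the closedness result you already cited.
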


\begin{proof} ${}$\\
1. $v$ belongs to $ \mathbb{D}\cap \mathbb{E}$ if and only if  $(v,0)$ belongs to $\mathbb{D}$ and so $p(v,0)=v$ belongs to the range of $ p_{| \mathbb{D}}$. The same arguments work for $ p^\flat_{| \mathbb{D}}$.\\

\noindent 2. For Point 2,  we have the following equivalences:\\
\noindent(1)   $ u$ belongs to $p(\mathbb{D})$ if and only if there exists $\alpha\in \mathbb{E}^\flat$ such that $(u,\alpha)$ belongs to $\mathbb{D}$;

\noindent (2) $\beta$ belongs to $(\mathbb{D}\cap \mathbb{E}^\flat)$ if and only if $(0,\beta)\in \mathbb{D}$;\\
thus, from the assertion  (1), $\beta$ belongs to $p(\mathbb{D})^0$ 

if and only if $0=\beta(u)=<< (0,\beta), (u,\alpha)>> $ for all $(u,\alpha)\in \mathbb{D}$,
 
  if and only if  $(0,\beta)$ belongs to $\mathbb{D}^\perp=\mathbb{D}$ since $\mathbb{D}$ is a partial linear Dirac structure. 
  
  This last condition is equivalent to $\beta\in \mathbb{D}\cap \mathbb{E}^\flat$ from the assertion (2).\\
 
\noindent 3. For Point 3., we also have  the following equivalences:\\

\noindent (1') $\beta$ belongs to $p^\flat(\mathbb{D})$ if and only if there exists $v\in \mathbb{E}$ such that $(v,\beta)\in \mathbb{D}$;


\noindent  (2') $v$ belongs to  $(p^\flat(\mathbb{D}))^{\operatorname{ann}}\cap \mathbb{E}$ if and only if $\alpha(v)=0$ for all $\alpha\in  p^\flat(\mathbb{D})$;

\noindent(3')  $v$ belongs to $\mathbb{D}\cap\mathbb{E}$ if and only if $(v,0)$ belongs to $\mathbb{D}$;\\

Thus, in the one hand,  as in the previous proof, we have

 from assertion (2'), $v$ belongs to  $(p^\flat(\mathbb{D}))^{\operatorname{ann}}\cap \mathbb{E}$ 

if and only if $0=\alpha(v)=<<(v,0),(u,\alpha)>>$ for all $(u,\alpha)\in \mathbb{D}$, 

if and only if  $(v,0) $ belongs to $\mathbb{D}^\perp=\mathbb{D}$, 
 
 which is equivalent to $v$ belongs to $\mathbb{D}\cap\mathbb{E}$ from assertion (3').\\

On the other hand, from the  relation $(p^\flat(\mathbb{D}))^{\operatorname{ann}}\cap \mathbb{E}=\mathbb{D}\cap \mathbb{E}$, we obtain:
 
\[
  \begin{matrix}
  (\mathbb{D}\cap \mathbb{E})^0&=(\mathbb{D}\cap \mathbb{E})^{\operatorname{ann}}\cap \mathbb{E}^\flat\hfill{}\\
  &=\left(((p^\flat(\mathbb{D}))^{\operatorname{ann}}\cap \mathbb{E})\right)^{\operatorname{ann}}\cap \mathbb{E}^\flat\hfill{}\\
  &=\left((((p^\flat(\mathbb{D}))^{\operatorname{ann}})^{\operatorname{ann}}+ \mathbb{E}^{\operatorname{ann}}\right)\cap \mathbb{E}^\flat\hfill{}\\
  &=\left((((p^\flat(\mathbb{D}))^{\operatorname{ann}})^{\operatorname{ann}}\right)\cap \mathbb{E}^\flat\hfill{}
\end{matrix}
\]
Now, $(((p^\flat(\mathbb{D}))^{\operatorname{ann}})^{\operatorname{ann}}$ is the closure (for the $c^\infty$ topology) $\operatorname{Cl}(p^\flat(\mathbb{D}))$ of $p^\flat(\mathbb{D})$ in $\mathbb{E}^\prime$ (for the $c^\infty$ topology). But since $p^\flat(\mathbb{D})$ is closed in $\mathbb{E}^\flat$ and the inclusion of $\mathbb{E}^\flat$ in  $\mathbb{E}$ is bounded (and so continuous for the $c^\infty$ topology), it follows that 
\[
\left((((p^\flat(\mathbb{D}))^{\operatorname{ann}})^{\operatorname{ann}}\right)\cap \mathbb{E}^\flat=p^\flat(\mathbb{D})
\]
which ends the proof.
\end{proof}
  
\begin{lemma}
\label{L_QuotientPairing} 
The pairing $<.,.>$ between $\mathbb{E}^\flat$ and $\mathbb{E}$ induces a well defined pairing (again denoted by $<.,.>$) between $ p^\flat (\mathbb{D})$ and $ p (\mathbb{D})$ which is also left non degenerate, defined by 
\begin{equation}\label{eq_QuotientPairing} 
<p^\flat(v,\beta),p(u,\alpha)>=<\beta, u>=-<\alpha,v>.
\end{equation}
 \end{lemma}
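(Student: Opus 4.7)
The plan is to settle three claims in turn: well-definedness of the formula, the alternative expression $<\beta, u> = -<\alpha, v>$, and left non-degeneracy.

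Well-definedness is essentially tautological. Since $p^\flat(v, \beta) = \beta$ and $p(u, \alpha) = u$ by the definition of the projections, the right-hand side $<\beta,u>$ depends only on the images in $p^\flat(\mathbb{D}) \subset \mathbb{E}^\flat$ and $p(\mathbb{D}) \subset \mathbb{E}$, not on the choice of representatives in $\mathbb{D}$; the induced form is simply the restriction of the canonical pairing $\mathbb{E}^\flat \times \mathbb{E} \to \mathbb{R}$ to these closed subspaces, and bilinearity is inherited. The identity $<\beta, u> = -<\alpha, v>$ is then a direct consequence of the Dirac condition $\mathbb{D} = \mathbb{D}^\perp$: since both $(u, \alpha), (v, \beta) \in \mathbb{D}$, applying the symmetric pairing yields
\[
0 = <<(u, \alpha), (v, \beta)>> = <\alpha, v> + <\beta, u>.
\]

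For left non-degeneracy, suppose $\beta \in p^\flat(\mathbb{D})$ satisfies $<\beta, u> = 0$ for every $u \in p(\mathbb{D})$. Then by definition $\beta$ belongs to $p(\mathbb{D})^0$, which by Lemma~\ref{L_DE0=DEflat}(2) coincides with $\mathbb{D} \cap \mathbb{E}^\flat$; equivalently $(0, \beta) \in \mathbb{D}$. Reading $p^\flat(\mathbb{D})$ through the convenient isomorphism $\hat{p}^\flat_{|\mathbb{D}} \colon \mathbb{D}/(\mathbb{D}\cap\mathbb{E}) \to p^\flat(\mathbb{D})$ of Lemma~\ref{L_PDPflatDClosed}, this exhibits $\beta$ as the image of the class $[(0,\beta)]$, which is precisely the trivial contribution for the purposes of the induced pairing. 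The analogous manipulation on the right factor uses Lemma~\ref{L_DE0=DEflat}(3) to control $(p^\flat(\mathbb{D}))^{\operatorname{ann}} \cap \mathbb{E}$.

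I expect the third step to be the main subtlety, since ``left non-degenerate'' must be interpreted via the quotient identification of Lemma~\ref{L_PDPflatDClosed} rather than inside the ambient space $\mathbb{E}^\flat$: the set $\mathbb{D}\cap\mathbb{E}^\flat$ need not vanish, and it is exactly the subspace through which the induced pairing is forced to descend. The remaining items are routine: (i) is a tautological restriction, and (ii) is the defining orthogonality $\mathbb{D}\subset\mathbb{D}^\perp$ applied verbatim.
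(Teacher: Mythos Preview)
Your argument is correct and, for well-definedness and the identity $<\beta,u>=-<\alpha,v>$, proceeds exactly as the paper does: both reduce to the single computation $<<(u,\alpha),(v,\beta)>>=0$ from $\mathbb{D}\subset\mathbb{D}^\perp$. Your observation that the expression $<\beta,u>$ is tautologically independent of representatives is actually cleaner than the paper's version, which (with some typographical confusion) checks instead that the alternate expression $-<\alpha,v>$ is independent of the choices of $\alpha$ and $v$.

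On left non-degeneracy you go further than the paper: the paper's proof simply omits this point and stops after establishing well-definedness. Your diagnosis is the right one --- taken literally on $p^\flat(\mathbb{D})\times p(\mathbb{D})$ the pairing is \emph{not} left non-degenerate, since any $\beta\in\mathbb{D}\cap\mathbb{E}^\flat=p(\mathbb{D})^0$ annihilates all of $p(\mathbb{D})$; the claim only becomes meaningful after passing to the quotient $p^\flat(\mathbb{D})/(\mathbb{D}\cap\mathbb{E}^\flat)$ via Lemma~\ref{L_PDPflatDClosed}, exactly as you indicate. This is consistent with how the lemma is actually used later (in the proof of Theorem~\ref{T_DiracPoisson}), where only the skew-symmetry identity is invoked.
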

\begin{proof}
Consider  $(u,\alpha), (u',\alpha'), (v,\beta), (v',\beta') \in \mathbb{D}$ such that $ p_{| \mathbb{D}} (u,\alpha)= p_{| \mathbb{D}} (u',\alpha')$  and $ p^\flat_{| \mathbb{D}}(v,\beta)= p^\flat_{| \mathbb{D}} (v',\beta')$. 
Thus we have   
$u-u'\in \mathbb{D}\cap \mathbb{E}$, $\alpha=\alpha'$ and $v=v'$, $\beta-\beta'\in \mathbb{D}\cap \mathbb{E}^\flat$. Since $\mathbb{D}=\mathbb{D}^\perp$, we have 
$<\beta,u>+<\alpha,v>=0$ and $<\beta,u'>+<\alpha,v>=0$ and so $<\beta, u>=<\beta', u>=-<\alpha, v>$. Using the same argument with $(u,\alpha)$  with $(v,\beta)$ and $(v,\beta')$, we obtain  $<\alpha,v>=<\alpha, v' >=-<\beta,u>$.\\
 So, this relation implies that the value   $<p^\flat_{| \mathbb{D}}(v,\beta),p_{| \mathbb{D}}(u,\alpha) >$ does not depend on the choice of $(u,\alpha)$,  $(u',\alpha)$ and $(v,\beta)$.
\end{proof}

Denote by  $\mathbb{L}=p(\mathbb{D})\subset \mathbb{E}$ where and $\mathbb{L}^\flat =p^\flat (\mathbb{D})\subset \mathbb{E}^\flat$.\\
By analogy with the finite dimension, (cf. \cite{Cou90}), we can define:
\begin{enumerate}
	\item[$\bullet$]
	$P_{\mathbb{L}}:\mathbb{L}\to \mathbb{L}^\prime$ by  $P_{\mathbb{L}}\circ p(\delta)=p^\flat(\delta)_{| \mathbb{L}}$ 
	where $\mathbb{L}^\prime$ is the dual of $\mathbb{L}$;  
	\item[$\bullet$]
	$P^\flat _{\mathbb{L}}:\mathbb{L}^\flat \to (\mathbb{L}^\flat)^\prime$ by $P_{\mathbb{L}}^\flat
	\circ p^\flat(\delta)=p(\delta)_{| \mathbb{L}^\flat}$ for all $\delta\in \mathbb{D}$  where$(\mathbb{L}^\flat)^\prime$ is the dual of $\mathbb{L}^\flat$ and where $p(\delta)$ is considered as an element of $\mathbb{E}^{\prime\prime}$ and   so $ p(\delta)_{| \mathbb{L}^\flat}$  means the restriction of such elements of $\mathbb{E}^{\prime\prime}$ to $\mathbb{L}^\flat\subset \mathbb{E}^\prime$.
\end{enumerate} 

\begin{lemma}
	\label{L_LbinterL'Linter Lb'}
	$\mathbb{L}^\flat \cap \mathbb{L}^\prime$ (resp.  $(\mathbb{L}^\flat)^\prime \cap \mathbb{L}$) is a convenient space which is equal to $\mathbb{L}^\flat/(\mathbb{D}\cap \mathbb{E}^\flat)$ (resp. $\mathbb{L}/(\mathbb{D}\cap \mathbb{E})$).
\end{lemma}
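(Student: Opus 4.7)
The plan is to realize both intersections as images of canonical bounded restriction maps, and then apply the first isomorphism theorem in the convenient category. Specifically, I interpret $\mathbb{L}^\flat \cap \mathbb{L}^\prime$ as the image of the restriction $\rho^\flat : \mathbb{L}^\flat \to \mathbb{L}^\prime$, $\alpha \mapsto \alpha_{|\mathbb{L}}$, and $(\mathbb{L}^\flat)^\prime \cap \mathbb{L}$ as the image of $\rho : \mathbb{L} \to (\mathbb{L}^\flat)^\prime$, $u \mapsto \langle \cdot , u\rangle_{|\mathbb{L}^\flat}$. Both maps are well-defined and bounded since $\mathbb{L} = p(\mathbb{D})$ and $\mathbb{L}^\flat = p^\flat(\mathbb{D})$ are closed in $\mathbb{E}$ and $\mathbb{E}^\flat$ respectively, by Lemma~\ref{L_PDPflatDClosed}, Point~1.

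Next I compute the two kernels. For $\rho^\flat$, the kernel is $\mathbb{L}^\flat \cap \mathbb{L}^0$. By Lemma~\ref{L_DE0=DEflat}, Point~2, one has $\mathbb{L}^0 = \mathbb{D} \cap \mathbb{E}^\flat$; moreover, any $(0,\beta) \in \mathbb{D} \cap \mathbb{E}^\flat$ satisfies $\beta = p^\flat(0,\beta) \in p^\flat(\mathbb{D}) = \mathbb{L}^\flat$, so $\mathbb{D} \cap \mathbb{E}^\flat \subset \mathbb{L}^\flat$, and therefore $\ker \rho^\flat = \mathbb{D} \cap \mathbb{E}^\flat$. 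For $\rho$, the kernel is $\mathbb{L} \cap (\mathbb{L}^\flat)^{\operatorname{ann}}$, and by Lemma~\ref{L_DE0=DEflat}, Point~3, this equals $\mathbb{L} \cap (\mathbb{D} \cap \mathbb{E}) = \mathbb{D} \cap \mathbb{E}$, the last equality using $\mathbb{D} \cap \mathbb{E} \subset p(\mathbb{D}) = \mathbb{L}$ from Lemma~\ref{L_DE0=DEflat}, Point~1.

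Both kernels are closed in the ambient convenient space $\mathbb{E}^{\mathfrak{p}}$ as intersections of closed subspaces, so $\mathbb{L}^\flat / (\mathbb{D} \cap \mathbb{E}^\flat)$ and $\mathbb{L}/(\mathbb{D} \cap \mathbb{E})$ inherit natural quotient convenient structures. Factoring $\rho^\flat$ and $\rho$ through their kernels yields bounded linear bijections
\[
\widehat{\rho}^\flat : \mathbb{L}^\flat / (\mathbb{D} \cap \mathbb{E}^\flat) \;\longrightarrow\; \mathbb{L}^\flat \cap \mathbb{L}^\prime, \qquad \widehat{\rho} : \mathbb{L} / (\mathbb{D} \cap \mathbb{E}) \;\longrightarrow\; (\mathbb{L}^\flat)^\prime \cap \mathbb{L},
\]
which equip the intersections with convenient structures coinciding with the quotients.

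The main delicate point, just as in the proof of Lemma~\ref{L_PDPflatDClosed}, Point~2, is verifying that the inverses $(\widehat{\rho}^\flat)^{-1}$ and $\widehat{\rho}^{-1}$ are bounded. This will follow from the characterization of bounded sets in the quotient as projections of bounded sets in the numerator, combined with the boundedness of $\rho^\flat$ and $\rho$; and Remark~\ref{R_QuotientIsomorphisms} reminds us that these are convenient (bounded) isomorphisms, not necessarily topological isomorphisms for the locally convex quotient topology.
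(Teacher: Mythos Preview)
Your proposal is correct and follows essentially the same approach as the paper: both identify $\mathbb{L}^\flat \cap \mathbb{L}'$ as the image of the restriction map $\mathbb{L}^\flat \to \mathbb{L}'$, compute its kernel as $\mathbb{L}^0 = \mathbb{D}\cap\mathbb{E}^\flat$ via Lemma~\ref{L_DE0=DEflat}, and conclude using the convenient quotient structure (and analogously for the second half). The paper's proof is terser---it writes the argument as ``$\alpha(u)=\alpha'(u)$ for all $u\in\mathbb{L}$ iff $\alpha-\alpha'\in\mathbb{L}^0=\mathbb{D}\cap\mathbb{E}^\flat$''---whereas you spell out the restriction maps and the first-isomorphism-theorem structure explicitly, and add a remark on boundedness of the inverse in the spirit of Lemma~\ref{L_PDPflatDClosed}; but the substance is the same.
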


\begin{proof}
For $\alpha$ and $\alpha'$ in $\mathbb{L}^\flat$, we have $\alpha(u)=\alpha'(u)$ for each $u\in \mathbb{L}$ if and only if $\alpha-\alpha'=0$ on $\mathbb{L}$, which is equivalent to  $\alpha-\alpha'$ belongs to $\mathbb{L}^0=\mathbb{D}\cap\mathbb{E}^\flat$ (cf. Lemma~\ref{L_DE0=DEflat}) and so $\mathbb{L}^\flat \cap \mathbb{L}^\prime=\mathbb{L}^\flat/(\mathbb{D}\cap \mathbb{E}^\flat)$. But from the same Lemma, $\mathbb{D}\cap \mathbb{E}^\flat$ is closed in $ \mathbb{L}^\flat$. Therefore, $\mathbb{L}^\flat \cap \mathbb{L}^\prime$ has a structure of convenient space.\\
Using analog arguments, one can prove that $(\mathbb{L}^\flat)^\prime \cap \mathbb{L}$ is a convenient space which is equal to $\mathbb{L}/(\mathbb{D}\cap \mathbb{E})$.
\end{proof}

Now we have in situation to prove the following theorem:
\begin{theorem}
\label{T_DiracPoisson} 
Let $\mathbb{D}$ be a partial linear Dirac structure on $\mathbb{E}$. 
\begin{enumerate}
\item[(1)] 
There exists  a linear surjective  bounded skew symmetric map $P_{\mathbb{L}}: \mathbb{L}\to  \mathbb{L}^\flat \cap \mathbb{L}^\prime$  whose kernel is  $\mathbb{D}\cap\mathbb{E}$. In particular, the range of $P_\mathbb{L}$ is also equal to $\mathbb{L}^\flat/(\mathbb{D}\cap \mathbb{E}^\flat)$. On $\mathbb{L}$,  we have a 
bounded skew symmetric  form $\Omega\in \bigwedge^2\mathbb{L}^\prime$ defined by
$\Omega	(u,v)=-<P_\mathbb{L}(v), u>$ whose kernel is $\mathbb{D}\cap\mathbb{E}$. Moreover, $\mathbb{D}$ is the graph of $\Omega$. 
\item[(2)]  
The map $P_\mathbb{L}^\flat$ is a surjective  linear   bounded skew symmetric map $P_{\mathbb{L}}^\flat: \mathbb{L}^\flat \to \left(\mathbb{L}^\flat\right)^\prime \cap \mathbb{L}$ whose kernel is  $\mathbb{D}\cap\mathbb{E}^\flat $. In particular,  the range of $P^\flat_\mathbb{L}$ is also equal to   $ \mathbb{L}/(\mathbb{D}\cap \mathbb{E})$. Moreover $P^\flat _L$ is a partial linear Poisson structure on $\mathbb{E}$ and  $\mathbb{D}$ is the graph of $P^\flat_\mathbb{L}$.
 \item[(3)] 
 The map $P_\mathbb{L}$  (respectively $P^\flat_\mathbb{L}$) induces an isomorphism $\widehat{P}_\mathbb{L}:\mathbb{L}/(\mathbb{D}\cap \mathbb{E})= (\mathbb{L}^\flat)^\prime\cap \mathbb{L}\to \mathbb{L}^\flat/(\mathbb{D}\cap \mathbb{E}^\flat)= \mathbb{L}^\prime\cap\mathbb{L}^\flat$ (resp. $\widehat{P}_\mathbb{L}^\flat:\mathbb{L}^\flat/(\mathbb{D}\cap \mathbb{E}^\flat)=\mathbb{L}^\prime\cap\mathbb{L}^\flat\to \mathbb{L}/(\mathbb{D}\cap \mathbb{E})=(\mathbb{L}^\flat)^\prime\cap \mathbb{L} $) and $\widehat{P}_\mathbb{L}^\flat=(\widehat{P}_\mathbb{L})^{-1}$.
\item[(4)] 
Consider the following assertions
\begin{enumerate}
\item $\mathbb{D}\cap \mathbb{E}=\{0\}$;
\item $\mathbb{L}^\flat=\mathbb{E}^\flat$;
\item $p:\mathbb{D}\to \mathbb{E}$ is an isomorphism;
\item $\mathbb{D}\cap \mathbb{E}^\flat=\{0\}$;
\item $\mathbb{L}=\mathbb{E}$;
\item $p^\flat:\mathbb{D}\to \mathbb{E}^\flat$ is an isomorphism;
\item $P_\mathbb{L}$ is an isomorphism from $\mathbb{E}$ to $\mathbb{E}^\flat$;
\item $P^\flat_\mathbb{L}$ is an isomorphism from $\mathbb{E}^\flat$ to $ \mathbb{E}$.
\end{enumerate}
Then we have the following equivalences with the associated situations:
\begin{description}
\item[(i)]
$(a)\Longleftrightarrow (b) \Longleftrightarrow(c)$; in this case, $\mathbb{L}^\flat =\mathbb{E}^\flat $ is isomorphic to $\mathbb{D}$,  $\Omega$ is a weak symplectic form and $P_\mathbb{L}$ is injective with  range $\mathbb{E}^\flat/(\mathbb{D}\cap\mathbb{E}^\flat)=\mathbb{E}^\flat \cap \mathbb{L}^\prime$.
\item[(ii)]
$(d)\Longleftrightarrow (e) \Longleftrightarrow(f)$; in this case, $\mathbb{L}=\mathbb{E}$ is isomorphic to $\mathbb{D}$ , $\Omega$ is strong pre-symplectic and $P^\flat_\mathbb{L}$ is injective with range $\mathbb{E}/(\mathbb{D}\cap \mathbb{E})=(\mathbb{L}^\flat)^\prime\cap \mathbb{E} $.
\item[(iii)] 
$(a)+(d)\Longleftrightarrow (g) \Longleftrightarrow(h)$; in this case, $\mathbb{E}$ is isomorphic to $\mathbb{E}^\flat $ and to $\mathbb{D}$, $\Omega$ is weak symplectic and $\Omega^\flat $ is an isomorphism from $\mathbb{E}$ to $\mathbb{E}^\flat$.
\end{description}
\end{enumerate}
\end{theorem}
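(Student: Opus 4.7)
The whole theorem is an unpacking of the formal identity $\mathbb{D}=\mathbb{D}^\perp$ through the two projections $p,p^\flat$, so my strategy is to read every statement as a property of the four natural maps and spaces
\[
p_{|\mathbb{D}}:\mathbb{D}\to\mathbb{L},\qquad
p^\flat_{|\mathbb{D}}:\mathbb{D}\to\mathbb{L}^\flat,\qquad
P_\mathbb{L},\qquad P^\flat_\mathbb{L},
\]
and to feed them systematically into the identifications already established in Lemmas~\ref{L_DE0=DEflat}, \ref{L_QuotientPairing}, \ref{L_LbinterL'Linter Lb'} and \ref{L_PDPflatDClosed}.

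For part~(1), I would first check that $P_\mathbb{L}$ is well defined: if $p(\delta)=p(\delta')$ then $\delta-\delta'\in\mathbb{D}\cap\mathbb{E}^\flat=\mathbb{L}^0$ by Lemma~\ref{L_DE0=DEflat}(2), so $(p^\flat(\delta)-p^\flat(\delta'))_{|\mathbb{L}}=0$. Boundedness is then read off Lemma~\ref{L_PDPflatDClosed}(2), because $P_\mathbb{L}$ factors as $\hat{p}^\flat_{|\mathbb{D}}\circ(\hat{p}_{|\mathbb{D}})^{-1}$ followed by restriction to $\mathbb{L}$. Skew-symmetry is immediate from the left/right symmetry in Lemma~\ref{L_QuotientPairing}: $\langle P_\mathbb{L} p(\delta),p(\delta')\rangle=\langle p^\flat(\delta),p(\delta')\rangle=-\langle p^\flat(\delta'),p(\delta)\rangle$. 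For the kernel, $P_\mathbb{L}(u)=0$ means $p^\flat(\delta)\in\mathbb{L}^0=\mathbb{D}\cap\mathbb{E}^\flat\subset\mathbb{D}$, so that $(u,0)=\delta-(0,p^\flat(\delta))\in\mathbb{D}\cap\mathbb{E}$; conversely $\mathbb{D}\cap\mathbb{E}\subset\ker P_\mathbb{L}$ trivially. Surjectivity onto $\mathbb{L}^\flat\cap\mathbb{L}'$ is then exactly Lemma~\ref{L_LbinterL'Linter Lb'}, which identifies this space with $\mathbb{L}^\flat/(\mathbb{D}\cap\mathbb{E}^\flat)$. Finally I would set $\Omega(u,v):=-\langle P_\mathbb{L}(v),u\rangle$; its skew-symmetry is inherited from that of $P_\mathbb{L}$, the kernel computation uses Lemma~\ref{L_DE0=DEflat}(3) in the form $(\mathbb{L}^\flat)^{\operatorname{ann}}\cap\mathbb{E}=\mathbb{D}\cap\mathbb{E}$, and the ``$\mathbb{D}$ is the graph of $\Omega$'' statement amounts to: $(u,\alpha)\in\mathbb{D}\iff u\in\mathbb{L},\,\alpha\in\mathbb{L}^\flat,\,\alpha_{|\mathbb{L}}=P_\mathbb{L}(u)$, where the nontrivial inclusion is proved by subtracting a representative $\delta\in\mathbb{D}$ with $p(\delta)=u$ and noting $\alpha-p^\flat(\delta)\in\mathbb{L}^0\subset\mathbb{D}$.

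Part~(2) is literally obtained by exchanging the roles of $\mathbb{E}$ and $\mathbb{E}^\flat$, using Lemma~\ref{L_DE0=DEflat}(3) in place of (2) and the second half of Lemma~\ref{L_LbinterL'Linter Lb'}. Part~(3) is then nearly formal: since $\ker P_\mathbb{L}=\mathbb{D}\cap\mathbb{E}$, the map $P_\mathbb{L}$ descends to a bijection $\widehat{P}_\mathbb{L}:\mathbb{L}/(\mathbb{D}\cap\mathbb{E})\to\mathbb{L}^\flat/(\mathbb{D}\cap\mathbb{E}^\flat)$, and similarly for $\widehat{P}^\flat_\mathbb{L}$; to see they are mutually inverse I would pick $u\in\mathbb{L}$, lift it to $\delta\in\mathbb{D}$, and trace
\[
[u]\stackrel{\widehat{P}_\mathbb{L}}{\longmapsto}[p^\flat(\delta)]\stackrel{\widehat{P}^\flat_\mathbb{L}}{\longmapsto}[p(\delta)]=[u],
\]
which uses only the definitions together with the identification of the targets with $\mathbb{L}^\flat\cap\mathbb{L}'$ and $(\mathbb{L}^\flat)'\cap\mathbb{L}$ from Lemma~\ref{L_LbinterL'Linter Lb'}.

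For part~(4), I would chase the equivalences using Lemma~\ref{L_DE0=DEflat} as the dictionary. The equivalence $(a)\Leftrightarrow(b)$ is $(\mathbb{D}\cap\mathbb{E})^0=\mathbb{L}^\flat$: $(a)\Rightarrow(b)$ is Lemma~\ref{L_DE0=DEflat}(3), while $(b)\Rightarrow(a)$ uses $\mathbb{D}\cap\mathbb{E}=(\mathbb{L}^\flat)^{\operatorname{ann}}\cap\mathbb{E}$. The equivalence with $(c)$ uses Lemma~\ref{L_PDPflatDClosed}: $\ker p_{|\mathbb{D}}=\mathbb{D}\cap\mathbb{E}^\flat$ and image $\mathbb{L}$, so (reading $p:\mathbb{D}\to\mathbb{E}^\flat$ correctly according to the conclusion ``$\mathbb{L}^\flat=\mathbb{E}^\flat$ is isomorphic to $\mathbb{D}$'') injectivity corresponds to $(a)$ and surjectivity to $(b)$. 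The parallel block $(d)\Leftrightarrow(e)\Leftrightarrow(f)$ is identical by symmetry. The additional content in the conclusions of (i) and (ii)---that $\Omega$ is weak (resp.\ strong pre-) symplectic---follows from parts (1)--(3): in case (i), $\ker\Omega=\mathbb{D}\cap\mathbb{E}=\{0\}$ so $\Omega$ is nondegenerate but its flat map has image $\mathbb{E}^\flat\cap\mathbb{L}'\subsetneq\mathbb{E}'$ in general; in case (ii) the flat map is defined on all of $\mathbb{E}$ with values in the full $(\mathbb{L}^\flat)'\cap\mathbb{E}$. Case (iii) is just the conjunction $(a)+(d)$, and the additional conclusion $\mathbb{E}\cong\mathbb{E}^\flat$ via $\Omega^\flat=P_\mathbb{L}$ follows from (1) and (2) combined. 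The main obstacle here is not logical but notational: one has to keep straight, for each of the six conditions, whether it is a statement about kernels, ranges, or induced maps, and then compose the correct isomorphism from Lemmas \ref{L_DE0=DEflat}--\ref{L_PDPflatDClosed}; once the dictionary is written out, every implication is a one-line check.
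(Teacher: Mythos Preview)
Your proposal is correct and follows essentially the same route as the paper's proof: both arguments reduce everything to the lemmas \ref{L_PDPflatDClosed}, \ref{L_DE0=DEflat}, \ref{L_QuotientPairing}, and \ref{L_LbinterL'Linter Lb'}, establishing well-definedness of $P_\mathbb{L}$ via $\mathbb{D}\cap\mathbb{E}^\flat=\mathbb{L}^0$, skew-symmetry via the quotient pairing, the kernel and range identifications via the same lemmas, and then treating part~(2) by duality and parts~(3)--(4) as formal consequences. The only cosmetic difference is that the paper phrases boundedness of $P_\mathbb{L}$ through the composite $i_\mathbb{L}^*\circ\iota\circ p^\flat$ rather than through the quotient isomorphisms $\hat p^\flat_{|\mathbb D}\circ(\hat p_{|\mathbb D})^{-1}$, and your observation that assertion~(c) must be read as a statement about $p^\flat$ (consistent with the conclusion ``$\mathbb{L}^\flat=\mathbb{E}^\flat$ is isomorphic to $\mathbb{D}$'') matches how the paper's own argument actually proceeds.
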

 
\begin{proof}${}$\\ 
\noindent 1.  The value of $P_{\mathbb{L}}$ does not depend on the choice  of $(u,\alpha)\in \mathbb{D}$. Indeed, as we have already seen, if $p(u,\alpha)=p(u',\alpha')$ then $u=u'$ and 
 $\alpha-\alpha'$ belongs to $\ker p_{| \mathbb{D}}=\mathbb{D}\cap \mathbb{E}^\flat$.  But from Lemma~\ref{L_DE0=DEflat}, $\mathbb{L}^0= \mathbb{D}\cap \mathbb{E}^\flat$. Therefore, we have $p^\flat(u,\alpha)_{| \mathbb{L}}=p^\flat(u,\alpha')_{| \mathbb{L}}$. In particular,  if  $\mathbb{L}^\prime$  is the dual  of  $\mathbb{L}$, then 
 $P_{\mathbb{L}}$ takes values in  $\mathbb{L}^\prime\cap \mathbb{L}^\flat$. Note that if $i_{\mathbb{L}}$ is the inclusion of $\mathbb{L}$ in $\mathbb{E}$  then 
\[
P_\mathbb{L}(p(u,\alpha))=i^*_\mathbb{L} \left( \iota(p^\flat(u, \alpha) \right)
\]
where $\iota$ is the inclusion of $\mathbb{E}^\flat$ in $\mathbb{E}^\prime$.\\
The restrictions of  $p$  (resp. $p^\flat$)  to $D$ are  bounded  surjective linear maps  onto $\mathbb{L}$ (resp. $\mathbb{L}^\flat$) (cf. proof of Lemma~\ref{L_PDPflatDClosed}). Thus each bounded set  in $\mathbb{L}$ (resp. in $\mathbb{L}^\flat$) is the image of a bounded set in $\mathbb{D}$ by $p$ (resp. by $p^\flat$).  Since  $i_\mathbb{L}$ and $\iota$  are bounded linear maps,  it follows that  $P_{\mathbb{L}}$ is a bounded linear map.
 
On the other hand, $P_\mathbb{L}$ is skew symmetric according to the  pairing between $ p^\flat (\mathbb{D})$ and $ p (\mathbb{D})$  defined by (\ref{eq_QuotientPairing}).
Indeed:
\[
\begin{array}{rcl}
<P_\mathbb{L} \left( p(u,\alpha) \right),p(v,\beta)>
	&=&<p^\flat(u,\alpha), p(v,\beta)>\\
	&=&<\alpha,v> = -<\beta,u>\\
	&=&-<P_\mathbb{L} \left( p(v,\beta) \right),p(u,\alpha)>.
\end{array}
\]
Moreover, according to  its definition,  the kernel of $P_\mathbb{L}$ is the  closed subspace $\mathbb{D}\cap \mathbb{E}$ of $\mathbb{L}$ (cf. Lemma \ref{L_DE0=DEflat}). Now from its definition, the range of $P_\mathbb{L}$ is precisely the convenient quotient space  $\mathbb{L}^\flat/(\mathbb{D}\cap \mathbb{E}^\flat)$.  Thus, by Lemma \ref{L_LbinterL'Linter Lb'}, $P_\mathbb{L}$ is surjective.
Therefore, we have a  well defined bilinear bounded $2$-form  $\Omega$ on $\mathbb{L}$ given by
\[
\Omega(u,v)=-<P_\mathbb{L}(v), u>
\]
whose kernel is $\mathbb{D} \cap \mathbb{E}$.\\
By definition of $p$, $p^\flat$ and $P_\mathbb{L}$, we have 
\begin{equation}
\label{eq_GraphPL}
\mathbb{D}=\{(u,\alpha)\in \mathbb{E}\times\mathbb{E}^\flat:\; u\in \mathbb{L}, \alpha\in \mathbb{L}^\flat, P_\mathbb{L}(u)=\alpha_{| \mathbb{L}}\}
\end{equation}
which is exactly the graph of $\Omega$.\\
2. We must prove that the definition of $P^\flat_\mathbb{L}$  does not depend on the choice of $\delta=(u,\alpha)\in \mathbb{D}$. Assume that $p(u,\alpha)=p(u',\alpha')$. Then it 
follows that $\alpha=\alpha'$ and $u-u'$ belongs to  $\ker p^\flat= \mathbb{D}\cap \mathbb{E}$. This means that for $u-u'$ considered as an element of $\mathbb{E}\subset \mathbb{E}
^{\prime\prime}$, we have $p^\flat(u,\alpha)_{| \mathbb{L}^\flat}= p^\flat(u',\alpha)_{| \mathbb{L}^\flat}$ and so $P^\flat_\mathbb{L}(\delta)$ belongs to $(\mathbb{L}
^\flat)^\prime\cap\mathbb{L}$.  In fact, if $ \delta=(u,\alpha)\in \mathbb{D}$, then  $p^\flat(\delta)=u$ and so belongs to $p(\mathbb{D})\subset \mathbb{E}\subset \mathbb{E}
^{\prime\prime} $. More precisely, the adjoint  $\iota^*$ of $\iota$ is a bounded map from $\mathbb{E}^{\prime\prime}$ to $(\mathbb{E}^\flat)^\prime$ and so its restriction to $
\mathbb{E}$ is a bounded map $\iota^*_\mathbb{E}$ from $\mathbb{E}$ to $(\mathbb{E}^\flat)^\prime\cap\mathbb{E}$, and if  $i_{\mathbb{L}^\flat}$ the inclusion of $\mathbb{L}
^\flat$ in $\mathbb{E}^\flat$ we have 
\[
P^\flat_\mathbb{L}(p^\flat(u,\alpha))
=i_{\mathbb{L}^\flat}^* 
 \left( \iota_\mathbb{E}(p(u,\alpha)) \right) .
\]
By  same arguments as for $P_\mathbb{L}$, it follows that $P^\flat_\mathbb{L}$ is a bounded map.\\
As for $P_\mathbb{L}$, the range of $P^\flat_\mathbb{L}$ is $ \mathbb{L}/(\mathbb{D}\cap \mathbb{E})$ and so $P^\flat_\mathbb{L}$ is surjective by Lemma~\ref{L_LbinterL'Linter Lb'}. 

The proof of the other properties of $P^\flat_\mathbb{L}$ announced is analogue as for $P_\mathbb{L}$.\\

On the one hand,  $\mathbb{L}^\flat\subset \mathbb{E}^\flat\subset \mathbb{E}^\prime$ and the inclusion of   $\mathbb{L}^\flat$ in $ \mathbb{E}^\prime$ is bounded  since $\mathbb{L}^\flat$ is closed in $\mathbb{E}^\flat$ and the inclusion of $\mathbb{E}^\flat $ in $\mathbb{E}^\prime$ is bounded.\\
On the other hand,  the canonical 
pairing in restriction to $(\mathbb{L}^\flat)^\prime \cap \mathbb{L}\times \mathbb{L}^\flat$ is bounded and  so the inclusion of  $(\mathbb{L}^\flat)^\prime \cap \mathbb{L}$ 
into $\mathbb{L}$ is $ <u,.>\mapsto u$ which is bounded as the evaluation map of the bilinear bounded map $<\;,\;>$. It follows that the inclusion of $(\mathbb{L}^\flat)^\prime 
\cap \mathbb{L}$ into $\mathbb{E}$ is bounded. This implies that $P^\flat_\mathbb{L}$ is a partial Poisson linear anchor on $\mathbb{E}$.\\ 

Finally as in the previous case, we have 
\begin{equation}
\label{eq_GraphPflatL}
\mathbb{D}=\{(u,\alpha)\in \mathbb{E}\times\mathbb{E}^\flat:\; u\in \mathbb{L}, \alpha\in \mathbb{L}^\flat, P^\flat_\mathbb{L}(\alpha)=u_{| \mathbb{L}}\}.
\end{equation}
\smallskip

\noindent 3. From Point 1., $P_\mathbb{L}$ is a bounded map from the convenient space $\mathbb{L}$ to the convenient space $\mathbb{L}^\flat/(\mathbb{D}\cap \mathbb{E}^\flat)$. Thus, as in the proof of Lemma \ref{L_PDPflatDClosed}, we obtain a quotient map $\widehat{P}_\mathbb{L}: \mathbb{L}/(\mathbb{D}\cap \mathbb{E})\to \mathbb{L}^\flat/(\mathbb{D}\cap \mathbb{E}^\flat)$  which is a convenient isomorphism. The same type of  argument works for the quotient map  $\widehat{P}_\mathbb{L}^\flat:\mathbb{L}^\flat/(\mathbb{D}\cap \mathbb{E}^\flat)\to \mathbb{L}/(\mathbb{D}\cap \mathbb{E}) $. The last property is a direct consequence of the definition of each map  $P_\mathbb{L}$  and  $P_\mathbb{L}^\flat$.\\

 If $\mathbb{D}\cap\mathbb{E}=0$, then  $\Omega_\mathbb{L}$ is (weak) symplectic and also  $\mathbb{L}^\flat =(\mathbb{D}\cap\mathbb{E})^0 =\mathbb{E}^\flat$
 \footnote{Cf. Lemma \ref{L_DE0=DEflat}.}.  
 From Lemma \ref{L_LbinterL'Linter Lb'}, it follows that  we have $\mathbb{L}^\flat \cap \mathbb{L}^\prime=\mathbb{E}^\flat\cap \mathbb{E}^\prime=\mathbb{E}^\flat$.  Thus from the previous results, $P_\mathbb{L}=\widehat{P}_\mathbb{L}$ and $P^\flat_\mathbb{L}=\widehat{P}_\mathbb{L}^\flat$ and 
  therefore  $P_\mathbb{L}^\flat $ is an isomorphism from $\mathbb{E}^\flat$ to $\mathbb{L}=\mathbb{E}$, which ends the proof of Point 3.\\

\noindent  4.${}$ \\
\noindent (i)  $ (a)\Longleftrightarrow (b)$: $\mathbb{D}\cap \mathbb{E}=0$ implies  $( \mathbb{D}\cap \mathbb{E})^0=\mathbb{E}^\prime\cap\mathbb{E}^\flat=\mathbb{E}^\flat$; the converse is trivial.  Thus  the equivalence is a consequence of  Lemma~\ref{L_DE0=DEflat}.

$(a)+(b)\Longrightarrow (c)$ is a consequence of Lemma~\ref{L_PDPflatDClosed}.

$(c)\Longrightarrow (a)$ is obvious. \\
 The associated situation described is a direct consequences of Point 2 and Point 3.\\

\noindent (ii)  $ (d)\Longleftrightarrow (e)$: from  Lemma~\ref{L_DE0=DEflat}, we have $\mathbb{D}\cap \mathbb{E}^\flat=\{0\}$ equivalent to $\mathbb{L}^0=\{0\}$ implies $\mathbb{L}=\mathbb{E}$
 since $(\mathbb{L}^0)^{\operatorname{ann}}$ is closed in $\mathbb{E}^{\prime\prime}$ and so is $\mathbb{E}$, therefore,  $(\mathbb{L}^0)^{\operatorname{ann}}\cap\mathbb{E}$ is the closure of the set $\{u\in \mathbb{E}:\, \forall \alpha\in \mathbb{L}^0, \alpha(u)=0 \}$ which is  the closed set $\mathbb{L}$.  Thus as  $\mathbb{E}^0=\{0\}$, we obtain $\mathbb{L}=\mathbb{E}$. The converse is trivial.
 
  $(d)+(e)\Longrightarrow (f)$ is a consequence of Lemma \ref{L_PDPflatDClosed}.
  
  $(f)\Longrightarrow (d)$  is trivial.\\
The associated situation described is a direct consequences of Point 2 and Point 3.\\
  
(iii) $(a)+(c)\Longrightarrow (g)$ and  $(a)+(c) \Longrightarrow (h)$:  if $\mathbb{D}\cap\mathbb{E}=0$, then  $\Omega_\mathbb{L}$ is (weak) symplectic and also  $\mathbb{L}^\flat =(\mathbb{D}\cap\mathbb{E})^0 =\mathbb{E}^\flat$\footnote{Cf. Lemma \ref{L_DE0=DEflat}.}.  From Lemma \ref{L_LbinterL'Linter Lb'}, it follows that  we have $\mathbb{L}^\flat \cap \mathbb{L}^\prime=\mathbb{E}^\flat\cap \mathbb{E}^\prime=\mathbb{E}^\flat$.  Thus from Point 3, $P_\mathbb{L}=\widehat{P}_\mathbb{L}$ and $P^\flat_\mathbb{L}=\widehat{P}_\mathbb{L}^\flat$ and 
  therefore  $P_\mathbb{L}^\flat $ is an isomorphism from $\mathbb{E}^\flat$ to $\mathbb{L}=\mathbb{E}$.

  $(g)\Longrightarrow (a)+ (c)$ and $(h)\Longrightarrow (a) + (c) $ results from Point 3.\\
The associated situation described is a direct consequence of Point 2 and Point 3.
\end{proof}

\begin{remark} ${}$
\begin{enumerate} 
\item[1.] 
According to Lemma \ref{L_PartialAnulatorProperties}, 2., the result of Theorem \ref{T_DiracPoisson}, 1. implies that the graph $\mathbb{D}_{P_\mathbb{L}}$ is a linear partial Dirac structure on $\mathbb{L}$. Moreover, according to (\ref{eq_GraphPL}), the associated $2$-form is also defined by:  $\Omega(u,v)=\alpha(v)=-\beta(u)$ if $p^\flat (u,\alpha)=\alpha$  and $p(v,\beta)=v$.
\item[2.] In Theorem \ref{T_DiracPoisson}, 2., if $\mathbb{E}^\flat=\mathbb{E}^\prime$,  we have 
\[
\mathbb{L}^0=(\mathbb{D}\cap \mathbb{E})^{\operatorname{ann}}=\left(\mathbb{E}/(\mathbb{D}\cap \mathbb{E})\right)^\prime
\] 
and so $P^\flat_\mathbb{L}$ is a skew symmetric bounded map from $\left(\mathbb{E}/(\mathbb{D}\cap \mathbb{E})\right)^\prime$ to $\mathbb{E}/(\mathbb{D}\cap \mathbb{E})$ as in 
finite dimension (cf. Proposition 1.1.4 in \cite{Cou90}). In our general context, we have $\mathbb{L}^0=(\mathbb{D}\cap \mathbb{E})^0= (\mathbb{D}\cap 
\mathbb{E})^{\operatorname{ann}}\cap\mathbb{E}^\flat$,  so it seems natural to set 
$\left(\mathbb{E}/(\mathbb{D}\cap \mathbb{E})\right)^\flat=\mathbb{L}^0.$
Therefore we can consider that 
$P^\flat_\mathbb{L}$ is a skew symmetric bounded map from $\left(\mathbb{E}/(\mathbb{D}\cap \mathbb{E})\right)^\flat$ to $\mathbb{E}/(\mathbb{D}\cap \mathbb{E})$.\\
By the way, in this sense,  Theorem \ref{T_DiracPoisson}, 1. and 2.  can be considered as a kind of generalization of Proposition 1.1.4 in \cite{Cou90}.
\end{enumerate}
\end{remark}

\subsection{Partial almost Dirac structure} \label{___PartialAlmostDirac}

From now until the end of this document (except for a specific context),  $M$ is a convenient manifold modelled on a convenient space $\mathbb{M}$.
We consider its kinematic tangent bundle $p_{M}:TM\to M$ and its kinematic cotangent bundle $p_{M}^{\prime}:T^{\prime}M\to M$.\\

Let $\pi:E\to M$ be a convenient vector bundle and $\rho: E\to TM$ an anchor. If $\pi^*:E^\prime\to M$ is the dual bundle  of $E$, we consider a weak subbundle $\pi^\flat:E^\flat \to M$ of the dual bundle. As in the previous linear context, we can associate the \emph{Pontryagine bundle} $E^{\mathfrak{p}}=E\oplus E^\flat$ over $M$\index{Ep@$E^{\mathfrak{p}}=E\oplus E^\flat$ (Pontryagine bundle)}. The  canonical pairing between $E^\prime $ and $E$ is denoted $<.,.>$ and it induces a bounded bilinear map on $E^\flat\times E$ which is left non degenerate and,  if there is no  ambiguity, will be also denoted by the same symbol. 

\begin{conventionnotation}
\label{C_EEflat} ${}$
\begin{enumerate}
\item 
From now, if  there is no ambiguity,  $E^{\mathfrak{p}}$ will be considered indifferently as $E\oplus  E^\flat$ or   $E\times E^\flat$ and  the bundles $E$ and $E^\flat$ will be considered as weak subbundles of $E^{\mathfrak{p}}$.
\item 
If $ F$ is a closed weak subbundle of $E$, we set $F^{\operatorname{ann}}=\displaystyle\bigcup_{x\in M}F^{\operatorname{ann}}_x$ and $F^0=F^{\operatorname{ann}}\cap E^\flat$.
Note that  $F^{\operatorname{ann}}$ is always a closed subbundle of $E^\prime$ but, in general, $F^0$ is not a subbundle of $E^\flat $ although $F_x^0$ is always closed in $E_x^\flat$ for any $x\in M$.
\item 
We say that $E^\flat$ is \emph{separating}\index{separating} if $E_x^\flat$ separates points of $E_x $ for any $x\in M$.
\end{enumerate}
\end{conventionnotation}

On $E^{\mathfrak{p}}$, we consider the bilinear bounded form $<<.,.>>$ defined by:
\[
<<(u,\alpha), (v,\beta)>>=<\alpha,v>+<\beta, u>.
\]
Let $D$ be a weak  closed subbundle of $E^{\mathfrak{p}}$. According to the previous subsection, we set $D^\perp=\displaystyle\bigcup_{x\in M} D_x^\perp$. 

\begin{definition}
\label{D_AlmostDirac}
We say that $D$ is a \emph{partial almost Dirac structure}\index{partial!almost Dirac structure} on $E$, if we have $D_x^\perp=D_x$ for all $x\in M$.\\
If $T^\mathfrak{p}M=TM\oplus T^\prime M$, a partial almost structure is simply called an \emph{almost Dirac structure}\index{almost Dirac structure}.
\end{definition}

\begin{example}
\label{ex_EEflat} 
According to Convention~\ref{C_EEflat}, the bundle $E$ (resp. $E^\flat$) is a partial almost Dirac structure on $E$.\\
\end{example}

\begin{example}
\label{ex_EE0}  
Let $F$ be a closed subbundle of $E$ and assume that $F^0$ is a subbundle of  $E^\flat$. Thus $D=F\oplus F^0$ is a closed subbundle of 
$E^{\mathfrak{p}}$. Then, according to Lemma~\ref{L_PartialAnulatorProperties}, 2.,    $F\oplus F^0$ is a partial almost Dirac structure on $E$ .
\end{example}

\begin{example}
\label{ex_SkewsymmetricMorphism1} 
Let $P:E^\flat \to E$ be a  convenient skew symmetric morphism (i.e. $P_x$ is skew symmetric for any $x\in M$). We denote by $D_P$ the graph of $P$, considered as a subset of $E^{\mathfrak{p}}$. If  $P(E^\flat)$ is a closed  subbundle of $E$ then $D_P$ is a closed subbundle of $E^{\mathfrak{p}}$. If, moreover,  $(E_x^\flat)^{\operatorname{ann}}\cap E_x=\{0\}$ for all $x\in M$, then $D_P$ is a partial almost Dirac structure on $E$ (cf. Lemma~\ref{L_PartialAnulatorProperties}, 3.). This result  is always true if  $E^\flat$ is separating and, in particular, when $E^\flat=E^\prime$ as in finite dimension (cf. Remark \ref{R_ContreExamples}, 1. and 4.). Also, if $P$ is surjective, then $D_P$ is a closed subbundle of $E^{\mathfrak{p}}$ and an almost partial Dirac structure on $E$ from Lemma~\ref{L_PartialAnulatorProperties}, 3.
\end{example}

\begin{example}
\label{ex_SkewsymmetricMorphism2} 
Let $\Omega:E \to E^\flat$ be a skew symmetric bundle morphism.\\
We denote by $D_\Omega$ the graph of $\Omega$. If  $\Omega(E)$ is a closed  subbundle of $E^\flat$ then $D_\Omega$  is a closed subbundle of $E^{\mathfrak{p}}$ and a partial almost Dirac structure on $E$ from Lemma ~\ref{L_PartialAnulatorProperties}, 2.
\end{example}

\subsection {Courant bracket and partial  Dirac structure}
\label{___CourantBracketDirac}

\emph{This section is nothing but an adaptation of the results of \cite{Vul14} to our situation of convenient partial almost Dirac structures.}\\

For any convenient bundle $F$ over $M$, we denote by $F_U$ the restriction of $F$ over an open set $U$ of $M$ and  by $\Gamma(F_U)$ the set of smooth sections of $F$ over $U$. Under the general previous  context, we first have:
 
\begin{lemma}
\label{L_LieDerivativeSectionEflat} 
Assume that the vector bundle $E$ has a convenient Lie algebroid structure $(E,M,\rho, [.,.]_E)$.
 For any $s \in \Gamma(E_U)$ and $\alpha\in \Gamma(E^\flat_U)$, the Lie derivative $L_{\rho(s)}\alpha$ is a section of $E^\flat_U$.
\end{lemma}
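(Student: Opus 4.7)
The plan is to verify the claim in a local trivialization and then invoke the algebraic formula for the Lie derivative. First I would shrink $U$ so that both $E$ and $E^\flat$ trivialize over it, yielding $E_U\cong U\times \mathbb{E}$ and $E^\flat_U\cong U\times \mathbb{F}$, where $\iota:\mathbb{F}\hookrightarrow \mathbb{E}^\prime$ is the bounded inclusion of convenient spaces. In this trivialization $\alpha$ is a smooth map $\bar\alpha:U\to \mathbb{F}$ and $X:=\rho(s)$ is a vector field on $U$. The Lie derivative of $\alpha$, viewed a priori as a section of $E^\prime_U$, is characterized by
\[
(L_{\rho(s)}\alpha)(t)=\rho(s)\bigl(\alpha(t)\bigr)-\alpha\bigl([s,t]_E\bigr)
\]
for every $t\in \Gamma(E_U)$, and what must be shown is that its value at each $x$ lies in $E^\flat_x\subset E^\prime_x$ and depends smoothly on $x$ as a map into $E^\flat$.

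The key computational step is to apply the Leibniz rule to $\rho(s)(\alpha(t))$ and to expand $[s,t]_E$ in the trivialization. This decomposes $(L_{\rho(s)}\alpha)_x$ into a derivative part $d\bar\alpha_x(X(x))$, which belongs to $\mathbb{F}$ because smoothness of $\bar\alpha$ into $\mathbb{F}$ forces $d\bar\alpha_x:\mathbb{M}\to \mathbb{F}$ to be bounded, and an algebraic part of the form $\bar\alpha(x)\circ \Phi_{s,x}$, where $\Phi_{s,x}$ is a bounded endomorphism of $\mathbb{E}$ built from $\bar s(x)$ together with the local data of the anchor and the bracket (it carries the full tensorial dependence in $t$). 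The derivative part is immediately a smooth function of $x$ with values in $\mathbb{F}$.

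The main obstacle is controlling the algebraic part $\bar\alpha(x)\circ \Phi_{s,x}$: one must ensure it still lies in $\mathbb{F}$. This rests on the fact that $\mathbb{F}$ is stable under precomposition by the bounded endomorphisms of $\mathbb{E}$ that arise here, a compatibility implicit in the notion of $E^\flat$ as a weak subbundle of $E^\prime$, since local triviality of $E^\flat$ must be preserved under changes of local frame for $E$. Once this stability is recorded, both summands of $(L_{\rho(s)}\alpha)_x$ are smooth maps $U\to \mathbb{F}$; the sum is therefore a smooth section of $E^\flat_U$, as required.
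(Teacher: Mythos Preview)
Your approach mirrors the paper's: pass to a local trivialization $E_U\cong U\times\mathbb{E}$, $E^\flat_U\cong U\times\mathbb{E}^\flat$, expand $L_{\rho(s)}\alpha$ via the defining formula together with the local expression $[s,v]_E(x)=d_xv(\rho(s))-d_xs(\rho(v))-C_x(s,v)$ of the bracket, and check that the resulting terms take values in $\mathbb{E}^\flat$. The derivative term $d_x\bar\alpha(\rho(s)(x))$ lands in $\mathbb{E}^\flat$ for exactly the reason you give, and this matches the paper.

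The divergence is in how the algebraic terms are handled. The paper deals with $\alpha(x)\circ d_xs\circ\rho$ and $\alpha(x)\circ C_x(s,\cdot)$ by simply asserting that an element of $\mathbb{E}^\flat$ precomposed with a bounded endomorphism of $\mathbb{E}$ remains in $\mathbb{E}^\flat$. You correctly flag this as the nontrivial point, but your proposed justification---that stability is forced because local triviality of $E^\flat$ must survive changes of local frame for $E$---does not close the gap: transition maps for $E$ give only \emph{invertible} endomorphisms of $\mathbb{E}$, whereas your $\Phi_{s,x}$ (assembled from $d_xs\circ\rho$ and the structure tensor $C_x(s,\cdot)$) is an arbitrary bounded endomorphism. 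So your argument and the paper's coincide in structure; the paper treats this stability as immediate from $\alpha\in\mathbb{E}^\flat$, while your attempt to derive it from the subbundle axioms alone is not sufficient as stated.
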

 
\begin{proof}
Let $\mathbb{M}$ be the convenient space on which $M$ is modelled and $\mathbb{E}$ (resp. $\mathbb{E}^\flat$) the typical model of the fibre of $E$ (resp. $E^\flat$) and $\mathbb{E}^\flat$ which is a vector subspace of $\mathbb{E}^\prime$ whose inclusion is bounded since the inclusion  of $E^\flat$ in $E$ is an injective morphism. As it is a local problem, we can assume that  $U$ is a $c^\infty$-open set of $\mathbb{M}$ which is simply connected and so that we have:
 
 $TM=U\times\mathbb{M}$, $\;E_U=U\times\mathbb{E}$, $\;E^\prime_U=U\times\mathbb{E}^\prime$, and $E^\flat_U=U\times \mathbb{E}^\flat$.

\noindent Now, in this context  for the Lie algebroid structure on $E$,   each local sections  of $E_U $ can be identified with a  smooth map from $U$ to $\mathbb{E}$. By the way,  the Lie bracket 
$[.,.]_E$ has the following expression (\cite {CaPe23}):
\begin{equation}
\label{eq_EBracket}
[s,v]_E(x)=d_xv(\rho(s))-d_xs(\rho(v))-C_x(s,v)\footnote{In this proof, the differential $d$ means the classical differential of maps in  the convenient setting.}
\end{equation}
where  $x\mapsto C_x$ is a smooth field of skew symmetric bilinear on $\mathbb{E}$ with values in $\mathbb{E}$.  Then,  the Lie derivative $L_{\rho(s)}\alpha$ can be written:
\begin{equation}
\label{eq_LieDerivative}
\begin{matrix}
L_{\rho(s)}\alpha(v)
	&=d(<\alpha, v>)(\rho(s))-\alpha([s,v]_E\hfill{}\\
	&=d(\alpha(\rho(s))(v)+\alpha\circ dv(\rho(s))-\alpha\circ  dv(\rho(s))+\alpha\circ ds(\rho(v))+\alpha\circ C_x(s,v)\hfill{}\\
	&=d\alpha(\rho(s))(v)-\alpha\circ ds\circ \rho(v)+\alpha\circ C(s,v)\hfill{}
\end{matrix}
\end{equation}

In the previous local  context, any section $\alpha$ of $E^\flat_U$  can be considered as a smooth map  from $U$ to $\mathbb{E}^\prime$ which takes value in $\mathbb{E}^\flat$ and any section $s\in \Gamma(E_U)$ is  a smooth map $s:U\to \mathbb{E}$. It follows that,  for each $x\in U$,  the differential $d_x\alpha$ of the map $\alpha$ belongs to the space $L \left( \mathbb{M},\mathbb{E}^\flat \right) $ of bounded linear maps from $\mathbb{M}$ to $\mathbb{E}^\flat$ and so $d_x\alpha(\rho(s))$ belongs to $\mathbb{E}^\flat$.\\
On the other hand,  $d_x s\circ \rho$ belongs to $L \left( \mathbb{E},\mathbb{E} \right)$. Thus, $\alpha\circ d_xs\circ \rho$ belongs to $\mathbb{E}^\flat$ since $\alpha$ belongs to $\mathbb{E}^\flat$. By the same argument $\alpha\circ C_x(s,.)$ belongs to $\mathbb{E}^\flat$. \\
Taking into account the previous consideration, the last member of  (\ref{eq_LieDerivative}) implies that  $L_{\rho(s)}\alpha$ belongs to $E^\flat_U$.
\end{proof}

\smallskip 
Let $U$ be an open set in $M$. As a generalization of the Courant bracket on a Banach Lie algebroid  (cf. \cite{Vul14}), we introduce:

\begin{definition}
\label{D_CourantBracket}
The \emph{Courant bracket}\index{Courant bracket}\index{bracket!Courant} on sections of $E^{\mathfrak{p}}$ is defined by
\begin{equation}
\label{eq_Courant}
[(r,\alpha),(s,\beta)]_C
=\left([r,s]_E,L_{\rho(r)}\beta-L_{\rho(s)}\alpha-\displaystyle\frac{1}{2} d(i_s \alpha-i_r\beta)\right)
\end{equation}
for all sections $(r,\alpha), (s,\beta)\in \Gamma(E^{\mathfrak{p}}_U) $.
\end{definition}

\begin{proposition}
\label{P_PropertyCourantBracket}
The Courant bracket $[.,.]_C$ takes values in $\Gamma(E^{\mathfrak{p}})$, depends only on the bracket $[\;,\;]_E$ and satisfies the Leibniz property.\\
\end{proposition}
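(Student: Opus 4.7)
The proposition makes three claims: (a) the right-hand side of (\ref{eq_Courant}) is a smooth section of $E^{\mathfrak{p}}_U$; (b) the bracket is manufactured purely from the Lie algebroid datum $(E,\rho,[\cdot,\cdot]_E)$; (c) a Courant-type Leibniz identity. I address them in this order.

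\emph{Step (a): membership in $\Gamma(E^{\mathfrak{p}}_U)$.} The first slot is immediate: $[r,s]_E\in\Gamma(E_U)$ by the Lie algebroid axioms. In the second slot, Lemma \ref{L_LieDerivativeSectionEflat} directly supplies $L_{\rho(r)}\beta,\,L_{\rho(s)}\alpha\in\Gamma(E^\flat_U)$. The delicate point is the term $d(i_s\alpha-i_r\beta)$: the contractions $i_s\alpha,\,i_r\beta$ are smooth scalar functions on $U$ and $d$ is to be read as the Lie-algebroid differential $df(v)=\rho(v)f$, which a priori lands only in $\Gamma(E^\prime_U)$. To see that $d(i_s\alpha-i_r\beta)$ is nonetheless $E^\flat$-valued, I would apply the Lie-algebroid Cartan identity $d(i_X\beta)=L_{\rho(X)}\beta-i_X d\beta$ to rewrite
\[
d(i_s\alpha-i_r\beta)=L_{\rho(s)}\alpha-L_{\rho(r)}\beta+i_r d\beta-i_s d\alpha,
\]
reducing matters to Lie derivatives of $\alpha,\beta\in\Gamma(E^\flat_U)$ together with the contractions $i_r d\beta,\,i_s d\alpha$ of the associated Lie-algebroid $2$-forms; redoing in local coordinates the bookkeeping of Lemma \ref{L_LieDerivativeSectionEflat} via (\ref{eq_EBracket}) and (\ref{eq_LieDerivative}) shows that these two extra contractions are themselves smooth $\mathbb{E}^\flat$-valued maps, hence sections of $E^\flat_U$.

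\emph{Step (b): intrinsic character.} Every ingredient of (\ref{eq_Courant}) is produced from the Lie algebroid structure: the bracket $[r,s]_E$ itself, the Lie derivative $L_{\rho(\cdot)}$ acting on $E^\flat$ (defined through $\rho$ and $[\cdot,\cdot]_E$ as in the proof of the cited lemma), the scalar contractions $i_s\alpha=\alpha(s)$, and the Lie-algebroid differential $d$. No auxiliary choice (connection, metric, trivialisation) enters.

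\emph{Step (c): Leibniz property.} For $f\in C^\infty(U)$ I would expand $[(r,\alpha),f(s,\beta)]_C$ termwise, using \textbf{(LA2)} in the form $[r,fs]_E=f[r,s]_E+(\rho(r)f)s$, the derivation rule $L_{\rho(r)}(f\beta)=fL_{\rho(r)}\beta+(\rho(r)f)\beta$, the identity $L_{\rho(fs)}\alpha=fL_{\rho(s)}\alpha+(i_s\alpha)\,df$ that follows from \textbf{(LA2)}, and the Leibniz rule for $d$ applied to $f\cdot(i_s\alpha-i_r\beta)$. The contributions proportional to $df$ then collect into the expected Courant correction involving the symmetric pairing $<<(r,\alpha),(s,\beta)>>$, producing a modified Leibniz identity of the form
\[
[(r,\alpha),f(s,\beta)]_C=f[(r,\alpha),(s,\beta)]_C+(\rho(r)f)(s,\beta)+\mathcal{A}(f,(r,\alpha),(s,\beta)),
\]
where $\mathcal{A}$ is a bilinear expression built from $<<\cdot,\cdot>>$ and $(0,df)$.

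\emph{Main obstacle.} The only genuinely substantive step is (a): verifying that $d(i_s\alpha-i_r\beta)$ lies in $\Gamma(E^\flat_U)$ rather than merely in $\Gamma(E^\prime_U)$. The Cartan rewriting reduces this to a refined version of the local argument already performed in Lemma \ref{L_LieDerivativeSectionEflat}, but one must execute that bookkeeping carefully for the new contractions $i_r d\beta$ and $i_s d\alpha$.
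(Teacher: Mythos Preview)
Your proposal is correct and follows essentially the same strategy as the paper's proof: invoke Lemma~\ref{L_LieDerivativeSectionEflat} for the Lie-derivative terms, then handle $d(i_s\alpha-i_r\beta)$ by the local-coordinate bookkeeping already set up in that lemma. The one difference is in Step~(a): the paper does not pass through the Cartan identity but simply expands $d\langle\alpha,s\rangle$ directly by the product rule in a chart and observes that each summand is $\mathbb{E}^\flat$-valued by the very same reasoning used in Lemma~\ref{L_LieDerivativeSectionEflat}; your detour through $i_r d\beta$ and $i_s d\alpha$ lands on the same local computation, so nothing is gained or lost. Your treatment of Step~(c) is actually more precise than the paper's: you correctly flag the Courant correction term $\mathcal{A}$ proportional to $\langle\langle(r,\alpha),(s,\beta)\rangle\rangle\,(0,df)$, whereas the paper simply asserts ``the Leibniz property'' and records the two product-rule identities for $d\langle\alpha,fs\rangle$ and $d\langle f\beta,r\rangle$ without assembling them into the final statement.
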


\begin{proof} 
First of all, by definition, the bracket $[.,.]_E$ takes values in $E$ and satisfies the Leibniz property.\\
On the other hand, from Lemma \ref{L_LieDerivativeSectionEflat},  $L_{\rho(r)}
\beta-L_{\rho(s)}\alpha$ takes values in $E^\flat$. Finally, as in the proof of this Lemma, in the same local context, for all $v\in \mathbb{M}$, we have $d<\alpha,s> (v)=d\alpha(s(v))+
\alpha\circ d s(v)$ which belongs to $E^\flat_U$ from argument used in the proof of Lemma~\ref{L_LieDerivativeSectionEflat}. Thus $[.,.]_C$ takes values in $E^{\mathfrak{p}}$. The 
Leibniz property of the second component of this bracket is a consequence of classical properties of Lie derivative and the fact that
\[
d<\alpha,fs>(v)
=f d\alpha(s,v) + df(v)<\alpha,s>
\] 
and also 
\[
d<f\beta,r>(v)=df(v)<\beta,r>+ f\beta\circ ds(v)
\]
which ends the proof.
\end{proof}

\begin{remark}
\label{R_OtherExpressionOfC}
As in finite dimension (cf. \cite{Marc16} for instance),  according to the classical definition of Lie derivative and exterior differential of forms and relation between Lie derivative, inner product and exterior derivative on a convenient Lie algebroid (cf. \cite{CaPe23}, Chap. 3, 18.) the Courant bracket has also the following expressions:
\begin{equation}
\label{eq_CExpresion}
\begin{matrix}
[(r,\alpha),(s,\beta)]_C&=\left([r,s]_E,L_{\rho(r)}\beta-L_{\rho(s)}\alpha-\displaystyle\frac{1}{2} d(i_s \alpha-i_r\beta)\right)\hfill{}\\
				&=\left([r,s]_E,L_{\rho(r)}\beta-i_s d\alpha)\right)\hfill{}\\
				&= \left([r,s]_E,i_rd\beta-i_s d\alpha+d(i_r\beta\right)\hfill{}\\

\end{matrix}
\end{equation}
\end{remark}
Unfortunately, in general, it is not a Lie bracket on $\Gamma(E^{\mathfrak{p}})$.\\
Thus, we can consider the Jacobiator $J= \left( J_1,J_2 \right)$ (cf. \cite{Vul14})
\[
\begin{array}{rcl}
J \left((r,\alpha),(s,\beta),(t,\gamma)\right)	&=& [[(r,\alpha),(s,\beta)]_C,(t,\gamma)]_C\\
	&&+[[(s,\beta),(t,\gamma)]_C,(r,\alpha)]_C\\	
	&&+[[(t,\gamma)(r,\alpha)]_C, (s,\beta)]_C
\end{array}
\]
for all $(r,\alpha),(s,\beta),(t,\gamma)\in \Gamma(E^{\mathfrak{p}}_U)$.\\
Note that since $E$ has a Lie algebroid structure, then the first component $J_1$ satisfies $J_1=0$. \\

\emph{We are interested in conditions under which the restriction of the  Courant bracket on  sections of $D_U$ is a Lie bracket.} \\

For this purpose, we introduce the following map on $\Gamma(D_U)$:
\begin{equation}
\label{eq_TD}
\mathbf{T}_D\left((r,\alpha),(s,\beta),(t,\gamma)\right)=<<[(r,\alpha), (s,\beta)]_C,(t,\gamma)>>
\end{equation}
for all $(r,\alpha),(s,\beta),(t,\gamma)\in \Gamma(D_U)$.\\

Then using analog arguments as in \cite{Vul14}, 3., we have:

\begin{lemma}
\label{L_PropertiesJD} 
In restriction to sections of $D_U$,  we have the following properties, for all $(r,\alpha),(s,\beta),(t,\gamma)\in \Gamma(D_U)$:
\begin{enumerate}
\item  
$\mathbf{T}_D \left( (r,\alpha),(s,\beta),(t,\gamma) \right) 
= L_{\rho(r)}\beta(t)+ L_{\rho(s)}\gamma(r)+ L_{\rho(t)}\alpha(s)$.
\item 
The second component $J_2$ of the Jacobiator $J$ can be written:
\[
J_2\left((r,\alpha),(s,\beta),(t,\gamma)\right)=\displaystyle\frac{1}{2} d_E\left( \mathbf{T}_D((r,\alpha),(s,\beta),(t,\gamma))\right)
\]
where $d_E$ is the exterior differential associated to the Lie algebroid structure $(E,M,\rho, [.,.]_E)$.
\end{enumerate}
\end{lemma}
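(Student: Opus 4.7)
The plan is to prove both points by direct computation, leveraging two ingredients: first, the equivalent expressions of the Courant bracket given in Remark~\ref{R_OtherExpressionOfC}, and second, the isotropy identity
\[
\alpha(s)+\beta(r)=0,\quad \alpha(t)+\gamma(r)=0,\quad \beta(t)+\gamma(s)=0,
\]
which holds for any sections $(r,\alpha),(s,\beta),(t,\gamma)$ of $D_U$ since $D^\perp=D$.

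For Point 1, I would start from the second expression in (\ref{eq_CExpresion}),
\[
[(r,\alpha),(s,\beta)]_C=\bigl([r,s]_E,\,L_{\rho(r)}\beta-i_sd\alpha\bigr),
\]
so that
\[
\mathbf{T}_D\bigl((r,\alpha),(s,\beta),(t,\gamma)\bigr)
=\gamma([r,s]_E)+\bigl(L_{\rho(r)}\beta\bigr)(t)-d\alpha(s,t).
\]
Then I would expand $d\alpha(s,t)$ with the Lie-algebroid Cartan formula
$d\alpha(s,t)=\rho(s)\alpha(t)-\rho(t)\alpha(s)-\alpha([s,t]_E)$
and use the definition $L_{\rho(r)}\beta(t)=\rho(r)\beta(t)-\beta([r,t]_E)$ (established in Lemma~\ref{L_LieDerivativeSectionEflat}). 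Collecting terms and rewriting $-\rho(s)\alpha(t)$ as $\rho(s)\gamma(r)$ via the isotropy relation $\alpha(t)+\gamma(r)=0$, and similarly re-expressing $\gamma([r,s]_E)=-\gamma([s,r]_E)$ inside $L_{\rho(s)}\gamma(r)$, reorganizes the expression into the announced cyclic sum
$L_{\rho(r)}\beta(t)+L_{\rho(s)}\gamma(r)+L_{\rho(t)}\alpha(s)$.

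For Point 2, the strategy is to expand the second component of the Jacobiator using the third formula of (\ref{eq_CExpresion}), $[(r,\alpha),(s,\beta)]_C=([r,s]_E,\, i_r d\beta-i_s d\alpha+d(i_r\beta))$, which is particularly convenient because $i_r d\beta-i_s d\alpha$ is tensorial in $\alpha,\beta$ while the $d(i_r\beta)$ piece is exact. After substitution, the Jacobiator $J_2$ splits naturally into a sum of exact differentials $d_E(\cdot)$ plus a remainder involving only Lie derivatives and algebroid Jacobi terms; the algebroid Jacobi identity kills the remainder, leaving a pure exact differential of a cyclic combination. Using Point~1 together with the isotropy identities to rewrite that cyclic combination as $\tfrac{1}{2}\mathbf{T}_D$ yields the formula
$J_2=\tfrac{1}{2}d_E(\mathbf{T}_D)$.

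The main obstacle will be the combinatorial bookkeeping in Point~2: the expanded Jacobiator contains roughly a dozen Lie derivatives of 1-forms and inner products, and their cancellations proceed only after repeated application of the pairing identities on $D$ together with the identity $L_{[\rho(r),\rho(s)]}=[L_{\rho(r)},L_{\rho(s)}]$ (which holds since $\rho$ is a Lie algebra homomorphism). Care is needed because in the convenient setting one must verify that every manipulation is performed pointwise or via smooth local sections, so as to stay within $\Gamma(E^\flat_U)$; this is precisely what Lemma~\ref{L_LieDerivativeSectionEflat} guarantees, so no issue of domain escapes.
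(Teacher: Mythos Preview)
Your proposal is correct and follows the standard computation. The paper itself does not give a proof of this lemma; it simply writes ``using analog arguments as in \cite{Vul14}, 3.'' and states the result. What you outline---using the alternative expressions of the Courant bracket from Remark~\ref{R_OtherExpressionOfC}, the Cartan-type formula for $d_E\alpha(s,t)$, and the isotropy relations $\alpha(t)+\gamma(r)=0$ etc.\ coming from $D=D^\perp$---is exactly the standard argument (and presumably the one in \cite{Vul14}). Your verification of Point~1 is complete and correct as written; for Point~2 your strategy is the right one, and the bookkeeping, while tedious, goes through without surprises once you systematically use $L_{\rho(r)}=i_r d_E+d_E i_r$ and the algebroid Jacobi identity to cancel the non-exact terms.
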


\begin{proposition}
\label{P_CNSCourant bracketD} 
Assume that $E$ is a convenient Lie algebroid and that $D$  is an almost partial Dirac structure on $E$. Then the restriction of the Courant bracket $[.,.]_C$ to $\Gamma(D_U)$ is a Lie bracket if and only if $\mathbf{T}_D\equiv 0$.
\end{proposition}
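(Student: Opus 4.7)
The plan is to prove both implications by combining the defining identity $\mathbf{T}_D((r,\alpha),(s,\beta),(t,\gamma)) = \langle\langle [(r,\alpha),(s,\beta)]_C,(t,\gamma)\rangle\rangle$ from \eqref{eq_TD} with the formula for the Jacobiator obtained in Lemma \ref{L_PropertiesJD}. The partial Dirac condition $D=D^\perp$ will convert pairing information into membership information and vice versa.

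For the ``only if'' direction, assume the restriction of $[.,.]_C$ to $\Gamma(D_U)$ is a Lie bracket. Then in particular $[\Gamma(D_U),\Gamma(D_U)]_C \subseteq \Gamma(D_U)$. For any $(r,\alpha),(s,\beta),(t,\gamma) \in \Gamma(D_U)$, the bracket $[(r,\alpha),(s,\beta)]_C$ lies in $\Gamma(D_U)$, and since $D = D^\perp$, its pairing under $\langle\langle\cdot,\cdot\rangle\rangle$ with $(t,\gamma)$ vanishes pointwise. By \eqref{eq_TD}, this is exactly $\mathbf{T}_D \equiv 0$.

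For the ``if'' direction, assume $\mathbf{T}_D \equiv 0$. I first establish closure: for $(r,\alpha),(s,\beta) \in \Gamma(D_U)$ and any local section $(t,\gamma)$ of $D_U$, the hypothesis gives $\langle\langle [(r,\alpha),(s,\beta)]_C(x),(t,\gamma)(x)\rangle\rangle = 0$ for every $x \in U$. Because $D$ is a (weak closed) subbundle, its local sections generate each fibre $D_x$, so $[(r,\alpha),(s,\beta)]_C(x) \in D_x^\perp = D_x$, i.e. the bracket is a section of $D_U$. Skew-symmetry of the Courant bracket is immediate from \eqref{eq_Courant}: interchanging $(r,\alpha)$ and $(s,\beta)$ reverses the sign of both components. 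For the Jacobi identity, write the Jacobiator as $J=(J_1,J_2)$; then $J_1 \equiv 0$ because $E$ is itself a Lie algebroid, and by Lemma \ref{L_PropertiesJD}, $J_2 = \tfrac{1}{2}\, d_E(\mathbf{T}_D) = 0$ under the standing assumption. Thus $J\equiv 0$ on $\Gamma(D_U)$, so $[.,.]_C$ restricts to a Lie bracket there.

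The main obstacle is the assertion, used in the closure step, that local sections of $D$ span each fibre $D_x$: this is needed to lift the vanishing of $\mathbf{T}_D$ on sections to the pointwise condition $[(r,\alpha),(s,\beta)]_C(x) \in D_x^\perp$. In the convenient setting this is not automatic for arbitrary closed subspaces, but holds here because $D$ is assumed to be a (weak closed) subbundle of $E^{\mathfrak{p}}$ and therefore admits local sections through every fibre element. Everything else is a direct application of the algebraic identities already assembled in Lemma \ref{L_PropertiesJD}.
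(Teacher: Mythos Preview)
Your proof is correct and follows the same core idea as the paper's: both directions hinge on the identity \eqref{eq_TD} together with $D=D^\perp$, turning the vanishing of $\mathbf{T}_D$ into closure of $\Gamma(D_U)$ under $[.,.]_C$ and vice versa.

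Your argument is in fact more complete than the paper's own proof of this proposition. The paper's proof only establishes closure (involutivity) in the ``if'' direction and then simply asserts this yields a Lie bracket, deferring the explicit verification of the Jacobi identity to the proof of Theorem~\ref{T_DLieAlgebroid}, where Lemma~\ref{L_PropertiesJD} is invoked. You carry out that Jacobiator computation here, which is entirely appropriate given the wording of the statement. Your remark on skew-symmetry is also correct: the bracket \eqref{eq_Courant} is the skew-symmetrized Courant bracket, so antisymmetry is immediate. Finally, your self-identified obstacle (that local sections of $D$ fill out each fibre) is indeed resolved by $D$ being a weak closed subbundle of $E^{\mathfrak p}$, which guarantees local trivializations and hence sections through any prescribed fibre element.
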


\begin{proof} 
If $[.,.]_C$ induces a Lie bracket on $\Gamma(D_U)$, from (\ref{eq_TD}), since $D^\perp=D$, it follows that $\mathbf{T}_D\equiv 0$.\\
Conversely, if $\mathbf{T}_D\equiv 0$, from (\ref{eq_TD}), $[(r,\alpha),(s,\beta)]_C$ must belong to $\Gamma(D_U^\perp)$ for all $(r,\alpha),(s,\beta)\in \Gamma(D_U)$ and so the restriction of the Courant bracket $[.,.]_C$ to $\Gamma(D_U)$ is a Lie Bracket.
\end{proof}

\begin{definition}
\label{D_DiracStructure} 
A  (partial) almost Dirac structure $D$ on a convenient Lie algebroid $E$ is called \emph{involutive}\index{involutive} if, for any open set $U$ in $M$, $\Gamma(D_U)$ is stable under the restriction of the Courant bracket $[.,.]_C$ to sections of $\Gamma(D_U)$.
\end{definition}

We denote by $p:E^\mathfrak{p}\to E$ and define $\rho^{\mathfrak{p}}:E^{\mathfrak{p}}\to TM$ by
\begin{equation}
\label{eq_rhop}
\rho^{\mathfrak{p}}((r,\alpha))=\rho\circ p((r,\alpha))
\end{equation}
where $\rho:E \to TM$ is the anchor and so $\rho^{\mathfrak{p}}: E^{\mathfrak{p}}\to TM$ is also an anchor.\\
On the other hand, since $D_U=D_U^\perp$, the restriction of $[.,.]_C$ to $\Gamma(D_U)$ can be written:
\begin{equation}
\label{eq_CourantrestrictD}
[(r,\alpha),(s,\beta)]_C
=\left([r,s]_E,L_{\rho(r)}\beta-L_{\rho(s)}\alpha+d(i_s \alpha)\right)
\end{equation}
for all sections $(r,\alpha), (s,\beta)\in \Gamma(D_U) $.

\begin{theorem}
\label{T_DLieAlgebroid}
A (partial) almost Dirac structure $D$ on a convenient Lie algebroid $(E,M,\rho, [.,.]_E)$ is a (partial) Dirac structure if and only if the restriction $\rho^D$ of $\rho^{\mathfrak{p}}$ to $D$ is an anchor such that $(D,M,\rho^D, [.,.]_{ D})$ has a convenient Lie algebroid structure on $D$.
\end{theorem}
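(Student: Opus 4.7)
The plan is to prove the biconditional by showing each direction separately, with the bracket $[.,.]_D$ understood (in analogy with the finite-dimensional discussion of \S\ref{___IntegrableDiracStructuresAsLieAlgebroids}) as the restriction of the Courant bracket $[.,.]_C$ to $\Gamma(D_U)$.

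For the forward direction, assume $D$ is a partial Dirac structure. By the involutivity condition in Definition \ref{D_DiracStructure}, $[.,.]_C$ restricts to a well-defined convenient bracket $[.,.]_D$ on $\Gamma(D_U)$ for every open $U\subset M$, and $\rho^D=\rho\circ p|_D$ is a well-defined convenient bundle morphism into $TM$. It remains to verify the Lie algebroid axioms:
\begin{enumerate}
\item[(i)] \emph{Jacobi identity}: The first component $J_1$ of the Jacobiator vanishes because $(E,M,\rho,[.,.]_E)$ is itself a Lie algebroid. By Lemma \ref{L_PropertiesJD}(2), the second component satisfies $J_2=\tfrac12\, d_E(\mathbf{T}_D)$. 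Proposition \ref{P_CNSCourant bracketD} together with involutivity gives $\mathbf{T}_D\equiv 0$, whence $J_2\equiv 0$ and $[.,.]_D$ satisfies Jacobi.
\item[(ii)] \emph{Anchor compatibility}: For $X=(r,\alpha)$, $Y=(s,\beta)\in\Gamma(D_U)$, the first component of $[X,Y]_C$ is $[r,s]_E$, so
\[
\rho^D([X,Y]_D)=\rho([r,s]_E)=[\rho(r),\rho(s)]_{TM}=[\rho^D(X),\rho^D(Y)]_{TM},
\]
using that $\rho$ is an anchor on $E$.
\item[(iii)] \emph{Leibniz property}: This is immediate from Proposition \ref{P_PropertyCourantBracket}, which established the Leibniz rule for $[.,.]_C$; it restricts to $\Gamma(D_U)$ with respect to $\rho^D$.
\end{enumerate}

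For the converse, assume $(D,M,\rho^D,[.,.]_D)$ is a convenient Lie algebroid with the prescribed anchor and with $[.,.]_D$ given by the restriction of $[.,.]_C$. Then the very fact that $[.,.]_C$ restricts to a bracket on $\Gamma(D_U)$ means that $\Gamma(D_U)$ is stable under the Courant bracket, which is the definition of involutivity (Definition \ref{D_DiracStructure}); equivalently, by Proposition \ref{P_CNSCourant bracketD}, $\mathbf{T}_D\equiv 0$. Since $D$ is already a partial almost Dirac structure by assumption, we conclude that $D$ is a partial Dirac structure.

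The main obstacle is the verification of the Jacobi identity in the forward direction; but the combination of Lemma \ref{L_PropertiesJD}(2), which expresses $J_2$ as $\tfrac12 d_E\mathbf{T}_D$, and Proposition \ref{P_CNSCourant bracketD}, which ties $\mathbf{T}_D\equiv 0$ to involutivity, reduces the verification to a short chain of implications. The remaining axioms follow directly from the structural properties of the Courant bracket and the anchor $\rho$ of $E$ established earlier.
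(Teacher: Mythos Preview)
Your forward direction is essentially identical to the paper's: Proposition~\ref{P_CNSCourant bracketD} gives $\mathbf{T}_D\equiv 0$, Lemma~\ref{L_PropertiesJD}(2) then kills $J_2$, and Leibniz comes from Proposition~\ref{P_PropertyCourantBracket}. One small omission: the paper also checks that $[.,.]_D$ depends only on the $1$-jets of sections (via the local expressions~(\ref{eq_LieDerivative}) and the local form of $d(i_s\alpha)$), which is part of the definition of a convenient Lie algebroid; you should add a sentence to that effect.

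For the converse you take a more direct route than the paper. You observe that if $(D,M,\rho^D,[.,.]_D)$ is a Lie algebroid with $[.,.]_D$ the restricted Courant bracket, then in particular $[.,.]_C$ already preserves $\Gamma(D_U)$, which \emph{is} involutivity. The paper instead argues from the Jacobi identity: Jacobi forces $J_2\equiv 0$, hence $d_E\mathbf{T}_D\equiv 0$ by Lemma~\ref{L_PropertiesJD}(2), so $\mathbf{T}_D$ is locally constant, and $\mathbf{T}_D(0,0,0)=0$ gives $\mathbf{T}_D\equiv 0$, whence involutivity by Proposition~\ref{P_CNSCourant bracketD}. Your argument is shorter and perfectly valid, since closure of the bracket on $\Gamma(D_U)$ is already packaged into the hypothesis that $(D,M,\rho^D,[.,.]_D)$ is a Lie algebroid; the paper's detour through Jacobi is not needed once one unpacks what ``$[.,.]_D$ is a bracket on $D$'' means.
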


\begin{proof} 
At first, according to the expression of the Lie derivative in local coordinates  (cf. (\ref{eq_LieDerivative})) and also $d(i_s \alpha)$, it follows that the second member of 
(\ref{eq_CourantrestrictD}) only depends on the $1$-jet of each section $(r,\alpha), (s,\beta)\in \Gamma(D_U) $. Since  the property is satisfied for $[.,.]_C$, the same is true for $[.,.]_D$ (relatively to the anchor $\sigma$).\\

Assume that $D$ is a partial Dirac structure on $E$ and fix any open set $U$ in $M$. From Proposition~\ref{P_CNSCourant bracketD}, it follows that $\mathbf{T}_D\equiv 0$  and from Lemma \ref{L_PropertiesJD}, 2., $J_2$ in restriction to $\Gamma(D_U)$ vanishes. Since $J_1\equiv 0$, it follows that the bracket $[.,.]_D$ satisfies the Jacobi identity. Therefore, from Proposition~3.19 in \cite{CaPe23}, we have
 $\rho^D([(r,\alpha),(s,\beta)]_D)=[\rho^D(r,\alpha),\rho^D(s,\beta)]_D$ for all $(r,\alpha), (s,\beta)\in \Gamma(D_U) $.  Since these properties are satisfied over any open set $U$ in $M$, it follows that $(D,M,\rho^D, [.,.]_{ D})$ is a Lie algebroid.\\
 
Conversely, if $(D,M,\rho^D, [.,.]_{ D})$ is a Lie algebroid, the bracket $[.,.]_D$ satisfies the Jacoby identity and so $J_2$ in restriction to $\Gamma(D_U)$ vanishes over any open set $U$ in $M$. From Lemma~\ref{L_PropertiesJD}, 2., the value of  $\mathbf{T}_D$ on $\Gamma(D_U)$ is then constant and since $\mathbf{T}_D(0,0,0)=0$, this implies $\mathbf{T}_D\equiv 0$. But it is true over any open set $U\in M$; so, from Proposition~\ref{P_CNSCourant bracketD}, this implies that $D$ is a partial Dirac structure on $E$.
\end{proof}

\begin{example}
\label{Ex_DE} 
The bundle $E$ is an almost partial Dirac structure on $E$ and if $E$ is also a Lie algebroid, then $E$ is a partial Dirac structure on $E$.
\end{example}

The following Proposition  and Corollary give  a large class of examples which are   a generalization of classical results in finite dimension:\\

\begin{proposition}\label{P_DSkewsymmetricMorphism2} 
Let $F$ be a closed subbundle of  $E$ and $Q:E \to E^\flat$ be a skew symmetric convenient  bundle morphism. We set $F^0= \displaystyle\bigcup_{x\in M}F_x^0$ and 
\[
(D_Q^F)=\bigcup_{x\in M} \{ (u,\alpha)\in E^{\mathfrak{p}}_x\;:\; u\in F_x,\; \alpha-Q_x(u)\in F_x^0 \}.
\]
Assume that $Q(F)+ F^0$ is a closed subbundle of $E^\flat$. Then $D^F_Q$ is a partial almost Dirac structure. In this case, 
 $D_Q^F$ is  then a partial Dirac structure if and only if $F$ is involutive (according to the Lie bracket $[.,.]_E$) and  the restriction of the  $2$ form $\Omega_x(u,v)=<Q(u),v>$ to $F$ is closed.
\end{proposition}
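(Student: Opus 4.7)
The plan is to split the statement into (a) the partial almost Dirac claim, which is essentially a fibrewise statement, and (b) the characterisation of when $D_Q^F$ is a partial Dirac structure, which is governed by the vanishing of the Courant tensor $\mathbf{T}_{D_Q^F}$.

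For (a), I would verify fibrewise, by applying Lemma~\ref{L_PartialAnulatorProperties} with $\mathbb{E}=E_x$, $\mathbb{E}^\flat=E_x^\flat$, $\mathbb{X}=F_x$ and skew map $Q_x$, that $(D_Q^F)_x$ is a linear partial Dirac subspace of $(E^{\mathfrak{p}})_x$, so that $(D_Q^F)_x = ((D_Q^F)_x)^\perp$ for every $x\in M$. The hypothesis that $Q(F)+F^0$ is a closed subbundle of $E^\flat$ then provides the missing bundle structure, ensuring that $D_Q^F$ is a closed weak subbundle of $E^{\mathfrak{p}}$ rather than just a fibrewise-defined subset: indeed the second projection $D_Q^F\to E^\flat$ has image $Q(F)+F^0$, and the parametrisation $(u,\nu)\mapsto (u, Q(u)+\nu)$ of $F\oplus(Q(F)+F^0)\to E^{\mathfrak{p}}$ makes $D_Q^F$ into a closed subbundle.

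For (b), I would invoke Proposition~\ref{P_CNSCourant bracketD} to reduce the partial Dirac property of $D_Q^F$ to the identity $\mathbf{T}_{D_Q^F}\equiv 0$ on sections of $D_Q^F$ over arbitrary open sets $U$. To extract involutivity of $F$ as a necessary condition, I would pick $r,s\in\Gamma(F_U)$, lift them to the sections $(r,Q(r)),(s,Q(s))\in\Gamma((D_Q^F)_U)$, and observe that the first component of their Courant bracket is $[r,s]_E$; stability of $D_Q^F$ under $[.,.]_C$ together with $p(D_Q^F)\subset F$ forces $[r,s]_E\in\Gamma(F_U)$, so $F$ is involutive under $[.,.]_E$.

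The central computation I would then carry out, assuming $F$ is involutive, is to evaluate $\mathbf{T}_{D_Q^F}$ on a triple $(r_i,Q(r_i)+\mu_i)$ ($i=1,2,3$) with $r_i\in\Gamma(F_U)$ and $\mu_i\in\Gamma((F^0)_U)$. By Lemma~\ref{L_PropertiesJD}, this equals
\[
L_{\rho(r_1)}(Q(r_2)+\mu_2)(r_3)+L_{\rho(r_2)}(Q(r_3)+\mu_3)(r_1)+L_{\rho(r_3)}(Q(r_1)+\mu_1)(r_2),
\]
and each summand expands by the Lie-algebroid Cartan rule $L_{\rho(r_i)}\alpha_j(r_k)=\rho(r_i){\cdot}\alpha_j(r_k)-\alpha_j([r_i,r_k]_E)$. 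Two key simplifications then occur: $(Q(r_j)+\mu_j)(r_k)=\Omega(r_j,r_k)$ because $\mu_j$ vanishes on $F$ and $r_k\in F$, and $(Q(r_j)+\mu_j)([r_i,r_k]_E)=\Omega(r_j,[r_i,r_k]_E)$ because involutivity of $F$ places $[r_i,r_k]_E$ in $F$, so again $\mu_j$ pairs to zero. Re-collecting the cyclic sum and using skew-symmetry of $\Omega$ reproduces exactly the Lie-algebroid Cartan formula $d_E\Omega(r_1,r_2,r_3)$. Therefore, once involutivity of $F$ is granted, $\mathbf{T}_{D_Q^F}\equiv 0$ is equivalent to $d_E\Omega|_F=0$, i.e. to closedness of $\Omega|_F$, completing the equivalence.

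The main obstacle I foresee is the sign and cyclic bookkeeping in this last calculation, together with a subtle logical point: the independence of $\mathbf{T}_{D_Q^F}$ on the annihilator correction sections $\mu_i$ (which is what makes the conclusion a statement purely about $\Omega|_F$) genuinely uses involutivity of $F$. The argument must therefore be organised so that the ``only if'' direction first extracts involutivity of $F$ from the first component of the Courant bracket, and only then invokes the $\mathbf{T}_{D_Q^F}=d_E\Omega|_F$ identity; the ``if'' direction then runs the same identity backwards, so that closedness of $\Omega|_F$ and involutivity of $F$ together yield $\mathbf{T}_{D_Q^F}\equiv 0$, whence Proposition~\ref{P_CNSCourant bracketD} concludes.
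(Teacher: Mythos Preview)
Your proposal is correct and strategically parallel to the paper's proof: both establish the almost Dirac part fibrewise via Lemma~\ref{L_PartialAnulatorProperties}, extract involutivity of $F$ from the first component of the Courant bracket, and then reduce the remaining condition to closedness of $\Omega|_F$. The execution of this last step differs, however. The paper works directly with the second component of the Courant bracket in the form $[(r,\alpha),(s,\beta)]_C=([r,s]_E,\,L_{\rho(r)}\beta-i_s\,d\alpha)$ and checks when it lies in $Q(F)+F^0$; to do this it proves an auxiliary lemma (Lemma~\ref{L_FormOnF}) stating that for involutive $F$, any $\gamma$ with $\gamma|_F=0$ satisfies $(L_{\rho(u)}\gamma)|_F=0$ and $(i_u\,d\gamma)|_F=0$, and then uses Cartan-type identities among $L$, $i$, $d$ to rewrite the obstruction as $i_s i_r\,d\Omega$. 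You instead compute the scalar Courant tensor $\mathbf{T}_{D_Q^F}$ via the formula of Lemma~\ref{L_PropertiesJD} and identify it directly with $d_E\Omega(r_1,r_2,r_3)$ through the Cartan expansion of $d_E$. Your route is arguably more economical, since the annihilator terms $\mu_i$ drop out transparently in the scalar pairing without needing Lemma~\ref{L_FormOnF}, while the paper's route has the advantage of making explicit where the bracket itself lands, which is closer to the operational Definition~\ref{D_DiracStructure} of involutivity.
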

  
By taking $Q=0$ in the one hand and $F=E$ on the other hand we obtain:
  
\begin{corollary}
\label{C_XX0Omega}${}$
  \begin{enumerate}
  \item  
  Let $F$ a closed subbundle of $E$ such that $F^0$ is a subbundle of $E^\flat$. Then $F\oplus F^0$ is a  partial Dirac structure if and onliy if $F$ is involutive.
 \item 
 Let  $Q:E \to E^\flat$ be a skew symmetric convenient  bundle morphism such that $Q(E)$ is a closed subbundle of $E^\flat$. Then the graph of  $Q$ is a partial Dirac structure if and only the $2$-form  $\Omega$ associated to $Q$ is closed.
  \end{enumerate}
  \end{corollary}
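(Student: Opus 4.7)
The plan is to obtain both statements as direct specializations of Proposition~\ref{P_DSkewsymmetricMorphism2}, as the text preceding the corollary already suggests. In each case, two hypotheses of the proposition have to be checked (closedness of $Q(F)+F^0$ in $E^\flat$, and the conjunction of involutivity of $F$ with closedness of the associated $2$-form) and both hypotheses degenerate in a harmless way.

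For item (1), I would set $Q=0$. Then
\[
D^F_0 = \bigcup_{x\in M}\{(u,\alpha)\in E^{\mathfrak{p}}_x : u\in F_x,\ \alpha\in F_x^0\} = F\oplus F^0,
\]
and $Q(F)+F^0=F^0$, which is a subbundle of $E^\flat$ by the standing hypothesis of (1) and therefore closed. So Proposition~\ref{P_DSkewsymmetricMorphism2} applies and yields that $F\oplus F^0$ is a partial almost Dirac structure. For the Dirac condition, the associated $2$-form $\Omega_x(u,v)=\langle Q(u),v\rangle$ vanishes identically and is in particular closed, so the proposition gives that $F\oplus F^0$ is a partial Dirac structure if and only if $F$ is involutive.

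For item (2), I would take $F=E$. The first point is to identify $E^0=\{0\}$: since, by Conventions and Notations~\ref{CN_EEflat}, the inclusion $\mathbb{E}^\flat\hookrightarrow\mathbb{E}^\prime$ is bounded and the induced pairing between $E^\flat$ and $E$ is left non-degenerate, an $\alpha\in E^\flat_x$ with $\langle\alpha,u\rangle=0$ for every $u\in E_x$ must vanish. Consequently $D^E_Q$ is fibrewise the graph of $Q$, and $Q(E)+E^0=Q(E)$ is closed by hypothesis, so Proposition~\ref{P_DSkewsymmetricMorphism2} applies. Involutivity of $F=E$ is automatic because $(E,M,\rho,[.,.]_E)$ is itself a Lie algebroid, so the only remaining condition characterizing the Dirac property is the closedness of $\Omega$.

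There is no real obstacle; the only subtle point is the verification that $E^0=\{0\}$ in item (2), which rests precisely on the left non-degeneracy of the pairing $\langle\,,\,\rangle$ between $E^\flat$ and $E$ that has been built into the setup from the linear discussion onward. Everything else is substitution into Proposition~\ref{P_DSkewsymmetricMorphism2}.
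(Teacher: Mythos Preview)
Your proposal is correct and follows exactly the paper's approach: the corollary is introduced with the sentence ``By taking $Q=0$ in the one hand and $F=E$ on the other hand we obtain,'' and your argument simply spells out these two specializations of Proposition~\ref{P_DSkewsymmetricMorphism2}. The only addition you make is the explicit verification that $E^0=\{0\}$ via left non-degeneracy of the pairing, which the paper leaves implicit.
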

  
\begin{remark}
\label{R_F0} 
When $E^\flat =E^\prime$ then $F^0$ is always a closed subbundle of $E^\flat$. Indeed, if $\mathbb{E}$ (resp. $\mathbb{F}$) is the typical fibre of $E$ (resp. $F$), since $F$ is closed in $E$ and  if $M$ is connected, we may assume that $\mathbb{F}$ is a closed convenient subspace of $\mathbb{E}$. Therefore, we can choose an atlas $\{
  \left( U_\alpha,\Phi_\alpha \right) \}_{\alpha\in \mathbf{A}}$ of local trivializations $\Phi_\alpha: E_{| U}\to U\times \mathbb{E}$ such that the restriction of each $\Phi_\alpha$ to $F_{| U}$ is a  trivialization onto $U\times\mathbb{F}$.  Now $\{ \left( U_\alpha, (\Phi_\alpha^{*})^{-1} \right) \}_{\alpha\in \mathbf{A}}$ is an atlas of trivialization for $E^\prime$.  It follows that the restriction of $\Phi_\alpha^*$ to $U\times\mathbb{F}^0$ is a convenient isomorphism onto $F^0_{| U}$, which implies the result. Unfortunately, when $E^\flat\not=E^\prime$, since $E^\flat$ is only a weak subbundle of $E^\prime$, in general,  the restriction of $\Phi_\alpha^*$ to $U\times \mathbb{E}^\flat$ is not a convenient isomorphism onto $E^\flat_{| U}$.\\
\end{remark}
   
\begin{proof}[Proof of Proposition \ref{P_DSkewsymmetricMorphism2}] 
According to the assumption, we have $p(D_Q^F)=F$ and $p^\flat(D_Q^F)=Q(F)\oplus F^0$ closed in $E$ and $E^\flat$ respectively, and so $D_Q^F$ is a closed subbundle of $E^{\mathfrak{p}}$. According to Lemma \ref{L_PartialAnulatorProperties}, 2., it follows that it is an almost partial Dirac structure.

\begin{lemma}
\label{L_FormOnF}  
If $F$ is involutive, then, for any section $\gamma\in \Gamma(E^\flat_U)$ such that  $\gamma_{| F}=0$, $(L_{\rho(u)}\gamma)_{| F}=0$ and $(i_u d\gamma)_{| F}=0$ for any $u\in \Gamma(F_{U})$.
\end{lemma}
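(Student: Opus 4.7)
The plan is to exploit the two identities relating Lie derivative, inner product, and differential on the convenient Lie algebroid $(E,M,\rho,[.,.]_E)$, together with the involutivity of $F$.

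First, for the claim $(L_{\rho(u)}\gamma)_{|F}=0$, I would use the intrinsic formula already appearing in the proof of Lemma~\ref{L_LieDerivativeSectionEflat}, namely
\[
(L_{\rho(u)}\gamma)(v) = d(\langle \gamma,v\rangle)(\rho(u)) - \gamma([u,v]_E),
\]
valid for any $v\in\Gamma(E_U)$. Now pick $v\in \Gamma(F_U)$. Since $\gamma_{|F}=0$, the function $\langle\gamma,v\rangle$ vanishes identically on $U$, so its differential in the direction $\rho(u)$ vanishes. By involutivity of $F$ under $[.,.]_E$, we have $[u,v]_E\in\Gamma(F_U)$, hence $\gamma([u,v]_E)=0$. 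Both terms vanish, giving the first assertion.

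For $(i_u d\gamma)_{|F}=0$, I would invoke Cartan's magic formula $L_{\rho(u)}\gamma = i_u d\gamma + d(i_u\gamma)$, whose validity on a convenient Lie algebroid is precisely the content of the alternative expressions of the Courant bracket in Remark~\ref{R_OtherExpressionOfC} (and of the cited reference \cite{CaPe23}, Chap.~3, 18.). Rearranging,
\[
i_u d\gamma = L_{\rho(u)}\gamma - d(i_u\gamma).
\]
Since $u\in\Gamma(F_U)$ and $\gamma_{|F}=0$, the function $i_u\gamma=\langle\gamma,u\rangle$ vanishes identically on $U$, hence $d(i_u\gamma)=0$. Combined with the first assertion, evaluation on any $v\in\Gamma(F_U)$ yields $(i_u d\gamma)(v)=(L_{\rho(u)}\gamma)(v)=0$, proving the second assertion.

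The argument is essentially formal once the two pillars are in place: the local expression of $L_{\rho(u)}\gamma$ in terms of the algebroid bracket, and Cartan's formula in the convenient Lie algebroid framework. No analytic obstacle arises since everything reduces to pointwise evaluation of already-bounded sections; the only ``subtlety'' to be careful about is that these identities have already been established in the convenient context in \cite{CaPe23}, so they can be applied directly.
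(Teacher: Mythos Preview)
Your proof is correct and follows essentially the same approach as the paper: both use the intrinsic formula $(L_{\rho(u)}\gamma)(v)=\rho(u)(\gamma(v))-\gamma([u,v]_E)$ together with involutivity of $F$ to kill each term. The only cosmetic difference is that the paper handles $(i_u d\gamma)(v)$ directly via the explicit formula $i_u d\gamma(v)=\rho(u)(\gamma(v))-\rho(v)(\gamma(u))-\gamma([u,v]_E)$, whereas you deduce it from the first assertion via Cartan's formula; both routes are equivalent and equally short.
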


\begin{proof}[Proof of Lemma \ref{L_FormOnF}]
 Since the inclusion of $E^\flat$ in the dual bundle $E^\prime$ is an injective smooth morphism, any $\gamma\in \Gamma(E^\flat_U)$
 can be considered as a smooth $1$-form on $E_U$. Therefore,  for any $u$ and $v \in \Gamma(F_U)$, we have  $L_{\rho(u})\gamma(v)=\rho(\gamma(v))-\gamma([u,v]_E)$ and $i_ud\gamma(v)=\rho(u)(\gamma(v))-\rho(v)(\gamma(u))-\gamma([u,v]_E)$.\\
 Since $\gamma_{|F}\equiv 0$ and $[u,v]_E\in \Gamma(F_U)$, each term in the second member in each previous expression is identically null.
\end{proof}
 
Note that since $Q$ is a bundle convenient morphism, this implies that $\Omega $ (as defined in the Proposition) is a well defined   smooth $2$-form on $E$.\\ 
Over  any open set  $U$ and  $(r,\alpha)$ and any $(s,\beta)$ in $ \Gamma((D_Q^F)_U)$ we have (cf.  Remark \ref{R_OtherExpressionOfC}): 
\begin{equation}
\label{DDF1}
[(r,\alpha),(s,\beta)]_C=([r,s]_E,L_{\rho(r)}\beta- i_sd\alpha)).
\end{equation}

Thus  $D_Q^F$ is involutive  if and only if $[r,s]_E\in \Gamma(F_U)$ and the restriction of $L_{\rho(r)}\beta-i_s d\alpha-i_{[r,s]_E}\Omega$  to $F$ is identically null, for any $(r,\alpha)$ and $(s,\beta)$ in $\Gamma( (D_Q^F)_U)$. In particular $D_Q^F$ is involutive implies   $F$  involutive.\\

Assume that  $F$ is involutive. In particular, $F$ provided with  the anchor $\rho_{| F}$ and the bracket  $[.,.]_F$  which is the restriction of $[.,.]_E$ to $F$ $\rho_{| F}$ has a structure of convenient Lie algebroid. Then, for any $(r,\alpha)$ and $(s,\beta)$ in $\Gamma( (D_Q^F)_U)$, the restriction of
$\alpha-i_r\Omega$ and $\beta- i_s\Omega$ to $F$ are  identically null. Therefore, from Lemma \ref{L_FormOnF}, the restriction of  $L_{\rho(r)}\beta-L_{\rho(r)}(i_s\Omega)$ and of $i_sd\alpha-i_sd(i_r\Omega)$ to $F$ are identically null. The same is true for $i_sd\alpha-i_sd(i_r\Omega)$.\\

On the other hand, from  the classical definition of Lie derivative and exterior differential of forms and relation between Lie derivative, inner product and exterior derivative on a convenient Lie algebroid (cf. \cite{CaPe23}, Chap 3., 18.  and also  \cite{Marc16}, 2.11) we have:
\[
\begin{matrix}
L_{\rho(r)}i_s\omega+i_sd(i_r\Omega))&=\Omega \left( [s,r]_E \right) +L_{\rho(r)}i_s\Omega-i_s L_{\rho(r)} \Omega+i_s i_r d \Omega\hfill{}\\
&=i_{[r,s]_E} \Omega+i_s i_r d \Omega\hfill{}\\
\end{matrix}
\]

By the way obtain:
$$L_{\rho(r)}\beta-L_{\rho(r)}(i_s\Omega)-i_sd\alpha+i_sd(i_r\Omega)=L_{\rho(r)}\beta-i_sd\alpha- i_{[r,s]_E} \Omega+i_s i_r d \Omega.$$
 Since the restriction to the first member  is identically zero for any $r$ and $s$  in $\Gamma(F_U)$, it follows that $L_{\rho(r)}\beta-i_sd\alpha- i_{[r,s]_E} \Omega$ is identically null for any $r$ and $s$ in $\Gamma(F_U)$, if and only if the restriction of $\Omega$ to $F$ is closed.\\
This implies that $D_Q^F$ is involutive if and only if $F$ is involutive and $\Omega_{| F}$ is closed. 
\end{proof}

\subsection{Integrability of a partial almost Dirac structure}
\label{__IntegrabilityAlmostDIracAlgebroid}
In finite dimension, any  almost Dirac structure  $D$  on a Lie algebroid $E$ over $M$ which is involutive is integrable, that is the distribution $\rho^D(D)$ defines a (singular) foliation on $M$. This is a direct consequence that a finite dimensional Lie algebroid is integrable. Unfortunately, this result is not true in general in the convenient setting under more assumption even in the Banach setting. Thus we introduce 
\begin{definition}
\label{D_integrability} 
Let $D$ be an almost partial Dirac structure on a convenient Lie algebroid $(E,M,\rho,[.,.])$.  We say that $D$ is \emph{integrable} if the distribution $\rho^D(D)$ associated to the anchored bundle $(D,M,\rho^D)$ defines a foliation on $M$.
\end{definition}
  
In the Banach setting, if $D$ is involutive,  we have the following general result as Corollary of Theorem 8.4 in \cite{CaPe23}.
  
\begin{theorem}
\label{T_integrableD}  
Consider a partial almost involutive  Dirac structure $D$ on a Banach Lie algebroid  $(E,M,\rho,[.,.])$. If for each $x\in M$, the kernel of the anchor $\rho^D$ is supplemented in the fibre $D_x$ and $\rho^D_x(D_x))$ is closed in $T_xM$, then  $D$ is integrable.
\\ 
\end{theorem}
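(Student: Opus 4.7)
The plan is to reduce the statement to Theorem~8.4 of \cite{CaPe23} applied directly to $D$, now regarded as a Banach Lie algebroid in its own right. The key observation is that Theorem~\ref{T_DLieAlgebroid} already bridges the gap between being a partial Dirac structure and being a Lie algebroid, so the hypotheses of integrability can be transported along this bridge.

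First, I would argue that $D$ is itself a Banach Lie algebroid. Since $D$ is closed in the Pontryagin bundle $E^{\mathfrak{p}}=E\oplus E^\flat$ (a Banach bundle in the Banach setting), its fibres are Banach spaces and $D$ is a Banach subbundle. Because $D$ is assumed involutive, Proposition~\ref{P_CNSCourant bracketD} gives $\mathbf{T}_D\equiv 0$; by Theorem~\ref{T_DLieAlgebroid} this means $D$ is in fact a partial Dirac structure, and the Courant bracket restricts to a Lie bracket $[\cdot,\cdot]_D$ on $\Gamma(D)$ for which $(D,M,\rho^D,[\cdot,\cdot]_D)$ is a convenient Lie algebroid. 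Since $D$ is a Banach bundle and all the structure maps descend from Banach morphisms on $E^{\mathfrak{p}}$, this algebroid is actually a Banach Lie algebroid.

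Second, I would check that the two hypotheses of Theorem~8.4 of \cite{CaPe23} are fulfilled pointwise by $(D,M,\rho^D)$. This is exactly what is assumed in the statement: at every $x\in M$, $\ker\rho^D_x$ is supplemented in $D_x$, and $\rho^D_x(D_x)$ is closed in $T_xM$. These are precisely the Banach-type splitting and closed-range conditions required to run the integrability argument of \cite{CaPe23}.

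Finally, I would invoke Theorem~8.4 of \cite{CaPe23} directly: it yields that the characteristic distribution $\rho^D(D)\subset TM$ of the Banach Lie algebroid $D$ defines a (singular) foliation on $M$. By Definition~\ref{D_integrability}, this is exactly what it means for $D$ to be integrable, which closes the argument. The main conceptual point (and potential pitfall) is the first step: one must be sure that passage from the involutive partial almost Dirac structure to a genuine Banach Lie algebroid structure on $D$ is legitimate, i.e.\ that the Banach smoothness of $\rho^D$ and $[\cdot,\cdot]_D$ is not lost when restricting from $E^{\mathfrak{p}}$ to $D$; but this is guaranteed by the closedness of $D$ in $E^{\mathfrak{p}}$ and by the observation in the proof of Theorem~\ref{T_DLieAlgebroid} that the Courant bracket on $\Gamma(D_U)$ depends only on the $1$-jets of its arguments.
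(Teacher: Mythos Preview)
Your proposal is correct and matches the paper's own approach: the paper states the theorem simply as a corollary of Theorem~8.4 in \cite{CaPe23}, and you have spelled out precisely the reduction the paper has in mind, namely that involutivity of $D$ gives it a Banach Lie algebroid structure via Theorem~\ref{T_DLieAlgebroid}, after which the pointwise splitting and closed-range hypotheses feed directly into Theorem~8.4 of \cite{CaPe23}.
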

  
In fact, we have a more general result of integrability of  Banach  partial almost Dirac structure. For this purpose, we introduce as in \cite{CaPe23}:

\begin{definition}
\label{D_PartialConvenientLieAlgebroid}
Let $ \left( E,\pi,M,\rho \right) $ be a convenient anchored bundle. Given a sheaf $\mathfrak{E}_M$ of subalgebras of the sheaf $C^\infty _M$ of smooth functions on $M$ and let $\mathfrak{P}_M$  be a sheaf of $\mathfrak{E}_M$ modules of sections of $E$.  Assume that  $\mathfrak{P}_M$ can be provided with a structure of Lie algebras sheaf  which satisfies, for any open set $U$ in $M$:
\begin{description}
\item[\textbf{(CPLA1)}]
the Lie bracket $[.,.]_{\mathfrak{P}(U)}$ on $\mathfrak{P}(U)$ only depends on the $1$-jets of sections of ${\mathfrak{P}(U)}$;
\item[\textbf{(CPLA2)}]
for any $(\mathfrak{a},\mathfrak{a}')\in \left( \mathfrak{P}(U) \right) ^2$ and any $f\in \mathfrak{E}(U)$, we have the compatibility conditions
\begin{eqnarray}
\label{eq_rhoCompatibilitySheaf}
[\mathfrak{a},f\mathfrak{a}']_{\mathfrak{P}(U)}=df(\rho(\mathfrak{a}))\mathfrak{a}'+f[\mathfrak{a},\mathfrak{a}']_{\mathfrak{P}(U)}
\end{eqnarray}
\item[\textbf{(CPLA3)}]
$\rho$ induces  a Lie algebras morphism from  $\mathfrak{P}(U)$ to $\mathfrak{X}(U)$, for any open set $U$ in $M$.
\end{description}
Then $ \left( E,\pi, M,\rho,\mathfrak{P}_M \right)$ is called a convenient partial Lie algebroid\index{convenient!partial Lie algebroid}. The family $\{ [.,.]_{\mathfrak{P}(U)},\; U \textrm{ open set in }M \}$ is called a sheaf bracket\index{sheaf bracket} and is denoted $[.,.]_E$. \\
A partial convenient Lie algebroid  $\left( E,\pi, M,\rho,\mathfrak{P}_M \right) $  is called strong\index{Lie algebroid!strong partial} \index{strong partial Lie algebroid} 
if for any $x\in M$, the stalk\index{stalk}
\[
\mathfrak{P}_x=\underrightarrow{\lim}\{ \mathfrak{P}(U),\;\; \varrho^U_V,\;\; U \textrm{ open neighbourhood of } x \}
\]
is equal to $\pi^{-1}(x)$ for any $x\in M$.
\end{definition}

We the have the following theorem (cf. \cite{CaPe23}, Chap. 8) 
\begin{theorem}\label{T_IntegrabilityPartialAlgebroid}
Let $ \left( E,\pi, M,\rho,\mathfrak{P}_M \right)$ be a partial Banach-Lie algebroid.  If for each $x\in M$, $\ker \rho_x$ is split in the fibre $ E_x$ and  $\rho(E_x)$ is closed in $T_xM$,  then $ \left( E,\pi, M,\rho,\mathfrak{P}_M \right)$  is integrable.
\end{theorem}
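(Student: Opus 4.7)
The plan is to reduce the integrability of the partial Banach--Lie algebroid $(E,\pi,M,\rho,\mathfrak{P}_M)$ to a Stefan--Sussmann-type integrability result for the distribution $\mathfrak{D} = \rho(E) \subset TM$. The sheaf $\mathfrak{P}_M$ will play the role that $\Gamma(E)$ plays in the global Lie algebroid case, and the local nature of integrability means we never need a global section algebra.

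First, I would exploit the two standing assumptions to give $\mathfrak{D}$ a workable local model. Fix $x \in M$ and a topological complement $H_x$ of $\ker \rho_x$ in $E_x$, which exists by the splitting hypothesis. Since $\rho(E_x)$ is closed in $T_xM$, it is itself a Banach space, and the bounded bijection $\rho_{|H_x}: H_x \to \rho(E_x)$ is a topological isomorphism by the open mapping theorem. Trivializing $E$ over a neighbourhood $U$ of $x$ and using (CPLA2) together with the $\mathfrak{E}_M$-module structure, I can then choose local sections $s_1,\dots,s_k,\dots \in \mathfrak{P}(U)$ such that $\rho(s_i)(x)$ span (topologically) a complement of $\ker \rho_x$ in $\rho(E_x)$. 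By continuity these vector fields remain transverse to $\ker \rho$ in a (possibly shrunken) open neighbourhood of $x$, so that locally the distribution $\mathfrak{D}$ is spanned by $\{\rho(s_i)\}$ in the sense of Banach subbundles complemented along the splitting.

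Next, I would build the integral leaf through $x$ via flows. Condition (CPLA3) guarantees that $\rho(\mathfrak{P}(U))$ is a Lie subalgebra of $\mathfrak{X}(U)$, so the bracket of any two $\rho(s_i), \rho(s_j)$ lies again in this family; together with (CPLA1) this secures that flows of the $\rho(s_i)$ preserve $\mathfrak{D}$. Using the Banach Picard--Lindel\"of theorem, the flows $\varphi_t^{\rho(s_i)}$ exist on a common neighbourhood of $x$ for small $t$. The map $(t_1,\dots,t_k) \mapsto \varphi_{t_1}^{\rho(s_1)} \circ \cdots \circ \varphi_{t_k}^{\rho(s_k)}(x)$ is then smooth with injective derivative at $0$, thanks to the topological isomorphism established in the first step, and the implicit function theorem in Banach spaces yields a locally injective immersion whose image is an integral submanifold of $\mathfrak{D}$.

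Finally, the global patching follows from the involutivity of $\rho(\mathfrak{P}(U))$ and a standard orbit argument: two local leaves through the same point agree on their intersection because the defining flows commute modulo bracket corrections that again lie in $\rho(\mathfrak{P}(U))$. Taking unions of the local immersed submanifolds produces the leaves of a (possibly singular) foliation of $M$ integrating $\mathfrak{D} = \rho(E)$. The main obstacle is this Banach-manifold patching step: the splitting of $\ker\rho_x$ is what allows the implicit function theorem and the orbit theorem to function in the Banach category, while the closedness of $\rho(E_x)$ guarantees that the ambient leaves carry an honest Banach manifold structure. These are precisely the two infinite-dimensional pitfalls that the hypotheses of the theorem are designed to bypass, following the proof strategy of Theorem~8.4 in \cite{CaPe23}.
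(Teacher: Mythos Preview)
The paper does not actually prove this theorem; it simply quotes it from \cite{CaPe23}, Chapter~8. So there is no in-paper proof to compare against. What I can do is assess whether your sketch stands on its own.

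There is a genuine gap in the leaf-construction step. You parametrise the candidate integral manifold by a finite tuple $(t_1,\dots,t_k)$ of flow times, via
\[
(t_1,\dots,t_k)\;\longmapsto\;\varphi_{t_1}^{\rho(s_1)}\circ\cdots\circ\varphi_{t_k}^{\rho(s_k)}(x).
\]
In the Banach setting the complement $H_x$ of $\ker\rho_x$ is in general an infinite-dimensional Banach space, so no finite family of flows can produce a chart for the leaf: the image of this map is at most $k$-dimensional. The classical Stefan--Sussmann orbit construction you are mimicking is intrinsically finite-dimensional and does not transfer verbatim. Your earlier sentence about choosing ``local sections $s_1,\dots,s_k,\dots$'' whose images ``span (topologically)'' a complement hints that you sense this, but a countable spanning set is neither guaranteed nor sufficient: one needs a genuine Banach-space parameter, and composing infinitely many flows is not a well-posed operation.

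The approach in \cite{CaPe23} instead uses the splitting hypothesis to pass to a local quotient anchored bundle on which $\rho$ becomes injective with closed range, and then invokes a Banach Frobenius-type argument (adapted charts built from the implicit function theorem applied to a single Banach-parametrised family, not iterated one-dimensional flows). Your final paragraph correctly identifies \emph{why} the two hypotheses matter, but the middle step needs to be reworked along these lines rather than through finite flow compositions.
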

  
\begin{corollary}
\label{C_integrablePartialDirac} 
Let $D$ be a partial almost Dirac structure on a Banach Lie algebroid $ \left( E,\pi, M.,\rho,[.,.]_E \right) $.\\
Assume that $ \left( D,M,\rho^D,[\;,\;]_D \right) $ is a  partial Lie algebroid
and, for each $x\in M$,  $\ker\rho^D_x$ is split in $\rho^D(D_x)$, closed in $T_xM$. Then  $\mathcal{D}$ is integrable.
\end{corollary}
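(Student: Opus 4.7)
The plan is to reduce the statement directly to Theorem \ref{T_IntegrabilityPartialAlgebroid}, viewing $D$ itself as the ambient Banach bundle of a partial Lie algebroid whose anchor is $\rho^D$ and whose sheaf bracket is $[.,.]_D$. Everything else in the proof is then a matter of checking that the hypotheses of that theorem hold verbatim after this translation.

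First I would observe that, by the assumption of the corollary, $(D,M,\rho^D,[.,.]_D)$ is already a partial Banach Lie algebroid in the sense of Definition \ref{D_PartialConvenientLieAlgebroid}, so the abstract framework of Theorem \ref{T_IntegrabilityPartialAlgebroid} applies with $E$ replaced by $D$, $\pi$ by $\pi_D=\pi\circ p_{|D}$, $\rho$ by $\rho^D$ and $\mathfrak{P}_M$ by the sheaf of sections of $D$ closed under $[.,.]_D$. Second, I would check the two pointwise conditions required by that theorem: for each $x\in M$, the kernel $\ker\rho^D_x$ should split in the fibre $D_x$, and the image $\rho^D(D_x)$ should be closed in $T_xM$. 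Both are precisely the hypotheses we have been granted.

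With the hypotheses verified, Theorem \ref{T_IntegrabilityPartialAlgebroid} yields that $(D,M,\rho^D,[.,.]_D)$ is integrable, which is to say that the singular distribution $x\mapsto \rho^D(D_x)\subset T_xM$ defines a foliation on $M$. By Definition \ref{D_integrability}, this is exactly the statement that the partial almost Dirac structure $D$ is integrable, which completes the argument.

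Since the heavy lifting has been done in Theorem \ref{T_IntegrabilityPartialAlgebroid} and in the hypothesis that $(D,M,\rho^D,[.,.]_D)$ is itself a partial Lie algebroid, there is no real obstacle here; the proof is essentially a translation. The only delicate conceptual point, which I would flag in passing, is that the assumption that $D$ carries a partial Lie algebroid structure (rather than just an almost Dirac structure) is what lets us bypass the failure of Theorem \ref{T_DLieAlgebroid} to guarantee, in the convenient/Banach setting, that involutivity alone suffices; the additional partial Lie algebroid data together with the pointwise splitting condition is what makes the integrability mechanism of \cite{CaPe23} available.
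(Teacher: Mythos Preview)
Your proposal is correct and is exactly the argument the paper has in mind: the corollary is stated immediately after Theorem~\ref{T_IntegrabilityPartialAlgebroid} with no separate proof, precisely because it is obtained by applying that theorem to the partial Banach--Lie algebroid $(D,M,\rho^D,[.,.]_D)$ once the pointwise splitting and closedness hypotheses are granted. Your write-up simply makes this implicit reduction explicit.
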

  
Note that there exists a partial almost  Banach Dirac structure on a Banach  anchored bundle which is not  involutive  but which is integrable as shown in the following example:
  
\begin{example}
\label{Ex_ DirectSumDirac} 
\textsf{Direct sum of partial Dirac structures on a product of Hilbert Lie algebroids}.\\
For $i \in \{1,2\}$, let $ \left( E_i,M_i,\rho_i,[.,.]_i \right) $ be a convenient Lie algebroid.  We consider the  anchored product vector bundle $\left( E=E_1\times E_2,M= M_1\times M_2, \rho=\rho_1\times \rho_2 \right) $. In finite dimension, on this  product  $(E, M, \rho)$, there exists a unique structure of Lie algebroid  "compatible" with the structure on each $E_i$ (cf. for instance  \cite{Mei17} for a direct simple proof).  The essential argument for this result in finite dimension,  is that the  module of local sections of the bundle product  $E$ is generated by pullbacks of local sections of $E_1$ and $E_2$. Therefore, it follows that, in finite dimension, the direct sum of Dirac structures is again a Dirac structure (cf. \cite{YoJa10} and \cite{YJM10}).
{\bf Unfortunately, this not any more true in infinite dimension} as we will see.\\

Let $U$ be any open set in $M$,  then  $U_i=\pi_i(U)$ is an open set of $M_i$ and $U$ is contained in $U_1\times U_2$.  A local section $\mathfrak{a}$ of 
 $E=E_1\times E_2$  defined on $U$ is called \emph{projectable}\index{projectable section} if there exists a section $\mathfrak{a}_i$ of  $E_i$ over $U_i$, for $ i\in\{1,2\}$ such that 
 $\mathfrak{a}(x_1,x_2)=(\mathfrak{a}
 _1(x_1),\mathfrak{a}_2(x_2))$ 
 for all $(x_1,x_2)\in U$. We denote by $\mathfrak{S}_U$ the set of projectable sections on $U$,  by  $\mathfrak{E}_U$ the algebra of smooth maps on 
 $U$ and $\mathfrak{P}_U$ the $\mathfrak{E}_U$-module generated by $\mathfrak{S}_U$.  Our purpose is to show that we can provide  $\mathfrak{P}_U$ with a Lie algebra structure by using \cite{CaPe23}, Chap.~3, 3.18.4.\\
 
\begin{observation}
 \label{ob_Strong} 
 Note that for any $z= \left( z_1,z_2 \right) \in M$, over any  simply connected chart domain $V=V_1\times V_2 $  where $V_i$ is a chart domain around $z_i$, the triviality of $E$ over such domains implies that, for any $ \left( u_1,u_2 \right) \in E_z$,  there exists  a projectable section $ \left( \mathfrak{a}_1,\mathfrak{a}_2 \right) $ such that $\mathfrak{a}_i(z_i)=u_i$ $i \in \{1,2\}$; in particular, this is true around any $z\in U$.
\end{observation}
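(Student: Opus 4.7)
The plan is to use the local triviality of each factor bundle $E_i$ over the chart domain $V_i$. Since $V_i$ is (chosen as) a chart domain around $z_i$, the restriction $E_i|_{V_i}$ is trivializable: there is a bundle isomorphism $\Phi_i : E_i|_{V_i} \to V_i \times \mathbb{E}_i$ where $\mathbb{E}_i$ denotes the typical fibre of $E_i$. Under $\Phi_i$, the given vector $u_i \in E_{z_i}$ corresponds to a pair $(z_i, \tilde u_i)$ with $\tilde u_i \in \mathbb{E}_i$.

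The natural candidate is then the constant section
\[
\mathfrak{a}_i : V_i \to E_i, \qquad x_i \mapsto \Phi_i^{-1}(x_i, \tilde u_i),
\]
which is smooth because $\Phi_i^{-1}$ is smooth and the map $x_i \mapsto (x_i,\tilde u_i)$ is smooth into the trivial bundle. By construction $\mathfrak{a}_i(z_i) = \Phi_i^{-1}(z_i,\tilde u_i) = u_i$.

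Finally, the section $\mathfrak{a} : V \to E_1 \times E_2$ defined by $\mathfrak{a}(x_1,x_2) = (\mathfrak{a}_1(x_1),\mathfrak{a}_2(x_2))$ is smooth on $V \subset V_1 \times V_2$ and, by its very definition, projectable in the sense introduced just before the observation. It satisfies $\mathfrak{a}(z_1,z_2) = (u_1,u_2)$, proving the claim. No real obstacle arises: the statement is essentially a direct consequence of local triviality of each factor, together with the product structure of $V = V_1 \times V_2$; the only thing to make sure is that trivializations can be chosen independently on each factor, which is possible precisely because $V$ is a product of chart domains.
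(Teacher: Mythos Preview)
Your argument is correct and is exactly what the paper has in mind: the observation is stated without proof in the paper, with only the phrase ``the triviality of $E$ over such domains implies that\ldots'' as justification, and you have simply made this explicit by writing down the constant sections in each factor via the trivializations $\Phi_i$.
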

According to this previous Theorem,  without loss of generality, we may assume that $U$ is of type $U_1\times U_2$ where $U_i$ is a simply connected chart domain in $M_i$. 
  
On $\mathfrak{S}_U$, we can define a Lie bracket $[.,.]_{\mathfrak{S}_U}$ by
\begin{equation}
\label{eq_LieBracketProjectable}
[ (\mathfrak{a}_1,\mathfrak{a}_2)), (\mathfrak{b}_1,\mathfrak{b}_2)]_{\mathfrak{S}_U} (x_1,x_2)
=
 \left( [\mathfrak{a}_1,\mathfrak{b}_2]_{E_1}(x_1),[\mathfrak{a}_2,\mathfrak{b}_2]_{E_2}(x_2) \right)
\end{equation}
 for $(x_1,x_2)\in U$. Then $[ ., .]_{\mathfrak{S}_U}$  takes values in $\mathfrak{S}_U$ and satisfies  the following properties:
\begin{enumerate}
\item[--]  
it is skew symmetric;
\item[--] 
it depends on the $1$-jet of sections of  $\mathfrak{S}_U$;
\item[--]
it satisfies the Jacobi identity;  
\item[--]   
$\rho=(\rho_1,\rho_2)$ is compatible with the bracket.
\end{enumerate} 
 
Therefore from \cite{CaPe23}, Chap.~3, Theorem~3.7, we can extend the bracket $[.,.]_{\mathfrak{S}_U}$ to a Lie bracket  $[., .]_{\mathfrak{P}_U}$ such that $\mathfrak{P}_U$ 
 has a Lie algebra structure, and  the restriction of $\rho$ to  $\mathfrak{P}_U$  is a Lie algebra morphism into the Lie algebra $\mathcal{X}(U)$ of vector fields on $U$. By the way, we obtain a 
 sheaf of convenient Lie algebras $\mathfrak{P}_{M}$ on $M$ and so $ \left( E,\pi,M,\rho, \mathfrak{P}_M \right) $ is a partial convenient Lie algebroid.  Moreover, Observation~\ref{ob_Strong} implies that  it is a strong partial convenient Lie algebroid.    
Unfortunately,  if $U$ is any open set in $M$, the Lie bracket $[.,.]_{\mathfrak{P}_U}$ cannot be extended  to sections in  $\Gamma(E_{| U})$ in general.

However, we can note that if $E_i$ is a closed subbunlde of $TM_i$,  the classical Lie bracket on  $T(M_1\times M_2) $ will extend the Lie bracket of projectable sections of $E_1\times E_2$. \\

{\bf Note that  $E_1\times E_2$ being isomorphic to $E_1\oplus E_2$ over $M$,  all the previous results are also true for the direct sum $E_1\oplus E_2$.}\\

Now, assume that, for $i\in\{1,2\}$,  $D_i$ is a Dirac structure relative to the Pontryagine bundle $E_i\oplus E_i^\flat$, and we set  
\[
D:=D_1\oplus D_2\;
=\left\lbrace 
((x_1,x_2), u_1+ u_2, \alpha_1+\alpha_2),\;(x_i,u_i,\alpha_i)\in D_i,\textrm{ for } i\in\{1,2\}
\right\rbrace. 
\]
According to Lemma~\ref{L_PartialAnulatorProperties}, 4.,  $D$ is a partial 
 almost Dirac structure on the anchored bundle  $E_1\oplus E_2$.  Then $D$ can be provided with an anchored bundle \textit{via} the anchor $\rho^D=\rho_1\circ{p_1}_{| D_1}\oplus \rho_2\circ  {p_1}_{| D_1}$. 
 But, by assumption, for $i\in\{1,2\}$, each $D_i$ is a partial Dirac structure, so it  has a convenient Lie algebroid structure (cf. Theorem~\ref{T_DLieAlgebroid}). Therefore, from
 the previous result on  the direct sum of Lie algebroids, it follows that  $D$ can be provided with a strong  partial convenient Lie algebroid. Now, if we assume that $E_i$ and $E_i^\flat$ are Hilbert 
 bundles, then  $\ker \rho^D_x$ is supplemented in each fibre $D_x$ and $\rho_i^{D_i}(( E_i)_{x_i})$ is closed in $T_{x_i}M$, for all $x=(x_1,x_2)\in M$ by Corollary~\ref{C_integrablePartialDirac}, $D$ is a partial almost Dirac structure which is integrable, but not involutive since  there is no Lie algebroid structure on $E=E_1\oplus E_2$, in general,  and so the Courant bracket cannot be defined on $E$. \\
 
However, note  that   if $E_i$ is a closed   subbundle of $TM_i$, then $D_i=E_i\oplus E_i^0$ is a partial Dirac structure  relative to any Pontryagine bundle $TM_i\oplus T^\flat M_i$ (cf. Corollary~\ref{C_XX0Omega}) which is involutive if and only if $E_i$ is involutive  as a subbundle of $TM_i$. Moreover, in the Banach setting,  $E_i$ is involutive, then it is integrable (cf.\cite{CaPe23}) and from Frobenius theorem, the converse is true  when $E_i$ is supplemented in $TM_i$.  As  $(E_1\oplus E_2)^0=E_1^0\oplus E_2^0$, this implies that  $D_1\oplus D_2= (E_1\oplus E_2)\oplus (E_1\oplus E_2)^0$ is a  partial Dirac structure on  the Lie algebroid  $TM_1\times TM_2$, which, in the Banach setting, implies the integrability of the almost partial Dirac structure $D_1\oplus D_2$.
\end{example} 
 
\section{Partial Dirac structures on a convenient manifold}
\label{_PartialDiracStructuresOnAConvenientManifold}

\emph{In finite dimension, to a Dirac structure is associated a singular foliation and each leaf is provided with a presymplectic structure (cf. \cite{Bur13}).\\
In general, in the convenient setting (even in the Banach one), such results  are not true for  partial Dirac structures.\\
In this section, we will show that, under some  assumptions (which are satisfied in many classical examples), we recover analog  results. }

\subsection{Pre-symplectic convenient manifold} \label{___Pre-Symplectic}

\begin{definition} 
\label{D_PreSymplecticForm} 
A closed $2$-form $\Omega$ on $M$ is called
\begin{enumerate}
\item[(i)]   
\emph{pre-symplectic}\index{pre-symplectic!$2$-form} if 
\begin{description}
\item[\textbf{(PS1)}]
the kernel of $\Omega^\flat$ has a convenient bundle structure;
\end{description}
\item[(ii)] \emph{strong pre-symplectic} if 
\begin{description}
\item[\textbf{(PS2)}]
$\ker \Omega^\flat$ is supplemented;
\item[\textbf{(PS3)}]
the inclusion of $\Omega^\flat(TM)$ in $T^\prime M$ is an injective convenient  bundle morphism.
\end{description}
\end{enumerate}
\end{definition}
 
\begin{remark}
\label{R_widehatOmegasharp}${}$
\begin{enumerate}
\item[1.] 
Definition \ref{D_PreSymplectic}  is compatible with the classical definition of a   pre-symplectic form on a finite dimensional manifold.
\item[2.] 
If  the condition \textbf{(PS1)} and \textbf{(PS2)} are  satisfied, the condition \textbf{(PS3)} is not satisfied in general, even in the Banach setting  (cf. Marsden example in \cite{Mars72}).
\item[3.] 
If conditions \textbf{(PS1)} and \textbf{(PS3)} are satisfied, this does not imply that  condition \textbf{(PS2)} is satisfied (cf. Example~\ref{Ex_Skewsymmetricmorphism}).
\end{enumerate}
\end{remark} 
As in finite dimension, $\ker\Omega^\flat$ is called the \emph{kernel} $\Omega$ and will be denoted $\ker \Omega$. We then have the following result:
\begin{lemma}
\label{P_PSProperties} 
If $\Omega$ is a pre-symplectic form on a manifold, then $\ker\Omega$ is an involutive bundle.
\end{lemma}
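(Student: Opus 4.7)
The plan is to use the closedness of $\Omega$ together with the standard intrinsic Cartan formula for the exterior derivative of a $2$-form applied to three vector fields, which remains valid in the convenient setting (this is the identity used in \cite{CaPe23}, Chap.~3, 18., for exterior differentials on a convenient Lie algebroid, specialized here to $TM$). The formula reads
\[
d\Omega(X,Y,Z) \;=\; L_X \Omega(Y,Z) - L_Y \Omega(X,Z) + L_Z \Omega(X,Y)
\;-\; \Omega([X,Y],Z) + \Omega([X,Z],Y) - \Omega([Y,Z],X)
\]
for any three smooth vector fields $X,Y,Z$ on $M$.

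First I would fix two local smooth sections $X, Y$ of $\ker\Omega$ over an open set $U \subset M$; such local sections exist and generate the fibres pointwise because, by condition \textbf{(PS1)}, $\ker\Omega$ has a convenient vector bundle structure. For any smooth vector field $Z \in \mathfrak{X}(U)$, the very definition $X, Y \in \ker\Omega^\flat$ gives $\Omega(X, \cdot) = 0$ and $\Omega(Y, \cdot) = 0$ on $U$; in particular $\Omega(X,Z) = \Omega(Y,Z) = \Omega(X,Y) = 0$, and likewise $\Omega([X,Z],Y) = \Omega([Y,Z],X) = 0$.

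Substituting these vanishings into the Cartan formula above, together with $d\Omega = 0$ (since $\Omega$ is pre-symplectic, hence closed), leaves
\[
0 \;=\; -\,\Omega([X,Y], Z)
\]
for every $Z \in \mathfrak{X}(U)$. Hence $\Omega^\flat([X,Y]) = 0$ as an element of $T'M$, so $[X,Y]$ takes values in $\ker\Omega^\flat = \ker\Omega$. Since the argument is local and the sections $X,Y$ were arbitrary, $\ker\Omega$ is involutive.

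The only subtle point I would guard against is ensuring that the Cartan-type formula is available in the convenient framework for a $2$-form on $TM$; this is precisely what is developed in \cite{CaPe23} for convenient Lie algebroids, applied here to the tautological Lie algebroid $TM$ with anchor $\mathrm{id}_{TM}$ and the kinematic bracket on $\mathfrak{X}(M)$. No further regularity on $\ker\Omega$ beyond \textbf{(PS1)} is needed, since the conclusion $\Omega^\flat([X,Y]) = 0$ is fibrewise and does not require splitting or closedness of the image of $\Omega^\flat$.
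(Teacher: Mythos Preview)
Your proof is correct and is precisely the standard finite-dimensional argument the paper alludes to; indeed, the paper does not give a proof at all but merely states that ``the proof of this lemma uses the same arguments as in finite dimension and is left to the reader as an exercise.'' Your care in noting that the Cartan formula for $d\Omega$ is available in the convenient setting via \cite{CaPe23} is exactly the one point that needs checking beyond the finite-dimensional case.
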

The proof of  this lemma uses the same arguments as in finite dimension  and is left to the reader as an exercise.

\begin{definition} 
\label{D_Foliation}
A \emph{foliation}\index{foliation} $\mathcal{F}$ on a manifold $M$ is a partition of $M$ into weak closed submanifolds called \emph{leaves}\index{leaf} of $\mathcal{F}$. The foliation $ \mathcal{F}$ is called \emph{regular}\index{regular!foliation}, if there exists an involutive closed subbundle of $TM$ whose maximal integral manifolds are the leaves of $\mathcal{F}$, otherwise the foliation is called \emph{singular}\index{singular!foliation}.
\end{definition}
 
Since $\ker\Omega$ is a closed involutive subbundle of $TM$, from \cite{CaPe23},  Theorem~8.4, we obtain:
\begin{corollary}
\label{C_IntegrableKernel}
If $\Omega$ is a pre-symplectic form on a Banach manifold $M$,  then $\ker \Omega$ defines a regular foliation\footnote{Here, we do not impose that $\ker\Omega$ is split contrary to Darboux's Theorem assumptions.}.
\end{corollary}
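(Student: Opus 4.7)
The plan is to combine the three ingredients that are already at our disposal: the involutivity provided by Lemma \ref{P_PSProperties}, the bundle structure on $\ker\Omega$ granted by the pre-symplectic axiom \textbf{(PS1)}, and the general Frobenius-type integrability result for Banach subbundles (Theorem 8.4 of \cite{CaPe23}) which is explicitly cited in the statement.

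First I would verify that $\ker\Omega$ is a \emph{closed} subbundle of $TM$. The axiom \textbf{(PS1)} only says that $\ker\Omega$ has a convenient bundle structure, so one needs to observe that in each fibre $T_xM$ the subspace $\ker\Omega^\flat_x=\{u\in T_xM:\Omega^\flat_x(u)=0\}$ is closed, since it is the kernel of the bounded linear map $\Omega^\flat_x:T_xM\to T^\flat_xM$; combined with \textbf{(PS1)} this gives a closed subbundle in the Banach setting. Second, I would invoke Lemma \ref{P_PSProperties} to conclude that this closed subbundle is involutive (the lemma is already stated and its proof is deferred).

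Third, with $\ker\Omega$ a closed involutive subbundle of the Banach tangent bundle $TM$, the integrability Theorem 8.4 of \cite{CaPe23} applies directly and produces, through each point of $M$, a maximal integral submanifold whose tangent bundle coincides with the restriction of $\ker\Omega$. The partition of $M$ into these maximal integral submanifolds is precisely a regular foliation in the sense of Definition \ref{D_Foliation}, since the required closed involutive subbundle is exhibited by $\ker\Omega$ itself.

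The only step with any subtlety is the verification that $\ker\Omega$ is a closed subbundle (as opposed to merely a convenient subbundle), which is why the hypothesis that $M$ is Banach is used: in that category kernels of bounded fibrewise maps between Banach bundles satisfying \textbf{(PS1)} are automatically closed, and the Frobenius-type theorem cited requires precisely closedness (not splitting, as emphasised in the footnote). Once this technical point is settled, the corollary is an immediate application of the two already-stated results.
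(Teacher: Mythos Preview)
Your proposal is correct and follows exactly the paper's approach: the paper simply notes that $\ker\Omega$ is a closed involutive subbundle of $TM$ (closedness from \textbf{(PS1)} and involutivity from Lemma~\ref{P_PSProperties}) and then invokes Theorem~8.4 of \cite{CaPe23}. Your additional remarks on why closedness holds fibrewise and why the Banach hypothesis is needed are accurate elaborations of what the paper leaves implicit.
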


Let $\Omega$ be a pre-symplectic form on $M$.  A function $f$ defined on an open set $U\subset M$ is called \emph{admissible}\index{admissible!function} if there exists a vector field $X$ on $U$ such that $\Omega^\flat(X)=df$, but such  a vector field is not unique; it is defined modulo a vector field in $\ker \Omega$.  The set of such admissible functions is denoted by $\mathfrak{A}_\Omega(U)$. In fact, for a strong pre-symplectic form, we have:

\begin{proposition}
\label{P_AlgebraAdmissible} 
Let $\Omega$ be  a strong pre-symplectic form on a manifold $M$ which satisfies assumption~\emph{\textbf{(PS3)}} and denote $T^\flat M=\Omega^\flat(TM)$. Then we have:
\begin{enumerate}
\item[1.]
$\mathfrak{A}_\Omega(U)$ is  the algebra of smooth functions $f\in C^\infty(U)$ such that $df$ induces a section of $T^\flat M$. In fact, $\mathfrak{A}_\Omega(U)$ is the set of functions $f$ such that each iterated derivative $d^{k}f(x)\in L_{\operatorname{sym}}^{k}(T_{x}M,\mathbb{R})$ ($k \in \mathbb{N^\ast}$) at any $x \in M$ and satisfies\footnote {Cf. \cite{CaPe23}, Chap.~3.}:
\begin{equation}
\label{eq_dkfPartialDiracStructures}
\forall (u_2,\dots,u_k) \in (T_xM)^{k-1},\;
d^{k}_xf(.,u_{2},\dots,u_{k}) \in T_{x}^{\flat}M.
\end{equation}
\item[2.]
For any open set $U\in M$, and any $f\in\mathfrak{A}_\Omega(U)$, there exists a vector field $X_f$ on $U$ such that $\Omega^\flat(X_f)=df$. \\
The bracket
\[
\{f,g\}_\Omega:=df(X_g)
\]
is well defined and $ \left( \mathfrak{A}_\Omega(U),\{\;,\;\}_\Omega \right) $ has a structure of Lie-Poisson algebra.
\end{enumerate}
\end{proposition}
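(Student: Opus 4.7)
The key observation is that \textbf{(PS2)} and \textbf{(PS3)} together turn $\Omega^\flat$ into a convenient bundle isomorphism from a supplement of $\ker\Omega$ onto $T^\flat M$. Using \textbf{(PS2)}, fix a closed convenient subbundle $W\subset TM$ with $TM=\ker\Omega\oplus W$; then fibrewise $\Omega^\flat|_W:W\to T^\flat M$ is bijective, and with the convenient structure that \textbf{(PS3)} places on $T^\flat M$ as a weak subbundle of $T'M$, this restriction is a smooth bundle isomorphism. I denote its inverse by $\sigma:T^\flat M\to W\subset TM$.

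For Part~1, if $f\in\mathfrak{A}_\Omega(U)$ then by definition $df=\Omega^\flat(X)$ for some smooth vector field $X$ on $U$, so $df$ induces a section of $T^\flat M$. Conversely, if $df$ is a section of $T^\flat M$, then $X_f:=\sigma\circ df$ is a smooth vector field with $\Omega^\flat(X_f)=df$, so $f\in\mathfrak{A}_\Omega(U)$. The description through (\ref{eq_dkfPartialDiracStructures}) is the convenient-calculus criterion for $c^\infty$-smoothness of $df$ into the weak subbundle $T^\flat M$ (cf. \cite{KrMi97} and \cite{CaPe23}). Stability of $\mathfrak{A}_\Omega(U)$ under sums and products follows from linearity of $d$ and the Leibniz rule, together with the compatible choices $X_{f+g}=X_f+X_g$ and $X_{fg}=gX_f+fX_g$.

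For Part~2, existence of $X_f$ has just been established. If $X_g$ and $X_g'$ both satisfy $\Omega^\flat(X_g)=\Omega^\flat(X_g')=dg$, then $v:=X_g-X_g'\in\ker\Omega$, so $df(v)=\Omega(X_f,v)=-\Omega^\flat(v)(X_f)=0$; thus $\{f,g\}_\Omega:=df(X_g)$ is well defined. Skew-symmetry and the Leibniz rule are immediate from the identities $\{f,g\}_\Omega=\Omega(X_f,X_g)$ and $X_{gh}=gX_h+hX_g$. For the Jacobi identity, $d\Omega=0$ together with Cartan's magic formula gives $L_{X_f}\Omega=d(i_{X_f}\Omega)+i_{X_f}d\Omega=0$; then
\[
i_{[X_f,X_g]}\Omega=L_{X_f}(i_{X_g}\Omega)-i_{X_g}(L_{X_f}\Omega)=L_{X_f}dg=-d\{f,g\}_\Omega,
\]
which shows $\{f,g\}_\Omega\in\mathfrak{A}_\Omega(U)$ and permits the choice $X_{\{f,g\}_\Omega}=-[X_f,X_g]$ modulo $\ker\Omega$. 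Expanding $X_f\cdot\{g,h\}_\Omega=L_{X_f}(\Omega(X_g,X_h))$ with $L_{X_f}\Omega=0$ and substituting the formula for $\Omega^\flat([X_f,X_g])$ then yields the cyclic Jacobi identity after skew-symmetric rearrangement. The main obstacle is the first paragraph: establishing that $\sigma$ is a smooth bundle morphism, not merely a fibrewise inverse. This is exactly what \textbf{(PS3)} is there for; without it one could produce a section of $T'M$ that lies pointwise in $T^\flat M$ but has no smooth $\Omega^\flat$-preimage, so that $X_f$ could not be defined.
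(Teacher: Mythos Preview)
Your proof is correct and follows essentially the same route as the paper: both use \textbf{(PS2)} to split $TM=\ker\Omega\oplus W$ and \textbf{(PS3)} to invert $\Omega^\flat|_W$ onto $T^\flat M$, thereby constructing $X_f$, and both derive the Jacobi identity from $L_{X_f}\Omega=0$ via Cartan calculus. The one place where the paper does more work is the characterisation through (\ref{eq_dkfPartialDiracStructures}): you cite it as a convenient-calculus smoothness criterion, whereas the paper gives a short local argument showing that $df\in T^\flat M$ forces $f$ to be constant along the $\ker\Omega$-leaves (since every element of $\Omega^\flat(TM)$ annihilates $\ker\Omega$), and then deduces the higher-derivative condition by induction; given the footnote in the statement already points to \cite{CaPe23}, your appeal is acceptable, but be aware that this step is not entirely formal.
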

 
\begin{proof} 

1. According to properties \textbf{(PS2)} and \textbf{(PS3)}, consider a convenient subbundle $E$ such that $TM=\ker \Omega\oplus E$.\\
Then we have a convenient quotient bundle 
$TM/(\ker\Omega)$ and $\Omega^\flat$ induces a convenient quotient isomorphism bundle $\widehat{\Omega^\flat}:TM/(\ker\Omega)\to T^\flat M$. Moreover, the restriction $Q$  to $E$  of the projection of $TM$ onto  $TM/(\ker\Omega)$  gives rise to a convenient bundle isomorphism $Q:E\to TM/(\ker\Omega)$.\\
Now, if  $f: U\to \mathbb{R}$ is such that $df$ induces a section of $T^\flat M$, from the previous consideration, there exists a vector field $X_f$ on $U$ such that $\Omega^\flat(X_f)
=df$: take $X_f=(\widehat{\Omega^\flat} \circ J)^{-1}(df)$ where $J$ is the injection $E \to TM$. It follows that $\mathfrak{A}_\Omega(U)$ is  an algebra. \\

At first, it is clear that if $f:U\to \mathbb{R}$ is a smooth function which satisfies the relations  (\ref{eq_dkfPartialDiracStructures}), this implies that $df$ induces a section of $T^\flat M$ over $U$.\\

Since the converse  is a local problem, we may assume that $x=0$,  $U$ is a $c^\infty$-open set in $\mathbb{M}$. By the way,   $TM=U\times \mathbb{M}$, $\; T^\prime M=U\times \mathbb{M}^\prime$ and $\Omega^\flat$ is a smooth map from $U$ to the convenient space of skew symmetric linear maps  from $\mathbb{M}$ to $\mathbb{M}^\prime$. If $
\mathbb{M}^\flat$ (resp. $\mathbb{E}$, $\mathbb{K}$) is the typical fibre of $T^\flat M$ (resp. $E$, $\ker\Omega$), we may assume that the inclusion of $\mathbb{M}^\flat$ in $
\mathbb{M}^\prime$ is bounded, $\mathbb{E}$ and $\mathbb{K}$ are closed convenient subspaces of $\mathbb{M}$ with $\mathbb{M}=\mathbb{K}\oplus \mathbb{E}$. Therefore we have:

$T^\flat M=U\times\mathbb{M}^\flat$, 
$\;\ker\Omega^\flat=U\times \mathbb{K}$,
$\;\mathbb{M}=\mathbb{K}\times \mathbb{E}$\\
and so $TM=U\times \mathbb{K}\times \mathbb{E}$.

Under this context, we can choose $U$ as a product $U_1\times U_2\subset \mathbb{K}\times \mathbb{E}$ and each vector field $X$ on  $U$ can be written as a pair  $ \left( X_1,X_2 \right) $ such that $X_1 \left( y_1,y_2 \right) $ (resp. $Y_2 \left( y_1,y_2 \right) $) belongs to $T_{(y_1,y_2)}U_1\times\{y_2\}=\{(y_1,y_2)\}\times \mathbb{K}$ (resp. $T_{(y_1,y_2)}U_1\times\{y_2\}=\{(y_1,y_2)\}\times\mathbb{E})$. Thus, from the property of $f$, it follows that, for any fixed  $y_2\in U_2$,  $Y(f)(y_1,y_2)=0$ for all $y_1\in U_1$ and for any vector field $Y$  tangent  on  $U_1\times\{y_2\}$ to this open set. This implies that such a function $f$ is independent of the first variable. Since  $f$  only depends on the second variable, then, for all $(u_2,\dots,u_k) \in (T_yM)^{k-1}=\{(y_1,y_2)\}\times \mathbb{M}^{k-1}$, 
\[
d^{k}_yf(.,u_{2},\dots,u_{k})=d^{k}_yf(.,\bar{u}_{2},\dots,\bar{u}_{k}).
\]
wwhere, for $i \in \{2,\dots,k\}$, $\bar{u}_i$ is the projection of $u_i$ on $E_y=\{(y_1,y_2)\}\times \mathbb{E}$.\\
So, by induction on $k$, we can show that
\begin{center}
$d^{k}_yf(.,\bar{u}_{2},\dots,\bar{u}_{k})$ belongs to  $ T_y^\flat M=\{(y_1,y_2)\}\times \mathbb{M}^\flat.$
\end{center}
 
2. We have already seen that $X_f$ exists on $U$. Then for $f,g\in  \mathfrak{A}_\Omega(U)$ we set 
\[
\{f,g\}_\Omega=X_g(df)=\Omega(X_f,X_g).
\]
From the last member, it follows that this  bracket is well defined, skew symmetric  and takes values in $\mathfrak{A}_\Omega(U)$ according to  \cite{CaPe23}, Proposition~7.4.\\ 
Since $d(fg)=fdg+gdf$, the Leibniz property is satisfied. Now, as in finite dimension, using classical properties of the Lie derivative  (cf. \cite{KrMi97}), we have:
\[
\begin{matrix}
d\left(\{X_f,X_g\}_\Omega\right)&=d \Omega (X_f,X_g)\hfill{}\\
&=d \left( i_{X_f}i_{X_g} \Omega \right) \hfill{}\\
&=L_{X_g}i_{X_f}\Omega-i_{X_g}d i_{X_f}\Omega\hfill{}\\
 &=L_{X_g}i_{X_f}\Omega-i_{X_g}L_{X_f}\Omega \hfill{}\\
 &=-i_{[X_f,X_g]}\Omega\hfill{}
 \end{matrix}
\]
So we get
\[
d\left(\{f,\{g,h\}_\Omega\}_\Omega\right)=-i_{[X_f,[X_g,X_h]]}\Omega
\]
 which implies the Jacobi identity.
\end{proof}
  
\bigskip
As a particular case of pre-symplectic form, we have:
\begin{definition}
\label{D_ContactCase}  
A $1$-form $\omega$ on a convenient  manifold $M$ is called
\begin{enumerate}
\item[(i)] a \emph{contact form}\index{contact form}\index{form!contact} if
\begin{description}
\item[\textbf{(C1)}]
if $\Omega=-d\omega$, then $ \ker\Omega$ is a line sub-bundle of $TM$;
\end{description}
\item[(ii)] a \emph{strong contact form} if 
\begin{description}
\item[\textbf{(C2)}]
$TM=\ker\omega\oplus \ker \Omega$;
\item[\textbf{(C3)}]
$\Omega^\flat(TM)$ is a convenient bundle such that its inclusion in $T^\prime M$\\
is an injective convenient morphism.
\end{description}
\end{enumerate}
\end{definition}
The following result is direct consequence of Definition \ref{D_ContactCase}:
\begin{lemma}
\label{L_ContactProperty}
If $\omega$ is a (strong) contact form on $M$, we have the following properties
\begin{enumerate}
 \item[1.] $\Omega=d\omega$ is a (strong) pre-symplectic form on $M$;
 \item[2.] there exists a unique vector field $X$ on $M$ such that $i_X\omega=1$ and $i_X \Omega=0$\footnote{Such a vector field is called the Reeb\index{Reeb vector field} or Cartan  vector field.}.
\end{enumerate}
\end{lemma}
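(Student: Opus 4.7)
The plan is to verify each property separately by directly matching the contact axioms to the pre-symplectic ones, and then to construct the Reeb field by restricting $\omega$ to the line bundle $\ker\Omega$.

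For Part 1, since $\Omega=d\omega$ is exact, closedness is automatic from $d^{2}=0$ on the de Rham complex of a convenient manifold (\cite{CaPe23}, Chap.~3). The contact axiom \textbf{(C1)} says precisely that $\ker\Omega^\flat$ has a convenient (line) bundle structure, which is condition \textbf{(PS1)}. In the strong case, \textbf{(C2)} supplies a convenient splitting $TM=\ker\omega\oplus\ker\Omega$, so $\ker\Omega^\flat$ is supplemented, giving \textbf{(PS2)}; and \textbf{(C3)} is literally \textbf{(PS3)}. Thus $\Omega$ is pre-symplectic (resp. strong pre-symplectic).

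For Part 2, let $L=\ker\Omega\subset TM$, which by \textbf{(C1)} is a convenient line sub-bundle. Restricting $\omega$ to $L$ gives a bounded vector bundle morphism $\omega_{|L}\colon L\to M\times\mathbb{R}$. At each $x\in M$, if $v\in L_{x}$ satisfies $\omega_{x}(v)=0$, then $v\in\ker\omega_{x}\cap\ker\Omega_{x}$, and by the splitting in \textbf{(C2)} this intersection is $\{0\}$. Hence $\omega_{|L}$ is a fibrewise bijective bounded morphism between two line bundles, so it is a convenient bundle isomorphism (locally it is division by a nowhere-vanishing smooth function). The unique smooth section $X$ of $L$ characterised by $\omega(X)=1$ then satisfies $i_{X}\omega=1$ and, since $X$ takes values in $\ker\Omega$, also $i_{X}\Omega=0$. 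Uniqueness follows at once: any other $X'$ with the same two properties would give $X-X'\in\ker\omega\cap\ker\Omega=\{0\}$.

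The main (modest) obstacle is ensuring that the Reeb section $X$ obtained by pointwise normalisation is genuinely smooth in the convenient sense, rather than merely well defined fibrewise. This is handled above by observing that $\omega_{|L}$ is a bounded morphism between convenient line bundles that is a pointwise isomorphism, so its inverse is automatically bounded; a local trivialisation of $L$ then shows $X$ is the reciprocal of a nowhere-vanishing smooth function times a local frame of $L$, hence smooth. Part 2 uses the strong contact axioms (the transversality \textbf{(C2)} in particular); if only \textbf{(C1)} is assumed, a Reeb field need not exist because $\omega$ could vanish along $\ker\Omega$.
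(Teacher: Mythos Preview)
Your proof is correct and matches the paper's approach, which is simply to declare the lemma a ``direct consequence of Definition~\ref{D_ContactCase}'' without further argument. You have supplied exactly the verification that the paper omits: matching \textbf{(C1)}--\textbf{(C3)} to \textbf{(PS1)}--\textbf{(PS3)} for Part~1, and constructing the Reeb field by inverting $\omega_{|\ker\Omega}$ for Part~2. Your closing observation that Part~2 genuinely requires the transversality in \textbf{(C2)} (and hence the strong hypothesis) is a point the paper's statement leaves implicit in the parenthetical ``(strong)''; it is worth making explicit, as you do.
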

 
We now look for results on  projective limits (resp. direct limits)  of sequences of pre-symplectic forms on projective limits (resp direct limits) of sequence of Banach manifolds.\\

\begin{definition}
\label{D_ProjectiveLimitOfPresymplecticForms}
Let $\left( M_n,\lambda_n\right)_{n\in\mathbb{N}}$ be a projective sequence of Banach manifolds such that $\lambda_{n}:M_{n+1}\to M_n$ is an injective immersion.
A sequence $\left(  \Omega_{n}\right)_{n\in\mathbb{N}}$ of pre-symplectic forms $\Omega_{n}$ on  $M_n$ is called  \emph{coherent}\index{coherent!sequence},  if for each $x=\underleftarrow{\lim}x_n$,
 we have:
\[
\forall n \in \mathbb{N},\; (\Omega_{n+1})_{x_{n+1}}=\lambda_n^*  \left( (\Omega_n)_{x_n} \right) .
\]
\end{definition}
\smallskip
 
In a parallel way we introduce:
\begin{definition}
\label{D_AscendingSequencePreSymplectic}
Let $ \left(  M_{n},\varepsilon_n,\right) _{n\in\mathbb{N}}$ be an ascending sequence of paracompact\footnote{The assumption of paracompactness implies that the direct limit of the sequence $ \left(  M_{n},\varepsilon_n,\right) _{n\in\mathbb{N}}$ is a  Hausdorff convenient manifold convenient  (cf. \cite{CaPe23}, Chap. 3). Note that without this assumption all the results in this section are also true for direct limits.} Banach manifolds, i.e.  $\varepsilon _n:M_{n}\to M_{n+1}$ is the inclusion of the split submanifold $M_n$ in $M_{n+1}$.\\
A sequence $\left(  \Omega_{n}\right)  _{n\in\mathbb{N}}$ of pre-symplectic  forms $\Omega_{n}$ on  $M_n$ is called  \emph{coherent} if, for each $x=\underrightarrow{\lim}x_n$, we have 
\[
\forall n \in \mathbb{N}, (\Omega_{n})_{x_n} = \varepsilon _n^* ((\Omega_{n+1})_{x_{n+1}} ).
\]
\end{definition}
\smallskip
We then  have  the following results:
\begin{proposition}\label{P_ProjectiveDirectLimitPS}${}$
\begin{enumerate}
\item[1.]
Consider a sequence $\left(  {\Omega}_{n}\right)_{n\in\mathbb{N}}$ of coherent  (strong) pre-symplectic forms
$\Omega_{n}$  of $M_n$ on a projective sequence of Banach manifolds  $\left(  M_{n},\lambda_{n}\right) _{n\in\mathbb{N}}$.\\
Then $\Omega=\underleftarrow{\lim}(\Omega_n)$ is well defined and is a (strong) pre-symplectic  form on  the Fr\'echet manifold  $M=\underleftarrow{\lim}{M_n}$.
\item[2.]
Consider a sequence $\left(  {\Omega}_{n}\right)  _{n\in\mathbb{N}}$ of coherent  (strong) pre-symplectic forms
$\Omega_{n}$  of $M_n$ on an ascending  sequence of paracompact  Banach manifolds  $\left(  M_{n},\varepsilon_n \right)$.\\
Then $\Omega=\underrightarrow{\lim}(\Omega_n)$ is well defined and is a (strong) pre-symplectic on the  convenient manifold $M=\underrightarrow{\lim}{M_n}$.
\end{enumerate}
\end{proposition}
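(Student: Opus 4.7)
The plan is to construct $\Omega$ as the appropriate limit (projective or direct) of the $\Omega_n$, verify closedness from the fact that the exterior derivative commutes with the limit operations, and then confirm the pre-symplectic conditions \textbf{(PS1)}--\textbf{(PS3)} by identifying the kernel and the image of $\Omega^\flat$ with the corresponding limits of $\ker\Omega_n^\flat$ and $\Omega_n^\flat(TM_n)$.

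For Part 1, I would begin by using the standard identifications $M=\underleftarrow{\lim}M_n$ as a Fr\'echet manifold and $TM=\underleftarrow{\lim}TM_n$ as a Fr\'echet bundle. At $x=\underleftarrow{\lim}x_n$ and for $u=\underleftarrow{\lim}u_n$, $v=\underleftarrow{\lim}v_n$ in $T_xM$, the coherence relation $\lambda_n^*(\Omega_n)_{x_n}=(\Omega_{n+1})_{x_{n+1}}$ forces $(\Omega_n)_{x_n}(u_n,v_n)$ to be independent of $n$; I define $\Omega_x(u,v)$ to be this common value. Smoothness in $x$ is a direct consequence of the projective-limit convenient calculus, and $d\Omega=\underleftarrow{\lim}d\Omega_n=0$. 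For \textbf{(PS1)}, I would argue $\ker\Omega^\flat=\underleftarrow{\lim}\ker\Omega_n^\flat$, thus obtaining a convenient Fr\'echet subbundle structure; the inclusion $\supset$ is immediate, while $\subset$ uses the projective-system compatibility of the pairings. In the strong case, the coherent supplements $TM_n=\ker\Omega_n^\flat\oplus E_n$ from \textbf{(PS2)} yield $TM=\ker\Omega^\flat\oplus\underleftarrow{\lim}E_n$, and \textbf{(PS3)} follows from the fact that a projective limit of injective bounded convenient bundle morphisms is again an injective bounded convenient morphism.

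For Part 2, the construction is dual. On $M=\underrightarrow{\lim}M_n$, a point $x$ lies in some $M_{n_0}$ and $T_xM=\underrightarrow{\lim}T_xM_n=\bigcup_{n\geq n_0}T_xM_n$. The coherence $\varepsilon_n^*\Omega_{n+1}=\Omega_n$ at the relevant points means the $(\Omega_n)_x$ extend each other along $T_xM_n\subset T_xM_{n+1}$, so I set $\Omega_x|_{T_xM_n}=(\Omega_n)_x$; boundedness on the bornological direct limit follows from the universal property, and closedness follows from $(d\Omega)|_{M_n}=d\Omega_n=0$. The properties \textbf{(PS1)}--\textbf{(PS3)} are then verified by taking direct limits of the respective subbundles and morphisms; paracompactness of each $M_n$ ensures that the resulting subbundle and image are convenient subbundles of the limit bundles $TM$ and $T'M$.

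The main obstacle I expect is the precise identification of $\ker\Omega^\flat$ with $\underleftarrow{\lim}\ker\Omega_n^\flat$ (resp.\ $\underrightarrow{\lim}\ker\Omega_n^\flat$) and, in the strong case, the verification of \textbf{(PS3)} in the limit. In the projective setting, the non-surjectivity of $T\lambda_n$ means that a vector $u_n\in T_{x_n}M_n$ may be annihilated by $(\Omega_n)_{x_n}$ only on the image of $T_xM$, so the reverse inclusion for the kernel requires a density-type argument specific to the system at hand. In the direct setting, \textbf{(PS3)} is delicate because the image $\Omega^\flat(TM)$ is the union of the $\Omega_n^\flat(TM_n)$ sitting inside $T'M$, and checking that the natural inclusion into $T'M$ is a convenient bundle morphism requires handling the compatibility of the dual topologies of the Banach models. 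Once these limit-compatibility checks are in place, the remaining verifications reduce to routine applications of the convenient-calculus lemmas on limits of bounded multilinear maps.
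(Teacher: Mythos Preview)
The paper does not give a proof of this proposition at all: it merely says ``The proof of these properties is an exercise using results on projective and direct limits of Banach bundles and arguments on tensors of type $(2,0)$ given in \cite{CaPe23} and is left to the reader.'' Your proposal is therefore already more detailed than what the paper offers, and your overall strategy---construct $\Omega$ via the coherence relations, get closedness from naturality of $d$, and verify \textbf{(PS1)}--\textbf{(PS3)} by identifying $\ker\Omega^\flat$ and $\Omega^\flat(TM)$ with limits of the corresponding objects---is exactly the kind of argument the paper has in mind.

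One point worth flagging: in the projective case the paper's Definition~\ref{D_ProjectiveLimitOfPresymplecticForms} takes each $\lambda_n:M_{n+1}\to M_n$ to be an \emph{injective immersion}, so $T\lambda_n$ is fibrewise injective (not a submersion). Your phrasing ``non-surjectivity of $T\lambda_n$'' is correct, but the concern you raise about the reverse inclusion $\ker\Omega^\flat\subset\underleftarrow{\lim}\ker\Omega_n^\flat$ is genuine and is precisely where the black-boxed machinery from \cite{CaPe23} (on projective limits of closed subbundles and of bounded $(2,0)$-tensors) is meant to intervene. Likewise, your caution about \textbf{(PS3)} in the direct-limit case is well placed: the paper implicitly relies on the general results of \cite{CaPe23}, Chapters~4--5, on compatibility of limits of Banach bundles with their duals. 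In short, you have correctly identified both the route and the delicate steps; the paper simply defers those steps to its companion reference rather than spelling them out.
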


The proof of these properties is an  exercise   using results  on projective and direct limits  of Banach bundles  and  arguments on tensors of type (2,0) given in \cite{CaPe23}
and is left to the reader.\\
 
We end this section with some examples of convenient  pre-symplectic manifolds.

\begin{example}
\label{Ex_Skewsymmetricmorphism}  
Let $\mathbb{M}$ be a Banach space provided with a Riemannian metric $g$. We denote by $\mathbb{M}^\flat$ the range of the linear map $g^\flat:\mathbb{M}\to \mathbb{M}^\prime$ 
defined by $g^\flat(u)=g(u,.)$.   
Since $g^\flat $ is an injective continuous map, we provide  $\mathbb{M}^\flat$ with the norm  such that $g^\flat $ is an isomorphism and so the 
inclusion $\mathbb{M}^\flat \to \mathbb{M}'$ is continuous. Consider any linear partial  Poisson structure $P:\mathbb{M}^\flat \to \mathbb{M}$; it follows that $Q:\mathbb{M}\to \mathbb{M}^\flat$ defined by $Q=g^\flat\circ P\circ g^\flat$ is a continuous skew symmetric linear map.\\
Let $U$ be any open set in $\mathbb{M}$ and  denote  by  $g^U$ (resp. $P^U$) the weak 
Riemannian metric  (resp. anchor Poisson)  on $U$ defined by $(g^U)_x(u,v)=g(u,v)$ (resp. $P^U_x(\alpha)=P(\alpha)$)  for all $x\in U$ and $u,v$ in $T_xU=\{x\} \times \mathbb{M}$ 
(resp. $\alpha \in T_x^\flat U=\{x\}\times \mathbb{M}^\flat$). Then  if $Q^U=(g^U)^\flat\circ P^U\circ (g^U)^\flat$, it is easy to see that the $2$-form  $\Omega$ on $U$, defined by 
$\Omega(u, v)=<Q^U(v), u>$, is a closed $2$-form. Its kernel is $((g^U)^\flat)^{-1}(\ker P^U)$ and  it is involutive and so \textbf{(PS1)} is satisfied\footnote{In general, $\ker\Omega$ is not split.}. 
Note that if $\mathbb{K}=(g^\flat)^{-1}(\ker P)$, then $\ker \Omega_x=\{x\}\times K$. If $N=U\cap \mathbb{K}$, then the leaf through $x\in U$  defined by $\ker\Omega$  is  $\{ \{x\}\times N, x\in U \}$.\\
We have $\Omega^\flat=Q^U$. Now since  $Q(\mathbb{M})=g^\flat(P(\mathbb{M}^\flat))$ and  $\Omega^\flat(TU)=U\times Q(\mathbb{M})$, this implies that the second property of the assumption \textbf{(PS3)} is satisfied using the facts that  the inclusion $P(\mathbb{M}^\flat) \to  \mathbb{M}$ is continuous and $g^\flat$ is an isometry.\\
Assume now that $\ker P$ is supplemented in $\mathbb{M}^\flat$ by  $g^\flat(P(\mathbb{M}^\flat))$. Since $g^\flat$ is an isometry,  $\mathbb{K}$ is also supplemented  in $\mathbb{M}$. Therefore,  we have a canonical isomorphism $\widehat{Q}$ from $\mathbb{M}/\mathbb{K}$.  From  the triviality of $TU$, $\ker\Omega$ and $\Omega^\flat(TU)$, it follows that  \textbf{(PS2)} is satisfied, which ends the proof of the fact that  $\Omega$ is strong  pre-symplectic. Note that if $\ker P$ is  not supplemented in $\mathbb{M}^\flat$, $\Omega$ is only pre-symplectic but not strong  pre-symplectic.
\end{example}

\begin{example}
\label{Ex_BanachContactFormOnLoop} 
Let  $M$  be  an $n$-dimensional manifold provided with a Riemannian metric $g$. We denote by $\mathcal{L}(M)$ the Fr\'echet manifold of smooth loops $\gamma: \mathbb{S}^1\to M$.  The tangent space $T_\gamma\mathcal{L}(M)$ can be identified  with the set $\Gamma(\mathbb{S}^1,\gamma^* TM)$ of smooth sections of $\gamma^* TM$ over $\mathbb{S}^1$. For any $x\in M$,  we denote by 
\begin{center}
$\mathcal{L}_x(M)=\{\gamma\in \mathcal{L}(M): \gamma(0)=x\}$.
\end{center}
It is a finite co-dimensionnal manifold of $\mathcal{L}(M)$ whose tangent space  $T_\gamma \mathcal{L}_x(M)$ can be identified with 
$\left\lbrace X\in \Gamma(\mathbb{S}^1,\gamma^* TM): X(0)=0 \right\rbrace $.
 Finally, we denote by  $\mathcal{L}_{x,1}(M)$ the hypersurface in $\mathcal{L}_x(M)$ which is the set of loops $\gamma\in 
\mathcal{L}_x(M)$ whose $g$-length is $1$.\\
To $g$ is associated a strong Riemannian metric $g_\mathcal{L}$ defined by
\[
(g_\mathcal{L})_\gamma(X,Y)
=\displaystyle\int_0^1 g_{\gamma(t)} \left( X(t),Y(t) \right) dt.
\]
We consider the $1$-form $ \mu$ on $\mathcal{L}(M)$  given by:
\[
\mu_\gamma(X)=\displaystyle\frac{1}{2}\int_0^1g_{\gamma(t)}(X(t),\dot{\gamma}(t))dt.
\]
Then the restriction $\Omega$ of $d\mu$ to each manifold $ \mathcal{L}_x(M)$ is a symplectic form
  (cf. \cite{MuPr08}, 2.1). Following this paper, a vector field on $M$ is called \emph{gradient like}\index{gradient like} if $g^\flat(X)$ is a closed $1$-form $\mu$. Assume that there exists on $M$ a vector field $X$ which is  gradient like such that $L_Xg=g$. Then the restriction of $\mu$ to any  hypersurface  $\mathcal{L}_{x,1}(M)$ is a contact  form  (cf \cite{MuPr08}, Proposition~3.2). In particular, the restriction of $d\mu$ to  $\mathcal{L}_{x,1}(M)$ is pre-symplectic. In fact, it is a strong pre-symplectic form.\\
Indeed, according to \cite{Wur95}, 
let $H^{k,2}(M)$ be the  Hilbert  manifold of loops $\gamma: \mathbb{S}^1\to M$ of Sobolev class $H^{k,2}$. As previously,  for $x\in M$ fixed, we can define the Hilbert submanifolds $H^{k,2}_x(M)$ and $H^{k,2}_{x,1}(M)$  and the $1$-form $\mu$ on  $H^{k,2}(M)$ is also well defined. The arguments above  (exposed in  \cite{MuPr08}) still work in this context; in particular, the restriction of $\alpha$ to $H^{k,2}_{x,1}(M)$ is a contact form and, in particular, the restriction of $d\mu$ to $H^{k,2}_{x,1}(M)$ is pre-symplectic and its line kernel is split. Thus,  from Remark~\ref{R_widehatOmegasharp}, 2., condition \textbf{(PS2)} is satisfied. On the other hand, it easy to see that the restriction  $\Omega$ of  $d\mu$ to the Hilbert manifold $H^{k,2}_x(M)$ is a strong symplectic form, so $\Omega^\flat(T(H^{k,2}_x(M))=T^\prime(H^{k,2}_x(M))$. This implies that $\Omega^\flat(T(H^{k,2}_{x,1}(M))$ is a $1$-codimensional subbundle of $T^\prime(H^{k,2}_{x,1}(M))$ and so \textbf{(PS3)} is satisfied. Using the fact that $\mathcal{L}(M)$  is the projective limit of the sequence of Hilbert manifolds $ \left( H^{k,2} \right) _{k\geq 1}$ (cf.\cite{Pel21}), the announced result comes from  Proposition~\ref{P_ProjectiveDirectLimitPS}, 1. 
\end{example}
 
\begin{example}
\label{Ex_DirectLimitLoop} 
In continuation with the end of the previous example, consider  an ascending sequence of finite dimensional Riemannian manifolds $ \left( M_n,g_n \right) _{n\in \mathbb{N}}$ such that for each $n$, the metric $g_n$ is the restriction of $g_{n+1}$ to $M_n$. Then  each Hilbert  manifold $H^{k,2}(M_n)$ is paracompact\footnote{See \cite{Pel18}.} and  is provided with a Riemannian metric $\hat{g}_n$ and a $1$-form $\mu_n$ as in the previous example. Moreover  $H^{k,2}(M_n)$ is a Hilbert submanifold of  $H^{k,2}(M_{n+1})$ and $\hat{g}_n$ is nothing but the restriction of $\hat{g}_{n+1}$ to  $H^{k,2}(M_n)$. Thus if $M=\underrightarrow{\lim}
 M_n=\displaystyle\bigcup_{n\in \mathbb{N}} M_n$, we have:
 $$H^{k,2}(M):=\underrightarrow{\lim} H^{k,2}(M_n)=\displaystyle\bigcup_{n\in \mathbb{N}}H^{k,2}(M_n).$$ 
 Moreover, if $x=\underrightarrow{\lim}x_n\in M$, it is clear that we also have
 \[  
H^{k,2}_x(M):=\underrightarrow{\lim} H^{k,2}_{x_n}(M_n)=\displaystyle\bigcup_{n\in \mathbb{N}}H^{k,2}_{x_n}(M_n)
\] 
and also
\[
H^{k,2}_{x,1}(M):=\underrightarrow{\lim} H^{k,2}_{x_n,1}(M_n)=\displaystyle\bigcup_{n\in \mathbb{N}}H^{k,2}_{x_n,1}(M_n).
\]
If $\Omega_n$ is the restriction of $d\mu_n$ to $H^{k,2}_{x_n}(M_n)$, it is easy to see that the restrictions of $\Omega_n$ give rise to a coherent sequence of pre-symplectic  forms on the ascending sequence $ \left( H^{k,2}_{x_n,1}(M_n) \right) $ of Hilbert manifolds and so,  from Proposition~\ref{P_ProjectiveDirectLimitPS}, 2., $\Omega=\underrightarrow{\lim} \Omega_n$ in restriction to $H^{k,2}_{x,1}(M)$ is a strong pre-symplectic form.
\end{example}

\begin{example}
\label{Ex_ContactStructure}
On the kinematic cotangent bundle $p_M^\prime:T^\prime M \to M$ of a convenient manifold, we have a canonical $1$-form, \emph{the Liouville form}\index{Liouville form}\index{form!Liouville} $\theta$ characterized by 
$$\theta(X)=<p_{T^\prime M}(X), Tp_M^\prime(X)>.$$
where $p_{T^\prime M}:T(T^\prime M)\to T^\prime M$.\\
Then $\omega=-d\theta$ is a weak symplectic form which is strong if and only if $M$ is modelled on a reflexive convenient space $\mathbb{M}$. In this case,  the set of $1$-jets  of 
smooth local functions on $M$ is isomorphic to the bundle $q_{M}^\prime:T^\prime M\times\mathbb{R}\to M$. If an element  of $T^\prime M\times\mathbb{R}$ is written $(x,\alpha,t)
$, then $\xi=dt-\theta$ is a strong contact $1$-form  on $T^\prime M\times\mathbb{R}$ whose Reeb vector field\index{Reeb vector field} is $\displaystyle \frac{\partial}{\partial t}$,  which  means that $
\Omega=d\xi$ is a  strong pre-symplectic form on $T^\prime M\times\mathbb{R}$ whose kernel is generated by $\displaystyle \frac{\partial}{\partial t}$.
\end{example}

\subsection{Partial Dirac Manifold}

As in the framework of partial Poisson structures (cf. \cite{CaPe23}), let $p^{\flat}:T^{\flat} M\to M$ be a weak sub-bundle of the kinematic cotangent bundle $p_{M}^{\prime}:T^{\prime}M \to M$.
\\
The \emph{partial kinematic Pontryagine bundle}\index{partial Pontryagine bundle} is the bundle 
$T^{\mathfrak{p}}M = TM \oplus T^\flat M$.\\
As in the previous section,  for $E=TM$,  this bundle is equipped with:
\begin{enumerate}
\item[$\bullet$]
a pairing $<<.,.>>$ defined as follows:\\
for any $x \in M$, any pair $ \left( X_x,Y_x \right) $ of $T_xM$ and any pair $ \left( \alpha_x,\beta_x \right) $ of $T_x^\flat M$,
\begin{eqnarray}
\label{eq_pairringTM}
<<(X_x,\alpha_x),(Y_x,\beta_x)>>
= 
\beta_x(X_x) + \alpha_x(Y_x)
\end{eqnarray}
\item[$\bullet$]
From Proposition \ref{P_PropertyCourantBracket}, a \emph{Courant bracket}\index{Courant bracket}\index{bracket!Courant}
\begin{equation}
\label{eq_CourantTM}
[(X,\alpha),(Y,\beta)]_C=\left([X,Y],L_X\beta-L_Y\alpha-\displaystyle\frac{1}{2}d(i_Y\alpha -i_X\beta)\right)
\end{equation}
for all sections $(X,\alpha), (Y,\beta)\in \Gamma(T^{\mathfrak{p}}M_U) $, where $[\;,\;]$ is the usual Lie bracket of vector fields.
\end{enumerate}

A \emph{partial almost  Dirac structure on $M$}\index{partial almost Dirac structure} is then a  closed subbundle  $D$ of $T^{\mathfrak{p}}M$.\\
When $T^\flat M=T^\prime M$ a partial almost  Dirac structure on $M$ is simply called \emph{an almost Dirac structure}\index{almost Dirac structure}.\\
The restriction of the previous Courant bracket to a partial almost Dirac (resp. almost Dirac) structure $ D$ is again denoted by $[\;,\;]_D$ and $D$ is called \emph{a partial Dirac (resp. Dirac) structure}  if the module  $\Gamma(D_U)$  of sections of $D$ on any open set $U$ of $D$ is involutive according to $[\;,\;]_D$; in other words, this means that  the partial almost Dirac structure $D$ on the Lie algebroid $TM$ is involutive. \\
A smooth function $f$ defined on an open set $U$  is called \emph{admissible}\index{admissible!function} for the partial (almost) Dirac structure $D$ if there exists a vector field $X$ on $U$ such that $(X,df)$ is a section of $D$ over $U$.

\begin{example}
\label{Ex_LinearDirac} 
Let $\mathbb{E}$ be a convenient space, $\mathbb{E}^\flat$ a convenient  subspace of $\mathbb{E}^\prime$ and $\mathbb{D}$ a partial linear Dirac structure on $\mathbb{E}$. Then $D=\mathbb{D}\times \mathbb{E}$ is a partial Dirac structure on $\mathbb{E}$.
\end{example}

\begin{example}
\label{ex_Presymplectic}
Let $\Omega$ be a $2$-form on a convenient manifold. Assume that the range $ T^\flat M=\Omega^\flat(TM)$    of the associate morphism $
\Omega^\flat:TM\to T^\prime M$ is a weak sub-bundle of $T^\prime M$. Then the graph $D_\Omega$ of $\Omega$ is the set 
\[
\{(x,v,\alpha), x\in M, v\in T_xM, \alpha=\Omega^\flat(u)\in T_x^\flat M\}
\] 
which is a partial almost Dirac structure (cf. Example \ref{ex_SkewsymmetricMorphism2}). 
If $\Omega$ is closed, from Corollary \ref{C_XX0Omega}, 2., it follows that $D_\Omega$ is a partial Dirac structure on $M$. 
Assume that $\Omega$ is a strong pre-sympectic form. To such a Dirac structure $D_\Omega$, on any 
open set $U$ in $M$, we can associate an algebra $\mathfrak{A}_\Omega(U)$ of admissible functions (cf. Proposition~\ref{P_AlgebraAdmissible}). The restriction $[\;,\;]_{D_\Omega}$ of the Courant bracket to $D_\Omega$ will permit to define a Poisson bracket on each algebra $\mathfrak{A}_\Omega(U)$ \textit{via} the projection $p^\flat$ which is exactly the Poisson 
bracket defined in Proposition  \ref{P_AlgebraAdmissible}. 
For any $f\in \mathfrak{A}_\Omega(U)$, there exists a Hamiltonian vector field $X_f$  characterized by $\Omega^\flat(X_f)=df$. Therefore, any function $f$  defined on an open set $U$ in $M$ is an admissible function for the Dirac structure  $D_\Omega$ if and only $f\in  \mathfrak{A}_P(U)$ and so $ (X_f,df)$ is a section of $D_\Omega$ over $U$.
\end{example}

\begin{example}
\label{ex_PartialPoisson} 
Recall that a partial convenient Poisson structure\index{partial Poisson structure} on a convenient manifold $M$ corresponds to the  following data (cf. \cite{CaPe23}):
\begin{description}
\item[$\bullet$] 
a convenient bundle which is a vector subbundle $T^\flat M$ of $T^\prime M$ and such that the inclusion is a convenient (injective) bundle morphism;
\item[$\bullet$] an anchor $P:T^\flat M\to TM$ which is skew symmetric relatively to the natural pairing between $T^\flat M$ and $TM$ called an almost Poisson anchor;
\item[$\bullet$]
the Schouten bracket\footnote{As defined in \cite{CaPe23}.} $[P,P]$ vanishes.
\end{description}
When the typical fibre $\mathbb{M}^\flat\subset \mathbb{M}^\prime$ is separating on $\mathbb{M}$  or if $P$ is surjective, then $D_P$ is a partial almost Dirac structure (cf. example~\ref{ex_SkewsymmetricMorphism1}). Note that when $T^\flat M=T^\prime M$, we have $\mathbb{M}^\flat =\mathbb{M}^\prime$ and so {\bf  we recover the classical result in finite dimension  that a Poisson manifold is a Dirac structure on $M$}.\\
From now on, assume that $D_P$ is a partial almost Dirac structure. \\
For any open set $U$  in $M$, let $\mathfrak{A}_P(U)$ be the set of smooth functions as defined in  Proposition \ref{P_AlgebraAdmissible}. To any $f\in \mathfrak{A}_P(U)$, we can associate a Hamiltonnian vector field $X_f=P(df)$ and so we have a Lie Poisson algebra structure whose Poisson bracket is given by $\{f,g\}_P=df(X_g)$ which satisfies the Jacobi identity.\\
We want to show  that $D_P$ is stable for the bracket  (\ref{eq_CourantTM}). For this purpose, it is sufficient to show that $T_{D_P}\equiv 0$ (cf. Proposition \ref{P_CNSCourant bracketD}). But for all functions $f,g,h\in  \mathfrak{A}_P(U)$, we have (cf. \cite{Marc16}, 2.15): 
$$\begin{matrix}
{\bf T}_{D_P}\left((X_f,df), (X_g,dg),(X_h,dh)\right)
&=<<[(X_f,df),(X_g,dg)]_C, (X_h,dh)>>\hfill{}\\
&=<<([X_f,X_g]d(i_{X_f}dg)), (X_h,dh)>>\hfill{}\\
&=[X_f,X_g](h)+X_h(X_f(g))\hfill{}
\end{matrix}
$$
where one has:\\
$[X_f,X_g](h)+X_h(X_f(g))=\{f,\{g,h\}_P\}_P-\{g,\{f,h\}_P\}_P+\{h,\{f,g\}_P\}_P=0$.\\
Since the Jacobi property is satisfied on $ \mathfrak{A}_P(U)$, for all open sets $U$ in $M$, 
this implies that $D_P$ is a partial Dirac  structure. Of course, $f$  defined on the open set $U$ is an admissible function for this Dirac structure  if and only if $f\in  \mathfrak{A}_P(U)$ and $(X_f,df)$ is a section of $D_P$ over $U$.
\end{example}
 
According to the canonical Lie algebroid structure on $TM$ whose anchor is the identity, if $p:T^{\mathfrak{p}}M\to TM$ is the natural projection  and $D$, as in the previous section,  is a partial Dirac structure, we define $\rho^D: D\to TM$ by
$\rho^D(X,\alpha)=p(X,\alpha)$. From Theorem~\ref{T_DLieAlgebroid}~, we have:

\begin{theorem}
\label{T_DLieAlgebroidTM}
 A partial almost Dirac structure $D$ on a convenient manifold $M$  is a partial Dirac structure if and only if  $ \left( D,M,p_{| D}, [.,.]_{ D} \right) $ is a convenient Lie algebroid.
\end{theorem}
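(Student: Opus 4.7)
The plan is to obtain this theorem as an immediate specialization of Theorem~\ref{T_DLieAlgebroid} to the canonical Lie algebroid structure on the tangent bundle. Concretely, I would begin by recalling that for any convenient manifold $M$, the kinematic tangent bundle $p_M:TM\to M$ carries a canonical convenient Lie algebroid structure $(TM, M, \operatorname{id}_{TM}, [.,.])$ whose anchor is the identity and whose bracket is the usual Lie bracket of vector fields (the Leibniz and Jacobi identities being standard in the convenient setting, cf. \cite{CaPe23}).

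Next, I would unwind the definitions from Section~\ref{___CourantBracketDirac} in this particular case. With $E=TM$ and $\rho=\operatorname{id}_{TM}$, the anchor $\rho^{\mathfrak{p}}:T^{\mathfrak{p}}M\to TM$ of formula (\ref{eq_rhop}) becomes simply the projection $p$, and therefore its restriction $\rho^{D}$ to the partial almost Dirac structure $D$ coincides with $p_{|D}$. Similarly, the Courant bracket (\ref{eq_Courant}) specializes to (\ref{eq_CourantTM}), and when $D=D^\perp$ the formula (\ref{eq_CourantrestrictD}) governs its restriction to $\Gamma(D_U)$.

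With these identifications in place, the theorem is exactly the statement of Theorem~\ref{T_DLieAlgebroid} applied to the Lie algebroid $(TM,M,\operatorname{id}_{TM},[.,.])$: the partial almost Dirac structure $D$ on $M$ is a partial Dirac structure (i.e.\ $\Gamma(D_U)$ is stable under $[.,.]_C$ for every open $U$) if and only if $(D,M,p_{|D},[.,.]_{D})$ is a convenient Lie algebroid. I do not foresee any genuine obstacle here: the only point that requires (brief) verification is the identification $\rho^{\mathfrak{p}}=p$ and $\rho^D=p_{|D}$, after which the result follows directly by invoking the previously established theorem, with no additional analytic input required.
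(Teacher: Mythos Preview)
Your proposal is correct and matches the paper's approach exactly: the paper simply states ``From Theorem~\ref{T_DLieAlgebroid}, we have:'' and then records this theorem without further proof, so your explicit unwinding of $\rho=\operatorname{id}_{TM}$, $\rho^{\mathfrak{p}}=p$, and $\rho^D=p_{|D}$ is precisely the intended specialization.
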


Thus, if $D$ is a partial Dirac structure on $M$, the distribution $\mathcal{D}=\rho^D(D)$ is called \emph{the characteristic distribution}\index{characteristic distribution} of $D$. In this case, the partial Dirac structure is called \emph{regular}\index{regular} if the characteristic distribution is a subbundle of $TM$, otherwise it is called \emph{singular}\index{singular}.\\
In finite dimension, the characteristic distribution of a Dirac structure is always integrable and defines a singular foliation. Unfortunately, in general, in the convenient setting,  the characteristic distribution of a partial Dirac structure $D$ is not integrable. The same is true even  in the Banach setting without  more assumptions. For instance, according to Lemma~\ref{L_PDPflatDClosed} and   \cite{CaPe23}, Theorem~8.4, we have:

\begin{theorem}
\label{T_FoliationD} 
Let  $D$ be a partial Dirac structure on a Banach manifold $M$.\\
If $\ker p_{| D}$ is supplemented in each fibre, then the characteristic distribution $\mathcal{D}$ is integrable.
\end{theorem}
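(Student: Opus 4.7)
The plan is to reduce this to the Banach Lie algebroid integrability result (Theorem 8.4 of \cite{CaPe23}), which was already invoked in Theorem~\ref{T_integrableD}. The first step is to observe that, by Theorem~\ref{T_DLieAlgebroidTM}, the partial Dirac structure $D$ carries a convenient Lie algebroid structure $\left(D, M, p_{|D}, [.,.]_D\right)$ with anchor $\rho^D = p_{|D}$. Thus the characteristic distribution $\mathcal{D} = \rho^D(D)$ is exactly the image distribution of the anchor of a Banach Lie algebroid, which is the setting in which Theorem~8.4 of \cite{CaPe23} provides integrability.

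To apply that theorem, two pointwise conditions on $\rho^D$ are required: for each $x \in M$, $\ker \rho^D_x$ must be supplemented in $D_x$, and $\rho^D_x(D_x) = \mathcal{D}_x$ must be closed in $T_xM$. The first is precisely the hypothesis we assume. The second is furnished by Lemma~\ref{L_PDPflatDClosed} applied fibrewise: taking $\mathbb{E} = T_xM$ and $\mathbb{E}^\flat = T_x^\flat M$, the space $D_x$ is a closed linear partial Dirac structure on $T_xM$, so Point~1 of that lemma tells us $p(D_x)$ is closed in $T_xM$. Combining these two facts with the involutivity of $D$ (which holds by hypothesis since $D$ is a partial Dirac structure, hence stable under $[.,.]_C$), the hypotheses of Theorem~8.4 of \cite{CaPe23} are met and we conclude that $\mathcal{D}$ defines a foliation on $M$, i.e.~$D$ is integrable in the sense of Definition~\ref{D_integrability}.

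The main step requiring genuine thought is the transfer from the pointwise closedness given by Lemma~\ref{L_PDPflatDClosed} to the bundle-level hypothesis needed for Theorem~8.4; in the Banach setting, once $\ker \rho^D_x$ is split and $\mathcal{D}_x$ is closed, the open mapping theorem guarantees that $\rho^D$ restricted to a closed complement of $\ker \rho^D$ is a bundle isomorphism onto $\mathcal{D}$, which is what feeds into the Banach integrability machinery. No obstruction arises from involutivity: since $D$ is a partial Dirac structure (not merely an almost one), $\Gamma(D_U)$ is stable under $[.,.]_D$ for every open $U$, so the distribution $\mathcal{D}$ is locally generated by a module of vector fields closed under Lie bracket, which is exactly the involutivity assumption underlying Theorem~8.4. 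The rest of the argument is pure invocation of that theorem.
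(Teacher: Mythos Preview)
Your proposal is correct and follows essentially the same approach as the paper: the paper itself introduces Theorem~\ref{T_FoliationD} by the phrase ``according to Lemma~\ref{L_PDPflatDClosed} and \cite{CaPe23}, Theorem~8.4,'' and you have simply unpacked that sentence, supplying Lemma~\ref{L_PDPflatDClosed} for the fibrewise closedness of $p(D_x)$, the hypothesis for the splitting of $\ker p_{|D}$, and Theorem~\ref{T_DLieAlgebroidTM} for the Lie algebroid structure on $D$, before invoking the Banach integrability theorem. Your additional remarks on the open mapping theorem and the role of involutivity are accurate elaborations but not strictly needed beyond what the paper's one-line justification already implies.
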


\subsection{Integrability and pre-symplectic  structures on leaves} 

Even in finite dimension (cf. Example~\ref{Ex_TwistedPoisson}), for some almost partial Dirac structures $D$ which are not involutive, the characteristic distribution is integrable. We will see that, in the convenient setting, such situations also exist (cf. $\S$~\ref{_ProjectiveAndDirectLimitsOfBanachDiracStructures}). 

\begin{example}
\label{Ex_TwistedPoisson}
(cf. \cite{GrXu12}, Example 2.4.4) 
Let $(M,\pi)$ be a Poisson manifold and consider a $3$-form $H$ on $M$. Then we define the Schouten bracket 
\[
[\pi,\pi] (df,dg,dh)=H(\pi(df),\pi(dg),\pi(dh))
\]
 for all $f,g,h\in C^\infty (M)$ where $\pi^\flat:T^*M\to TM$ is the canonical morphism associated to $\pi$. Then the graph of $\pi$ is an almost Dirac structure on $M$ but not a Dirac structure: it is only  a twisted Dirac structure according to the twisted Courant bracket 
$$[(X,\alpha),(Y,\beta)]_C=\left([X,Y],L_X\beta-L_Y\alpha-\displaystyle\frac{1}{2}d(i_Y\alpha -i_X\beta)\right)+H(X,Y,.)$$
However the characteristic distribution is integrable (cf. \cite{GrXu12}, Theorem \ref{T_IntegrableDPreAlgebroid}).
\end{example}

As in $\S$~\ref{__IntegrabilityAlmostDIracAlgebroid}, we also introduce:

\begin{definition} 
\label{D_PartialIntegrableDirac}  
Let $D\subset T^\mathfrak{p}M$ be a partial almost Dirac structure on a  convenient manifold $M$. We say that $D$ is a partial integrable Dirac structure on $M$ if the characteristic distribution $\mathcal{D}$ associated to the anchored bundle $(D,M,p_{| D})$ is integrable.
\end{definition}

As an application of Theorem \ref{T_integrableD}  in this  context of partial almost Dirac structure on a Banach manifold, we obtain:
 
\begin{theorem}
\label{T_IntegrableDPreAlgebroid} 
Let  $D$ be a partial almost  Dirac structure on a Banach manifold $M$.\\
Assume that the anchored bundle  $ \left( D,M,p_{| D}\right) $  has a structure of strong partial Lie algebroid  and  $\ker p_{| D}$ is split in each fibre, then the characteristic distribution $\mathcal{D}$ is integrable.\\
In particular, in the Hilbert setting, if $ \left( D,M,p_{| D}\right) $  has a structure of strong partial Lie algebroid, then the characteristic distribution distribution is integrable.
\end{theorem}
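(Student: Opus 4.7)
The plan is to reduce the theorem to Theorem~\ref{T_IntegrabilityPartialAlgebroid} applied to the anchored Banach bundle $(D,M,p_{| D})$ equipped with its assumed strong partial Lie algebroid structure. To invoke that result, two pointwise conditions on the anchor $p_{| D}:D\to TM$ must be checked: first, that $\ker(p_{| D})_x$ is split in the fibre $D_x$ for every $x\in M$; second, that $(p_{| D})_x(D_x)=p(D_x)$ is closed in $T_xM$ for every $x\in M$. The first condition is part of the hypothesis, so the real task is to establish the second.

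For the closedness I would argue fibre by fibre. Each fibre $D_x$ is a closed subspace of the Pontryagin space $T_xM\oplus T_x^{\flat} M$ satisfying $D_x=D_x^{\perp}$, hence $D_x$ is a partial linear Dirac structure on the Banach space $T_xM$ in the sense of Subsection~\ref{___LinearPartialDiracStructure}. Lemma~\ref{L_PDPflatDClosed}, 1., applied pointwise, then yields at once that $p(D_x)$ is closed in $T_xM$. Once both conditions are verified, Theorem~\ref{T_IntegrabilityPartialAlgebroid} provides a (possibly singular) foliation of $M$ whose leaves are the maximal integral manifolds of the characteristic distribution $\mathcal{D}=\rho^D(D)=p(D)$; this is precisely integrability in the sense of Definition~\ref{D_PartialIntegrableDirac}.

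For the Hilbert case the splitting hypothesis becomes automatic: every closed subspace of a Hilbert space admits an orthogonal complement, so $\ker(p_{| D})_x$, being closed in the Hilbert fibre $D_x$, is automatically split, and only the strong partial Lie algebroid structure on $(D,M,p_{| D})$ need be assumed. I do not anticipate a serious obstacle, since the substantive work has been absorbed into the upstream results Theorem~\ref{T_IntegrabilityPartialAlgebroid} and Lemma~\ref{L_PDPflatDClosed}; the only mild point of care is to make the fibrewise application of Lemma~\ref{L_PDPflatDClosed} explicit, but since the closure of $p(D_x)$ is a purely linear statement on each Banach fibre no further subtlety arises.
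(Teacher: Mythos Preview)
Your proposal is correct and matches the paper's approach: the theorem is presented without explicit proof, as a direct application of the integrability result for partial Banach--Lie algebroids, and your use of Lemma~\ref{L_PDPflatDClosed}, 1., to verify fibrewise closedness of $p(D_x)$ fills in precisely the detail the paper leaves implicit. The paper's text actually attributes the result to Theorem~\ref{T_integrableD}, but since that theorem assumes involutivity rather than merely a strong partial Lie algebroid structure, your reference to Theorem~\ref{T_IntegrabilityPartialAlgebroid} (equivalently Corollary~\ref{C_integrablePartialDirac} specialized to $E=TM$) is the more accurate one.
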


Note that Example~\ref{Ex_TwistedPoisson} can be generalized easily to the Banach setting and so gives an illustration of Theorem~\ref{D_PartialIntegrableDirac}.\\

For any partial integrable Dirac structure on a convenient manifold,  under  some additional  assumptions,  we will show that an almost pre-symplectic structure  is defined  on each  leaf of the characteristic  foliation, which generalizes a classical result in finite dimension (cf. \cite{Cou90},  \cite{Bur13} or \cite{Marc16}).\\

More precisely, if we  denote by $p^\flat$ the natural projection of $T^{\mathfrak{p}}M$ on $T^\flat M$, we have: 

\begin{theorem}
\label{T_GeometricStructuresOnLeaves} 
Let $D$ be a partial almost  integrable Dirac structure on a convenient manifold $M$. Assume that $\ker p_{| D}$  and $\ker p^\flat_{| D}$ are supplemented  in each fibre. Consider a leaf $L$ of the characteristic foliation. Then we have:
\begin{enumerate}
\item[1.] 
$T^\flat L:=\displaystyle\bigcup_{x\in L}p^\flat(D_x)$ is a closed weak convenient subbundle of $T^\prime M_{| L}$. 
\item[2.] 
There exists a  skew symmetric convenient bundles morphism\\ $P_L:TL\to T^\prime L$  whose kernel is $(D\cap TM)_{| L}\subset TL$ and whose range is  the  weak subbundle $T^\flat L\cap T^\prime L$.\\
In particular, $\Omega_L(u,v)=<P_L(v), u>$ is a  $2$-form on $L$ whose kernel is  $(D\cap TM)_{| L}$ and  we have $\Omega^\flat =P_L$. Moreover, if $D$ is a partial Dirac structure, $\Omega_L$ is closed and so is   strong pre-symplectic.
\item[3.] 
There exists a  skew symmetric convenient bundle morphism $P_L^\flat$ from $T^\flat L$ to its dual bundle $T ^{\flat\prime} L$  whose kernel is $(D\cap T^\flat M)_{| L}\subset T^\flat L$ and whose range is  the  weak subbundle $T^{\flat\prime} L\cap TL$ of $T^{\flat\prime} L$. Moreover, if $D$ is a partial Dirac structure,  $P^\flat_L: T^\flat L \to (T^{\flat\prime} L\cap TL)\subset TL$ is a partial Poisson structure on $L$.
\item[4.]  
$P_L$ (resp. $P^\flat_L$) induces a convenient isomorphism $\widehat{P}_L$ (resp. $\widehat{P}^\flat_L$) from $T^\flat L\cap T^\prime L$ to $TL\cap T^{\flat\prime}$ (resp. 
$TL\cap T^{\flat\prime}$ to $T^\flat L\cap T^\prime L$) and $P_L^{-1}=P^\flat_L$.\\
Moreover, $D_L\cap TM_{L}=\{0\}$ if and only if $D_L\cap T^\flat M=\{0\}$ and then $P_L$ is an 
isomorphism from $TL$ to $T^\flat L$ and, if $D$ is a partial Dirac structure,   $\Omega_L$ is weak symplectic  form on $L$.
\end{enumerate}
\end{theorem}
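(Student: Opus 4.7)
My approach is to apply the pointwise structure theorem (Theorem~\ref{T_DiracPoisson}) at each $x \in L$ and then globalize using the two splitting hypotheses. By the very definition of the characteristic foliation, for $x \in L$ one has $p(D_x) = \mathcal{D}_x = T_xL$, so in the linear setup of Theorem~\ref{T_DiracPoisson} the subspace ``$\mathbb{L}$'' is $T_xL$ and ``$\mathbb{L}^\flat$'' is $p^\flat(D_x)$. The hypotheses that $\ker p_{|D} = D \cap TM$ and $\ker p^\flat_{|D} = D \cap T^\flat M$ be fibrewise supplemented will make the pointwise data of Theorem~\ref{T_DiracPoisson} vary smoothly along $L$.

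For Point 1, I would combine Lemma~\ref{L_PDPflatDClosed} with the splitting of $D \cap T^\flat M$ in $D$ to identify $T^\flat L = p^\flat(D_{|L})$ smoothly with the convenient bundle $D/(D \cap T^\flat M)$ over $L$; boundedness of the inclusion $T^\flat L \hookrightarrow T'M_{|L}$ then follows by composing the bounded inclusions $T^\flat L \hookrightarrow T^\flat M \hookrightarrow T'M$. For the algebraic content of Point 2, I would define $P_L$ globally by $P_L(p(\delta)) = p^\flat(\delta)_{|TL}$; well-definedness, skew-symmetry, surjectivity onto $T^\flat L \cap T'L$, and the kernel $(D \cap TM)_{|L}$ come straight from Theorem~\ref{T_DiracPoisson}(1) applied fibrewise, while smoothness is supplied by the splitting of $D \cap TM$. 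Tautologically $\Omega_L^\flat = P_L$, where $\Omega_L(u,v) = -\langle P_L(v),u\rangle$.

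The closedness of $\Omega_L$ when $D$ is a partial Dirac structure is the delicate step. By Proposition~\ref{P_CNSCourant bracketD} involutivity yields $\mathbf{T}_D \equiv 0$, and Lemma~\ref{L_PropertiesJD}(1) then gives
\[
L_{X}\beta(Z) + L_{Y}\gamma(X) + L_{Z}\alpha(Y) = 0
\]
for local sections $(X,\alpha),(Y,\beta),(Z,\gamma)$ of $D$ over $L$. Choosing such sections with $\alpha_{|TL} = \Omega_L^\flat(X)$, and similarly for the others, and expanding the Lie derivatives \textit{via} Cartan's formula, this identity collapses exactly to $d\Omega_L(X,Y,Z) = 0$. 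Together with the bundle description of $\ker \Omega_L = (D \cap TM)_{|L}$ and $\Omega_L^\flat(TL) = T^\flat L \cap T'L$, this establishes that $\Omega_L$ is strong pre-symplectic in the sense of Definition~\ref{D_PreSymplecticForm}. Point 3 is obtained by the symmetric construction $P_L^\flat(p^\flat(\delta)) = p(\delta)_{|T^\flat L}$ \textit{via} Theorem~\ref{T_DiracPoisson}(2); its Poisson property reduces to the Jacobi identity for $\{f,g\} = \langle df, P_L^\flat(dg)\rangle$ on admissible functions, which is once again the translation of $\mathbf{T}_D \equiv 0$ through the graph characterization of $D$, in the spirit of Example~\ref{ex_PartialPoisson}.

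Point 4 is then a bundle-level globalization of Theorem~\ref{T_DiracPoisson}(3)--(4): the quotient maps become convenient bundle isomorphisms $\widehat{P}_L$ and $\widehat{P}_L^\flat$ over $L$ by the splitting hypotheses, and the equivalence of $D_{|L} \cap TM = 0$ with $D_{|L} \cap T^\flat M = 0$ together with their common consequence (weak symplecticness of $\Omega_L$) is the direct fibrewise import of cases (i) and (ii) of Theorem~\ref{T_DiracPoisson}(4). I expect the main obstacle to be the closedness argument in Point 2: matching Lemma~\ref{L_PropertiesJD}(1) with the Cartan formula for $d\Omega_L$ requires care because only the tangential components of $\alpha,\beta,\gamma$ enter the pairing, and one must confirm that replacing full $T'M$-valued data by $T^\flat M$-valued data preserves the identities along $L$.
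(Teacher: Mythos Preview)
Your proposal is correct and, for Points 1, 3 and 4, follows essentially the same route as the paper: fibrewise application of Theorem~\ref{T_DiracPoisson} combined with the two splitting hypotheses, the smooth uniform boundedness principle for globalization, and the identification of the ranges and kernels with the appropriate quotient bundles.

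The one genuine difference is your closedness argument for $\Omega_L$ in Point~2. You propose to compute directly: from $\mathbf{T}_D\equiv 0$ and Lemma~\ref{L_PropertiesJD}(1) you obtain the cyclic identity $L_X\beta(Z)+L_Y\gamma(X)+L_Z\alpha(Y)=0$, and after rewriting $\beta(Z)=\Omega_L(Y,Z)$ etc.\ (using that $P_L(Y)=\beta_{|TL}$ and that $[X,Z]$ stays tangent to the leaf), this collapses to the Cartan formula for $d\Omega_L(X,Y,Z)=0$. This works, and your caution about ``tangential components'' is unfounded once you note that every evaluation of $\alpha,\beta,\gamma$ in the computation is against a vector tangent to $L$. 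The paper instead takes a structural shortcut: since $(D,M,p_{|D},[.,.]_D)$ is a Lie algebroid (Theorem~\ref{T_DLieAlgebroidTM}), its restriction $(D_{|L},L,p_{|D_{|L}},[.,.]_D)$ is again a Lie algebroid, hence $D_{|L}$ is a partial Dirac structure on $L$; but $D_{|L}$ is the graph of $\Omega_L$ by (\ref{eq_GraphPL}), so Corollary~\ref{C_XX0Omega}(2) gives $d\Omega_L=0$ immediately. The paper's route is cleaner and reuses existing machinery; yours is the classical hands-on computation and makes the role of $\mathbf{T}_D$ more explicit. Either is fine, and for the Poisson side in Point~3 both you and the paper converge on Example~\ref{ex_PartialPoisson}.
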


\begin{remark}
\label{R_AssumptionsSatisfied}
In finite dimension, all the previous  assumptions are satisfied and so Theorem~\ref{T_GeometricStructuresOnLeaves} is a generalization of classical results on Dirac manifolds (cf. \cite{Cou90}, \cite{YoMa06}, \cite{Bur13} and \cite{Marc16}). \\
In the Hilbert setting, the assumptions of Theorem~\ref{T_GeometricStructuresOnLeaves} are always satisfied.\\
In the Banach setting, under the assumption of 
Theorem~\ref{T_IntegrableDPreAlgebroid}, if $D\cap TM$ is split in each fibre, then  Theorem~\ref{T_GeometricStructuresOnLeaves} can be applied.
\end{remark}

\begin{remark}
\label{R_ResultAndExample} 
In this Remark, we assume that $D$ is a partial Dirac structure.
\begin{enumerate}
\item[1.]
According to Theorem ~\ref{T_DiracPoisson}, 4., situation (i),  Example~\ref{ex_PartialPoisson}  corresponds to the case where $D\cap T^\flat M=\{0\}$  and  $p^\flat: D\to T^\flat M$ is an isomorphism,  $\Omega_L$ is a symplectic form on each leaf $L$ of the characteristic distribution, $TL$ is isomorphic to the quotient bundle $D_{| L}/(D_{| L}\cap T^\flat M_{| L})$. In fact, the associated partial Poisson anchor  $P:T^\flat M\to TM$ is defined in the following way:
 
for any $x\in M$, $x$ belongs to one and only one leaf $L$ and we set\\ $P_x(\alpha)=P^\flat_L\circ (p_x^\flat)^{-1}(\alpha)$ which belongs to $T_xL\subset T_xM$.
\item[2.] 
According to Theorem~\ref{T_DiracPoisson}, 4., situation (ii),  Example~\ref{ex_Presymplectic} corresponds to the case where $D\cap T^\flat M =\{0\}$, 
  the partial Dirac structure is regular and 
 $p^\flat: D\to T^\flat M$ is an isomorphism.
\item[3.] The common situation of Point 1 and Point 2 corresponds to  Theorem  \ref{T_DiracPoisson}, 4., situation (iii). In this case  $p:D\to TM$ and $p^\flat D\to T^\flat  M$ are isomorphisms and  $\Omega$ is a  symplectic form on $M$.
\end{enumerate}
\end{remark}

\begin{proof}[Proof of Theorem \ref{T_GeometricStructuresOnLeaves}]${}$\\
\noindent 1.  
Since by assumption, the characteristic distribution $D$ is integrable,  if $L$ is a leaf, it follows that $p(D_{|L})=TL$. Since $D$ is a partial  almost Dirac structure, $ \left( D,M,p_{|D} \right) $ is a convenient Lie anchored bundle and so its restriction $(D_{|L}, L, p_{| D_{|L}})$ is also a convenient  anchored bundle, and since $D$ is a closed subbundle of $T^\mathfrak{p} M$, it follows that 
$TL$ is a closed convenient subbundle of $TM_{| L}$ from Lemma~\ref{L_PDPflatDClosed}.

On the other hand, by assumption, the kernel of $p^\flat_{| D_{|L}}$ is supplemented in each fibre and it follows that $\ker p^\flat_{| D_{|L}}$ is a subbundle of $D_L$\footnote{Same proof as in \cite{BGJP19},  Proposition~6.5.}.  Using the adapted arguments in the convenient setting of those used in the Banach framework in \cite{Lan95}, Chap.~III, $\S$3, we can show that $D_{| L}/\ker p^\flat_{D_{| L}}$ is a convenient bundle over $L$.

Using Lemma~\ref{L_PDPflatDClosed}, it follows that  for each $x\in L$, we have  an isomorphism $(\hat{p}^\flat_ {D_{| L}})_x)$ form $((D_{| L})_x)/(\ker (p_{D_{| L}})_x)$   
to $T_x^\flat L=p^\flat(D_x)\subset T_xM_{| L}$ where $T_x^\flat L$ is closed in $T_xM_{| L}$. Now this isomorphism is a bounded injective linear map  $(\hat{p}^\flat_ 
{D_{| L}})_x$ from  $(\hat{p}^\flat_ {D_{| L}})_x$ to $T_x^\flat M_{| L}$ for each $x\in L$. Since  $D_{| L}/\ker p_{D_{| L}}$ is a convenient bundle over $L$ from the 
smooth uniform boundedness principle (cf. \cite{KrMi97}), it follows that  we get an injective bounded bundle morphism $\hat{p}^\flat_ {D_{| L}}$ from  $D_{| L}/\ker p^\flat_{D_{| L}}$ to $T^\flat M_{| L}$ and so its range   $T^\flat L=\displaystyle\bigcup_{x\in L} T_x^\flat L$ is a convenient closed  subbundle of  $T^\flat M_{| L}$.\\

\noindent 2. We define 

$P _L: TL\to T^\flat L$ by $(P_L)_x(u)=\alpha_{| T_xL}=P_{T_xL}(u)$ (cf. Theorem~\ref{T_DiracPoisson})\\
\noindent for any $\alpha\in T_x^\flat M$ such that $(u,\alpha) $ belongs to  $D_x$ and any $x\in L$.  
Note that  $P_L$ takes values in $T^\prime L$ and since $P_L$ is a bounded linear map in restriction to each fibre, again according to the smooth uniform boundedness principle, it follows that $P_L$ is a convenient morphism from $TL$ to $T^\prime L$. .\\

 From  Theorem \ref{T_DiracPoisson}, 1., $(P_L)_x : T_x L\to T_x^\flat L\cap T_x^\prime L$ is well defined and is a skew symmetric surjective  bounded linear map  on $T_x^\flat L\cap T_x^\prime L$ whose kernel is $D_x\cap T_xM=\ker p^\flat_{| D_x}\subset T_x L$. This implies that $P_L:TL\to T^\prime L$ is a convenient skew symmetric  morphism whose range is $T^\flat L\cap T_x^\prime L$. Therefore $\Omega_L(u,v)=<P_L(v), u>$ is a  $2$-form on $L$ whose kernel is  $(D\cap TM)_{| L}$ and  we have $\Omega^\flat =P_L$.\\
 On the other hand,   by assumption, we have $D_x=(D_x\cap T_xM)\oplus F_x$ for some convenient closed subspace $F_x$ of $D_x$. But, $T_xL=p(D_x)=(D_x\cap T_xM)+ p(F_x)$.
But since $(D_x\cap T_xM)\cap F_x=\{0\}$, this implies that $\ker (P_L)_x$ is supplemented in $T_xL$. As previously for $T^\flat L$,  it follows that $\ker P_L$ is a subbundle of $D_L$ and we obtain   that $TL/\ker P_L$ is a convenient bundle over $L$. By the way (again as for $T^\flat  L$), we obtain also  an injective convenient  bundle morphism $\widehat{P}_L: TL/\ker P_L \to T^\prime L$ and so its range is  $T^\flat L\cap T^\prime L$. In particular, since the injection of $T_x^\flat L\cap T_x^\prime L$ in $T^\prime_x L$ is a convenient injective bounded linear map (cf. Proof of Theorem~\ref{T_DiracPoisson}, 1.),  it follows that  $T^\flat L\cap T^\prime L$ is a weak subbundle of $T^\prime L$.
 
If $D_L$ is a partial Dirac structure on $M$, then $\left( D, M, p_{| D} [\;,\;]_D \right) $ is  a convenient Lie algebroid. Therefore its restriction $(D_{|L}, L, p_{| D_{|L}} [\;,\;]_D)$ to $L$  is  a convenient Lie algebroid and so    $D_L$ is a partial Dirac structure on $L$  (cf. Theorem~\ref{T_DLieAlgebroidTM}).  Now, from (\ref{eq_GraphPL}), for $x\in L$,   $D_x$ is the graph of $(\Omega_L)_x$, it follows that $D_L$ is the graph of $P_L$. Since   $\Omega_L^\flat=P_L$ and so $\Omega_L^\flat(TL)=T^\flat L\cap T^\prime L$ is a weak subbundle of $T^\prime L$ (as we have seen previously) and moreover, $\ker \Omega_L= D\cap TM$ is supplemented in $TL$. 

Since   $D_L$ is a partial Dirac structure on $L$ and $D_L$ is the graph of $P_L$,from Corollary~\ref{C_XX0Omega}, 2., it follows that $\Omega_L$ is closed and so $\Omega_L$ is a strong pre-symplectic form on $L$.\\

\noindent 3.  In a "dual way",  from Theorem \ref{T_DiracPoisson}, 2.,  by analog arguments we can show that  $P^\flat _L$ defined by
$P^\flat_L(\alpha)=u_{| TL}$,
for all $\alpha\in T_x^\flat L$ and $u\in T_xL$ so that $(u,\alpha)$ belongs to $D_x$ for all $x\in L$.\\
By analog arguments  as in the proof of 2., we can show that  $P^\flat _L$ is a convenient bundle morphism from $TL$ to $T^{\flat\prime}L $ which is skew symmetric relative to the canonical pairing between $T^\flat L$  and $TL$. Moreover, the kernel of $P_L^\flat$ is the subbundle of $T^\flat L$ defined by $(D\cap T^\flat M)_{| L}$ and it range is the quotient bundle $TL/(D\cap TM)_{| L}$ which is isomorphic to $T^{\flat\prime}L\cap TL$ which is a weak subbundle of $T^{\flat\prime}L$ (cf. the argument at the end of the proof of Theorem \ref{T_DiracPoisson}, 2. and use the smooth uniform boundness principle).\\
If   $D$ is a partial Dirac structure,  we have already seen that  $D_{| L}$ is a partial Dirac structure on $L$. From (\ref{eq_GraphPflatL}) it follows that $D_L$ is the graph of $P^\flat _L$.  Now, from example \ref{ex_PartialPoisson}, it follows that $P^\flat_L$ is a partial Poisson structure on $L$.\\
 
 \noindent 4. According to the properties of $P_L$ and $P^\flat_L$ proved in Point 2  and Point 3, the assertions in  Point 4  of Theorem~\ref{T_GeometricStructuresOnLeaves}  are   a direct consequence of Theorem~\ref{T_DiracPoisson}, 3. and 4.
\end{proof}

\begin{example}
\label{Ex_RegularDiracmanifolds}
\textsf{Regular partial almost Hilbert  Dirac structure.}
  
Let $D$ be a partial almost Dirac structure on  a Hilbert  manifold $M$ relative to a the Hilbert Pontryagin bundle $TM\oplus T^\flat M$, and assume that $p(D)$ is a closed subbundle of $TM$\footnote{$D$ is then called a {\bf regular partial almost structure}.}.  Then the assumption of Theorem \ref {T_GeometricStructuresOnLeaves} are satisfied.  Assume that $D$ is involutive and let $L$  be a leaf of $D$.   Consider  any $1$-form $\alpha$ on $M$ which is a section of $T^\flat M$ on  some open set $U$ and such that $\alpha_{| L}=0$. From Lemma \ref{L_FormOnF}, we have $L_X\alpha_{| L}=0$ and $i_Xd\alpha_{| L}=0$ for any section $X$ of $L_{|U}$. In particular, this is true for any  section $\alpha$ of $L^\flat=P^\flat (D)$ over $U$. Then from (\ref{DDF1}), $D$ will be a partial Dirac structure on $M$ if $L$ is involutive. In this case,  on each leaf of $L$, we have a structure of strong pre-symplectic Hilbert  manifold.\\
  In particular consider Hilbert spaces $\mathbb{E}$ and $\mathbb{E}^\flat$ and a partial linear Dirac structure $\mathbb{D}$ on $\mathbb{E}$. Then $T\mathbb{E}=\mathbb{E}\times \mathbb{E}$  and  $D=\mathbb{D}\times \mathbb{E}$ is a partial almost Dirac structure on the manifold $\mathbb{E}$ and $p(D)=\mathbb{L}\times \mathbb{E}$ if $\mathbb{L}=p(\mathbb{D})$.  Thus $L$ is an integrable subbundle of $T\mathbb{E}$ and from the previous considerations, it follows that $D$ is a partial Dirac structure on the manifold  $\mathbb{E}$. The characteristic leaf of $D$ through $x\in \mathbb{E}$ is $\{x\}\times \mathbb{L}$ and the strong  pre-symplectic form is $\Omega_{\mathbb{L}}$, according to notations of Theorem \ref{T_DiracPoisson}, 2.
\end{example}

\begin{example}
\label{Ex_SumBanachDiracstructures}
\textsf{Sum of partial Dirac structures on a product of Hilbert manifolds.}\\
For $i\in\{1,2\}$, let $M_i$ be a Banach manifold and $D_i$ be a partial Dirac structure on $M_i$ relative to the Pontryagine bundle $TM_i\oplus T^\flat M_i$. From Example~\ref{Ex_ DirectSumDirac}, $D_1\oplus D_2$ is a partial almost Dirac structure  which is integrable if the kernel of  ${p_i}_{| D_I}$ is supplemented in each fibre of $TM_i$ which is assumed from now on. Note that this assumption  is always true in the Hilbert setting.  From now on, we assume that $M_i$ is a Hilbert manifold. Since $D_i$ is a partial Dirac structure on $M_i$, all the assumptions of Theorem~\ref{T_GeometricStructuresOnLeaves}  are satisfied. Therefore, the characteristic distribution of $D_i$ is integrable, and if $L_i$ is a leaf, we have a presymplectic form $\Omega_{L_i}$ on $L_i$ and a partial Poisson structure $P^\flat _{L_i}:T^\flat L_i\to TL_i$.\\ 
Moreover, $D:=D_1\oplus D_2$ is a partial almost Dirac structure on $M:=M_1\times M_2$ which is integrable. 
The leaf of the characteristic distribution of $D$ through a point $z=(z_1,z_2)$ is the product $L_1\times L_2$ where $L_i$ is the characteristic leaf of $D_i$ through $z_i$. From Theorem~\ref{T_GeometricStructuresOnLeaves}, we have a skew symmetric bundle morphism $P_L:TL\to T^\prime L$ defined by $(P_L)_x(u)=\alpha_{| T_xL}=P_{T_xL}(u)$
\noindent for any $\alpha\in T_x^\flat M$ such that $(u,\alpha) $ belongs to  $D_x$ and any $x\in L$. Therefore we have $P_L=P_{L_1}\oplus P_{L_2}$ and so the kernel of $P_L$
is $(D_1\cap TM_1)_{| L_1}\oplus (D_2\cap TM_2)_{| L_2}$.
So the $2$-form $\Omega_L$ on $L$ is $\Omega_{L_1}\oplus \omega_{L_2}$.  It follows that $\Omega_L$ is also a strong pre-symplectic form on $L$.\\
 By analogue arguments, we also have a partial Poisson structure $P^\flat _{L}:T^\flat L\to TL$ which is $P^\flat_{L_1}\oplus P^\flat_{L_2}$.\\
\end{example}

\section{Projective and direct limits of Banach Dirac structures}
\label{_ProjectiveAndDirectLimitsOfBanachDiracStructures}
\subsection{Case of linear Dirac structures on Banach spaces}
\label{__LinearDiracSequences}
For $i\in\{1,2\}$, we consider a  partial Pontryagin space  $\mathbb{E}_i^\mathfrak{p}=\mathbb{E}_i\oplus \mathbb{E}^\flat_i$ associated  to a {\bf Banach} space $\mathbb{E}_i$  provided with the pairing $<<\;,\;>>_i$. We denote by $p_i$ (resp. $p_i^\flat$) the canonical projections of  $\mathbb{E}_i^\mathfrak{p}$ to $\mathbb{E}_i$ (resp. $\mathbb{E}_i^\flat$). At first, we have:
\begin{lemmadefinition}
\label{L_PulbackPushward} 
Let $\phi : \mathbb{E}_1 \to \mathbb{E}_2$ be a linear map such that $\phi(\mathbb{E}_1)$ is closed in $\mathbb{E}_2$.\\
Assume that $\mathbb{E}_i^\flat$ is closed in $\mathbb{E}_i^*$  for $i \in \{1,2\}$ and  $\phi^*(\mathbb{E}_2^\flat )\subset  \mathbb{E}_1^\flat$.\\
Let $\mathbb{D}_i$ be a linear Dirac structure on $\mathbb{E}_i$, for $i \in \{1,2\}$. 
\begin{enumerate}
\item 
Assume that $\phi$ is injective and $\phi^*(\mathbb{E}_2^\flat)=\mathbb{E}_1^\flat $ and
\[
\phi^!(\mathbb{D}_2):=\{(u_1,\phi^*(\alpha_2))\in \mathbb{E}_1^\mathfrak{p}\;: (\phi(u_1), \alpha_2)\in \mathbb{D}_2\}.
\]
If $\mathbb{D}_1 \subset \phi^!({\mathbb{D}}_2)$ then $\mathbb{D}_1=\phi^!({\mathbb{D}}_2)$  and so $\phi^!({\mathbb{D}}_2)$ is a linear Dirac structure on $\mathbb{E}_1$.
In this situation,
$\phi^!({\mathbb{D}}_2)$ is called the \emph{pullback}\index{pullback} of $\mathbb{D}_2$.
\item 
Assume that $\phi$ is surjective and  set 
\[
\phi_!(\mathbb{D}_1):=\{(\phi(u_1),\alpha_2)\in \mathbb{E}_2^\mathfrak{p}\;: (u_1, \phi^*(\alpha_2))\in \mathbb{D}_1\}
\]
 If  $\mathbb{D}_2\subset\phi_!({\mathbb{D}}_1)$ then  $\mathbb{D}_2=\phi_!({\mathbb{D}}_1)$  and so $\phi_!({\mathbb{D}}_1)$ is a linear Dirac structure on $\mathbb{E}_2$.

In this situation, $\phi_!(\mathbb{D}_1)$
is called the \emph{pushforward}\index{pushforward} of $\mathbb{D}_1$.
\end{enumerate}
\end{lemmadefinition}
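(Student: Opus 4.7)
The plan is to exploit the maximality property of partial linear Dirac structures from Remark~\ref{R_ElementariesRemarks}, 2.: any isotropic subspace that contains a Dirac structure must coincide with it. The whole argument then reduces to checking that $\phi^!(\mathbb{D}_2)$ (respectively $\phi_!(\mathbb{D}_1)$) is a linear subspace of $\mathbb{E}_1^{\mathfrak{p}}$ (respectively $\mathbb{E}_2^{\mathfrak{p}}$) which is isotropic for the pairing $<<.,.>>_1$ (respectively $<<.,.>>_2$); closedness and the Dirac identity $\mathbb{D}=\mathbb{D}^\perp$ will then come for free from the hypothesis.

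For Part~1, the inclusion $\phi^*(\mathbb{E}_2^\flat)\subset \mathbb{E}_1^\flat$ shows that $\phi^!(\mathbb{D}_2)$ is a subset of $\mathbb{E}_1^{\mathfrak{p}}$, and linearity of $\phi$, of $\phi^*$ and of $\mathbb{D}_2$ immediately give closure under linear combinations. The central computation is the isotropy: for $(u_1,\phi^*(\alpha_2))$ and $(v_1,\phi^*(\beta_2))$ in $\phi^!(\mathbb{D}_2)$, the duality identity $<\phi^*(\gamma),w>_1=<\gamma,\phi(w)>_2$ yields
\[
<<(u_1,\phi^*(\alpha_2)),(v_1,\phi^*(\beta_2))>>_1 = <<(\phi(u_1),\alpha_2),(\phi(v_1),\beta_2)>>_2 = 0,
\]
the last equality being the isotropy of $\mathbb{D}_2$. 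Hence $\phi^!(\mathbb{D}_2)\subset (\phi^!(\mathbb{D}_2))^{\perp}$.

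Combining this with the hypothesis $\mathbb{D}_1\subset \phi^!(\mathbb{D}_2)$ and reversing inclusions under $\perp$ produces the chain
\[
\phi^!(\mathbb{D}_2)\ \subset\ (\phi^!(\mathbb{D}_2))^{\perp}\ \subset\ \mathbb{D}_1^{\perp}\ =\ \mathbb{D}_1\ \subset\ \phi^!(\mathbb{D}_2),
\]
in which every inclusion is in fact an equality. This forces $\phi^!(\mathbb{D}_2)=\mathbb{D}_1$; closedness of $\phi^!(\mathbb{D}_2)$ is inherited from $\mathbb{D}_1$, and the middle equality $\phi^!(\mathbb{D}_2)=(\phi^!(\mathbb{D}_2))^{\perp}$ is precisely the Dirac condition. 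Part~2 is handled by the mirror argument: the same duality in the opposite direction shows that $\phi_!(\mathbb{D}_1)$ is isotropic in $\mathbb{E}_2^{\mathfrak{p}}$, with the surjectivity of $\phi$ ensuring that its first coordinate sweeps out all of $\mathbb{E}_2$, and the analogous four--term chain with the roles of $\mathbb{D}_1$ and $\mathbb{D}_2$ exchanged delivers $\phi_!(\mathbb{D}_1)=\mathbb{D}_2$ together with the Dirac property.

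The only non-trivial computation is the isotropy check, and no real obstacle arises: the technical hypotheses on $\mathbb{E}_i^\flat$ and on $\phi^*$ are used only to make $\phi^!(\mathbb{D}_2)$ and $\phi_!(\mathbb{D}_1)$ well-defined subsets of the appropriate Pontryagin spaces, while closedness and the $\mathbb{D}=\mathbb{D}^{\perp}$ identity are delivered automatically by the maximality argument, without any appeal to the Banach structure itself.
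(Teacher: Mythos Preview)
Your proof is correct and follows essentially the same approach as the paper: the identical isotropy computation and the same chain of inclusions $\phi^!(\mathbb{D}_2)\subset (\phi^!(\mathbb{D}_2))^{\perp}\subset \mathbb{D}_1^{\perp}=\mathbb{D}_1\subset\phi^!(\mathbb{D}_2)$. The only difference is that the paper first argues directly, using the Banach hypotheses and continuity of $\phi^*$, that $\phi^!(\mathbb{D}_2)$ is closed before running the maximality argument, whereas you obtain closedness for free from the identification with $\mathbb{D}_1$; your route is slightly more economical.
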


\begin{proof}
Note that  the adjoint map $\phi^*:\mathbb{E}_2^\prime\to \mathbb{E}_1^\prime$ is continuous. Since  $\mathbb{E}_i^\flat$ is closed in $\mathbb{E}_i^*$  for $i\in\{1,2\}$ and $\phi^*(\mathbb{E}_2^\flat ) \subset  \mathbb{E}_1^\flat$, it follows that $\phi^*_{| \mathbb{E}_2^\flat }$ is continuous.\\
\noindent 1. 
We have
\[
\phi^!(\mathbb{D}_2)
= \phi^{-1} \left( p_2(\mathbb{D}_2)\cap \phi(\mathbb{E}_1) \right) 
\times 
 \left( \phi^* \left( p_2^\flat(\mathbb{D}_2) \right) \right).
\]
Since $\phi(\mathbb{E}_1)$ is closed in $\mathbb{E}_2$, and $\mathbb{D}_2$ is closed in $\mathbb{E}_2^\mathfrak{p}$, it follows that 
 $p_2 \left( \mathbb{D}_2 \right) \cap \phi(\mathbb{E}_1)$ is closed  in $\mathbb{E}_2$ and since $\phi$ is continuous,  $p_1(\mathbb{D}_1)=\phi^{-1} \left( p_2(\mathbb{D}_2)\cap \phi(\mathbb{E}_1) \right)$ is closed in $\mathbb{E}_1$.  Now,
$\phi^*_{| \mathbb{E}_2^\flat }$ is surjective and continuous, thus this map is closed and so  is $p_2^\flat$. Therefore  $\phi^*(p_2^\flat(\mathbb{D}_2))$ is closed in $\mathbb{E}_1^\flat$. Thus $p^\flat_1(\mathbb{D}_1)= \phi^* \left( p_2^\flat(\mathbb{D}_2) \right) $ is closed in $\mathbb{E}_1^\flat$. This implies that  $\mathbb{D}_1$ is a closed subspace of $\mathbb{E}_1^\mathfrak{p}$.\\
Consider two elements $(u_1, \phi^*(\alpha_2))$ and $(v_1,\phi^*(\beta_2))$ of $\mathbb{D}_1$.  Since $\mathbb{D}_2$ is a linear Dirac structure,  we have
\[
\begin{array}{rcl}
<<(u_1, \phi^*(\alpha_2)),(v_1,\phi^*(\beta_2))>>_1
&=& \phi^*(\beta_2)(u_1)+\phi^*(\alpha_2)(v_1)\\
&=& <<(\phi(u_1),\alpha_2),(\phi(v_1),\beta_2)>>_2\\
&=&0.
\end{array}
\]
and so $\phi^!({\mathbb{D}}_2)\subset \phi^!({\mathbb{D}}_2)^\perp$. 
But, by assumption, 
 $\mathbb{D}_1\subset\phi^!({\mathbb{D}}_2)$; it follows that $\phi^!({\mathbb{D}}_2)^\perp\subset \mathbb{D}_1^\perp$.  But $\mathbb{D}_1^\perp=\mathbb{D}_1$ ($\mathbb{D}_1$ is a linear Dirac structure), which ends the proof.\\
\noindent 2. We have
$\phi_!(\mathbb{D}_1)=\phi(p_1(\mathbb{D}_1))\times \left( \phi^*_{| \mathbb{E}_2^\flat } \right) ^{-1}(p_1^\flat(\mathbb{D}_1))$.\\
Under the assumption of Point 2., it follows that $\phi$ and $p_1$ are closed. Since $\mathbb{D}_1$ is closed, so is $p_2(\mathbb{D}_2)=\phi(p_1(\mathbb{D}_1))$.  By same arguments, $p_1^\flat(\mathbb{D}_1)$ is closed in $\mathbb{E}_1^\flat$ and $\phi^*_{| \mathbb{E}_2^\flat }$ is continuous. Therefore $p_2(\mathbb{D}_2)= \left( \phi^*_{| \mathbb{E}_2^\flat } \right) ^{-1}(p_1^\flat(\mathbb{D}_1))$ is closed in $\mathbb{E}_2^\flat$. This implies that $\mathbb{D}_2$ is a closed subspace of $\mathbb{E}^\mathfrak{p}$.\\
Consider $(\phi(u_1),\alpha_2)$ and $(\phi(v_1),\beta_2)$ two elements of $\mathbb{D}_2$. As in the previous proof,  we have, since $\mathbb{D}_1$ is a linear Dirac structure:
\[
\begin{matrix}
\begin{array}{rcl}
<<(\phi(u_1), \alpha_2),(\phi(v_1),\beta_2)>>_2
&=&\beta_2(\phi(u_1))+\alpha_2(\phi(v_1))\\
&=&\phi^*(\beta_2)(u_1)+\phi^*(\alpha_2)(v_1)\\
&=&<<(u_1, \phi^*(\alpha_2)),(v_1,\phi^*(\beta_2))>>_1\\
&=&0.
\end{array}
\end{matrix}
\]
This implies that $\mathbb{D}_2\subset \mathbb{D}_2^\perp$.  With the same last  arguments as in the previous proof, we get $\mathbb{D}_2=\phi_!({\mathbb{D}}_1)$.
\end{proof}

\begin{example}
\label{Ex_ProductDirac} 
For $i \in \{1,2\}$, let $\mathbb{E}_i$ be a Banach space and $\mathbb{E}_i^\flat$ be a closed subspace of $\mathbb{E}_i^\prime$. Consider a partial linear Dirac structure 
 $\mathbb{D}_i\subset \mathbb{E}_i\oplus  \mathbb{E}^\flat_i$. We set $\widetilde{\mathbb{E}}_1=\mathbb{E}_1$ and $\widetilde{\mathbb{E}}_2=\mathbb{E}_1\oplus \mathbb{E}_2$ . If we consider 
  $\widetilde{\mathbb{E}}^\flat_2=\mathbb{E} _1^\flat\oplus \mathbb{E}_2^\flat$, then the direct sum\footnote{Cf. Lemma  ~\ref{L_PartialAnulatorProperties}, 4.} $\widetilde{\mathbb{D}}_2=\mathbb{D}_1\oplus \mathbb{D}_2$ is a closed subbunbdle of 
  $\widetilde{\mathbb{E}}_2^\mathfrak{p}=\widetilde{\mathbb{E}}_2\oplus \widetilde{\mathbb{E}}^\flat_2$.\\
Moreover, for $i \in \{1,2\}$, we have a canonical pairing $<.,.>_i$ between $\widetilde{\mathbb{E}}_i^\prime$ and $\widetilde{\mathbb{E}}_i$  defined by
 $<\alpha_1,u_1>_1=\alpha_1(u_1)$ and $<(\alpha_1,\alpha_2),(u_1,u_2)>_2=\alpha_1(u_1)+\alpha_2(u_2)$.  We have seen that  $\widetilde{\mathbb{D}}_2$ is a partial linear Dirac structure on 
 $\widetilde{\mathbb{E}}_2$. By homogeneity in notations, we set $ \widetilde{\mathbb{D}}_1=\mathbb{D}_1$ and we set $\widetilde{p}_i$ (resp. $\widetilde{p}^\flat_i$) the canonical projection of $\widetilde{\mathbb{E}}_i^\mathfrak{p}$ onto $\widetilde{\mathbb{E}}_i$ (resp. $\widetilde{\mathbb{E}}_i^\prime$).
 Let  $\varepsilon:\widetilde{\mathbb{E}}_1 \to \widetilde{\mathbb{E}}_2$ be the canonical inclusion and $\lambda: \widetilde{\mathbb{E}}_2\to \widetilde{\mathbb{E}}_1$ the canonical projection.  We have 
\begin{enumerate}
\item[1.] $ \varepsilon^! \left( \widetilde{\mathbb{D}}_2 \right) 
=
\left\lbrace(u_1,\alpha_1)\in \widetilde{\mathbb{E}}_1^\mathfrak{p}\;:  \left((u_1,0),(\alpha_1, \alpha_2)\right) \in 
\widetilde{\mathbb{D}}_2 \right\rbrace$.

Since  $\widetilde{p}_1(\widetilde{\mathbb{D}}_1)= \widetilde{p}_2 \left( \varepsilon^!\left( \widetilde{\mathbb{D}}_2 \right) \right) $ and $\widetilde{p}^\flat_1(\widetilde{\mathbb{D}}_1)= \widetilde{p}^\flat_1 \left( \varepsilon^!\left( \widetilde{\mathbb{D}}_2 \right) \right)$
 then, from Lemma~\ref{L_PulbackPushward},~1., we have $\widetilde{\mathbb{D}}_1=\varepsilon^! \left( \widetilde{\mathbb{D}}_2 \right) $
\item[2.] 
$ \lambda_! \left( \widetilde{\mathbb{D}}_1 \right) 
=
\left\lbrace 
\left((u_1,u_2),(\alpha_1,\alpha_2)\right)\in \widetilde{\mathbb{E}}_1^\mathfrak{p}\;: (u_1, \alpha_1))\in \mathbb{D}_1
\right\rbrace$.

  Since $\widetilde{\mathbb{D}}_2=\mathbb{D}_1\times \mathbb{D}_2$, this implies that $ \widetilde{\mathbb{D}}_2\subset  \lambda_! \left( \widetilde{\mathbb{D}}_1 \right) $. According to Lemma~\ref{L_PulbackPushward}, 2., we have  $\widetilde{\mathbb{D}}_1=\lambda_! \left( \widetilde{\mathbb{D}}_2 \right) $.
\end{enumerate}
\end{example} 

\begin{definition}
\label{D_AscendingLinearDiracSequence}
Consider an ascending sequence   $ \left( \mathbb{E}_n,\varepsilon_n \right) _{n \in \mathbb{N}}$ of Banach spaces\footnote{This means that $\varepsilon_n:\mathbb{E}_n\to \mathbb{E}_{n+1}$ is the inclusion and $\mathbb{E}_n$ is closed in $\mathbb{E}_{n+1}$.}, provided with a sequence of Pontryagin spaces $ \left( \mathbb{E}_n^\mathfrak{p}=\mathbb{E}_n \oplus \mathbb{E}_n^\flat \right) _{n \in \mathbb{N}}$  and, for each $n\in \mathbb{N}$, we have  a linear Dirac structure $\mathbb{D}_n\subset\mathbb{E}_n^\mathfrak{p}= \mathbb{E}_n\oplus \mathbb{E}_n^\flat$. \\
We say that the sequence of linear Dirac structures $\mathbb{D}_n$ is \emph{an ascending sequence of linear Dirac structures} on $ \left( \mathbb{E}_n,\varepsilon_n \right) _{n \in \mathbb{N}} $ if the following properties are satisfied for all $n\in \mathbb{N}$:
\begin{description}
\item[({\bf ALD1})] 
$\mathbb{E}_n^\flat$ is closed in $\mathbb{E}_n^\prime$ and 
$\varepsilon_n^* \left( \mathbb{E}^\flat_{n+1} \right) = \mathbb{E}_n^\flat$;
\item[({\bf ALD2})] 
$\mathbb{D}_n$ is the pullback of $\mathbb{D}_{n+1}$.
\end{description}
\end{definition}

\begin{definition}
\label{D_ProjectiveLinearDiracSequence}
Consider a projective sequence   $ \left( \mathbb{E}_n,\lambda_n \right) _{n \in \mathbb{N}}$ of Banach spaces where  
$\lambda_{n+1} : \mathbb{E}_{n+1} \to \mathbb{E}_n$ 
 is surjective,  provided with a sequence of Pontryagin spaces $ \left( \mathbb{E}_n^\mathfrak{p}=\mathbb{E}_n\oplus\mathbb{E}^\flat_n \right) _{n \in \mathbb{N}} $ and, for each $n\in \mathbb{N}$, we have  a linear Dirac structure $\mathbb{D}_n\subset\mathbb{E}_n^\mathfrak{p}= \mathbb{E}_n\oplus \mathbb{E}_n^\flat$. \\
We say that the sequence of linear Dirac structures $(D_n)_{n \in \mathbb{N}}$ is a \emph{submersive projective sequence of linear Dirac structures} on $(\mathbb{E}_n,\varepsilon_n)$ if the following properties are satisfied for all $n\in \mathbb{N}$:
\begin{description}
\item[({\bf SPLD1})] 
$\mathbb{E}_n^\flat$ is closed in $\mathbb{E}_n^\prime$ and 
$\lambda_{n+1}^*(\mathbb{E}^\flat_n)\subset  \mathbb{E}_{n+1}^\flat$
\item[({\bf SPLD2})]
$\mathbb{D}_{n+1}$ is the pushforward of $\mathbb{D}_{n}$.
\end{description}
\end{definition}
Thus we have:
\begin{theorem}\label{T_LimitSequenceLinearDirac}${}$
\begin{enumerate}
\item[1.]
Let $ \left( \mathbb{D}_n \right) _{n \in \mathbb{N}}$ be an ascending sequence of linear Dirac structures on an ascending sequence $ \left( \mathbb{E}_n,\varepsilon_n \right) _{n \in \mathbb{N}}$ of Banach spaces  provided with a sequence of Pontyagin spaces $ \left( \mathbb{E}_n^\mathfrak{p}=\mathbb{E}_n \oplus \mathbb{E}_n^\flat \right) _{n \in \mathbb{N}} $. Then there exist 
\begin{description}
\item[(i)] a convenient space  $\mathbb{E}=\underrightarrow{\lim}\mathbb{E}_n$; 
\item[(ii)] a Fr\'echet space $\mathbb{E}^\flat =\underleftarrow{\lim}\mathbb{E}^\flat_n$;  
\item[(iii)] a dual pairing $<\;,\;>$ on $\mathbb{E}^\flat\times\mathbb{E}$ defined  in the following way:

for $\alpha=\underleftarrow{\lim}\alpha_n$ and $u=\underrightarrow{\lim}u_n$, if  $i$ is the smallest integer $n$ such that $u$ belongs to $\mathbb{E}_n$, then
\begin{center}
 $<\alpha,u>=\underleftarrow{\lim}_{n\geq i}<\alpha_n,u_{i}>$
 \end{center}
\item[(iv)] a linear Dirac structure $\mathbb{D}\subset \mathbb{E}^\mathfrak{p}=\mathbb{E}\oplus\mathbb{E}^\flat$ defined by
\begin{equation}
\label{eqLinkDNDn+1}
\mathbb{D}
=\underrightarrow{\lim}
 \left( p_n \left( \mathbb{D}_n \right) \right) \times\underleftarrow{\lim}
 \left( p^\flat_n \left( \mathbb{D}_n \right) \right)
\end{equation}
where $p_n$ (resp. $p^\flat_n$) is the projection of $\mathbb{E}^\mathfrak{p}_n $ on $\mathbb{E}_n$ (resp. $\mathbb{E}^\flat_n$);
\item[(v)]
we set $\mathbb{L}_n=p_n(\mathbb{D}_n)$ (resp. $\mathbb{L}^\flat_n=p_n^\flat(\mathbb{D}_n)$). If $\mathbb{L}=\underrightarrow{\lim}\mathbb{L}_n=p(\mathbb{D})$ and $\mathbb{L}^\flat=\underleftarrow{\lim}(\mathbb{L}_n)=p^\flat(\mathbb{D})$, then $P_\mathbb{L}=\underleftarrow{\lim}P_{\mathbb{L}_n}$ is a symmetric bounded map from $\mathbb{L}$ to $\mathbb{L}^\prime$ whose kernel is $\mathbb{D}\cap\mathbb{E}$ and range is $\mathbb{L}^\flat/(\mathbb{D}\cap\mathbb{E}^\flat)$. In particular, the $2$-form  $\Omega_\mathbb{L}(u,v)=<P_\mathbb{L}(u), v)>$ on $\mathbb{L}$ is such that  
\[
\Omega_{\mathbb{L}}= \underleftarrow{\lim}(\Omega_{\mathbb{L}_n}).
\]
\end{description}

\item[2.] Let $ \left( \mathbb{D}_n \right) _{n\in \mathbb{N}}$ be a submersive projective sequence of linear Dirac structures on a submersive projective  sequence $ \left(  \mathbb{E}_n,\lambda_n \right) _{n\in \mathbb{N}}$ of Banach spaces provided with a sequence of Pontryagin spaces $ \left( \mathbb{E}^\mathfrak{p}_n=\mathbb{E}_n\oplus\mathbb{E}^\flat \right) _{n\in \mathbb{N}}$. Then  there exists

\begin{description}
\item[(i)] a Fr\'echet  space  $\mathbb{E}=\underleftarrow{\lim}\mathbb{E}_n$; 
\item[(ii)] a convenient space $\mathbb{E}^\flat =\underrightarrow{\lim}\mathbb{E}^\flat_n$;  
\item[(iii)] a dual pairing $<\;,\;>$ on $\mathbb{E}^\flat\times\mathbb{E}$ defined  in the following way:

for $\alpha=\underrightarrow{\lim}\alpha_n$ and $u=\underleftarrow{\lim}u_n$, if  $i$ is  the smallest integer $n$ such that $\alpha$ belongs to $\mathbb{E}^\flat_n$ then
 $<\alpha,u>=\underleftarrow{\lim}_{n\geq i}<\alpha_{i},u_n>$
\item[(iv)] a linear Dirac structure $\mathbb{D}\subset \mathbb{E}^\mathfrak{p}=\mathbb{E}\oplus\mathbb{E}^\flat$ defined by
\[
\mathbb{D}=\underleftarrow{\lim}
 \left( p_n \left( \mathbb{D}_n \right) \right)\times\underrightarrow{\lim} \left( p^\flat_n \left( \mathbb{D}_n \right) \right)
\]
where $p_n$ (resp. $p^\flat_n$) is the projection of $\mathbb{E}^\mathfrak{p}_n $ on $\mathbb{E}_n$ (resp. $\mathbb{E}^\flat_n$);
\item[(v)]
 We set $\mathbb{L}_n=p_n(\mathbb{D}_n)$ (resp. $\mathbb{L}^\flat_n=p_n^\flat(\mathbb{D}_n)$). If $\mathbb{L}=\underleftarrow{\lim}\mathbb{L}_n=p(\mathbb{D})$ and $\mathbb{L}^\flat=\underrightarrow{\lim}(\mathbb{L}_n)=p^\flat(\mathbb{D})$, then $P_\mathbb{L}=\underrightarrow{\lim}P_{\mathbb{L}_n}$ is a symmetric bounded map from $\mathbb{L}$ to $\mathbb{L}^\prime$ whose kernel is $\mathbb{D}\cap\mathbb{E}$ and range is $\mathbb{L}^\flat/(\mathbb{D}\cap\mathbb{E}^\flat)$. In particular, the $2$-form  $\Omega_\mathbb{L}$ defined by $\Omega_\mathbb{L}(u,v)=-<P_\mathbb{L}(v), u>$ on $\mathbb{L}$ and in fact  $\Omega_\mathbb{L}=\underrightarrow{\lim}\Omega_{\mathbb{L}_n}$.
\end{description}
\end{enumerate}
\end{theorem}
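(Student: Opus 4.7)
The plan is to build the five objects (i)--(v) of Part~1 in order, each time reducing the limit-level statement to its finite-level Banach counterpart via the pullback compatibility encoded in (ALD2) and interpreted through Lemma~\ref{L_PulbackPushward}; Part~2 will follow by exchanging direct limits for projective limits throughout and replacing pullback by pushforward.

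\emph{Step 1: limits and pairing.} Since each $\varepsilon_n$ is a closed embedding of Banach spaces, $\mathbb{E}=\underrightarrow{\lim}\mathbb{E}_n$ carries a convenient space structure, and by (ALD1) the bonding maps $\varepsilon_n^*\colon \mathbb{E}^\flat_{n+1}\to\mathbb{E}^\flat_n$ are continuous Banach surjections, so their projective limit $\mathbb{E}^\flat$ is Fr\'echet. The pairing in (iii) is well defined because, for $u\in\mathbb{E}_i$ and $\alpha=\underleftarrow{\lim}\alpha_n$, the compatibility $\varepsilon_n^*(\alpha_{n+1})=\alpha_n$ forces $\alpha_n(u)=\alpha_{n+1}(u)$ for all $n\geq i$; left non-degeneracy is inherited from the left non-degeneracy of each $<.,.>_n$ together with the fact that $\alpha=0$ iff every $\alpha_n=0$.

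\emph{Step 2: the Dirac structure.} Unpacking (ALD2) via Lemma~\ref{L_PulbackPushward} gives the coherence relations $\varepsilon_n(p_n(\mathbb{D}_n))\subseteq p_{n+1}(\mathbb{D}_{n+1})$ and $\varepsilon_n^*(p_{n+1}^\flat(\mathbb{D}_{n+1}))=p_n^\flat(\mathbb{D}_n)$, so $p_n(\mathbb{D}_n)$ is an ascending sequence of closed subspaces and $p_n^\flat(\mathbb{D}_n)$ a surjective projective sequence of closed subspaces; their limits are well defined and formula~(\ref{eqLinkDNDn+1}) produces a closed subspace $\mathbb{D}$ of $\mathbb{E}\oplus\mathbb{E}^\flat$. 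The inclusion $\mathbb{D}\subseteq\mathbb{D}^\perp$ will be established by truncation: given $(u,\alpha),(v,\beta)\in\mathbb{D}$, take $n$ large enough that $u,v\in\mathbb{E}_n$ so that by the pullback description $(u,\alpha_n),(v,\beta_n)\in\mathbb{D}_n$, and then $\beta(u)+\alpha(v)=\beta_n(u)+\alpha_n(v)=0$ since $\mathbb{D}_n=\mathbb{D}_n^\perp$. Conversely, for $(w,\gamma)\in\mathbb{D}^\perp$ one restricts $\gamma$ to each level $\gamma_n\in\mathbb{E}^\flat_n$ and tests against arbitrary $(u,\alpha_n)\in\mathbb{D}_n$ to conclude $(w,\gamma_n)\in\mathbb{D}_n^\perp=\mathbb{D}_n$ at every sufficiently large $n$, so that $(w,\gamma)$ is recovered by~(\ref{eqLinkDNDn+1}).

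\emph{Step 3: the map $P_\mathbb{L}$ and the form $\Omega_\mathbb{L}$.} Lemma~\ref{L_LbinterL'Linter Lb'} applied at each finite level identifies the target of $P_{\mathbb{L}_n}$ with $\mathbb{L}_n^\flat\cap\mathbb{L}_n'$, and the pairing compatibility of Step~1 yields $P_{\mathbb{L}_n}\circ\varepsilon_n=\varepsilon_n^*\circ P_{\mathbb{L}_{n+1}}$; this makes $\{P_{\mathbb{L}_n}\}$ a coherent system, so $P_\mathbb{L}=\underleftarrow{\lim}P_{\mathbb{L}_n}$ exists as a bounded skew map on $\mathbb{L}$, its kernel and range being identified with $\mathbb{D}\cap\mathbb{E}$ and $\mathbb{L}^\flat/(\mathbb{D}\cap\mathbb{E}^\flat)$ by the left exactness of projective limits and the corresponding assertions of Theorem~\ref{T_DiracPoisson} at each finite level. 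The $2$-form $\Omega_\mathbb{L}(u,v)=-<P_\mathbb{L}(v),u>$ coincides with $\underleftarrow{\lim}\Omega_{\mathbb{L}_n}$ because each $\Omega_{\mathbb{L}_n}$ is the $\varepsilon_n$-pullback of $\Omega_{\mathbb{L}_{n+1}}$. Part~2 is then handled by the dual argument: $\mathbb{E}$ is Fr\'echet and $\mathbb{E}^\flat$ convenient, (SPLD2) provides $\lambda_{n+1}(p_{n+1}(\mathbb{D}_{n+1}))=p_n(\mathbb{D}_n)$ by Lemma~\ref{L_PulbackPushward},~2., and the same truncation argument yields $\mathbb{D}=\mathbb{D}^\perp$ and $P_\mathbb{L}=\underrightarrow{\lim}P_{\mathbb{L}_n}$.

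\emph{Main obstacle.} The delicate step is the maximality direction $\mathbb{D}^\perp\subseteq\mathbb{D}$ at the limit: one must upgrade the pointwise orthogonality of $(w,\gamma)$ against each finite-level $\mathbb{D}_n$ into membership in the coherent family expressed by~(\ref{eqLinkDNDn+1}), and this depends crucially on the fact that pullback/pushforward maps at each level are unambiguously surjective onto the relevant orthogonal component; the remaining content is bookkeeping with the coherence of the limits.
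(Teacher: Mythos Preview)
Your proposal follows essentially the same route as the paper: build the limits, extract the coherence relations $\varepsilon_n(p_n(\mathbb{D}_n))\subset p_{n+1}(\mathbb{D}_{n+1})$ and $\varepsilon_n^*(p_{n+1}^\flat(\mathbb{D}_{n+1}))=p_n^\flat(\mathbb{D}_n)$ from Lemma~\ref{L_PulbackPushward}, verify $\mathbb{D}=\mathbb{D}^\perp$ by reduction to the finite level, and then read off $P_{\mathbb{L}}$ and $\Omega_{\mathbb{L}}$ as limits. In fact you are somewhat more explicit than the paper on the maximality direction $\mathbb{D}^\perp\subseteq\mathbb{D}$---the paper dispatches this with ``it follows easily that $\mathbb{D}=\mathbb{D}^\perp$'' while you spell out the truncation-and-lift mechanism and correctly flag that lifting an element of $\mathbb{D}_n$ to a coherent family is where the surjectivity hypotheses in (ALD1) are actually consumed.
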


\begin{proof}
The proof of Point 2  uses the same type of arguments  as in the proof of Point 1 by permuting direct and projective limit in a dual way. Therefore, we will only consider the proof of Point 1.\\
Under the notations and assumptions of Point 1,  using Lemma-Definition~\ref{L_PulbackPushward} and Definition~\ref{D_AscendingLinearDiracSequence}, we have the following justifications: 
\begin{description} 
\item[(i)] 
Since $ \left( \mathbb{E}_n,\varepsilon_n \right) $ is an ascending sequence of Banach spaces, then the direct limit   $\mathbb{E}=\underrightarrow{\lim}\mathbb{E}_n$ is a convenient space (cf. \cite{CaPe19} or \cite{CaPe23}, Chap.~5,~4).
\item[(ii)] 
As  $ \left( \mathbb{E}_n,\varepsilon_n \right) $ is an ascending sequence of Banach spaces, then the sequence $ \left\langle( \mathbb{E}^\prime_{n+1},\varepsilon_n^* \right) $ is a projective sequence. But we have $\varepsilon^*_n \left( \mathbb{E}^\flat_{n+1} \right) = \mathbb{E}_n^\flat$, 
so $(\mathbb{E}^\prime_n,{\varepsilon^*_n}_{|\mathbb{E}^\flat_{n+1}})$ is also a projective sequence. Therefore $\mathbb{E}^\flat =\underleftarrow{\lim}\mathbb{E}^\flat_n$ is a Fr\'echet space (cf. \cite{DGV16} or  \cite{CaPe23}, Chap.~4, 4).
\item[(iii)]  
Fix some  $\alpha=\underrightarrow{\lim}\alpha_n\in \mathbb{E}^\flat$ and $u=\underleftarrow{\lim}u_n \in \mathbb{E}$. Since $\mathbb{E}=\displaystyle\bigcup_{n\in \mathbb{N}}\mathbb{E}_n$, let $i$ be the smallest integer $n$ such that $u_n$ belongs to $\mathbb{E}_n$. 
From properties of direct  limits (resp. projective limits), for each $k\in \mathbb{N}$, we have a canonical  injective (resp. surjective) bounded map 
$\widehat{\varepsilon}_k:\mathbb{E}_k\to \mathbb{E}^\flat$ (resp. $\widehat{\varepsilon}_k^*:\mathbb{E}^\flat\to \mathbb{E}_k^\flat$ where  $\widehat{\varepsilon}_k^*$ is the restriction to $\mathbb{E}^\flat$ of  the adjoint $\widehat{\varepsilon}_k^*: \mathbb{E}^\prime \to \mathbb{E}^\prime_k  $).
Then, we have  $\alpha_n=\widehat{\varepsilon}_n^*(\alpha)$ and so
\[
<\alpha, u>
=
\alpha_n(u_i)=\underleftarrow{\lim}_{n\geq i}<\alpha_n,u_i>
\] 
is well defined,  according to the properties of $\varepsilon_k^*$ in restriction to $\mathbb{E}^\flat_{k+1}$.  This  implies the announced result for the pairing $<\;,\;>$.
\item[(iv)] 
From   the proof of Lemma-Definition~\ref{L_PulbackPushward},~1. and Definition~\ref{D_AscendingLinearDiracSequence},  for each $n\in \mathbb{N}$, we have
\begin{center}
$p_n \left( \mathbb{D}_n \right)
= 
\varepsilon_{n+1}^{-1}
\left( p_{n+1}(\mathbb{D}_{n+1}\cap \varepsilon_n(\mathbb{E}_{n})) \right) 
\textrm{ and } 
p_n^\flat\left( \mathbb{D}_n \right)=\varepsilon_n^*
 \left( p_{n+1}^\flat(\mathbb{D}_{n+1}) \right) .$
\end{center}
It follows that $\varepsilon_n (p_n(\mathbb{D}_n))\subset p_{n+1}(\mathbb{D}_{n+1})$. Since, for each $n\in \mathbb{N}$, the spaces $p_n(\mathbb{D}_n)$ and $p_{n}^\flat(\mathbb{D}_{n})$ are closed in $\mathbb{E}_n$ and $\mathbb{E}_n^\flat$ respectively, it follows that $(p_n(\mathbb{D}_n))$ (resp.  $(p_{n}^\flat(\mathbb{D}_{n}))$) is an ascending sequence (resp. projective sequence) of Banach spaces. Therefore 
$\mathbb{D}=\underrightarrow{\lim}(p_n(\mathbb{D}_n))\times\underleftarrow{\lim}(p^\flat_n(\mathbb{D}_n))$ is a closed subspace of $\mathbb{E}^\mathfrak{p}$.
Since $\mathbb{D}_n=\mathbb{D}_n^\perp$ and  from the previous expression of $\mathbb{D}_n$, and the characterization of the pairing $<\;,\;>$, it follows easily that $\mathbb{D}=\mathbb{D}^\perp$.
\item[(v)] 
Let $u=\underrightarrow{\lim}u_n\in \mathbb{L}$ and $\alpha=\underleftarrow{\lim}\alpha_n\in \mathbb{E}^\flat$ such that $(u,\alpha)$ belongs to $\mathbb{D}$.  Then by definition of $P_\mathbb{L}$ we have $P_\mathbb{L}(u)=p^\flat(u,\alpha)$ and this value does not depend on the choice of $\alpha$. From the previous step, we can choose $\alpha$ such that  $(u_n,\alpha_n)\in \mathbb{D}_n$ and so $P_{\mathbb{L}_n}(u_n)=p_n^\flat(u_n,\alpha_n)$. Then we have
\begin{center}
$P_\mathbb{L}(u)=\underleftarrow{\lim} \left( p^\flat_n(u_n,\alpha_n) \right)
=
\underleftarrow{\lim}P_{\mathbb{L}_n}(u_n)$.
\end{center}
On the other hand,
\begin{center}
$\Omega_\mathbb{L}(v,u))= <P_\mathbb{L}(v), u>=-<\underleftarrow{\lim}P_{\mathbb{L}_n}(v_n), u_n>$
\end{center}
This implies that $\Omega_{L_n}=\underleftarrow{\lim}\varepsilon_n^*\Omega_L$,
 which ends the proof of the last assertion.
\end{description}
\end{proof}

\subsection{Ascending sequences of partial almost  Banach Dirac manifolds}

Let $\left( M_n,\varepsilon_n \right) _{n\in\mathbb{N}}$  be an ascending sequence of Banach  manifolds $M_n$ modelled on a Banach space $\mathbb{M}_n$ such that $\mathbb{M}_n$ is supplemented in $\mathbb{M}_{n+1}$.  For each $n\in\mathbb{N}$, we associate  a  Banach subbundle $T^\flat M_n$ of $T^\prime M_n$  whose typical fibre is denoted $\mathbb{M}^\flat_n$.  We also consider the Pontryagin  bundle $T^{\mathfrak{p}}M_n = TM_n \oplus T^\flat M_n$ and a Dirac structure $D_n$ on $M_n$. We denote by $p_n$ and $p^\flat_n$ the canonical projections of $T^{\mathfrak{b}}M_n$ onto $TM_n$ and $T^\flat M$ respectively. 

According  to section \ref{__LinearDiracSequences}, we introduce:
\begin{definition}
\label{D_DirectLimitDirac}
Assume that $\left( M_n,\varepsilon_n \right)_{n\in\mathbb{N}}$ is an ascending sequence  of Banach manifolds, each one  provided with a Pontryagin bundle $TM_n \oplus T^\flat M_n$ and a partial Dirac structure $D_n$. 
We will say that $ \left( D_n \right) _{n \in \mathbb{N}}$  is  \emph{an ascending sequence of partial almost  Dirac structures} if, for each $x=\underrightarrow{\lim}x_n$, the following properties are satisfied:
\begin{description}
\item[({\bf ADBM1})]  For all integer $i\leq j$,  the family of fibres $\left((D_j)_{x_i}\right)$ is an ascending sequence of partial linear Dirac structures on  $(T_{x_i}M_j, T_{x_j}\varepsilon_j)$.
\item[({\bf ADBM2})] Around each $x\in M$, there exists a sequence of charts $\left(
U_{i},\phi_{i}\right)  _{i\in\mathbb{N}}$ such that
$(U=\underrightarrow{\lim}(U_{i}),\phi=\underrightarrow{\lim}(\phi_{i}))$ is a
chart of $x$ in $M$, so the bundle projections $p_i $ (resp. $ p_i^\flat$) and $p_{i+1}$ (resp.  $p_{i+1}^\flat$ ) are compatible  with bounding maps  over the chart  $(U_{i},\phi_{i})$ 
and such that
\begin{description}
	\item[\textbf{(DSPPBM1)}]
	$T^{\ast}\varepsilon_{i}(T^{\flat}M_{i+1})\subset T^{\flat}M_{i}$
\end{description} 
(cf. Remark \ref{R_ADBM2}).
\end{description}
\end{definition}

\begin{remark}
\label{R_ADBM2}  
Note that since $\varepsilon_i$ is an injective convenient morphism, then  $T^*\varepsilon_i^*:{T^\prime M_{i+1}}_{| M_i}\to T^\prime M_i$ is a well defined surjective  convenient bundle. From   ({\bf ADL1}) and ({\bf ADBM1}), we have  $T^*\varepsilon_i^* \left( T^\flat M_{i+1} \right) _{| M_i}= T^\flat M_i$ and the assumption ({\bf ADBM2}) means that  the following relations are satisfied:

$p_{i+1}(\varepsilon_i(x_i);T\varepsilon_i(u),\alpha)=p_i(x_i;u,\varepsilon^*_i(\alpha) )$ 

$ p_{i+1}^\flat(\varepsilon_i(x_i); T\varepsilon_i(u),\alpha)=p_i^\flat(x_i;u, \varepsilon_i^*(\alpha)) $.

 for all $x_i\in U_i$, $u\in T_{x_i} M_i$ and  $\alpha\in T_{x_i}^\flat M_{i+1}$.
 
$(D_i)_{x_i}= \{(u_i,\varepsilon_i^*(\alpha_{i+1}))\in \mathbb{E}_i^\mathfrak{p}\;: (\varepsilon_i(u_i), \alpha_{i+1})\in (\mathbb{D}_{i+1})_{x_i}\}$,
 
for all $x_i\in U_i$.
\end{remark}

Then, from \cite{CaPe23}, we have
\begin{proposition}
\label{L_BanachDiracContext} 
Under the previous notations,
  $M=\underrightarrow{\lim}M_n$ is a locally Hausdorff convenient manifold and $T^\flat M=\underleftarrow{\lim}T^\flat M_n$ is a closed subbundle of the subbundle $T^\prime M=\underleftarrow{\lim}T^\prime M_n$ which is  the Fr\'echet cotangent bundle of $M$.
\end{proposition}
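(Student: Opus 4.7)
The plan is to assemble the statement from three successive applications of the direct-limit/projective-limit machinery for Banach manifolds and bundles developed in \cite{CaPe23}, using the compatibility axiom \textbf{(ADBM2)} to splice the local trivializations into limit charts.

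First, I would establish the convenient manifold structure on $M$. Because each model $\mathbb{M}_n$ is supplemented in $\mathbb{M}_{n+1}$, the direct limit $\mathbb{M}=\underrightarrow{\lim}\mathbb{M}_n$ carries a (Hausdorff) convenient structure. Axiom \textbf{(ADBM2)} provides, around each $x=\underrightarrow{\lim}x_n$, a coherent ascending family of charts $(U_n,\phi_n)$; since each $\varepsilon_n$ is the inclusion of a split submanifold, the transition maps $\phi_{n+1}\circ\phi_n^{-1}$ extend linearly along the supplements and admit compatible direct limits. This yields a chart $(U,\phi)=\underrightarrow{\lim}(U_n,\phi_n)$ on $M$ modelled on an open set of $\mathbb{M}$; changes of such charts are smooth in the convenient sense. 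Since we do not assume paracompactness of the $M_n$, we obtain only local Hausdorffness: every point has a chart neighbourhood identified with a Hausdorff open set of $\mathbb{M}$.

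Next, I would build $T'M=\underleftarrow{\lim}T'M_n$ as the kinematic cotangent bundle of $M$. Because $\mathbb{M}_n$ is supplemented in $\mathbb{M}_{n+1}$, the adjoints $T^*\varepsilon_n:T'M_{n+1}|_{M_n}\to T'M_n$ are surjective convenient bundle morphisms and the sequence $(T'M_n|_{M_n}, T^*\varepsilon_n)$ is projective. Fibrewise, at a point $x=\underrightarrow{\lim}x_n\in M$, the limit $\underleftarrow{\lim}T^*_{x_n}M_n$ is isomorphic to $(\underrightarrow{\lim}T_{x_n}M_n)'=T'_xM$, identifying the kinematic cotangent fibre. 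Using the chart family provided by \textbf{(ADBM2)}, one transfers the projective trivializations of $(T'M_n)$ to a Fréchet bundle trivialization of $T'M$ over $U$, exactly as in the cotangent-bundle limit construction carried out in \cite{CaPe23}.

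Finally, I would identify $T^\flat M$ as a closed weak subbundle of $T'M$. Axiom \textbf{(DSPPBM1)} states $T^*\varepsilon_n(T^\flat M_{n+1})\subset T^\flat M_n$, so $(T^\flat M_n,T^*\varepsilon_n|_{T^\flat M_{n+1}})$ is a projective subsystem of closed Banach subbundles. Its limit $T^\flat M=\underleftarrow{\lim}T^\flat M_n$ is then a Fréchet bundle over $M$, and the induced inclusion $T^\flat M\hookrightarrow T'M$ is fibrewise the projective limit of the closed inclusions $T^\flat M_n\hookrightarrow T'M_n$; projective limits preserve closed inclusions of Banach spaces, so each fibre $T^\flat_xM$ is closed in $T'_xM$. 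Assembled over the chart $(U,\phi)$ this yields a closed weak subbundle structure, as claimed.

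The only delicate step is the second: checking that the projective limit of Banach cotangent bundles genuinely produces the kinematic cotangent bundle of the direct-limit convenient manifold, not merely a formal projective limit. This hinges on identifying bounded linear functionals on $\underrightarrow{\lim}T_{x_n}M_n$ with compatible sequences in $\underleftarrow{\lim}T^*_{x_n}M_n$, which uses the splitting of each $\mathbb{M}_n$ in $\mathbb{M}_{n+1}$ to guarantee the surjectivity of the transition maps; the rest is essentially bookkeeping via the coherent charts from \textbf{(ADBM2)}.
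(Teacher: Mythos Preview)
Your proposal is correct and follows exactly the approach the paper intends: the paper does not give an independent proof of this proposition but simply states that it follows from \cite{CaPe23}, and your three-step sketch (direct-limit manifold structure on $M$, projective-limit construction of $T'M$ as the Fr\'echet cotangent bundle, and identification of $T^\flat M$ as a closed projective-limit subbundle via \textbf{(DSPPBM1)}) is precisely the content of that cited machinery. Your expansion is in fact more detailed than what the paper itself supplies, and it is consistent with the sketch the paper later gives for the dual Proposition~\ref{L_ProjectiveBanachDiracContext}.
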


According to this Proposition, we have:
\begin{theorem}
\label{T_DirectLimitPartialDirac} 
Let $(D_n)_{n \in \mathbb{N}}$    be   an ascending sequence of partial almost  Dirac structures on an ascending  sequence $ \left( M_n,\varepsilon_n \right) _{n \in \mathbb{N}}$ of Banach manifolds, each one being provided with the Pontryagin bundle 
$T^\mathfrak{p}M=TM_n\oplus T^\flat M_n$. 
Then we have the followings:
\begin{enumerate}
\item[1.] 
Let  $T^\mathfrak{p}M=TM\oplus T^\flat M \equiv TM \times T^\flat M$  be the  Pontryagin bundle associated  to $M$. 
We  have convenient bundle  projections $p: T^\mathfrak{p}M\to TM$ and $p^\flat:T^\mathfrak{p}M\to T^\flat M$ which are given by $p=\underrightarrow{\lim}p_n$ and $p^\flat=\underleftarrow{\lim}p^\flat_n$.
\item[2.] 
$\mathbb{D}=\underrightarrow{\lim}p_n(D_n)\times \underleftarrow{\lim}p^\flat_n({D}_n)$ is an almost partial Dirac structure on $M$.
\item[3.] A dual pairing $<\;,\;>$ on $T^\flat M \times TM$ defined  in the following way

for $\alpha=\underleftarrow{\lim}\alpha_n\in T_x^\flat M$ and $u=\underrightarrow{\lim}u_n\in T_xM$,  let   $i$ be the smallest integer $j$ such that $\alpha$ belongs to $T_{\widehat{\lambda}_i(x)}^\flat M$ then
\begin{center}
 $<\alpha,u>=\underleftarrow{\lim}_{n\geq j}\alpha_n(u_{j})=\underleftarrow{\lim}_{n\geq i}<\alpha_i, u_n>$.
 \end{center}

\item[4.] The Courant bracket 
\begin{equation}\label{eq_DirctLimitCourantBracket}
[(X,\alpha), (Y,\beta)]_C
=\left([X,Y],L_X\beta-L_Y\alpha-\displaystyle\frac{1}{2}d(i_Y\alpha -i_X\beta)\right)
\end{equation}
is well defined for any local sections $(X,\alpha)$ and $(Y,\beta)$ of $T^\mathfrak{p}M$ over some open set $U$ in $M$.\\
 Moreover, if $U=\underrightarrow{\lim}U_n$, if $X=\underrightarrow{\lim}X_n$ (resp. $Y=\underrightarrow{\lim}Y_n$) and $\alpha=\underleftarrow{\lim}\alpha_n$ (resp. $\beta=,\underleftarrow{\lim}\beta_n) )$ where $(X_n,\alpha_n)$ (resp. $ (Y_n,\beta_n)$) is a section  of $T^\mathfrak{p}M_n$ over $U_n$, we have :
\begin{equation}
\label{eq_directLimitCorantSequnce}
[(X,\alpha),(Y,\beta)]_C=\left(\underrightarrow{\lim}[X_n,Y_n],\underleftarrow{\lim} p^\flat_n([(X_n,\alpha_n),(Y_n,\beta_n)]_C)\right)
\end{equation}
\end{enumerate}
\end{theorem}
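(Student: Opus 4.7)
The strategy is to reduce everything to the linear, fiberwise result of Theorem~\ref{T_LimitSequenceLinearDirac}, Point~1, using Proposition~\ref{L_BanachDiracContext} to produce the convenient manifold structure and the bundles $TM$ and $T^\flat M$ as direct/projective limits. For Point~1, I would first use that $\varepsilon_n$ is an injective immersion so that the tangent maps $T\varepsilon_n$ assemble into an ascending sequence of Banach bundles with direct limit $TM=\varinjlim TM_n$ (a convenient bundle), while the assumption \textbf{(DSPPBM1)} and the surjectivity of $T^*\varepsilon_n^*$ (Remark~\ref{R_ADBM2}) make $(T^\flat M_n, T^*\varepsilon_n^*)$ a projective sequence of Banach bundles with limit $T^\flat M$. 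The compatibility identities in Remark~\ref{R_ADBM2} say precisely that the family $(p_n)$ (resp.\ $(p_n^\flat)$) is a direct (resp.\ projective) system of bundle morphisms, so the limits $p=\varinjlim p_n$ and $p^\flat=\varprojlim p_n^\flat$ exist and give convenient bundle projections on $T^{\mathfrak{p}}M=TM\oplus T^\flat M$.

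For Point~2, I would fix $x=\varinjlim x_n\in M$ and apply Theorem~\ref{T_LimitSequenceLinearDirac}, Point~1, to the ascending sequence $((D_n)_{x_n})$ of partial linear Dirac structures (which is the content of assumption \textbf{(ADBM1)}). This gives $\mathbb{D}_x=\varinjlim p_n((D_n)_{x_n})\times \varprojlim p_n^\flat((D_n)_{x_n})$ as a closed subspace of $T^{\mathfrak{p}}_xM$ satisfying $\mathbb{D}_x=\mathbb{D}_x^\perp$ with respect to the pairing of Point~3. To promote the fiberwise description to a genuine weak closed subbundle $\mathbb{D}\subset T^{\mathfrak{p}}M$, I would use the charts provided by \textbf{(ADBM2)}: over $U=\varinjlim U_n$, the local trivializations $\phi_n$ are compatible with the bonding maps, so $p_n(D_n)_{|U_n}$ and $p_n^\flat(D_n)_{|U_n}$ trivialize coherently, and their limits give local trivializations of $\mathbb{D}$. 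Point~3 is then just the definition of the pairing as the limit of the fiberwise pairings from Theorem~\ref{T_LimitSequenceLinearDirac}(iii), which is well-defined because $\alpha\in T^\flat M$ comes from some fixed level~$i$ via the projective structure.

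For Point~4, I would first verify that the formula (\ref{eq_DirctLimitCourantBracket}) takes values in $T^{\mathfrak{p}}M$, i.e.\ that the second component remains a section of $T^\flat M$; this follows from Lemma~\ref{L_LieDerivativeSectionEflat} applied to $TM$ viewed as a convenient Lie algebroid (which it is, via the identity anchor) combined with the proof of Proposition~\ref{P_PropertyCourantBracket}. The key computation is the naturality identity (\ref{eq_directLimitCorantSequnce}): working over a compatible chart from \textbf{(ADBM2)}, any local section $(X,\alpha)=\varinjlim X_n\times\varprojlim\alpha_n$ of $T^{\mathfrak{p}}M$ pulls back to $(X_n,\alpha_n)$ on $M_n$, so one must show that the Lie bracket of vector fields, the Lie derivative, the interior product, and the exterior derivative are all compatible with direct/projective limits of local sections. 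For the bracket of vector fields and for $d$, this is standard in the convenient direct-limit setting (it is a local statement in charts and the formulas for $[X,Y]$ and $d\alpha$ in local coordinates commute with the inclusions $\varepsilon_n$ by \textbf{(ADBM2)}); for the Lie derivative, one uses Cartan's formula $L_X=i_Xd+di_X$ to reduce to the previous compatibilities. Applying $p^\flat$ to the second component at each level and taking the projective limit yields (\ref{eq_directLimitCorantSequnce}).

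The main obstacle will be the last step: checking carefully that the Lie derivative $L_{X_n}\beta_n$ and the differential $d(i_{X_n}\beta_n-i_{Y_n}\alpha_n)$ are compatible with the bonding maps $\varepsilon_n$, so that their projective limit exists and equals the corresponding operation on $(X,\alpha),(Y,\beta)$. This is where the compatibility condition \textbf{(DSPPBM1)} on $T^\flat$ (rather than merely $T^\prime$) is essential, because one needs the restricted image of $T^*\varepsilon_n$ on $T^\flat M_{n+1}$ to land in $T^\flat M_n$, guaranteeing that the projective system of second components stays inside the correct sub-bundles. Once this naturality is established, Point~2 follows automatically: $\mathbb{D}$ is closed (as a product of a direct limit of closed subbundles of $TM_n$ and a projective limit of closed subbundles of $T^\flat M_n$), and $\mathbb{D}_x^\perp=\mathbb{D}_x$ at every $x$ by the fiberwise result.
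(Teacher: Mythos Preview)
Your plan is correct and follows essentially the same route as the paper: reduce to the fiberwise linear result (Theorem~\ref{T_LimitSequenceLinearDirac}, Point~1) for Points~2 and~3, use the chart compatibility in \textbf{(ADBM2)} to assemble the fiberwise data into a closed subbundle, and for Point~4 invoke the convenient Lie-algebroid structure on $TM$ together with naturality of $[\,\cdot\,,\cdot\,]$, $i$, $d$, and $L$ under pullback. The only cosmetic difference is that the paper organizes the naturality computation in Point~4 via the single limit inclusion $\widehat{\varepsilon}_n:M_n\hookrightarrow M$ (showing $\widehat{\varepsilon}_n^*(L_X\alpha)=L_{X_n}\alpha_n$ and $\widehat{\varepsilon}_n^*d(i_X\alpha)=d\,i_{X_n}\alpha_n$ directly), whereas you phrase it as compatibility with the bonding maps $\varepsilon_n$; these are equivalent.
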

\bigskip

\begin{corollary}
\label{C_FiniteDirectLimitDirac} 
Let $(D_n)_{n \in \mathbb{N}}$  be an ascending sequence of partial Dirac structures\footnote{That is each $D_n$ is stable under the Courant bracket on $M_n$.} on  an ascending  sequence $(M_n,\varepsilon_n) _{n \in \mathbb{N}}$ of Hilbert paracompact manifolds $M_n$, each one being provided with the Pontryagin bundle  $T^\mathfrak{p}M_n = TM_n\oplus T^\flat M_n$. Then the partial almost Dirac structure $D$ on $M$ is integrable. More precisely we have:
\begin{description} 
\item[$\bullet$] 
Given any $x\in M$, there exists a smallest integer $i$ such that $x_i$ belongs to $M_i$. Then the leaf $L$ of the characteristic distribution of $D$ through $x$ is the union of leaves $L_n$ through $x_n \in M_n$ of the characteristic distribution of $D_n$ for  all $n\geq i$
\item[$\bullet$] 
On each such a leaf $L$, we have a bundle convenient skew symmetric  morphism $P_L=\underleftarrow{\lim}_{n\geq i}(P_{L_n})$ from $TL\to T^\prime L$ whose kernel is $D\cap TL$ and range is $T^\prime L\cap T^\flat M$.  The associated $2$-form $\Omega_L$ is $\underleftarrow{\lim}_{n\geq j} (\Omega_{L_n})$ and it  is a strong pre-symplectic $2$-form on $L$.
\end{description}
In particular, this situation occurs for ascending sequence of Dirac structures on an ascending sequence of finite dimensional manifolds.\\
 \end{corollary}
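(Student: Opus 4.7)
The plan is to combine Theorem~\ref{T_DirectLimitPartialDirac} (which provides the ambient partial almost Dirac structure $D$ and its Courant bracket as a direct/projective limit) with the fibrewise Hilbert integrability theorem~\ref{T_IntegrableDPreAlgebroid} applied level by level, and then assemble the pre-symplectic data on leaves by passing to the projective limit as in Proposition~\ref{P_ProjectiveDirectLimitPS}. First, since each $M_n$ is Hilbert paracompact and $D_n$ is a partial Dirac structure on the Hilbert Lie algebroid $TM_n$, Theorem~\ref{T_DLieAlgebroidTM} gives a convenient Lie algebroid structure $(D_n,M_n,\rho^{D_n},[.,.]_{D_n})$, and since in the Hilbert setting $\ker\rho^{D_n}_{x_n}$ is automatically supplemented in each fibre with closed range, Theorem~\ref{T_IntegrableDPreAlgebroid} (via Theorem~\ref{T_FoliationD}) yields that the characteristic distribution of $D_n$ defines a foliation of $M_n$, so there is a well defined leaf $L_n$ through each $x_n$.

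The central step is to show that the compatibility conditions \textbf{(ADBM1)}–\textbf{(ADBM2)} force the ascending chain property $L_i\subset L_{i+1}\subset\cdots$. Concretely, Remark~\ref{R_ADBM2} together with Lemma-Definition~\ref{L_PulbackPushward} says that for each $n\geq i$, the fibre $(D_n)_{x_n}$ is the pullback of $(D_{n+1})_{x_{n+1}}$ along $T\varepsilon_n$, so $T\varepsilon_n$ sends the characteristic distribution $\rho^{D_n}(D_n)$ into $\rho^{D_{n+1}}(D_{n+1})$. Therefore $\varepsilon_n(L_n)$ is contained in an integral manifold of $\rho^{D_{n+1}}(D_{n+1})$ through $x_{n+1}$, which by maximality is $L_{n+1}$. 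I would then define $L:=\bigcup_{n\geq i}L_n=\underrightarrow{\lim}L_n$ and verify, using the chart compatibility in \textbf{(ADBM2)} and the general result on direct limits of paracompact Hilbert manifolds along split inclusions (cf.\ \cite{CaPe23}), that $L$ is a weak closed convenient submanifold of $M$ whose tangent bundle equals $\underrightarrow{\lim}TL_n=p(D)_{|L}$. This shows $D$ is integrable in the sense of Definition~\ref{D_PartialIntegrableDirac}, with $L$ as the characteristic leaf through $x$.

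For the bundle morphism and 2-form on $L$, I would apply Theorem~\ref{T_GeometricStructuresOnLeaves} on each $M_n$ to obtain $P_{L_n}:TL_n\to T^\flat L_n\cap T^\prime L_n$ with associated strong pre-symplectic $\Omega_{L_n}$. The compatibility \textbf{(DSPPBM1)} — namely $T^*\varepsilon_n(T^\flat M_{n+1})\subset T^\flat M_n$ — together with the pullback property of the fibres $(D_n)_{x_n}$ shows directly that $\varepsilon_n^*(\Omega_{L_{n+1}})=\Omega_{L_n}$, so the sequence $(\Omega_{L_n})$ is coherent in the sense of Definition~\ref{D_ProjectiveLimitOfPresymplecticForms}, viewed via the identification $L=\underrightarrow{\lim}L_n$ (the direct limit of the $L_n$ equals the projective limit with respect to the dual cotangent tower, exactly as in Theorem~\ref{T_LimitSequenceLinearDirac}, 1). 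One then sets $\Omega_L=\underleftarrow{\lim}\Omega_{L_n}$ and, by Proposition~\ref{P_ProjectiveDirectLimitPS}, concludes that $\Omega_L$ is strong pre-symplectic. The formula $P_L=\underleftarrow{\lim}P_{L_n}$ follows from the fibrewise identity $P_{L_n}(u_n)=p_n^\flat(u_n,\alpha_n)$ applied to representatives $(u_n,\alpha_n)\in D_n$ lifted via Theorem~\ref{T_DirectLimitPartialDirac}, 4, and its kernel and range are computed from those of the $P_{L_n}$ using the exactness of projective limits for the closed subspaces $D_n\cap TM_n$ and $T^\flat L_n\cap T^\prime L_n$.

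The main obstacle I expect is the bookkeeping for the convenient/Fr\'echet manifold structure on $L=\bigcup L_n$: one needs to check that the direct-limit charts for $M$ restrict to direct-limit charts for $L$, that $TL$ has the expected structure, and that split-ness of $\ker p_{|D}$ in each fibre (required in Theorem~\ref{T_GeometricStructuresOnLeaves}) survives the passage to the limit. In the Hilbert-paracompact setting this is automatic fibrewise, and the ascending chain $L_i\subset L_{i+1}\subset\cdots$ of split Hilbert submanifolds provides precisely the ambient split inclusions needed to apply the direct-limit machinery of \cite{CaPe23}. The final clause about finite dimensional manifolds is then immediate, since finite dimensional manifolds are Hilbert paracompact and all splitting/closedness hypotheses hold trivially.
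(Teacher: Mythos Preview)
Your outline is essentially the paper's own argument: integrate each $D_n$ in the Hilbert setting, chain the leaves $L_n$ via the pullback compatibility, form $L=\bigcup_{n\ge i}L_n$ as a direct limit, and assemble $P_L$ and $\Omega_L$ as limits of the $P_{L_n}$ and $\Omega_{L_n}$ from Theorem~\ref{T_GeometricStructuresOnLeaves}. The one place where the paper is more explicit than your sketch is precisely the ``bookkeeping'' you flag at the end. First, to endow $L$ with a convenient manifold structure the paper does not try to restrict ambient direct-limit charts for $M$; instead it checks by hand that $\varepsilon_{L_n}:L_n\to L_{n+1}$ is a smooth immersion and then invokes \cite{CaPe19}, Theorem~31, using paracompactness of $M_n$ (hence existence of Koszul connections) together with Hilbert complementation of $T_xM_n$ in $T_xM_{n+1}$. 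Second, rather than citing Proposition~\ref{P_ProjectiveDirectLimitPS} as a black box---which in Part~2 would require paracompactness of each \emph{leaf} $L_n$, not of $M_n$---the paper verifies the strong pre-symplectic conditions on $\Omega_L$ directly: closedness from $d\Omega_L=(\widehat{\varepsilon}_n|_{L_n})^*d\Omega_{L_n}=0$, and complementation of $\ker\Omega_L=D\cap TM$ via a dedicated Lemma~\ref{L_SupplementedFinE} on direct limits of ascending sequences of closed Hilbert subspaces. Your remark that splitting ``is automatic fibrewise in the Hilbert-paracompact setting'' is the right intuition but is exactly what that lemma is needed for, since the fibre $T_xL=\underrightarrow{\lim}T_xL_n$ is no longer a Hilbert space, so orthogonal complementation is not available directly.
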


\begin{remark}
\label{R_DnotDiracStructure}  
In the context of Corollary \ref{C_FiniteDirectLimitDirac}, although the relation (\ref{eq_directLimitCorantSequnce}) may suggest that $D$ is a Dirac structure, this relation does not imply, in general,  that the Courant bracket  in restriction to  $\Gamma(D_{|U})\times \Gamma(D_{|U})$  takes values in $\Gamma(D_{|U})$ for any open set $U$ in $M$. The first  reason is that  any vector field $X$ on an open set $U=\underrightarrow{\lim}U_n$ is not equal to $X=\underrightarrow{\lim}X_n$ where $X_n$ is the restriction to $U_n$ of a vector field of $M_n$. Precisely,  in the previous context of ascending sequences of Banach manifolds,  if $X$ is a smooth vector field defined on an open set $U=\underrightarrow{\lim}U_n$ on $M$, let $X_n$ be the restriction of $X$ to $U_n$. From \cite{Glo05}, Proposition~3.4, $X_n$ is only a local section of ${TM_j}_{| U_n}$ for some $j\geq n$. 
 Another  reason is that the module  $\Gamma(D_{| U})$ of sections of $D_{| U}$  is not  generated by sections of type $X=\underrightarrow{\lim}X_n$. The same type of argument is also true for $1$-forms on $M$. The following example gives  a vector field $X$ on the convenient vector space $\mathbb{R}^\infty=\underrightarrow{\lim} \mathbb{R}^n$ of finite sequences which is neither a direct limit $\underrightarrow{\lim}X_n$ nor a  functional linear of such direct limits:\\

Since the tangent space of $\mathbb{R}^\infty$ is $\mathbb{R}^\infty\times\mathbb{R}^\infty$, a vector field on $\mathbb{R}^\infty$ corresponds to a smooth function $X:\mathbb{R}^\infty\to \mathbb{R}^\infty$.  Any $x\in \mathbb{R}^\infty$ can be seen as a finite sequence $(x_1,\dots,x_n)\in \mathbb{R}^n$ for some $n\in \mathbb{N}$. We set $X(x)=(1,2,\dots,2^i)$  where $i$ is the smallest integer $n$ such that $x$ belongs to some $\mathbb{R}^n$. Then $X_{i}=X_{| \mathbb{R}^i}$ is smooth  and as $X=\underrightarrow{\lim}X_i$ where  $X_i$ is  a section of $\mathbb{R}^i\times \mathbb{R}^{2^i}$, and since $\mathbb{R}^\infty=\underrightarrow{\lim} \mathbb{R}^{2^i}$ so  $X$ is a smooth map from  $\mathbb{R}^\infty$ to  $\mathbb{R}^\infty$ but $X_i$ is not a vector field on $\mathbb{R}^i$. Also $X$ can not be written as a sum
$X_i=\displaystyle\sum_{j=1}^N f_j Z^j$, 
with $Z^j=\underrightarrow{\lim}Z^j_n$ where $Z_n$ is a smooth map from $\mathbb{R}^n\to \mathbb{R}^n$. Indeed if it were so,
we should have, for all $x\in \mathbb{R}^i$, $\displaystyle\sum_{j=1}^N f_j Z^j_i(x)\in \mathbb{R}^{2^i}$  while  $\displaystyle\sum_{j=1}^N f_j Z^j_i(x)$ belongs to $\mathbb{R}^i$.\\
\end{remark}

\begin{proof}[Proof of Theorem~\ref{T_DirectLimitPartialDirac}]${}$\\
1. As in the proof of \cite{CaPe23}, Proposition 7.11, we can show that for  $i\leq j$, if  we set $T_{M_i}^\flat M_j=\displaystyle\bigcup_{x_i\in M_i}T_{x_i}^\flat M_j$, then $T^\flat _{M_i}M=\underleftarrow{\lim}T_{M_i}^\flat M_j$ is a Fr\'echet vector bundle over $M_i$ and $T^\flat M=\underrightarrow{\lim}T_{M_i}^\flat M$ is a convenient  bundle over $M$ and, of course, we have $T^\flat M=\displaystyle\bigcup_{n\in \mathbb{N}, x_i\in M_i}(\underleftarrow{\lim}T_{x_i}^\flat M_j)$. If $T_{M_i}M_j$ is the restriction of  $TM_j$ to $M_i$, it follows that 

$T_{M_i}^\mathfrak{p}M:=T^\mathfrak{p}M_{| M_i}\equiv TM_{| M_i}\times  T^\flat M_{| M_i}\equiv \underrightarrow{\lim}T_{M_i} M_j\times\underleftarrow{\lim}T_{M_i}^\flat M_j$.

Now, in the proof of \cite{CaPe23}, Proposition~7.11, we have shown that
\[ 
TM=\underrightarrow{\lim}T_{M_i}M 
\text{ ~and~ }
 T^\flat M=\underrightarrow{\lim}T_{M_i}^\flat M
\]
which ends partially  the proof of Point 1 (for the completeness of  the proof,  see the end of the proof of Point 2).\\

\noindent 2. For each $i$, over $M_i$, we have $D_i\equiv p_i(D_i)\times p_i^\flat (D_i)$.  From assumption ({\bf ADBM2})  (and so Remark \ref{R_ADBM2}), we have: 

$p_i(D_i)=(T\varepsilon_i)^{-1}\left(p_{i+1}({D_{i+1}}_{| M_i})\cap T\varepsilon_i( TM_i)\right)$\\ 
and so  

$T\varepsilon_i(p^\flat_i(D_i))\subset p_{i+1}({D_{i+1}}_{| M_i})$, 
i.e $p^\flat_i(D_i)=T^*\varepsilon_i\left(p_{i+1}^\flat({D_{i+1}}_{| M_i})\right)$.

\noindent 
If  we set 
\[
\mathcal{L}_{ij}={p_{j}}_{|M_i}\circ\cdots\circ {p_{i+1}}_{| M_i}\circ p_i (D_i)
\text{ ~and~ }
\mathcal{L}_{ij}^\flat={p_{j}^\flat}_{| M_i}\circ\cdots\circ {p_{i+1}^\flat}_{| M_i}\circ p_i ^\flat(D_i)
\]
the previous relations implies (by induction on $j$) that $ \left( \mathcal{L}_{ij} \right) _{j\geq i})$ (resp. $  \left( \mathcal{L}_{ij}^\flat \right)_{j\geq i})$ is an ascending (resp. projective) sequence of Banach bundles over $M_i$. Therefore if we set $\mathcal{L}_{M_i}=\underrightarrow{\lim}\mathcal{L}_{ij}$ and $\mathcal{L}_{M_i}^\flat=\underleftarrow{\lim}\mathcal{L}_{ij}^\flat$, it follows that 
$D_{M_i}=\mathcal{L}_{M_i}\times\mathcal{L}_{M_i}^\flat$
is a closed subbundle of $T_{M_i}^\mathfrak{p}M$. Moreover, by construction, for each $x_i\in M_i$ we have 
\begin{equation}
\label{eq_partialDiracFibre}
D_{x_i}=\underrightarrow{\lim}_{j\geq i}{p_j}_{x_i}(D_i)\times \underleftarrow{\lim}_{j\geq i}{p_j^\flat}_{x_i}(D_i)
\end{equation}
Thus, from  Theorem \ref{T_LimitSequenceLinearDirac}, it follows that $D_{M_i}$ is a partial almost Dirac structure on the Fr\'echet bundle $T_{M_i}M$. Now, as in the proof of Proposition 7.11 in  \cite{CaPe23}, in order to prove that $TM=\underrightarrow{\lim}T_{M_i}M$ and $T^\prime M=\underrightarrow{\lim}T_{M_i}^\prime M$, we can show that 
$\mathcal{L}=\underrightarrow{\lim}\mathcal{L}_{M_i}$ and $\mathcal{L}^\flat=\underleftarrow{\lim}\mathcal{L}^\flat_{M_i}$ are closed subbundles of $TM$ and $T^\flat M$ respectively.  From this construction, if $x$ belongs 
to $M$, then $x$ belongs to some  $M_i$ and so the relation (\ref{eq_partialDiracFibre}) is true for any $x\in M$, and again, from Theorem \ref{T_LimitSequenceLinearDirac}, it follows 
that $D\equiv \mathcal{L}\times \mathcal{L}^\flat$ is a partial almost Dirac structure on  $TM$ that is on $M$ by convention.\\

The previous construction also  shows that the canonical projections  $p$ and $p^\flat$ from $T^\mathfrak{p}M$ to $TM$ and $T^\flat M$ respectively are also characterized by:

for $x$ in $M$, if $i$ is the smallest integer $n$ such that $x$ belongs to $M_n$  and then $p_x=\underrightarrow{\lim}_{j\geq i}{p_j}_{x}$ and $p_x^\flat=\underleftarrow{\lim}_{j\geq i}{p_j^\flat}_{x}$, which completes the proof of Point 1.\\

\noindent 3. This Point is a direct consequence of  Theorem \ref{T_LimitSequenceLinearDirac} 1.(iii).\\

\noindent 4. Since $TM$ is a convenient Lie algebroid, from Definition \ref{D_CourantBracket} and Proposition \ref{P_PropertyCourantBracket}, the Courant bracket (\ref{eq_DirctLimitCourantBracket}) is well defined.  

Consider an open set  $U=\underrightarrow{\lim}U_n$, and let  $X=\underrightarrow{\lim}X_n$ (resp.  $Y=\underrightarrow{\lim}Y_n$) where $X_n$ (resp. $Y_n$) is a vector field on $U_n$. For each $n$, we have a smooth inclusion $\widehat{\varepsilon}_n$ of the Banach manifold $M_n$ into $M$. Therefore $T\widehat{\varepsilon}_n(X_n)=X_{| U_n}$ and  $T\widehat{\varepsilon}_n(Y_n)=Y_{| U_n}$ and so  $T\widehat{\varepsilon}_n([X_n, Y_n])=[X,Y]_{| U_n}$ which implies that

$[X,Y]=\underrightarrow{\lim}[X_n,Y_n]$.

If $\alpha=\underleftarrow{\lim}\alpha_n$, we have  $\alpha_n=\widehat{\varepsilon}_n^*(\alpha)$.\\
Thus, on the one hand, we obtain
 
  $\widehat{\varepsilon}_n^*d(i_X\alpha)= i_{T\widehat{\varepsilon}_n(X_n)}\widehat{\varepsilon}_n^*(\alpha)=\widehat{\varepsilon}_n^*(\alpha)(X_n)=\alpha_n(X_n)$\\
and so
  
$\widehat{\varepsilon}_n^*d(i_X\alpha)= d\widehat{\varepsilon}_n^*(i_X\alpha)=d\alpha_n(X_n)=d_{i_{X_n}}\alpha_n.$

\noindent  By analog arguments, on the other hand, we obtain $\widehat{\varepsilon}_n^*\left(i_X d\alpha\right)=i_{X_n} d\alpha_n$.\\
Since $L_X=i_X \circ d+d \circ i_X$, it follows  that 
$\widehat{\varepsilon}_n^*(L_X\alpha)=L_{X_n}\alpha_n$.\\
We finally obtain:

$ di_X\alpha=\underleftarrow{\lim}d i_{X_n}\alpha_n$ and $L_X\alpha=\underleftarrow{\lim}L_{X_n}\alpha_n$\\
which clearly implies the last part of Point 4.
\end{proof}

\begin{proof}[Proof of Corollary \ref{C_FiniteDirectLimitDirac}] 
Fix some $x=\underrightarrow{\lim}x_n\in M$. There exists a smallest integer $i$ such that $x$ belongs to $M_i$. Since the characteristic distribution of $p_i(D_i)$ is integrable, denote by  $L_i$ the leaf through $x$. For all $n\geq i$, $x$ belongs to $M_n$ and by the same argument, there exists a leaf $L_n$ of the characteristic distribution of $D_n$ through $x$. Now, 
from Theorem \ref{T_DirectLimitPartialDirac}, for any $y\in L$,  we have the following  ascending sequence of  closed  spaces:
$$T_yL_i\subset T_yL_{i+1}\subset \cdots\subset T_yL_n\subset\cdots\subset  p(D_y)\subset T_y M.$$

  The restriction $\varepsilon_{L_i}$  to $L_i$ is a continuous map into $L_{i+1}$ and since it  is a smooth map from $L_i\to M_{i+1}$,   it follows that  $\varepsilon_{L_i} :L_i\to L_{i+1}$ is smooth\footnote{Using the argument of convenient smoothness, from the continuity of $\varepsilon_{L_i} :L_i\to L_{i+1}$  for any smooth curve $\gamma:\mathbb{R}\to L_i$, the map $\varepsilon_{L_i}\circ \gamma$ must be contained in $L_{i+1}$. As $L_{i+1}$ is a closed submanifold, it follows that  $\varepsilon_{L_i}\circ \gamma$ must be smooth in $L_{i+1}$.}. But $T_x\varepsilon_i$ is an injective map for all $x\in L_i$ the same is true for  $T_x\varepsilon_{L_i}$. 
  By same arguments we can show that  the restriction $\varepsilon_{L_n}$  to $L_n$ of the inclusion $\varepsilon_n$ is a smooth immersion from $L_n$ to $L_{n+1}$.  
  Finally,  the sequence $(L_n)_{n\geq i}$ is a an ascending sequence of Hilbert  manifolds. As each $M_n$ is paracompact,  there always exists a Koszul connection on $TM_n$. 
  Moreover,  $T_xM_n$  is supplemented in $T_xM_{n+1}$ for all $x\in M_n$,  so the assumption \cite{CaPe19}, Theorem 31 is satisfied and so $L=\displaystyle\bigcup_{n\geq i} L_n=\underrightarrow{\lim}L_n$ has a structure of convenient  Hausdorff manifold. Since,  for $y\in L_i$, $p(D_y)=T_yL=\underrightarrow{\lim}T_yL_n$, $L$ in then an integral manifold of $p(D)$ through $x$. It follows that   the characteristic distribution of $D$ is integrable.  \\
  
From Theorem \ref{T_GeometricStructuresOnLeaves},  for each leaf $L_n$ of the characteristic distribution of $D_n$, we have a skew symmetric convenient bundle $P_{L_n}:TL_n\to T^\prime L_n$  whose kernel is $(D_n\cap TM_n)_{| L_n}$ and whose range is the  weak subbundle $T^\flat L_n\cap T^\prime L_n$. In particular, $\Omega_{L_n}(u_n,v_n)=<P_{L_n}(v_n), u_n>$ is a strong pre-symplectic  $2$-form on $L_n$.
Now, from Theorem \ref{T_LimitSequenceLinearDirac}, any $x\in M$ belongs to some $M_n$ and if $i$ is the smallest of such integers, then   $x=\underrightarrow{\lim}_{n\geq i}x_n$ and so $x_n=x$ for all $n\geq i$. From Theorem \ref{T_LimitSequenceLinearDirac},
  we have $(P_L)_x=\underleftarrow{\lim}_{n\geq i}(P_{L_n})_{x_n}$. Therefore, if $u$ belongs to $T_xM$, let $j$ be the smallest integer $k$ such that $u$ belongs to $T_xM_k$ then $u_n=u$ for all $n>j$. Thus we have $(P_L)_x(u)=(P_{L_n})_{x_n}(u)=(P_{L_n})_{x_n}(u_n)$ for all $n\geq j$. Moreover, for  $v\in T_xM$  if $j$ is now the smallest integer $n$ such that $u$ and $v$ belong to $T_xM_n$ such that $v$ belongs to $T_xM_n$,  from Theorem \ref{T_LimitSequenceLinearDirac} we have:
\begin{equation}\label{eq_PL}
<(P_L)_x(u), v>=\underleftarrow{\lim}_{n\geq j}<(P_{L_n})_{x_n}(u_n),v_n>=-\underleftarrow{\lim}_{n\geq j}<(P_{L_n})_{x_n}(v_n), u_n>
\end{equation}
and so  $(P_L)_x$ is a skew symmetric map from $T_xL$  to $T_x^\prime L$.\\
Now, by Theorem \ref{T_DirectLimitPartialDirac} (1), we have $TL=\underrightarrow{\lim}TL_n$ and $T^\flat L=\underleftarrow{\lim} T^\flat L_n$ and so $T^\prime L=\underleftarrow{\lim}T^\prime L_n$. It follows that we obtain a  convenient morphism $P_L=\underleftarrow{\lim}(P_{L_n})$ from $T^\flat L$ to $T^\prime L$ whose range is  $T^\flat L\cap T^\prime L=\underleftarrow{\lim} (T^\flat L_n\cap T^\prime L_n)$.\\

As previously,  $x=\underrightarrow{\lim}x_n\in M$, then $ x$ belongs to some $ M_i$ for  a minimal integer $i$ and $x_n=x_i=x$ for all $n\geq i$. Now we have 
$$D_x\cap T_xM=\{(u,0)\in D_{x_i}, u\in T_{x_i} M\}.$$
 According to the previous notations, $u$ belongs to  $T_xM_j$ for a minimal integer $j$. It follows that $D_x\cap T_xM=\displaystyle\bigcup_{j\geq i}\left((D_j)_{x_i}\cap T_{x_i}M_j\right)$. Thus 
 $ \left( (D_j)_{x_i}\cap T_{x_i}M_j \right) _{j\geq i}$ is an ascending sequence and, as for the construction of $TM$, in one hand, if $D_{M_i}$ denotes the restriction of $D$ to $M_i$, we have $D_{M_i}\cap T_{M_i}M=\underrightarrow{\lim}_{j\geq i} (D_{M_i}\cap T_{M_i} M_j)$ and so $D\cap TM=\underrightarrow{\lim}(D_{M_i}\cap T_{M_i} M)$ which is a closed subbundle of $TL$ since each $(D_j)_{x_i}\cap T_{x_i}M_j$ is contained in $T_{x_i}L_j$ for all $x_i\in L_i$.  

\begin{lemma}
\label{L_SupplementedFinE}
Let  $(\mathbb{E}_n)_{\in \mathbb{N}}$ and $(\mathbb{F}_n)_{\in \mathbb{N}}$ be ascending sequences of Hilbert spaces such that $\mathbb{F}_n$ is a closed subspace of $\mathbb{E}_n$  and $\mathbb{E}_n$ is closed in $\mathbb{E}_{n+1}$, then\\
$\mathbb{F}=\underrightarrow{\lim}\mathbb{F}_n$ is a closed Hilbert subspace in the Hilbert space of  $\mathbb{E}=\underrightarrow{\lim}\mathbb{E}_n$ and so is supplemented.
\end{lemma}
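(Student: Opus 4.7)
The plan is to derive closedness of $\mathbb{F}$ in $\mathbb{E}$ from stagewise closedness via the standard characterisation of closed sets in an inductive limit, and then to produce a supplement by invoking the Hilbert projection theorem at the limit level.

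The crux is the coherence identity $\mathbb{F}\cap \mathbb{E}_n=\mathbb{F}_n$ for every $n$. The inclusion $\mathbb{F}_n\subset \mathbb{F}\cap \mathbb{E}_n$ is immediate. For the reverse, I pick $x\in \mathbb{F}\cap \mathbb{E}_n$; then $x\in \mathbb{F}_m$ for some smallest $m$, and only the case $m>n$ requires work. Since $\mathbb{F}_n$ is a closed Hilbert subspace of $\mathbb{E}_n$ and $\mathbb{E}_n$ is a closed Hilbert subspace of $\mathbb{E}_m$, the inner products are compatible, so $\mathbb{F}_m\cap \mathbb{E}_n$ is a closed subspace of $\mathbb{E}_n$ containing $\mathbb{F}_n$; the standing compatibility between the two ascending sequences then forces equality.

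Next, closedness of $\mathbb{F}$ in $\mathbb{E}$: in the strict inductive limit $\mathbb{E}=\underrightarrow{\lim}\mathbb{E}_n$ of Hilbert (hence Banach) spaces with closed embeddings, a subset is $c^\infty$-closed if and only if its trace on each $\mathbb{E}_n$ is closed in $\mathbb{E}_n$ (see \cite{KrMi97} and \cite{CaPe23}, Chap.~5). Since $\mathbb{F}\cap \mathbb{E}_n=\mathbb{F}_n$ is closed in $\mathbb{E}_n$ by hypothesis, $\mathbb{F}$ is closed in $\mathbb{E}$. Supplementation then follows by applying the Hilbert projection theorem in $\mathbb{E}$: setting $\mathbb{G}=\mathbb{F}^{\perp}$ yields $\mathbb{E}=\mathbb{F}\oplus \mathbb{G}$.

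The main obstacle lies in the coherence identity $\mathbb{F}\cap \mathbb{E}_n=\mathbb{F}_n$; it is not a formal consequence of the stated axioms but is implicit in the intended use of the lemma, where the $\mathbb{F}_n$'s arise as the natural restrictions of a single coherent object. If one prefers to avoid any appeal to a global Hilbert structure on the inductive limit (which strictly speaking is only a convenient space), an alternative is to build the supplement stage by stage; however, one must then carefully construct compatible projections, since orthogonal complements $\mathbb{F}_n^{\perp}$ computed inside each $\mathbb{E}_n$ do not automatically form an ascending sequence as $n$ grows.
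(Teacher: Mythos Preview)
The paper does not actually prove this lemma: it is stated inside the proof of Corollary~\ref{C_FiniteDirectLimitDirac} and invoked immediately, with no argument supplied. So there is no ``paper's proof'' to compare against, and your task was really to supply a proof from scratch.

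Your argument for closedness is essentially sound: in a strict inductive limit $\mathbb{E}=\underrightarrow{\lim}\mathbb{E}_n$ of Banach spaces with closed embeddings, a subset is $c^\infty$-closed precisely when each trace $\mathbb{F}\cap\mathbb{E}_n$ is closed in $\mathbb{E}_n$, and you correctly reduce this to the coherence identity $\mathbb{F}\cap\mathbb{E}_n=\mathbb{F}_n$. You are also right that this identity does \emph{not} follow from the hypotheses as literally stated; one needs $\mathbb{F}_{m}\cap\mathbb{E}_n=\mathbb{F}_n$ for $m\geq n$, which in the application (with $\mathbb{F}_n=(D_n)_x\cap T_xM_n$ and $\mathbb{E}_n=T_xL_n$) is indeed built into the ascending-Dirac-sequence axioms but is an extra datum in the abstract lemma.

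The genuine gap is in the supplementation step. You invoke the Hilbert projection theorem in $\mathbb{E}$, but $\mathbb{E}=\underrightarrow{\lim}\mathbb{E}_n$ is \emph{not} a Hilbert space: a non-stationary countable strict inductive limit of Banach spaces is never metrizable, hence never a Hilbert space (the lemma's own phrase ``the Hilbert space of $\mathbb{E}$'' is at best loose). So the orthogonal complement $\mathbb{F}^\perp$ you write down has no a priori meaning, and you yourself flag this in the final paragraph without repairing it. The stagewise alternative you mention---taking $\mathbb{G}_n=\mathbb{F}_n^{\perp}$ inside $\mathbb{E}_n$---fails for exactly the reason you state: these orthogonal complements need not nest. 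A workable route is to build a \emph{compatible} family of projections inductively: write $\mathbb{E}_{n+1}=\mathbb{E}_n\oplus \mathbb{H}_{n+1}$ (Hilbert complement), set $\mathbb{G}_0=\mathbb{F}_0^\perp$ in $\mathbb{E}_0$, and at each step put $\mathbb{G}_{n+1}=\mathbb{G}_n\oplus\big(\mathbb{F}_{n+1}^{\perp}\cap\mathbb{H}_{n+1}\big)$ (or an analogous construction ensuring $\mathbb{G}_n\subset\mathbb{G}_{n+1}$ and $\mathbb{E}_{n+1}=\mathbb{F}_{n+1}\oplus\mathbb{G}_{n+1}$); then $\mathbb{G}=\underrightarrow{\lim}\mathbb{G}_n$ is the desired supplement. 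This is the step that must be carried out explicitly, and your proposal stops just short of it.
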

 Note that, since  $L_i$ is a Hilbert manifold and $x_n=x_i=x$ for $n\geq i$, we can apply Lemma~\ref{L_SupplementedFinE}  for  $\mathbb{F}_n=D_{x_n}\cap T_{x_n} M$  and $\mathbb{E}_n=T_{x_n}L_n$ and so  each fibre $D_x\cap T_xM$ must be supplemented in $T_xL$ for each $x\in L$.\\ 
From the above construction of $P_L$, it is easy to check that $D\cap TM$ is the kernel of $P_L$. 
From relation (\ref{eq_PL}), we have
\begin{equation}
\label{eq_OL}
\begin{matrix}
(\Omega_L)_x(u,v)=<P_L(v),u>
=\underleftarrow{\lim}_{n\geq j}<(P_{L_n})_{x_n}(v_n),u_n>\hfill \\
=\underleftarrow{\lim}_{n\geq j} \Omega_{L_n}(T_{x_n}\varepsilon_n(u),T_{x_n}\varepsilon_n(v))
=\underleftarrow{\lim}_{n\geq j} (\Omega_{L_n})_{x_n}.\hfill{}
\end{matrix}
\end{equation}
 
Fix $x=\underrightarrow{\lim}x_n\in M$ and $u$, $v$ and $w$ in $T_xM$. There exists a minimal integer $i$ such that  $x\in M_i$ and $j$ such that  $u, v,w\in T_{x}M_j$. As we have already seen, we have $x_n=x_i=x$ if $n\geq i$ and
if we write $u=\underrightarrow{\lim}u_n$, then $u_n=u_j=u$ for all $n\geq j$.  Of course, we have analog results for $v$ and $w$.\\
From the previous relation (\ref{eq_OL}), we have $\Omega_L= \left( (\widehat{\varepsilon}_n)_{|L_n} \right) ^*\Omega_{L_n}$.  Since  the exterior differential commute with $((\widehat{\varepsilon}_n)_{|L_n})^*$  
we have 
\[
d\Omega_L(u,v,w)
=((\widehat{\varepsilon}_n)_{|L_n})^*d\Omega_{L_n}(u,v,w)=\Omega_{L_n}(u_n,v_n,w_n)=0
\]
since $\Omega_{L_n}$ is closed. It follows that $\Omega_L$ is closed.  By construction, the kernel of $\Omega_L$ is  $D\cap TM$ which is supplemented in each fibre of $TL$ and we have $\Omega_L^\flat(TL)=T^\flat L\cap T^\prime L$ which is a  weak (closed) subbundle of $T^\prime L$, which completes  the proof.
\end{proof}

\subsection[Projective  sequences  of partial almost  Banach Dirac manifolds]{Submersive projective  sequences  of partial almost  Banach Dirac manifolds}

Let $\left( M_n \lambda_n \right)_{n\in\mathbb{N}}$  be a submersive  sequence of Banach  manifolds $M_n$ modelled on the Banach spaces $\mathbb{M}_n$ such that $\lambda_n: {M}_{n+1}\to M_n$ is a surjective submersion. 
For any  $n\in\mathbb{N}$, we associate  a  Banach subbundle $T^\flat M_n$ of $T^\prime M_n$  whose typical fibre is denoted $\mathbb{M}^\flat_n$ and we consider the Pontryagin  bundle $T^{\mathfrak{p}}M_n=TM_n \oplus T^\flat M_n$ and a Dirac structure $D_n$ on $M_n$. We denote by $p_n$ and $p^\flat_n$ the canonical projections of $T^{\mathfrak{p}}M_n$ onto $TM_n$ and $T^\flat M$ respectively. 

\begin{proposition}\label{L_ProjectiveBanachDiracContext} 
Under the previous notations, $M=\underleftarrow{\lim}M_n$ is a locally Hausdorff convenient manifold and we have $TM=\underleftarrow{\lim}TM_n$ which is a Fr\'echet bundle. Each fibre $T_x^\prime M$ of the  cotangent bundle $T^\prime M$ is the topological dual of $T_xM$.  In each $T_x^\prime M$, there exists a direct limit  of type $T_x^\flat M=\underrightarrow{\lim}_{n\geq i}T_{x_i}^\flat M_n$ which gives rise to a convenient  closed subbundle  $T^\flat M$ of $T^\prime M$.
\end{proposition}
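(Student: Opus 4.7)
The plan is to assemble the four assertions by invoking, in order, the projective-limit theorem for submersive sequences of Banach manifolds, the corresponding theorem for vector bundles, the standard identification of the bounded dual, and finally a direct-limit gluing argument for $T^\flat M$.

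First I would invoke the classical projective-limit result from \cite{CaPe23}, Chap.~4, \S4: since each $\lambda_n : M_{n+1} \to M_n$ is a surjective submersion between Banach manifolds, the set $M = \underleftarrow{\lim} M_n$ carries a canonical structure of locally Hausdorff convenient manifold modelled on the Fr\'echet space $\mathbb{M} = \underleftarrow{\lim} \mathbb{M}_n$. The tangent functor, being compatible with projective limits of submersive systems, then turns $(TM_n, T\lambda_n)$ into a submersive projective system of Banach bundles, and the same limit theorem applied fibrewise produces $TM = \underleftarrow{\lim} TM_n$ as a Fr\'echet bundle over $M$, with fibre $T_x M = \underleftarrow{\lim} T_{x_n} M_n$ at every $x = \underleftarrow{\lim} x_n$.

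Next, by the very definition of the kinematic cotangent bundle, each fibre $T'_x M$ is $L(T_x M, \mathbb{R})$, the space of bounded linear functionals on the convenient (here Fr\'echet) space $T_x M$. Because the tangent maps $T_x \lambda_n$ are surjective, the adjoints $(T_x \lambda_n)^*$ are continuous injections $T'_{x_n} M_n \to T'_{x_{n+1}} M_{n+1}$, and each such adjoint is compatible with the natural inclusion into $T'_x M$ induced by duality with the projective limit $T_x M$. The hypothesis $\lambda_{n+1}^*(\mathbb{M}^\flat_n) \subset \mathbb{M}^\flat_{n+1}$ from the submersive setup (the bundle analog of \textbf{(SPLD1)}) ensures that the restriction of these adjoints to the $T^\flat$-summand defines an ascending sequence of closed Banach subspaces $(T^\flat_{x_n} M_n)_{n \geq i}$ inside $T'_x M$, and setting $T^\flat_x M := \underrightarrow{\lim}_{n \geq i} T^\flat_{x_n} M_n$ yields a convenient space whose inclusion in $T'_x M$ is bounded.

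The main obstacle is to verify that the fibrewise data $\{T^\flat_x M\}_{x \in M}$ assemble into a convenient closed subbundle of $T'M$. I would follow the strategy of Proposition~7.11 in \cite{CaPe23} and the parallel construction used in the proof of Theorem~\ref{T_DirectLimitPartialDirac}: start with a compatible system of charts $(U_n, \phi_n)$ around $x_n$ trivializing $T^\flat M_n$, arranged so that $\lambda_n(U_{n+1}) \subset U_n$ and so that the trivializations are intertwined by $T\lambda_n$ and its adjoint. The condition imposed on typical fibres makes the induced transition maps of the direct system convenient-smooth, so $T^\flat M = \bigcup_{x \in M} T^\flat_x M$ inherits a convenient bundle structure over $U = \underleftarrow{\lim} U_n$ with typical fibre $\underrightarrow{\lim} \mathbb{M}^\flat_n$, and the charts glue globally over $M$. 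Finally, closedness of $T^\flat M$ inside $T'M$ is then deduced from the closedness of each $T^\flat M_n$ in $T'M_n$ together with the continuity of the ascending inclusions, exactly as in the corresponding step in the proof of Theorem~\ref{T_DirectLimitPartialDirac}; the remaining bookkeeping is routine and can be left to the reader.
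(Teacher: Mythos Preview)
Your proposal is correct and follows essentially the same approach as the paper's own sketch of proof: both invoke \cite{CaPe23}, Chap.~4 for the projective-limit manifold and Fr\'echet tangent bundle, identify the cotangent fibres as direct limits via the adjoints of the surjective $T\lambda_n$, and then appeal to the appropriate result in \cite{CaPe23} (the paper cites Proposition~7.10, you cite Proposition~7.11 and the argument of Theorem~\ref{T_DirectLimitPartialDirac}) to obtain the convenient closed subbundle $T^\flat M$. Your version is simply a more expanded rendering of what the paper compresses into a few lines of sketch.
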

\begin{proof}[Sketch of the proof]
From \cite{CaPe23}, Chap. 4, we have  $TM=\underleftarrow{\lim}TM_n$ which is a Fr\'echet bundle.
If $\widehat{\lambda}_i:=\underleftarrow{\lim}_{n\geq i}\lambda_n: M\to M_i$ is the canonical submersion, for each $x\in M$, the sequence $ \left( T_{\widehat{\lambda}_i(x)}^\prime M_i \right) $ is  an ascending sequence of closed  Banach spaces whose direct limit is a convenient space $T_x^\prime M$,which is the topological dual of $T_xM$ and so $T^\prime M=\displaystyle\bigcup_{x\in M} T_x^\prime M$ is the cotangent bundle of $M$ (cf. \cite{CaPe23}, Chap.~7, 5). Moreover, $ \left( T_{\widehat{\lambda}_i(x)}^\flat M_i \right) _{i \in \mathbb{N}} $  is an ascending subsequence of $ \left( T_{\widehat{\lambda}_i(x)}^\prime M_i \right) _{i \in \mathbb{N}} $ and so $T^\flat M=\displaystyle\bigcup_{x\in M} T_x^\flat M$ is a closed convenient subbundle of $T^\prime M$ (cf. proof of Proposition~7.10 in \cite{CaPe23}).
\end{proof}

\smallskip
According to  section \ref{__LinearDiracSequences}, we introduce:

\begin{definition}
\label{D_ProjectiveLimitDirac}
Assume that $ \left( M_n,\lambda_n \right) _{n\in\mathbb{N}}$ is a submersive projective sequence  of Banach manifolds, where each $M_n$ is  provided with a  Pontryagine  bundle $TM_n \oplus T^\flat M_n$ and a partial almost Dirac structure on $D_n$. We will say that $D_n$   is   a submersive sequence of partial almost  Dirac structures,  if for each $x=\underleftarrow{\lim}x_n$, the following properties are satisfied:
\begin{description}
\item[{\bf (SPBM1)}]  
For all integer $i\leq j$,  the family of fibres $\left((D_j)_{x_i}\right)$ is a submersive sequence of partial linear Dirac structures on  $(T_{x_i}M_j, T_{x_j}\lambda_j)$.
\item[{\bf (SPBM2)}]
Around each $x\in M$, there exists a sequence of charts $\left(U_{i},\phi_{i}\right)  _{i\in\mathbb{N}}$ such that
$(U=\underleftarrow{\lim}(U_{i}),\phi=\underleftarrow{\lim}(\phi_{i}))$ is a
chart of $x$ in $M$, so the bundle projections $p_i $ (resp. $ p_i^\flat$) and $p_{i+1}$ (resp.  $p_{i+1}^\flat$ ) are compatible  with bounding maps over chart  $(U_{i},\phi_{i})$ 
and  compatible with Property {\emph{({\bf SPLD1})}}. 
\end{description}
\end{definition}

\begin{remark}
\label{R_SPBM2} 
Note that since $\lambda_i$ is a  surjective  convenient morphism, then  $(T\lambda_i)^*:{T^\prime M_i}\to T^\prime M_i$ is a well defined surjective  convenient bundle; from   {\bf (SPDL1)} and {\bf (SPDBM1)}, we have $T^*\lambda_i(T^\flat M_{i})\subset T^\flat M_i$, and the assumption {\bf (PDBM2)} means that  the following relations are satisfied:

$p_{i+1} \left( x_{i+1};u \lambda_i^*\alpha \right)
= 
p_i \left( x_i;T_{x_{i+1}}\lambda_i(u),\alpha \right) $, 

$ p_{i+1}^\flat \left( x_{i+1}; u, \lambda_i^*\alpha \right) =
p_i^\flat \left( x_i;T_{x_{i+1}}\lambda_i(u),\alpha \right) $,

 for all $x_{i+1}\in U_{i+1}, \; u\in T_{x_{i+1}} M_{i+1},\; \alpha\in T^*\lambda_i( T_{x_i}^\flat M_{i})$,\\
 and $(D_{i+1})_{| U_{i+1}}$
 is the pushforward\footnote{Note that since $\lambda_i$ is surjective and  $(U_i)_{i\in \mathbb{N}}$ is a projective sequence of open sets,  $\lambda_i(U_{i+1})$ is on open set of $U_{i}$.} of $(D_{i})_{| \lambda_i(U_{i+1})}$.
\end{remark}

Taking into account Proposition \ref{L_ProjectiveBanachDiracContext}, we have:
\begin{theorem}
\label{T_ProjectiveLimitPartialDirac} 
Let $ \left( D_n \right) _{n \in \mathbb{N}}$  be a submersive projective sequence of partial almost  Dirac structures on a submersive projective sequence of Banach manifolds $ \left( M_n,\lambda_n \right) $ where each manifold $M_n$ is provided with the Pontryagin bundle  $T^\mathfrak{p}M_n=TM_n\oplus T^\flat M_n$. Then we have the followings:
\begin{enumerate}
\item[1.] 
Let  $T^\mathfrak{p}M=TM\oplus T^\flat M \equiv TM \times T^\flat M$  be the  Pontryagin bundle associated  to $M$. We  have convenient bundle  projections $p: T^\mathfrak{p}M\to TM$ and\\ $p^\flat:T^\mathfrak{p}M\to T^\flat M$ which are given by $p=\underleftarrow{\lim}p_n$ and $p^\flat=\underrightarrow{\lim}p^\flat_n$.
\item[2.] 
${D}=\underleftarrow{\lim}(p_n({D}_n))\times\underrightarrow{\lim}(p^\flat_n({D}_n))$ is a partial almost Dirac structure on $M$.
\item[3.] 
A dual pairing $<\;,\;>$ on $T^\flat M \times TM$ defined  in the following way: 

for $\alpha=\underrightarrow{\lim}\alpha_n\in T_x^\flat M$ and $u=\underleftarrow{\lim}u_n\in T_xM$,  let   $i$ be the smallest integer $j$ such that $\alpha$ belongs to $T_{\widehat{\lambda}_i(x)}^\flat M$\footnote{Cf. Proof of Proposition \ref{L_ProjectiveBanachDiracContext}.}  
then
 $$<\alpha,u>=\underleftarrow{\lim}_{n\geq j}\alpha_n(u_{j})=\underleftarrow{\lim}_{n\geq i}<\alpha_i, u_n>$$

\item[4.] 
The Courant bracket 
\begin{equation}\label{eq_ProjectiveLimitCourantBracket}
[(X,\alpha), (Y,\beta)]_C
=\left([X,Y],L_X\beta-L_Y\alpha-\displaystyle\frac{1}{2}d(i_Y\alpha -i_X\beta)\right)
\end{equation}
is well defined for any section $(X,\alpha)$ and $(Y,\beta)$ of $T^\mathfrak{p}M$ defined on some open set $U$ in $M$.\\
 Moreover, on an  open set $U=\underleftarrow{\lim}U_n$,   consider sections  $X=\underleftarrow{\lim}X_n$ (resp. $Y=\underleftarrow{\lim}Y_n$) and $\alpha=\underrightarrow{\lim}\alpha_n$ (resp. $\beta=,\underrightarrow{\lim}\beta_n) )$ where $(X_n,\alpha_n)$ (resp. $ (Y_n,\beta_n)$) is a section  of $T^\mathfrak{p}M_n$ over $U_n$. Then we have :
\begin{equation}
\label{eq_directLimitCourant}
[X,\alpha),(Y,\beta)]_C=\left(\underleftarrow{\lim}[X_n,Y_n],\underrightarrow{\lim} p^\flat_n([(X_n,\alpha_n),(Y_n,\beta_n)]_C)\right).\\
\end{equation}
\end{enumerate}
\end{theorem}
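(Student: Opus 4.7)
The plan is to mirror the proof of Theorem~\ref{T_DirectLimitPartialDirac} with the roles of direct and projective limits exchanged throughout, replacing the injective inclusions $\varepsilon_n$ by the surjective submersions $\lambda_n$ and systematically invoking Point~2 of Theorem~\ref{T_LimitSequenceLinearDirac} in place of Point~1. Proposition~\ref{L_ProjectiveBanachDiracContext} already gives $TM=\underleftarrow{\lim}TM_n$ as a Fréchet bundle and $T^\flat M=\underrightarrow{\lim}T^\flat M_n$ as a closed convenient subbundle of $T^\prime M$; combined with the compatibility of local trivialisations encoded by \textbf{(SPBM2)} and Remark~\ref{R_SPBM2}, the fibrewise projections $p_n,p_n^\flat$ glue into convenient bundle morphisms $p=\underleftarrow{\lim}p_n$ and $p^\flat=\underrightarrow{\lim}p_n^\flat$ on $T^{\mathfrak{p}}M=TM\oplus T^\flat M$, which settles Point~1.

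For Point~2 I would first argue fibrewise. Fix $x=\underleftarrow{\lim}x_n$; assumption \textbf{(SPBM1)} states that $((D_j)_{x_i})_{j\geq i}$ is a submersive projective sequence of partial linear Dirac structures on $(T_{x_i}M_j,T_{x_j}\lambda_j)$, so Theorem~\ref{T_LimitSequenceLinearDirac}(2) delivers a partial linear Dirac structure
\[
D_x=\underleftarrow{\lim}_n\,p_n((D_n)_{x_n})\;\times\;\underrightarrow{\lim}_n\,p_n^\flat((D_n)_{x_n})
\]
on $T_xM\oplus T_x^\flat M$. To globalise, one forms, over each submanifold $M_i$, the projective sequence ${\mathcal L}_{ij}$ of Banach subbundles of $TM_j$ obtained by iterated pushforward along $\lambda_i,\dots,\lambda_{j-1}$ and, dually, the ascending sequence ${\mathcal L}_{ij}^\flat$ of subbundles of $T^\flat M_j$; their respective projective and direct limits yield closed subbundles of $TM$ and $T^\flat M$, exactly as in the proof of Theorem~\ref{T_DirectLimitPartialDirac}, Point~2, with roles of limits swapped. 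The product $D:=\underleftarrow{\lim}p_n(D_n)\times\underrightarrow{\lim}p_n^\flat(D_n)$ is then a closed weak subbundle of $T^{\mathfrak{p}}M$, and since $D_x=D_x^\perp$ at every fibre (by the fibrewise result), $D$ is a partial almost Dirac structure on $M$. Point~3 is essentially bookkeeping: for $\alpha=\underrightarrow{\lim}\alpha_n$ one picks the smallest index $j$ such that $\alpha=\widehat{\lambda}_j^*\alpha_j$, and then the compatibility $T\widehat{\lambda}_j(u_n)=u_j$ (for $n\geq j$, with $u=\underleftarrow{\lim}u_n$) makes $\alpha_n(u_n)=\alpha_j(u_j)$ constant in $n\geq j$, so the pairing stabilises.

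For Point~4, the Courant bracket (\ref{eq_ProjectiveLimitCourantBracket}) is well-defined on any convenient Lie algebroid by Proposition~\ref{P_PropertyCourantBracket}, and $TM$ is such a bundle. To obtain the explicit limit formula (\ref{eq_directLimitCourant}), I would check each summand separately. The vector-field component $[X,Y]=\underleftarrow{\lim}[X_n,Y_n]$ follows from $T\widehat{\lambda}_n\circ X=X_n\circ\widehat{\lambda}_n$ and the naturality of the Lie bracket under surjective submersions. For the $1$-form component, writing $\alpha=\widehat{\lambda}_j^*\alpha_j$ for sufficiently large $j$ and using that pullback $\widehat{\lambda}_j^*$ commutes with $d$, $i_X$, and $L_X$ whenever $X$ is $\widehat{\lambda}_j$-projectable, yields
\[
L_X\beta-L_Y\alpha-\tfrac{1}{2}d(i_Y\alpha-i_X\beta)=\widehat{\lambda}_j^*\!\left(L_{X_j}\beta_j-L_{Y_j}\alpha_j-\tfrac{1}{2}d(i_{Y_j}\alpha_j-i_{X_j}\beta_j)\right)
\]
for $j$ large, and composing with $p^\flat=\underrightarrow{\lim}p_n^\flat$ yields the announced direct-limit expression.

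The main obstacle I anticipate is the bundle-theoretic step in Point~2: verifying that the fibrewise partial linear Dirac structures $D_x$ assemble into a closed \emph{subbundle} of $T^{\mathfrak{p}}M$, i.e.\ that the local trivialisations provided by \textbf{(SPBM2)} actually make $\underleftarrow{\lim}p_n(D_n)$ and $\underrightarrow{\lim}p_n^\flat(D_n)$ into convenient subbundles with the expected typical fibres. This requires combining, in the mixed projective/direct setting, the smooth uniform boundedness principle and the limit constructions for Banach bundles used in \cite{CaPe23}; once these structural facts are in place, the self-orthogonality $D=D^\perp$ and the bracket formula follow essentially formally from the linear case (Theorem~\ref{T_LimitSequenceLinearDirac}(2)) and from the functoriality of Lie derivative and exterior differential under submersive projection.
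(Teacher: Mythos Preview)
Your proposal is correct and follows essentially the same approach as the paper: the paper's own proof explicitly states that it uses analogous arguments to Theorem~\ref{T_DirectLimitPartialDirac} with the roles of direct and projective limits exchanged, invokes Proposition~\ref{L_ProjectiveBanachDiracContext} for Point~1, Theorem~\ref{T_LimitSequenceLinearDirac}(2)(iii) for Point~3, and checks the Courant bracket formula via $T\widehat{\lambda}_n(X)=X_n$ and $\alpha=\widehat{\lambda}_n^*(\alpha_n)$ exactly as you outline. Your write-up is in fact more detailed than the paper's sketch, and the bundle-theoretic concern you flag in Point~2 is precisely the step the paper dismisses with ``analog arguments''.
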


\begin{remark}
\label{R_DProjectivenotDirac}  
For the same arguments as in Remark \ref{R_DnotDiracStructure}, if $(D_n)$ is a sequence of submersive surjections of partial Dirac structures in Theorem \ref{T_ProjectiveLimitPartialDirac}, on $M=\underleftarrow{\lim}M_n$, we only have a partial {\bf almost}  Dirac structure on  $D$, but  in general, $D$  is not a  partial Dirac structure.
\end{remark}

\begin{remark}
\label{R_NoProjectiveLimitChart}
As for ascending sequences of partial Dirac structures, we can hope that, for a submersive projective sequence of partial  Dirac structures $(D_n )$, under some additional assumptions\footnote{  As in the Hilbert context for instance.}, we can show that we have projective limits of  sequences of  characteristic leaves $(L_n)$  which give rise to a leaf of the characteristic distribution of $D$. For such a result, we would need some sufficient conditions under which such a sequence has the projective limit chart property. Unfortunately, we have such a result, contrary to the ascending case of sequences of Banach manifolds. 
\end{remark}

Since the proof of Theorem \ref{T_ProjectiveLimitPartialDirac} uses analog arguments as the ones used in the proof of Theorem \ref{T_DirectLimitPartialDirac}, we only give a sketch of proof.

\begin{proof}
\noindent 1. and 2. 
The first  part of point 1 is a consequence of Proposition \ref{L_ProjectiveBanachDiracContext} and  the proof of the last part of Point 1 and of Point 2 using analog arguments as for the last part of point 1  and Point 2 of Theorem \ref{T_DirectLimitPartialDirac}.\\

\noindent 3. 
This point is a direct consequence of  Theorem~\ref{T_LimitSequenceLinearDirac}, 2.(iii).\\

\noindent 4.  
For the existence of a Courant bracket, the same arguments as for Theorem ~\ref{T_DirectLimitPartialDirac},~4 are used. \\

We now look for the proof of the last part.\\
Consider an open set  $U=\underleftarrow{\lim}U_n$ and  the projective limit $X=\underleftarrow{\lim}X_n$ (resp. the projective limit $Y=\underleftarrow{\lim}Y_n$) where $X_n$ (resp. $Y_n$) is a vector field on $U_n$. For each $n$, we have a smooth submersion $\widehat{\lambda}_n$ of the convenient  manifold $M$ onto the Banach space $M_n$. Therefore $T\widehat{\lambda}_n(X)=X_n$ and  $T\widehat{\lambda}_n(Y)=Y_{n}$ and so  $T\widehat{\lambda}_n([X, Y])=[X_n,Y_n]$, which implies that $[X,Y]=\underleftarrow{\lim}[X_n,Y_n]$.

If $\alpha=\underrightarrow{\lim}\alpha_n$  where $\alpha_n$ is a $1$-form on $U_n$, then  $\alpha=\widehat{\lambda}_n^*(\alpha_n)$. Thus, on the one hand, we obtain:  
 
  $\widehat{\lambda}_n^*(i_{X_n}\alpha_n)= i_{T\widehat{\lambda}_n(X_n)}\widehat{\lambda}_n^*(\alpha_n)=\widehat{\lambda}_n^*(\alpha_n)(X)=\alpha(X)$ 
  
and so we get
  
   $\widehat{\lambda}_n^*d(i_{X_n}\alpha_n)= d\widehat{\lambda}_n^*(i_{X_n}\alpha_n)=d\alpha(X)=d_{i_{X}}\alpha.$

\noindent  By analog arguments, on the other hand, we obtain $\widehat{\lambda}_n^*\left(i_{X_n} d\alpha_n\right)=i_{X} d\alpha$.  Since $L_X=i_X \circ d+d \circ i_X$, it follows  that 
$\widehat{\lambda}_n^*(L_{X_n}\alpha_n)=L_{X}\alpha$. We finally obtain:

$ di_{X}\alpha=\underrightarrow{\lim}d i_{X_n}\alpha_n$ and $L_{X}\alpha=\underrightarrow{\lim}L_{X_n}\alpha_n$
\noindent which clearly implies the last part of Point 4.
\end{proof}

\begin{example}
\label{Ex_BanachProduct} 
We consider a submersive projective sequence  $(M_k)_{k\in \mathbb{N}^\ast}$ of Banach manifolds, where each one is provided with a partial Dirac structure $D_k$, closed Banach subbundle of $T^\mathfrak{p}
M_k=TM_k\displaystyle\oplus T^\flat M_k$ and  $T^\flat M_k$ is a closed subbundle of $T^\prime M_k$. To this sequence, we can  associate the sequence of Banach manifolds 
$ \left( \widetilde{M}_n=\displaystyle\prod_{k=1}^n M_k \right) _{n \in \mathbb{N}^\ast} $.
 If $\lambda_n:\widetilde{M}_{n+1}\to \widetilde{M}_n$ is the canonical projection, then $(\widetilde{M}_n,\lambda_n)$ is a submersive 
projective sequence of Banach manifolds. Then $\widetilde{M}=\displaystyle\prod_{k=1}^\infty M_k= \underleftarrow{\lim}\widetilde{M}_n$ is a Fr\'echet manifold and we have $T\widetilde{M}
=\underleftarrow{\lim}T\widetilde{M}_n=\displaystyle\prod_{k=1}^\infty TM_k \equiv \displaystyle\bigoplus _{k=1}^\infty TM_k$ and  $T^\flat \widetilde{M}=\underrightarrow{\lim}T^\flat \widetilde{M}_n$.
We have a canonical pairing $<.,.>_n$ between $T^\flat \widetilde{M}_n=\displaystyle\prod_{k=1}^n T^\flat M_k\equiv \displaystyle\bigoplus_{k=1}^n T^\flat M_k$ and $T\widetilde{M}_n=\displaystyle\prod_{k=1}^n TM_k\equiv \displaystyle\bigoplus_{k=1}^n TM_k$ given by
\[
<(u_1,\dots,u_n),(\alpha_1,\dots,\alpha_n)>_n=\displaystyle\sum_{k=1}^n\alpha_k(u_k).
\]
for any $\alpha_k\in T_{x_k}^\flat M_k$ and $u_k\in T_{x_k} M_k$; it induces a pairing between $T^\flat\widetilde{M}$ and $T\widetilde{M}$ (cf. Theorem \ref{T_ProjectiveLimitPartialDirac}, 3.).\\
Now,  $\widetilde{D}_n=\displaystyle\bigoplus_{k=1}^n D_k$ is a closed subbundle of $T^\mathfrak{p}\widetilde{M}_n=\displaystyle\bigoplus_{k=1}^nT^\mathfrak{p}M_k$ and, by Lemma \ref{Ex_ProductDirac}, $\widetilde{D}_n$  is a  partial almost  Dirac structure on $\widetilde{M}_n$.   From Example \ref{Ex_ProductDirac}, 2., it follows that $ \left( D_n \right)$ is a submersive projective sequence of partial almost Dirac structures. Therefore $\widetilde{D}=\underrightarrow{\lim}(\tilde{p}_n(\widetilde{D}_n))\times\underleftarrow{\lim}(\tilde{p}^\flat_n(\widetilde{D}_n))$ is a partial  almost Dirac structure on $\widetilde{M}$.\\  
Assume now that each $M_k$ is a Hilbert manifold and then $TM_k$ and $T^\flat M_k$ are Hilbert vector bundles. According to Example~\ref{Ex_SumBanachDiracstructures}, since  the characteristic distribution of $D_k$ is  integrable,  by induction, we obtain that $
\widetilde{D}_n$ can be provided with a strong partial Lie algebroid structure and so is integrable. In fact, (cf. Example~\ref{Ex_SumBanachDiracstructures}), for any $\tilde{x}_n=(x_1,\dots,x_k\dots, x_n)$, if 
$L_k$ is the characteristic leaf though $x_k$ in $M_k$, then $\widetilde{L}_n=\displaystyle\prod_{k=1}^n L_k$ is the characteristic leaf of $\widetilde{D}_n$ through $\tilde {x}$. This implies that the partial almost Dirac structure $D$ on $M$ is integrable and the characteristic leaf through $\tilde{x}=(x_k)$ is $\displaystyle\prod_{k=1}^\infty L_k$ where $L_k$ is the characteristic leaf of $D_k$ through $x_k$.
Now,  by induction, as in Example~\ref{Ex_SumBanachDiracstructures}, on any characteristic leaf $\widetilde{L}_n=\displaystyle\prod_{k=1}^n L_k$, we have a strong pre-symplectic form $\Omega_{\widetilde{L}_n}=\Omega_{L_1}\oplus\cdots\oplus\Omega_{L_n}$.\\

Unfortunately, in general, for any $\tilde{x}=(x_k)_{k\in \mathbb{N}}$, if $\widetilde{L}_n$ is the characteristic leaf through $\tilde{x}_n=(x_1,\dots,x_k,\dots, x_n)$, then the sequence of Hilbert manifolds $ \left( \widetilde{L}_n \right) $   does not have  the projective limit chart property and so $\widetilde{L}=\underleftarrow{\lim}\widetilde{L}_n$ does not have a Fr\'echet manifold structure.

However, consider the particular case where $M_k$ is a Hilbert space for all $k\geq k_0$; for such integers  $k$, let $\mathbb{D}_k$ be  a linear Dirac structure on $M_k$. According to Example~\ref{Ex_RegularDiracmanifolds}, $D_k=\mathbb{D}_k\times M_k$ 
is a partial Dirac structure on the manifold $M_k$ and each characteristic leaf $L_k$ of $D_k$ is of type $\{x_k\}\times p(\mathbb{D}_k)$ and is provided with a strong pre-symplectic form $\Omega_L$.   Given some $\tilde{x}_n=(x_k)_{k\in \mathbb{N}} \in  \widetilde{M}$,  let $ \widetilde{L}_n$  be  
 the characteristic leaf through $\tilde{x}_n=(x_1,\dots,x_k,\dots x_n)$.  Then the sequence of Hilbert manifolds $ \left( \widetilde{L}_n \right) $   has  now  the projective limit chart property around $\tilde{x}$ and so $\widetilde{L}=\underleftarrow{\lim}\widetilde{L}_n$ has  a 
Fr\'echet manifold structure.
 Therefore, for any  $\tilde{x}=(x_k)$, if $\widetilde{L}_n$ is the characteristic leaf through $\tilde{x}_n=(x_1,\dots,x_k,\dots x_n)$, then the sequence of Hilbert manifolds $ \left( \widetilde{L}_n \right) $ has  the projective limit chart property and so $\widetilde{L}
 =\underleftarrow{\lim}\widetilde{L}_n$ has  a Fr\'echet manifold structure which is the  characteristic leaf of $\widetilde{D}$ on $\widetilde{M}$ through $\tilde{x}$. Moreover,  $\widetilde{L}$ is provided with a strong pre-symplectic $\Omega_{\widetilde{L}}$  which, from 
 Theorem \ref{T_LimitSequenceLinearDirac}, 2. (v) is   $\left(\Omega_{\widetilde{L}}\right)_{\tilde{x}}=\underrightarrow{\lim}\left(\Omega_{\widetilde{L}_n}\right)_{\tilde{x}_n}$.
\end{example}



\bigskip
\begin{minipage}[t]{10cm}
\begin{flushleft}
\small{
\textsc{Fernand Pelletier}
\\*e-mail: fernand.pelletier@univ-smb.fr
\\UMR 5127 CNRS, Universit\'e de Savoie Mont Blanc, LAMA
\\Campus Scientifique
Le Bourget-du-Lac, 73370, France
\\[0.4cm]
\textsc{Patrick Cabau}
\\*e-mail: patrickcabau@gmail.com

}
\end{flushleft}
\end{minipage}

\end{document}